\DeclareFontFamily{U}{mathc}{}
\DeclareFontShape{U}{mathc}{m}{it}%
{<->s*[1.03] mathc10}{}
\DeclareMathAlphabet{\mathcal}{U}{mathc}{m}{it}
\DeclareMathAlphabet{\matheus}{U}{eus}{m}{n}
\SetMathAlphabet{\matheus}{bold}{U}{eus}{b}{n}
\DeclareFontFamily{U}{dmjhira}{}
\DeclareFontShape{U}{dmjhira}{m}{n}{ <-> dmjhira }{}
\newtheorem{defi}{Definition}[subsection]
\newtheorem{theorem}[defi]{Theorem}
\newtheorem{coroll}[defi]{Corollary}
\newtheorem{pro}[defi]{Proposition}
\newtheorem{lemma}[defi]{Lemma}
\theoremstyle{definition}
\newtheorem{construct}[defi]{Construction}
\newtheorem{example}[defi]{Example}
\providecommand{\customgenericname}{}
\newcommand{\newcustomtheorem}[2]{%
	\newenvironment{#1}[1]
	{%
		\renewcommand\customgenericname{#2}%
		\renewcommand\theinnercustomgeneric{##1}%
		\innercustomgeneric
	}
	{\endinnercustomgeneric}
}
\theoremstyle{remark}
\newtheorem{rk}[defi]{Remark}
\newtheorem{nota}[defi]{Notation}
\newcommand{\bbA}{\mathbbm{A}}
\newcommand{\calA}{\mathcal{A}}
\newcommand{\twoA}{\mathcal{A}}
\newcommand{\bbB}{\mathbbm{B}}
\newcommand{\calB}{\mathcal{B}}
\newcommand{\twoB}{\mathcal{B}}
\newcommand{\N}{\mathbbm{N}}
\newcommand{\bbK}{\mathbbm{K}}
\newcommand{\calK}{\mathcal{K}}
\newcommand{\twoC}{\mathcal{C}}
\newcommand{\F}{\mathscr{F}}
\newcommand{\bbF}{\mathbbm{F}}
\newcommand{\V}{\mathcal{V}}
\newcommand*{\Set}{\mathbf{Set}}
\newcommand*{\Cat}{\mathcal{Cat}}
\newcommand*{\FCat}{\mathbf{\F}\text{-}\mathcal{Cat}}
\newcommand*{\FDeltaCat}{\mathbf{\F_\Delta}\text{-}\mathcal{Cat}}
\newcommand*{\VCat}{\mathcal{V}\text{-}\mathcal{Cat}}
\newcommand*{\sSet}{\mathbf{Set}_\Delta}
\newcommand*{\Kan}{\mathcal{Kan}}
\newcommand*{\qCat}{Q\text{-}\mathcal{Cat}}
\newcommand*{\JLim}{J\text{-}\mathbb{Lim}}
\newcommand*{\calJLim}{J\text{-}\mathcal{Lim}}
\newcommand*{\JColim}{J\text{-}\mathbb{Colim}}
\newcommand*{\calJColim}{J\text{-}\mathcal{Colim}}
\newcommand*{\Cart}{\mathbb{Cart}}
\newcommand*{\calCart}{\mathcal{Cart}}
\newcommand*{\CoCart}{\mathbb{CoCart}}
\newcommand*{\calCoCart}{\mathcal{CoCart}}
\newcommand*{\Lali}{\mathbb{Lali}}
\newcommand*{\La}{\mathbb{La}}
\newcommand*{\Ra}{\mathbb{Ra}}
\newcommand*{\calLa}{\mathcal{La}}
\newcommand*{\calRa}{\mathcal{Ra}}
\newcommand*{\Rali}{\mathbb{Rali}}
\newcommand*{\calLali}{\mathcal{Lali}}
\newcommand*{\calRali}{\mathcal{Rali}}
\newcommand*{\Adj}{\mathbf{Adj}}
\newcommand*{\Mnd}{\mathbf{Mnd}}
\DeclareMathOperator{\im}{im\;}
\DeclareMathOperator{\res}{res}
\DeclareMathOperator{\Dom}{dom\;}
\DeclareMathOperator{\Cod}{cod\;}
\DeclareMathOperator{\inc}{inc}
\DeclareMathOperator{\ev}{ev\;}
\DeclareMathOperator{\ins}{ins}
\newcommand*{\TAlg}{\mathrm{T}\text{-}\mathrm{Alg}}
\newcommand{\rins}[3]{\ins^{#1}_{#2}({#3})}
\newcommand*{\shortbar}{\scalebox{1.6}[1]{-}}
\definecolor{turquoise}{HTML}{00B4CE}
\tikzset{
	marked/.style = {decoration = {markings, mark = at position 0.5 with { 
				\node[transform shape, xscale = .8, yscale=.4] {/}; } }, postaction = 
		{decorate} },
	circled/.style = {decoration = {markings, mark = at position 0.5 with { 
				\node[transform shape, scale = .7] {$\circ$}; } }, postaction = {decorate} 
	},
	loose/.style ={->,
		decorate,
		decoration={snake,amplitude=.4mm,segment length=2mm,post length=1mm}},
}
\tikzset{double line with arrow/.style 
	args={#1,#2}{decorate,decoration={markings,%
			mark=at position 0 with {\coordinate (ta-base-1) at (0,1pt);
				\coordinate (ta-base-2) at (0,-1pt);},
			mark=at position 1 with {\draw[#1] (ta-base-1) -- (0,1pt);
				\draw[#2] (ta-base-2) -- (0,-1pt);
}}}}
\tikzset{dar/.style={double,double equal sign distance,-implies}}
\tikzset{mid/.style={anchor=mid}} 
\newbox\dottedarrow@box
\newcommand*\dottedarrow
\newcommand*\dottedarrow@t[1][1.3em]
\newcommand*\dottedarrow@m[1][]
\relax\detokenize{#1}\relax
\newcommand*\Neginternal[3]{\mathpalette\Neg@{{#1}{#2}{#3}}}
\newcommand*\Neg@[2]{\Neg@@{#1}#2}
\newcommand*\Neg@@[4]{%
	\mathrel{\ooalign{%
			$\m@th#1#4$\cr
			\hidewidth$\m@th#3{#1}\mkern\muexpr#2*2$\hidewidth\cr
	}}%
}
\newcommand*\mynegslash[1]{\rotatebox[origin=c]{60}{$\m@th#1\shortbar$}}
\newcommand*\amsnegslash[1]{\rotatebox[origin=c]{60}{$\m@th#1-$}}
\newcounter{sarrow}
\newcommand\xleadsto[1]{%
	\stepcounter{sarrow}%
	\mathrel{\begin{tikzpicture}[decoration=snake]
			\node (\thesarrow) {$\scriptstyle #1$};
			\draw[->,decorate] (\thesarrow.south west) -- (\thesarrow.south 
			east);
		\end{tikzpicture}
	}
}
\renewcommand{\leadsto}{\xleadsto{\mkern9mu}}
\newcommand{\xrightarrowdbl}[2][]{%
	\xrightarrow[#1]{#2}\mathrel{\mkern-14mu}\rightarrow
}
\newcommand{\isof}{\rotatebox[origin=c]{270}{\scalebox{0.7}{$\xrightarrowdbl{}$}}}
\providecommand*{\twoheadrightarrowfill@}{%
	\arrowfill@\relbar\relbar\twoheadrightarrow
}
\providecommand*{\twoheadleftarrowfill@}{%
	\arrowfill@\twoheadleftarrow\relbar\relbar
}
\providecommand*{\xtwoheadrightarrow}[2][]{%
	\ext@arrow 0579\twoheadrightarrowfill@{#1}{#2}%
}
\providecommand*{\xtwoheadleftarrow}[2][]{%
	\ext@arrow 5097\twoheadleftarrowfill@{#1}{#2}%
}
\newcommand{\hooktwoheadrightarrow}{%
	\hookrightarrow\mathrel{\mspace{-15mu}}\rightarrow
}
\setlist[enumerate,1]{label=\textup{(\arabic*)}}
\setlist[enumerate,2]{label=\textup{(\alph*)}}
\newcommand{\longref}[2]{\hyperref[#2]{#1~\textup{\ref*{#2}}}}
\newcommand{\eqtref}[2]{\hyperref[#2]{#1~\textup{(\ref*{#2})}}}
\newcommand{\longcite}[1]{\textup{\cite{#1}}}
\newcommand*{\op}{\mathrm{op}}
\newcommand*{\co}{\mathrm{co}}
\newcommand*{\chor}{\mathrm{chordate}}
\newcommand*{\id}{\mathrm{id}}
\DeclareMathOperator{\ob}{ob}
\numberwithin{equation}{section}
\newcounter{subsubsubsection}[subsubsection]
\renewcommand\paragraph{\@startsection{paragraph}{5}{\z@}%
	{3.25ex \@plus1ex \@minus.2ex}%
	{-1em}%
	{\normalfont\normalsize\bfseries}}
\renewcommand\subparagraph{\@startsection{subparagraph}{6}{\parindent}%
	{3.25ex \@plus1ex \@minus .2ex}%
	{-1em}%
	{\normalfont\normalsize\bfseries}}
\def\toclevel@subsubsubsection{4}
\def\toclevel@paragraph{5}
\def\toclevel@paragraph{6}
\def\l@subsubsubsection{\@dottedtocline{4}{7em}{4em}}
\def\l@paragraph{\@dottedtocline{5}{10em}{5em}}
\def\l@subparagraph{\@dottedtocline{6}{14em}{6em}}
\title[Limits of $(\infty, 1)$-categories with structure and their lax morphisms]{Limits of $(\infty, 1)$-categories with structure \\ and their lax morphisms}
\author{Joanna Ko}
\email{joanna.ko.maths@gmail.com}
\address{Department of Mathematics and Statistics, Masarykova univerzita, 
	Kotlářská 2, Brno 61137, Czech Republic}
\keywords{$(\infty, 1)$-categories; $\infty$-categories; $\infty$-cosmoi; lax morphisms; Eilenberg-Moore objects; enhanced simplicial categories}
\begin{document}
	
	\begin{abstract}
		%
		Riehl and Verity have established that for a quasi-category $A$ that admits limits, and a homotopy coherent monad on $A$ which does not preserve limits, the Eilenberg-Moore object still admits limits; this can be interpreted as a completeness result involving lax morphisms. We generalise their result to different models for $(\infty, 1)$-categories, with an abundant variety of structures. For instance, $(\infty, 1)$-categories with limits, Cartesian fibrations between $(\infty, 1)$-categories, and adjunctions between $(\infty, 1)$-categories. In addition, we show that these $(\infty, 1)$-categories with structure in fact possess an important class of limits of lax morphisms, including $\infty$-categorical versions of inserters and equifiers, when only one morphism in the diagram is required to be structure-preserving. Our approach provides a minimal requirement and a transparent explanation for several kinds of limits of $(\infty, 1)$-categories and their lax morphisms to exist. 

	\end{abstract}
	\maketitle
	\tableofcontents
	
	\section{Introduction}
	\label{sec:intro}
	In \cite{RV:2015}, Riehl and Verity have shown that for a quasi-category $A$ that admits limits, and a (homotopy coherent) monad $T$ on $A$ which does \emph{not} preserve limits, the Eilenberg-Moore object over $T$ is also a quasi-category with limits, and that the forgetful functor from the Eilenberg-Moore object to $A$ preserves limits.
	
	Whether the above theorem could be generalised to \emph{different models} for $(\infty, 1)$-categories is of great interest. One framework that allows working on $(\infty, 1)$-category theory more model-independently is given by the theory of \emph{$\infty$-cosmoi}, as developed by Riehl and Verity in \cite{book:RV:2022}.
	
	Generally speaking, $\infty$-cosmoi are quasi-categorically enriched categories that satisfy certain nice properties resembling enriched categories of fibrant objects. The theory of $\infty$-cosmoi provides a setting for understanding $(\infty, 1)$-categories with structure and the \emph{pseudo} morphisms between them, i.e. morphisms that preserve structure up to isomorphism, model-independently. For instance, we have the $\infty$-cosmos of $(\infty, 1)$-categories admitting $J$-shaped limits for a simplicial set $J$ and the functors which preserve limits, and also the $\infty$-cosmos of Cartesian fibrations between $(\infty, 1)$-categories and the Cartesian functors.
	
	In particular, the theory of $\infty$-cosmoi enables the understanding of limits of $(\infty, 1)$-categories with structure and their pseudo morphisms. Riehl and Verity have established in \cite{book:RV:2022} that any $\infty$-cosmos admits all \emph{flexible weighted limits}, which are simplicially enriched limits that are analogous to \emph{PIE limits} in $2$-category theory. Intuitively, flexible weighted limits in an $\infty$-cosmos are $\infty$-categorical limits that do not impose strict equations; just as PIE limits are $2$-dimensional limits that do not impose equations between $1$-morphisms. Examples of flexible weighted limits include products, inserters, and comma objects. Indeed, in the seminal paper \cite{BKP:1989} by Blackwell, Kelly, and Power, it is shown that for any $2$-monad, the $2$-category $\TAlg_p$ of $T$-algebras together with their pseudo $T$-morphisms admits all PIE limits. This hints that the phenomena happening in an $\infty$-cosmos should be similar to those in the $2$-categorical situation.
	
	In \cite{Lack:2005}, Lack has established the existence of several $2$-dimensional limits involving \emph{lax} morphisms. For instance, the dual version of \cite[Proposition 4.4]{Lack:2005} implies that in the $2$-category of categories with $J$-shaped limits for a category $J$ and the functors that do \emph{not} necessarily preserve limits, comma objects exist even when \emph{only one} of the $1$-morphisms in the diagram preserves limits; similarly, \cite[Proposition 4.11]{Lack:2002} implies that the Eilenberg-Moore object over a monad which does \emph{not} necessarily preserves limits still has $J$-shaped limits.
	
	These phenomena led Lack and Shulman to introduce \emph{enhanced $2$-category theory} in \cite{LS:2012}. Roughly speaking, an enhanced $2$-category is a $2$-category with \emph{two} types of $1$-morphisms: the tight ones and the loose ones, in which every tight $1$-morphism is also loose. For example, categories admitting $J$-shaped limits form an enhanced $2$-category, with the tight $1$-morphisms given by the functors that preserve limits, whereas the loose $1$-morphisms given by just the functors, and the $2$-morphisms given by the natural transformations. By formulating enhanced $2$-categorical notions via enriched category theory, Lack and Shulman have successfully studied the subtle behaviours of limits involving lax morphisms.
	
	Seeing that the phenomena in an $\infty$-cosmos are similar to those in the $2$-categorical situation, we aim to recover similar classical results in the $\infty$-categorical world. More precisely, we would like to show that in many cases, knowing that \emph{only one} of the lax morphisms is pseudo, is already sufficient to deduce the existence of several kinds of limits of $(\infty, 1)$-categories with structure and their lax morphisms.
	
	For this, we take inspiration from the work \cite{LS:2012} by Lack and Shulman. We introduce the notion of \emph{enhanced simplicial categories}, which are basically simplicially enriched categories with \emph{two} types of $0$-arrows. We proceed to this concept via enriched category theory by introducing first a category $\F_\Delta$, and then define enhanced simplicial categories as categories enriched in $\F_\Delta$. Roughly speaking, the category $\F_\Delta$ consists of \emph{enhanced simplicial sets}, which are simplicial sets with special vertices. Therefore, enriching in $\F_\Delta$ gives rise to a simplicial category with two types of $0$-arrows. When every $0$-arrow are tight, we call it a \emph{chordate} enhanced simplicial category; when only the identities are tight, we call it an \emph{inchordate} enhanced simplicial category. Very often, we are interested in the case when the tight part constitutes an $\infty$-cosmos, so as to establish model-independent results for $(\infty, 1)$-categories. Examples include $(\infty, 1)$-categories possessing limits together with their pseudo and their lax morphisms, and also Cartesian fibrations between $(\infty, 1)$-categories together with the Cartesian functors and the functors between them.
	
	To explore limits that exist in these enhanced simplicial categories, we would like to propose a new notion that captures \emph{rigged PIE limits} in enhanced $2$-category theory, which are shown to exist in many enhanced $2$-categorical structures in \cite{LS:2012}. We then introduce the notion of \emph{rigged $n$-inserters}, which happens to be a crucial concept that generalises (rigged) products, inserters, comma objects, and equifiers in (enhanced) $2$-category theory to the $\infty$-categorical world, and is also the key to the construction of Eilenberg-Moore objects. In other words, rigged $n$-inserters are a basic and fundamental class of $\F_\Delta$-weighted limits in enhanced simplicial categories, which serve as a main component in generating some more advanced $\F_\Delta$-weighted limits.
	
	Particularly, when the tight part of an enhanced simplicial category is an $\infty$-cosmos, there is a convenient description of rigged $n$-inserters.
	\begin{custompro}{\hspace{-1mm}}[Rigged $n$-inserters]
		Let $\mathbb{K}$ be an enhanced simplicial category that has simplicial powers by inchordate enhanced simplicial sets, and $S, T$ be objects in $\mathbb{K}$.  Given a loose $0$-arrow $D \colon S \leadsto T^{\partial \Delta^n}$ in $\bbK$, where the composite
		\[ S \xleadsto{D} T^{\partial \Delta^n} \xrightarrow{\ev[[{n}]]} T \]
		of $D$ with the evaluation at the terminal vertex is a {tight} $0$-arrow,  the \emph{terminally rigged $n$-inserter} $\rins{n}{[[n]]}{D}$ of $D$ is the pullback
		\begin{center}
			\begin{tikzcd}
				{\rins{n}{[[n]]}{D}} \arrow[dr, phantom, "\lrcorner", very near start] \ar[r, "\phi", loose] \ar[d, "p"'] & T^{\Delta^n} \ar[d,  "{T^{\partial}}"]
				\\
				S \ar[r, "D"', loose]  & T^{\partial\Delta^n}
			\end{tikzcd}
		\end{center} 
		of $D$ along the restriction $T^\partial \colon T^{\Delta_n} \to T^{\partial \Delta_n}$.
		
		Dually, if the composite
		\[ S \xleadsto{D} T^{\partial \Delta^n} \xrightarrow{\ev[[{0}]]} T \]
		of $D$ with the evaluation at the initial vertex is a {tight} $0$-arrow, then the \emph{initially rigged $n$-inserter} $\rins{n}{[[[0]]}{D}$ of $D$ is the pullback
		of $D$ along the restriction $T^\partial \colon T^{\Delta_n} \to T^{\partial \Delta_n}$.
	\end{custompro}
	In simple words, rigged $n$-inserters are limits of diagrams in the shape of the boundary of a standard $n$-simplex in a hom-object, where \emph{only one} of the legs in the diagram is tight.
	
	Apart from rigged $n$-inserters, it is important to know that \emph{cosmological limits}, i.e. simplicial limits which naturally exist in any $\infty$-cosmos, are upgraded automatically to $\F_\Delta$-weighted limits in the corresponding enhanced simplicial category. Such limits are called \emph{tight cosmological limits}.
	
	We then establish most of our completeness results in different settings via a more unified approach. Instead of proving the existence of rigged $n$-inserters and tight cosmological limits in different enhanced simplicial categories directly case-by-case, we establish first in the more fundamental enhanced simplicial categories, which are $\Lali(\calK)$ of \emph{lali-isofibrations} of an $\infty$-cosmos $\calK$, i.e., isofibrations which are left adjoints left inverses, and dually $\Rali(\calK)$ of \emph{rali-isofibrations}, i.e., isofibrations which are right adjoints left inverses. The tight $0$-arrows are morphisms of left adjoints, and respectively right adjoints, whereas the loose $0$-arrows are simply morphisms of isofibrations.
	\begin{customthm}{\hspace{-1mm}}
		The enhanced simplicial category $\Lali(\calK)$ of lali-isofibrations of an $\infty$-cosmos $\calK$ admits tight cosmological limits and terminally rigged $n$-inserters.
		
		Dually, the enhanced simplicial category $\Rali(\calK)$ of rali-isofibrations of an $\infty$-cosmos $\calK$ admits tight cosmological limits and initially rigged $n$-inserters.
	\end{customthm}
	In fact, (co)Eilenberg-Moore objects over loose (co)monads can be constructed from rigged $n$-inserters. In $2$-category theory, the Eilenberg-Moore object $\TAlg$ over a loose monad $T$ with unit $\eta$ and multiplication $\mu$ on an object $A$ could be constructed in three steps. First, equip $A$ with structure morphisms, which could be done by taking the inserter
	\begin{center}
		\begin{tikzcd}[row sep=3]
			& A \ar[dr, loose, "T"] \ar[Rightarrow, from=1-2, to=3-2, shorten=2mm, "\phi"] &
			\\
			L \ar[ur,  "p"] \ar[dr, "p"'] & & A
			\\
			& A \ar[ur, equal] &
		\end{tikzcd}
	\end{center}
	of $T$ along the identity. Second, impose associativity by considering the equifier
	\begin{center}
		\begin{tikzcd}
			E \ar[r, "q"] & L \ar[rrr, bend left, loose, "TTp"{name=U}] \ar[rrr, bend right, "p"'{name=D}] \ar[Rightarrow, shift right=2ex, from=U, to=D, "\phi \cdot T\phi"', shorten=2mm] \ar[Rightarrow, shift left=2ex, from=U, to=D, "\phi \cdot \mu p", shorten=2mm] & & & A
		\end{tikzcd}.
	\end{center}
	And finally, impose unitality by considering the equifier on the right below.
	\begin{center}
		\begin{tikzcd}
			\TAlg \ar[r] & E \ar[rrr, bend left, "pq"{name=U}] \ar[rrr, bend right, "pq"'{name=D}] \ar[Rightarrow, shift right=2ex, from=U, to=D, "\phi \cdot \eta q"', shorten=2mm] \ar[Rightarrow, equal, shift left=2ex, from=U, to=D, "\id", shorten=2.5mm] & & & A
		\end{tikzcd}.
	\end{center}
	So, altogether, the Eilenberg-Moore object $\TAlg$ can be built as a chain
	$${\TAlg}  \to E \to L \to A$$
	of $c$-rigged inserter and equifiers introduced in \cite{LS:2012}. As $c$-rigged inserters and equifiers are special cases of terminally rigged $n$-inserters, it is expected that Eilenberg-Moore objects over loose monads on $(\infty, 1)$-categories could be constructed from the latter. By looking into the work \cite{RV:2015} of Riehl and Verity, we present how Eilenberg-Moore objects over \emph{loose} monads are built from as a countable chain of terminally rigged $n$-inserters. Dually, coEilenberg-Moore objects over loose comonads are constructed out of initially rigged $n$-inserters.
		%
	As a consequence, we are able to establish further completeness results in $\Lali(\calK)$ or $\Rali(\calK)$.
	\begin{customthm}{\hspace{-1mm}}
		The enhanced simplicial category $\Lali(\calK)$ of lali-isofibrations of an $\infty$-cosmos $\calK$ admits Eilenberg-Moore objects over loose monads.
		
		Dually, the enhanced simplicial category $\Rali(\calK)$ of rali-isofibrations of an $\infty$-cosmos $\calK$ admits coEilenberg-Moore objects over loose comonads.
		
		In both cases, the forgetful functor is tight and reflects tightness.
	\end{customthm}
	
	The $\infty$-cosmos $\calLali(\calK)$ of lali-isofibrations of an arbitrary $\infty$-cosmos $\calK$ is the source of many other important $\infty$-cosmoi, as shown in \cite{book:RV:2022}, including $(\infty, 1)$-categories possessing limits, and Cartesian fibrations. In addition, we show that left adjoints which are isofibrations of an $\infty$-cosmos also form an $\infty$-cosmos $\calLa(\calK)$, which can actually be obtained as the pullback of $\calLali(\calK)$. Since enhanced simplicial categories are formulated as $\F_\Delta$-categories, and rigged $n$-inserters are $\F_\Delta$-weighted limits, consequently, by applying tools from enriched category theory, our theorem in $\Lali(\calK)$ then implies that the enhanced simplicial category of $(\infty, 1)$-categories possessing limits, that of Cartesian fibrations between $(\infty, 1)$-categories, and that of left adjoints which are isofibrations between $(\infty, 1)$-categories, all admit terminally rigged $n$-inserters. As corollaries, the aforementioned enhanced simplicial categories admit inserters, comma $(\infty, 1)$-categories, and equifiers of \emph{loose} $0$-arrows when \emph{only} their codomains are tight, and also Eilenberg-Moore objects over \emph{loose} monads. The dual versions of these results are also established accordingly, by considering \emph{initially} rigged $n$-inserters $\rins{n}{[[0]]}{D}$. Dually, the $\infty$-cosmos of rali-isofibrations is the source of the dual versions of the aforementioned structures, such as $(\infty, 1)$-categories possessing colimits, coCartesian fibrations, and right adjoints which are isofibrations, hence our theorem in $\Rali(\calK)$ then implies that the enhanced simplicial category of $(\infty, 1)$-categories possessing colimits, that of coCartesian fibrations between $(\infty, 1)$-categories, and that of right adjoints which are isofibrations between $(\infty, 1)$-categories, all admit initially rigged $n$-inserters; consequently, they all admit inserters, comma $(\infty, 1)$-categories, and equifiers of \emph{loose} $0$-arrows, when \emph{only} the domains are tight, and coEilenberg-Moore objects over \emph{loose} comonads.
	
	In short, we have generalised the result by Riehl and Verity on quasi-categories with limits in \cite{RV:2015}. We successfully established a {model-independent} version of the theorem; furthermore, we include many more structures other than $(\infty, 1)$-categories with limits, and provide a better understanding on the phenomenon of limits of $(\infty, 1)$-categories with structure and their lax morphisms by introducing the very central notion of rigged $n$-inserters, which helps us deduce the existence of substantially more limits aside from Eilenberg-Moore objects over loose monads. In addition, our approach of formulating the enhanced structures via enriched category theory allows us to deduce an \emph{extra} property of the limit projections, namely, we show that they \emph{jointly reflect} tightness as well.
	
	After that, we also discuss the existence of some $\F_\Delta$-weighted limits in certain enhanced simplicial categories, but not in the enhanced simplicial category $\Lali(\calK)$ of lali-isofibrations or $\Rali(\calK)$ of rali-isofibrations. We show that the enhanced simplicial categories $\La(\calK)$ of left adjoints which are isofibrations between $(\infty, 1)$-categories, $\Ra(\calK)$ of right adjoints which are isofibrations, $\JLim(\calK)$ of $(\infty, 1)$-categories with $J$-shaped limits, and $\JColim(\calK)$ od $(\infty, 1)$-categories that have $J$-shaped colimits all admit pullbacks of a \emph{loose} $0$-arrow along a tight Cartesian fibration. Our arguments indicate that  $\Lali(\calK)$ or $\Rali(\calK)$ might not possess this kind of limits. In other words, pullbacks of a \emph{loose} $0$-arrow along a tight Cartesian fibration would be an example of $\F_\Delta$-weighted limits that exist in several interesting enhanced simplicial categories, which is, however, not inherited from $\Lali(\calK)$ or $\Rali(\calK)$. This surprising phenomenon could be further investigated and explored carefully, so as to develop and formalise a theory of \emph{enhanced $\infty$-cosmoi} and \emph{enhanced $(\infty, 2)$-category theory} in the future.
	
	
	The article is outlined as follows. In \longref{Section}{sec:enhanced}, we present the basic notions in enhanced $2$-category theory, and introduce the concept of enhanced simplicial categories as enriched categories in $\F_\Delta$. We show several important examples of enhanced simplicial categories. After that, we explore $\F_\Delta$-weighted limits, and understand a few basic examples. In  \longref{Section}{sec:inserter}, we introduce the notion of rigged $n$-inserters, which is the heart of this article. We then demonstrate with a few crucial examples of limits which are subsumed by our notion of rigged $n$-inserters, including products, inserters, comma objects, and equifiers. In \longref{Section}{sec:complete}, we begin by proving our main completeness results in $\Lali(\calK)$ and $\Rali(\calK)$. Subsequently, we provide a few useful lemmata with the help of enriched category theory, which allow us to transfer the $\F_\Delta$-weighted limits in $\Lali(\calK)$ or $\Rali(\calK)$ to other enhanced simplicial categories. We then show numerous completeness results in many different kinds of enhanced simplicial categories. Lastly, in \longref{Section}{sec:not_in_Lali}, we discuss the existence of certain $\F_\Delta$-weighted limits involving lax morphisms in some enhanced simplicial categories, which are not shown to exist in  $\Lali(\calK)$ or $\Rali(\calK)$.

	\section*{Acknowledgement}
	The author would like to thank {John Bourke} for his kind guidance and endless encouragement. The author is grateful to Richard Garner and Stephen Lack for the fruitful and inspiring discussions during her stay in Macquarie University. The author would also like to thank Dennis-Charles Cisinski and Johannes Gloßner for the enlightening conversations when she visited Higher Invariants in Universität Regensburg.
	
	The author acknowledges the support of Masarykova univerzita under the grant MUNI/A/1457/2023.
	
	\section{Enhanced simplicial categories}
	\label{sec:enhanced}
	
	\subsection{Preliminaries on enhanced $2$-category theory}
	
	We recall the basics of enhanced $2$-category theory, which was first 
	proposed by Lack and Shulman in 
	\longcite{LS:2012}. An introduction can also be found in
	\longcite{Bourke:2014} by Bourke.
	
	\begin{defi}
		Denote by $\F$ the full sub-category of the arrow category 
		$\mathbf{Cat}^\mathbf{2}$ 
		of the $1$-category	$\mathbf{Cat}$ of categories, determined by the fully faithful and injective-on-objects 
		functors, i.e., the \emph{full embeddings}.
	\end{defi}
	
	In other words, an object of $\F$ is a full embedding
	\begin{equation*}
		A_\tau \xhookrightarrow{j_A} A_\lambda,
	\end{equation*}
	a morphism $f$ from $j_A$ to $j_B$ in $\F$ is given by two functors 
	$f_\tau \colon A_\tau \to B_\tau$ and 
	$f_\lambda \colon A_\lambda \to B_\lambda$ 
	making the following square commute
	\begin{center}
		\begin{tikzcd}
			A_\tau \ar[r, hook, "j_A"] \ar[d, "f_\tau"'] & A_\lambda \ar[d, 
			"f_\lambda"]
			\\
			B_\tau \ar[r, hook, "j_B"'] & B_\lambda
		\end{tikzcd}.
	\end{center}
	
	We call $A_\tau$ the \emph{tight} part of $A$, and $A_\lambda$ the 
	\emph{loose} part of 
	$A$; similarly, we apply this terminology to $f$.
	
	\begin{rk}
		$\F$ is (co)complete and Cartesian closed.
	\end{rk}
	
	\begin{defi}
		An \emph{enhanced $2$-category} $\bbA$ is a category $\bbA$ enriched in $\F$. 
	\end{defi}
	
	This 
	means $\bbA$ has
	\begin{enumerate}
		\item [$\bullet$] objects $x, y, \cdots$;
		\item [$\bullet$] hom-objects $\bbA(x, y)$ in $\F$, each consists 
		of a full embedding $\bbA(x, y)_\tau \hookrightarrow \bbA(x, 
		y)_\lambda$ of the tight part into the loose part.
	\end{enumerate}
	
	We can form a $2$-category $\calA_\tau$ as follows:
	
	\begin{enumerate}
		\item [$\bullet$] $\calA_\tau$ has all the objects of $\bbA$;
		\item [$\bullet$] the hom-categories $\calA_\tau (x, y)$ for any 
		objects $x, y$ are $\bbA(x, y)_\tau$.
	\end{enumerate}
	
	Similarly, we can form a $2$-category $\calA_\lambda$ by setting the 
	hom-categories $\calA_\lambda (x, y)$  as $\bbA(x, y)_\lambda$ for any 
	objects $x, y$.
	
	Since for each pair of objects $x, y$, $\calA_\tau (x, y) \hookrightarrow 
	\calA_\lambda (x, y)$ is a full-embedding, we obtain a $2$-functor
	
	\begin{align*}
		J_\bbA \colon \calA_\tau &\to \calA_\lambda.
	\end{align*}
	
	By construction, $J_\bbA$ is identity-on-objects, faithful, and locally 
	fully faithful.
	
	\begin{rk}
		We may identify an $\F$-category $\bbA$ with $J_\bbA$. Indeed, any 
		$2$-functor which is identity-on-objects, faithful, and locally 
		fully faithful uniquely determined an $\F$-category.
	\end{rk}
	
	The morphisms in $\calA_\tau$ are called the \emph{tight} morphisms, 
	whereas 
	those in $\calA_\lambda$ are called \emph{loose}.
	
	\begin{nota}
		We write $A \to B$ for a tight morphism from $A$ to $B$, and $A 
		\leadsto B$ for a loose morphism from $A$ to $B$.
	\end{nota}
	
	Let $\bbA$ and $\bbB$ be two $\F$-categories. An \emph{$\F$-functor} 
	$F\colon 
	\bbA \to \bbB$ is a functor enriched in $\F$, which precisely means that 
	$F$ consists 
	of $2$-functors $F_\tau \colon \calA_\tau \to 
	\calB_\tau$ and $F_\lambda \colon \calA_\lambda \to \calB_\lambda$ 
	making the following diagram commute
	
	\begin{equation}
		\label{diag:F_functor}
		\begin{tikzcd}
			\calA_\tau \ar[r, hook, "J_\bbA"] \ar[d, "F_\tau"'] & \calA_\lambda 
			\ar[d, 
			"F_\lambda"]
			\\
			\calB_\tau \ar[r, hook, "J_\bbB"'] & \calB_\lambda
		\end{tikzcd}.
	\end{equation}
	
	\begin{rk}
		$F_\tau$ is uniquely determined by $F_\lambda$: an $\F$-functor 
		$F\colon 
		\bbA \to \bbB$ is a $2$-functor $F_\lambda \colon 
		\calA_\lambda \to \calB_\lambda$ which preserves tightness, i.e., 
		$F_\lambda$ sends a tight morphism in $\bbA$ to a tight morphism in 
		$\bbB$.
	\end{rk}
	
	Let $F, G\colon \bbA \rightrightarrows \bbB$ be two $\F$-functors. An 
	\emph{{$\F$}-natural 
		transformation}	$\alpha \colon F \to G$ consists of $2$-natural 
	transformations $\alpha_\tau \colon F_\tau \to G_\tau$ and 
	$\alpha_\lambda \colon F_\lambda \to G_\lambda$ 
	making the 
	following diagram of $2$-morphisms commute
	
	\begin{equation}
		\label{diag:F_nat_tran}
		\begin{tikzcd}
			\calA_\tau \ar[r, hook, "J_\bbA"] \ar[d, "G_\tau"', bend right] 
			\ar[d, 
			"F_\tau", 
			bend left] \ar[d, near end, phantom, 
			"{\substack{\Leftarrow \\ \alpha_\tau}}"] & \calA_\lambda \ar[d, 
			"F_\lambda", bend left] \ar[d, "G_\lambda"', bend right] \ar[d, 
			near 
			end, phantom, 
			"{\substack{\Leftarrow \\ \alpha_\lambda}}"]
			\\
			\calB_\tau \ar[r, hook, "J_\bbB"'] & \calB_\lambda
		\end{tikzcd}.
	\end{equation}
	
	\begin{rk}
		\label{rk:F-nat-tran}
		The existence of $\alpha_\tau$ can be seen as 
		the condition that  $\alpha_\lambda$ has tight components.
	\end{rk}
	
	\subsection{Enhanced simplicial categories, enhanced simplicial functors, and \texorpdfstring{$\F_\Delta$}{FΔ}-weighted limits}
	
	We now introduce the concept of enhanced simplicial categories and relevant notions. These notions capture the situation of having two types of morphisms in a simplicially enriched category.
	
	\begin{nota}
		Denote by $\sSet$ the category of simplicial sets, and by  $\sSet^\mathbb{2}$ the arrow $1$-category of $\sSet$.
	\end{nota}
	
	Recall that a simplicial map $f \colon S \to T$ is said to be \emph{fully faithful}, precisely when for any vertices $x, y \in S_0$, the induced map
	$$f_{x, y} \colon S(x, y) \to T(f(x), f(y))$$
	on the hom-objects is an isomorphism of simplicial sets.
	
	\begin{defi}
		Denote by $\F_\Delta$ the full sub-category of $\sSet^\mathbb{2}$  spanned by the fully faithful and injective-on-vertices simplicial maps.
		An \emph{enhanced simplicial set} is an object in $\F_\Delta$.
	\end{defi}
	
	Basically, an enhanced simplicial set $S$ is a simplicial set having a sub-simplicial set
	
	\begin{equation*}
		S_\tau \xhookrightarrow{j_S} S_\lambda,
	\end{equation*}
	
	and a morphism $f$ from $j_S$ to $j_T$ in $\F_\Delta$ is given by two simplicial maps 
	$f_\tau \colon S_\tau \to T_\tau$ and 
	$f_\lambda \colon S_\lambda \to T_\lambda$ 
	making the following square commute
	\begin{center}
		\begin{tikzcd}
			S_\tau \ar[r, hook, "j_S"] \ar[d, "f_\tau"'] & S_\lambda \ar[d, 
			"f_\lambda"]
			\\
			T_\tau \ar[r, hook, "j_B"'] & T_\lambda
		\end{tikzcd}.
	\end{center}
	
	We call $S_\tau$ the \emph{tight} part of $S$, and $S_\lambda$ the 
	\emph{loose} part of 
	$S$; similarly, we apply this terminology to $f$.
	
	\begin{nota}
		Let $J$ be a simplicial set.
		%
		Indeed, there are two extreme ways to turn a simplicial set into an enhanced simplicial set: any simplicial set can be viewed as a \emph{chordate} enhanced simplicial set in which every vertex is tight; or an \emph{inchordate} enhanced simplicial set with no tight vertices. In the chordate case, we denote by $J_\chor$; in the inchordate case, we denote simply by $J$.
	\end{nota}

	\begin{pro}
		$\F_\Delta$ is (co)complete and Cartesian closed.
	\end{pro}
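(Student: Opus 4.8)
The plan is to follow the template of the analogous statement for $\F$ recalled above. The ambient category $\sSet^{\mathbb{2}} = \Fun(\mathbb{2}, \sSet)$ is a presheaf topos (of presheaves on $\Delta \times \mathbb{2}^{\op}$), hence complete, cocomplete, and cartesian closed; so it suffices to exhibit $\F_\Delta$ as a reflective subcategory that is moreover an exponential ideal. The one non-formal ingredient, which I would isolate as a preliminary lemma, is the simplicial counterpart of the fact that full, injective-on-objects functors are exactly the full subcategory inclusions: a simplicial map is fully faithful and injective on vertices precisely when it is isomorphic in $\sSet^{\mathbb{2}}$ to the inclusion $T|_V \hookrightarrow T$ of the full sub-simplicial set spanned by a set $V \subseteq T_0$ of vertices, where $(T|_V)_n = \{\sigma \in T_n : \text{every vertex of } \sigma \text{ lies in } V\}$. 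Granting this, an object of $\F_\Delta$ is, up to isomorphism, just a pair $(T, V \subseteq T_0)$.

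For completeness I would check that $\F_\Delta$ is closed under limits in $\sSet^{\mathbb{2}}$: limits there are computed pointwise, and a short computation on simplices shows that a pointwise limit $\lim_i\bigl(T_i|_{V_i} \hookrightarrow T_i\bigr)$ is again of the form $\bigl((\lim_i T_i)|_V \hookrightarrow \lim_i T_i\bigr)$, with $V = \{(x_i) \in (\lim_i T_i)_0 : x_i \in V_i \text{ for all } i\}$. For cocompleteness I would exhibit the reflector $R \colon \sSet^{\mathbb{2}} \to \F_\Delta$ sending a simplicial map $f \colon S \to T$ to the inclusion $T|_{f(S_0)} \hookrightarrow T$, with unit the square whose loose leg is $\mathrm{id}_T$ and whose tight leg is $f$ with its codomain corestricted; the universal property is verified exactly as for $\F$, the only point being that a morphism from $f$ into an object $C \hookrightarrow D$ of $\F_\Delta$ has its tight component forced on vertices, and extends uniquely to higher simplices because $C \hookrightarrow D$, being a full sub-simplicial set inclusion, contains every simplex of $D$ all of whose vertices lie in $C$. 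Being reflective in the cocomplete category $\sSet^{\mathbb{2}}$, $\F_\Delta$ is then cocomplete, with colimits obtained by applying $R$ to the (pointwise) colimit in $\sSet^{\mathbb{2}}$.

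For cartesian closure I would verify that the reflector $R$ preserves finite products — the terminal object $\Delta^0 \hookrightarrow \Delta^0$ already lies in $\F_\Delta$, and $(T_1 \times T_2)|_{V_1 \times V_2} = T_1|_{V_1} \times T_2|_{V_2}$ — so that $\F_\Delta$ is an exponential ideal of $\sSet^{\mathbb{2}}$; by the standard fact that a reflective subcategory of a cartesian closed category whose reflector preserves finite products is itself cartesian closed with the inherited internal hom, $\F_\Delta$ is cartesian closed. Unwinding the internal hom of $\sSet^{\mathbb{2}}$ then yields the expected concrete description: $[A, B]$ has loose part the simplicial mapping space $B_\lambda^{A_\lambda}$ and tight part the full sub-simplicial set spanned by those maps $A_\lambda \to B_\lambda$ that send $A_\tau$-vertices to $B_\tau$-vertices. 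The main obstacle is the preliminary lemma together with its repeated use: one must check carefully that "fully faithful and injective on vertices" is genuinely stable under limits, under pullback, and under the restriction maps $B_\tau^{A_\tau} \to B_\lambda^{A_\tau}$ entering the internal hom — i.e. that this class really behaves like "full sub-simplicial set inclusions" — after which everything is a routine transcription of the argument for $\F$.
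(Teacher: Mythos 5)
Your proposal is correct and follows essentially the same route as the paper: both exhibit $\F_\Delta$ as a full reflective subcategory of $\sSet^{\mathbb{2}}$ with reflector given by the image factorisation $S \twoheadrightarrow \im f \rightarrowtail T$, and both deduce Cartesian closedness from the observation that this reflector preserves finite products (the paper packages this via an orthogonal factorisation system, citing Im--Kelly for the reflection and Street for the closure, where you verify the adjunction by hand and invoke the exponential-ideal form of the reflection theorem). Your preliminary lemma identifying objects of $\F_\Delta$ with full sub-simplicial sets spanned by a set of vertices is exactly the fact the paper's verification of the unique diagonal fill-in uses implicitly.
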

	
	\begin{proof}
		We first show that the fully faithful and injective-on-vertices simplicial maps form a right class of a factorisation system on $\sSet$.
		
		Let $f \colon S \to T$ be an arbitrary simplicial map. Write $\im f$ as the simplicial subset of $T$ spanned by the vertices $f(x)$ for any $x \in S_0$. Then $f$ restricts onto $\im f$ as a surjective-on-vertices simplicial map
		$$l_f \colon S \twoheadrightarrow \im f$$
		and that there is an inclusion
		$$r_f \colon S \hookrightarrow T$$
		which is fully faithful, because
		$$\im f(f(x), f(y)) \to T(f(x), f(y))$$
		is an isomorphism of simplicial sets.
		
		In other words, $f$ factorises into a surjective-on-vertices simplicial map followed by a fully faithful and injective-on-vertices simplicial map.
		
		Now write $\twoheadrightarrow$ for a surjective-on-vertices simplicial map, and $\rightarrowtail$ for a fully faithful and injective-on-vertices simplicial map.
		
		Suppose we are given a commutative diagram
		
		\[\begin{tikzcd}[ampersand replacement=\&]
			A \& X \& B \\
			{A'} \& {X'} \& {B'}
			\arrow["l", two heads, from=1-1, to=1-2]
			\arrow["f"', from=1-1, to=2-1]
			\arrow["r", tail, from=1-2, to=1-3]
			\arrow["g", from=1-3, to=2-3]
			\arrow["{l'}"', two heads, from=2-1, to=2-2]
			\arrow["{r'}"', tail, from=2-2, to=2-3]
		\end{tikzcd}\]
		
		in $\sSet$. We can define a simplicial map $h \colon X \to X'$ as follows. Fix any vertex $x \in X_0$, we have $gr(x) \in B'_0$. Since $r'$ is injective-on-vertices, there is a unique vertex $x' \in X'_0$ such that $r'(x') = gr(x)$. We then define $h(x) := x'$. Next, fix any $n$-simplex $\chi \in X$ for $n \ge 1$. Since $r'$ is fully faithful, there exists a unique $n$-simplex $\chi' \in X'$ such that $r'(\chi') = gr(\chi)$. We then define $h(\chi) := \chi'$.
		
		Now for a simplex $a \in A$, we have $hl(a) = (r')^{-1}(grl(a)) = (r')^{-1}(r'l'f(a)) = l'f(a),$
		and therefore, $l'f = hl$.
		
		Now suppose there is a simplicial map $k \colon X \to X'$ such that $l'f = kl$ and $r'k = gr$. This means that $hl = kl$ and $r'h = r'k$. As $l$ is surjective-on-vertices, for any vertex $x \in X_0$, there exists a unique $a \in A_0$ such that $x = l(a)$. Then 	$h(x) = hl(a) = kl(a) = k(x)$,
		and so, $h|_{X_0} = k|_{X_0}$. Since $r'$ is fully faithful, for any $n$-simplex $\chi \in X$ with $n \ge 1$, if $r'h(\chi) = r'k(\chi)$, then $h(\chi) = k(\chi)$.  Altogether, we have $h = k$, proving the uniqueness of $h$.
		
		The above shows that the fully faithful and injective-on-vertices simplicial map form a right class of a factorisation system on $\sSet$.
		
		By \cite[Theorem 5.10]{IK:1986}, it then follows that the assignment $L \colon f \mapsto r_f$ for any simplicial map $f \colon S \to T$ constitutes a left adjoint $L$ to the inclusion
		$$\inc \colon \F_\Delta \hookrightarrow \sSet^{\mathbb{2}}.$$

		Now since $\sSet^{\mathbb{2}}$ is (co)complete, we conclude that $\F_\Delta$ is also (co)complete.
		
		We move on to show that $\F_\Delta$ is Cartesian-closed.
		
		Indeed, it is known that $\sSet^\mathbb{2} \cong [(\Delta \times \mathbb{2})^\op, \Set]$ is Cartesian-closed. Now, note that the product of two surjective-on-vertices maps is also surjective-on-vertices, so in this case, both the left and the right classes of the factorisation system are closed under products, by \cite[Theorem 3.10]{Street:1980}, we deduce that $\F_\Delta$ is Cartesian-closed.
	\end{proof}
	
	\begin{rk}
		More explicitly, the internal hom $[S, T]$ of enhanced simplicial sets $S$ and $T$ in $\F_\Delta$ is an enhanced simplicial set with loose part $[S, T]_\lambda$ given by the usual internal hom $\sSet(S_\lambda, T_\lambda)$ in $\sSet$, and tight part $[S, T]_\tau$ given by the pullback
		\[\begin{tikzcd}[ampersand replacement=\&]
			{[S, T]_\tau} \& {[S, T]_\lambda} \\
			\& {\sSet(S_\lambda, T_\lambda)} \\
			{\sSet(S_\tau, T_\tau)} \& {\sSet(S_\tau, T_\lambda)}
			\arrow["{j_{[S, T]}}", hook, from=1-1, to=1-2]
			\arrow[from=1-1, to=3-1]
			\arrow[from=1-1, to=3-2, phantom, "\lrcorner", very near start]
			\arrow[equal, from=1-2, to=2-2]
			\arrow["{{j_S}^*}", from=2-2, to=3-2]
			\arrow["{{j_T}_*}"', hook, from=3-1, to=3-2]
		\end{tikzcd}.\]
		In particular, the unit for the monoidal structure on $\F_\Delta$ is given by $\Delta^0_{[[0]]}$.
	\end{rk}
	
	The interaction between $\F_\Delta$ and $\F$ is very similar to that between $\sSet$ and $\Cat$, as expected.

	\begin{pro}
		\label{pro:adj1}
		The nerve-realisation adjunction between the nerve functor $N \colon \mathbf{Cat} \hookrightarrow \sSet$ and the homotopy category functor $h \colon \sSet \to \mathbf{Cat}$ which preserves finite products
		\[\begin{tikzcd}[ampersand replacement=\&]
			\mathbf{Cat} \&\& \sSet
			\arrow[""{name=0, anchor=center, inner sep=0}, "N"', shift right=2, hook, from=1-1, to=1-3]
			\arrow[""{name=1, anchor=center, inner sep=0}, "h"', shift right=2, from=1-3, to=1-1]
			\arrow["\dashv"{anchor=center, rotate=-90}, draw=none, from=1, to=0]
		\end{tikzcd}\]
		lifts to an adjunction
		\[\begin{tikzcd}[ampersand replacement=\&]
			\F \&\& {\F_\Delta}
			\arrow[""{name=0, anchor=center, inner sep=0}, "{N^{\mathbb{2}}}"', shift right=2, hook, from=1-1, to=1-3]
			\arrow[""{name=1, anchor=center, inner sep=0}, "{h^{\mathbb{2}}}"', shift right=2, from=1-3, to=1-1]
			\arrow["\dashv"{anchor=center, rotate=-90}, draw=none, from=1, to=0]
		\end{tikzcd},\]
		in which $h^{\mathbb{2}}$ preserves finite products.
	\end{pro}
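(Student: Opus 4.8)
The plan is to obtain the lift formally from the arrow-category adjunction $h^{\mathbb{2}} \dashv N^{\mathbb{2}}$ on $\sSet^{\mathbb{2}} \rightleftarrows \mathbf{Cat}^{\mathbf{2}}$, together with the reflectivity of $\F_\Delta$ and of $\F$ inside their ambient arrow categories. The first thing to check is that $N^{\mathbb{2}}$, which acts componentwise, carries $\F$ into $\F_\Delta$: if $F \colon \mathcal{A} \to \mathcal{B}$ is a full embedding, then $N(F)$ is injective on vertices because $N$ has the object set as its set of vertices and $F$ is injective on objects; and $N(F)$ is fully faithful in the simplicial-map sense because each hom-space $N\mathcal{A}(x,y)$ is discrete, namely the hom-set $\mathcal{A}(x,y)$ viewed as a constant simplicial set, and under this identification the comparison map is just $F_{x,y}$, which is a bijection. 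Since $\F$ and $\F_\Delta$ are \emph{full} subcategories of $\mathbf{Cat}^{\mathbf{2}}$ and $\sSet^{\mathbb{2}}$, this already produces a functor $\tilde N \colon \F \to \F_\Delta$ with $\inc \circ \tilde N = N^{\mathbb{2}} \circ \inc$; this equality is the precise sense in which ``the nerve functor lifts''.

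For the left adjoint I would first record the $\mathbf{Cat}$-side analogue of the preceding proposition: running the same factorisation-system argument (via \cite[Theorem 5.10]{IK:1986} and \cite[Theorem 3.10]{Street:1980}) for the (surjective-on-objects, full-embedding) factorisation system on $\mathbf{Cat}$ — a fact already recorded in \cite{LS:2012, Bourke:2014} — shows that $\inc \colon \F \hookrightarrow \mathbf{Cat}^{\mathbf{2}}$ is reflective, with reflector $L_{\mathbf{Cat}}$ sending $(A_\tau \to A_\lambda)$ to the full embedding $\bigl(\im(A_\tau \to A_\lambda) \hookrightarrow A_\lambda\bigr)$, and that $L_{\mathbf{Cat}}$ preserves finite products because the left class is stable under finite products. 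I would then \emph{define} the lifted left adjoint $\tilde h := L_{\mathbf{Cat}} \circ h^{\mathbb{2}} \circ \inc \colon \F_\Delta \to \F$ (this is the functor the statement denotes $h^{\mathbb{2}}$) and verify $\tilde h \dashv \tilde N$ through the natural bijections
\[
\F\bigl(L_{\mathbf{Cat}} h^{\mathbb{2}} \inc S,\, B\bigr)
\;\cong\; \mathbf{Cat}^{\mathbf{2}}\bigl(h^{\mathbb{2}} \inc S,\, \inc B\bigr)
\;\cong\; \sSet^{\mathbb{2}}\bigl(\inc S,\, N^{\mathbb{2}} \inc B\bigr)
\;=\; \sSet^{\mathbb{2}}\bigl(\inc S,\, \inc \tilde N B\bigr)
\;\cong\; \F_\Delta\bigl(S,\, \tilde N B\bigr),
\]
using in turn $L_{\mathbf{Cat}} \dashv \inc$, the arrow-category adjunction $h^{\mathbb{2}} \dashv N^{\mathbb{2}}$, the previous paragraph, and fullness of $\inc \colon \F_\Delta \hookrightarrow \sSet^{\mathbb{2}}$. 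Finally, $\tilde h$ preserves finite products as a composite of three functors that do: $\inc \colon \F_\Delta \hookrightarrow \sSet^{\mathbb{2}}$ is a right adjoint, hence continuous; $h^{\mathbb{2}}$ on arrow categories is computed pointwise and $h$ preserves finite products by hypothesis; and $L_{\mathbf{Cat}}$ was just seen to preserve them. (Both $\F$ and $\F_\Delta$ have the relevant products by the preceding proposition.)

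The one point demanding care — and the reason the reflector $L_{\mathbf{Cat}}$ cannot be dropped — is that $h^{\mathbb{2}}$ by itself does \emph{not} send $\F_\Delta$ into $\F$. For instance, $\partial\Delta^2 \hookrightarrow \Delta^2$ is an object of $\F_\Delta$: it is injective on vertices, and it is fully faithful because every simplex of $\Delta^2$ not lying in $\partial\Delta^2$ is a degeneracy of the non-degenerate top $2$-simplex, whose three vertices are distinct, so none of them contributes to any hom-space $\Delta^2(x,y)$. Yet $h(\partial\Delta^2) \to h(\Delta^2) = [2]$ fails to be faithful, since the free category $h(\partial\Delta^2)$ has two distinct morphisms $0 \to 2$ (the edge $02$ and the composite of $01$ with $12$) and both are sent to the unique morphism $0 \to 2$ of $[2]$. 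Consequently the only genuine verification is the one in the first paragraph (that $N^{\mathbb{2}}$ restricts to $\F$), after which the statement is a formal consequence of the two reflectivity results; I expect this bookkeeping with reflectors to be the sole delicate point.
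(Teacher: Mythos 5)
Your proof is correct, and it departs from the paper's own argument at precisely the point where that argument breaks down. The paper obtains the adjunction on arrow categories and then claims both functors restrict to the full subcategories: the verification that $N^{\mathbb{2}}$ carries $\F$ into $\F_\Delta$ matches your first paragraph, but for the left adjoint the paper merely asserts that $h(j_S)$ is ``clearly a full embedding'' for every object $j_S$ of $\F_\Delta$. Your counterexample $\partial\Delta^2 \hookrightarrow \Delta^2$ refutes this: under the paper's stated definition of fully faithful simplicial maps (isomorphisms on the two-vertex hom-spaces) this inclusion does lie in $\F_\Delta$, while $h(\partial\Delta^2) \to h(\Delta^2) = [2]$ collapses the two distinct morphisms $0 \to 2$ of the free category $h(\partial\Delta^2)$ and hence is not faithful. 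The failure is not an artefact of the definitional ambiguity either: even under the stronger reading implicitly used in the paper's factorisation-system argument (namely that $S$ be the full simplicial subset of $T$ spanned by the image vertices), one can take $T = \Delta^2 \amalg_{\Lambda^2_1} \Delta^2$ and $S$ the full simplicial subset on the two outer vertices; then $S \hookrightarrow T$ lies in $\F_\Delta$, the category $hS$ has two parallel arrows between those vertices, and $hT$ identifies them.

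Consequently the reflection step you insert is not optional bookkeeping but the actual content of the proof: defining the lifted left adjoint as $L_{\mathbf{Cat}} \circ h^{\mathbb{2}} \circ \inc$, where $L_{\mathbf{Cat}}$ is the reflector onto $\F$ arising from the (surjective-on-objects, full-embedding) factorisation system on $\mathbf{Cat}$, and then verifying the adjunction by the chain of bijections you display, is the correct way to produce the adjunction $\F \rightleftarrows \F_\Delta$. Your product-preservation argument is also sound: $\inc$ preserves finite products as a right adjoint, $h^{\mathbb{2}}$ does so pointwise because $h$ does, and $L_{\mathbf{Cat}}$ does so because both classes of the factorisation system are stable under finite products, so the reflection of a product of arrows is the product of the reflections. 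The proposition as stated remains true once the functor labelled $h^{\mathbb{2}} \colon \F_\Delta \to \F$ is understood to be your $\tilde h$; the paper's proof, however, requires the correction you supply.
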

	
	\begin{proof}
		Since $[\mathbb{2}, -]$ is a $2$-functor, the adjunction
		\[\begin{tikzcd}[ampersand replacement=\&]
			\mathbf{Cat} \&\& \sSet
			\arrow[""{name=0, anchor=center, inner sep=0}, "N"', shift right=2, hook, from=1-1, to=1-3]
			\arrow[""{name=1, anchor=center, inner sep=0}, "h"', shift right=2, from=1-3, to=1-1]
			\arrow["\dashv"{anchor=center, rotate=-90}, draw=none, from=1, to=0]
		\end{tikzcd}\]
		lifts to the adjunction
		\[\begin{tikzcd}[ampersand replacement=\&]
			\mathbf{Cat}^\mathbb{2} \&\& \sSet^\mathbb{2}
			\arrow[""{name=0, anchor=center, inner sep=0}, "N^\mathbb{2}"', shift right=2, hook, from=1-1, to=1-3]
			\arrow[""{name=1, anchor=center, inner sep=0}, "h^\mathbb{2}"', shift right=2, from=1-3, to=1-1]
			\arrow["\dashv"{anchor=center, rotate=-90}, draw=none, from=1, to=0]
		\end{tikzcd}\]
		in which $h^\mathbb{2}$ also preserves finite products.
		
		It then remains to check that $N^\mathbb{2}$ sends full embeddings to fully faithful and injective-on-vertices simplicial maps, whereas  $h^\mathbb{2}$ does the opposite.
		
		%
		
		For a full embedding $j_A \colon A_\tau \hookrightarrow A_\lambda$ in $\F$, we have $N^\mathbb{2}(j_A) = N(j_A)$. Indeed, as $j_A$ is injective-on-vertices, $N(j_A)$ is clearly also injective-on-vertices. Since $j_A$ is fully faithful, so $1$-simplices in $N(A_\tau)$ correspond bijectively to $1$-simplices in $N(A_\lambda)$. And since all the higher simplices are just composites of morphisms with identitiesm altogether, $N(j_A)$ is fully faithful.

		For a fully faithful and injective-on-vertices simplicial map $j_S \colon S_\tau \hookrightarrow S_\lambda$ in $\F_\Delta$, we have $h^\mathbb{2}(j_S) := h(j_S)$, which is cleearly a full embedding.
	\end{proof}
	
	
	Now, we are ready to do enriched category theory in $\F_\Delta$.
	
	\begin{defi}
		An \emph{enhanced simplicial category} is a category enriched in $\F_\Delta$. Similarly, an \emph{enhanced simplicial functor} is an $\F_\Delta$-functor, whereas an \emph{enhanced simplicial natural transformation} is an $\F_\Delta$-natural transformation.
	\end{defi}
	
	Unravelling, this means that for any objects $a, b \in \ob\mathbb{A}$, the mapping space amounts to a fully faithful and injective-on-vertices simplicial map
	\[\mathbb{A}(a, b)_\tau \hookrightarrow \mathbb{A}(a, b)_\lambda.\]
	In other words, the \emph{tight} $0$-arrows in $\mathbb{A}(a, b)_\tau$ are embedded into the \emph{loose} $0$-arrows in $\mathbb{A}(a, b)_\lambda$.
	
	\begin{nota}
		Adapting the convention in enhanced $2$-category theory, we write $a \to b$ for a tight $0$-arrow, and $a 
		\leadsto b$ for a loose $0$-arrow.
	\end{nota}
	
	Just like an enhanced $2$-category, an enhanced simplicial category $\bbA$ can be seen as an identity-on-objects, faithful, and locally fully faithful simplicial functor
	\begin{align*}
		J_\bbA \colon \calA_\tau &\to \calA_\lambda,
	\end{align*}
	in which $\calA_\tau$ is a simplicial category with only tight $0$-arrows, and $\calA_\lambda$ is the ambient simplicial category with all the (loose) $0$-arrows.
	
	\begin{nota}
		Similarly, there are two extreme ways to view a simplicial category $\calA$ as an enhanced simplicial category. We could take all the $0$-arrows to be tight, which gives us a \emph{chordate} enhanced simplicial category, denoted by $\calA_\chor$; or we could only take the identities to be tight, which gives us an \emph{inchordate} enhanced simplicial category, denoted simply by $\calA$.
	\end{nota}
	
	An {enhanced simplicial functor} 
	$F\colon 
	\bbA \to \bbB$  consists 
	of simplicial functors $F_\tau \colon \calA_\tau \to 
	\calB_\tau$ and $F_\lambda \colon \calA_\lambda \to \calB_\lambda$ 
	making \longref{Diagram}{diag:F_functor}
	commute.
	
	Let $F, G\colon \bbA \rightrightarrows \bbB$ be two enhanced simplicial functors. An 
	{enhanced simplicial natural 
		transformation}	$\alpha \colon F \to G$ consists of simplicial natural 
	transformations $\alpha_\tau \colon F_\tau \to G_\tau$ and 
	$\alpha_\lambda \colon F_\lambda \to G_\lambda$ making \longref{Diagram}{diag:F_nat_tran} commute.
	
	\begin{rk}
		Similar to the case in enhanced $2$-category theory, the tight part $F_\tau$ of an enhanced simplicial functor $F$ is uniquely determined by the loose part $F_\lambda$, in which $F_\lambda$ has to also preserve tight $0$-arrows.
		
		As for any enhanced simplicial natural transformations $\alpha$, the existence of $\alpha_\tau$ can be seen as 
		the extra condition on $\alpha_\lambda$ that the components are tight $0$-arrows.
	\end{rk}
	
	Examples of enhanced simplicial categories are abundant.
	
	\begin{example}[$\JLim(\qCat)$]
		\label{eg:JLimQCat}
		Quasi-categories admitting $J$-shaped limits for a simplicial set $J$ form an enhanced simplicial category, with tight $0$-arrows given by the functors that preserve $J$-shaped limits, and loose $0$-arrows given by the functors (that do not necessarily preserve $J$-shaped limits).
	\end{example}
	
	\begin{example}[$\Cart(\qCat)$]
		\label{eg:CartQCat}
		Cartesian fibrations between quasi-categories form an enhanced simplicial category, with tight $0$-arrows given by the Cartesian functors, i.e., commutative squares that preserve Cartesian lifts, and loose $0$-arrows given by plain commutative squares.
	\end{example}
	
	Indeed, with the development of the theory of $\infty$-cosmoi by Riehl and Verity in \cite{book:RV:2022}, the above examples can be considered in a more general context.
	
	\begin{defi}[{\cite[Definition 1.2.1]{book:RV:2022}}]
		\label{def:cosmos}
		An $\infty$-cosmos $\calK$ is a category enriched over quasi-categories, whose objects and morphisms are called \emph{$\infty$-categories} and \emph{$\infty$-functors}, respectively, and that for any $\infty$-categories $A$ and $B$, an $n$-simplex in the quasi-category $\calK(A, B)$ is called an \emph{$n$-arrow from $A$ to $B$}, that is also equipped with a specified collection of maps which are called \emph{isofibrations} and denoted by $\twoheadrightarrow$, satisfying the following two axioms.
		\begin{enumerate}
			\item[$\bullet$] The quasi-categorically enriched category $\calK$ possesses a terminal object, small products, pullbacks of isofibrations, limits of a countable chain of isofibrations, and powers with simplicial sets, each satisfying a universal property that is enriched in $\sSet$.
			\item[$\bullet$] The isofibrations contain all isomorphisms and any $0$-arrow whose codomain is the terminal object; are closed under composition, product, pullback, limit of chains, and Leibniz exponential with a monomorphism of simplicial sets; and have the property that if $f \colon A \twoheadrightarrow B$ is an isofibration, then $\calK(X, f) \colon \calK(X, A) \twoheadrightarrow  \calK(X, B)$ is an isofibration of quasi-categories, for any object $X \in \calK$.
		\end{enumerate}
	\end{defi}
	
	\begin{rk}
		Indeed, for a general simplicial category $\calK$, we may also call an  $n$-simplex in $\calK(A, B)$ as an \emph{$n$-arrow from $A$ to $B$}.
	\end{rk}
	
	\begin{defi}[{\cite[Definition 1.3.1]{book:RV:2022}}]
		A \emph{cosmological functor} between two $\infty$-cosmoi is a simplicial functor that preserves the isofibrations and all of the cosmological limits described in \longref{Definition}{def:cosmos}.
	\end{defi}
	
	\begin{rk}
		Cosmological limits are in particular $\sSet$-weighted limits.
		
		The axiom for isofibrations in \longref{Definition}{def:cosmos} implies that the limit projections of any cosmological limits are isofibrations.
	\end{rk}
	
	\begin{defi}[{\cite[Definition 1.2.2]{book:RV:2022}}]
		A $0$-arrow $f \colon A \to B$ in an $\infty$-cosmos is called
		\begin{enumerate}
			\item[$\bullet$] an \emph{equivalence} precisely when the induced map $f_* \colon \calK(X, A) \to \calK(X, B)$  is an equivalence of quasi-categories for any $\infty$-category $X$;
			\item[$\bullet$] a \emph{trivial fibration} precisely when it is both an isofibration and an equivalence.
		\end{enumerate}
	\end{defi}
	
	\begin{example}
		Most models for $(\infty, 1)$-categories gives an $\infty$-cosmos; for instance, quasi-categories form an $\infty$-cosmos $\qCat$.  
		
		Particularly, $1$-categories and Kan complices also form an $\infty$-cosmos $\Cat$ and $\Kan$, respectively.
	\end{example}
	
	The theory of $\infty$-cosmoi provides a setting to understand $(\infty, 1)$-categories with structure and the pseudo morphisms, i.e., morphisms that preserve the structure up to isomorphism, model-independently.
	
	\begin{example}[$(\infty, 1)$-categories with structure]
		The $\infty$-categories in an $\infty$-cosmos $\calK$ admitting $J$-shaped limits form the $\infty$-cosmos $\calJLim(\calK)$, with $0$-arrows given by the limit-preserving $\infty$-functors. Similarly, Cartesian fibrations between $\infty$-categories in an $\infty$-cosmos $\calK$ and the Cartesian $\infty$-functors between them form the $\infty$-cosmos $\calCart(\calK)$.
		
		Dually, we have the $\infty$-cosmos $\calJColim(\calK)$ of $\infty$-categories in an $\infty$-cosmos $\calK$ admitting $J$-shaped colimits, and the $\infty$-cosmos $\calCoCart(\calK)$ of coCartesian fibrations.
	\end{example}
	
	\begin{example}[$\calK^{\isof}$]
		\label{eg:isof_of_cosmos}
		The isofibrations of an $\infty$-cosmos $\calK$ form an $\infty$-cosmos $\calK^{\isof}$, which has
		\begin{enumerate}
			\item[$\bullet$] objects the isofibrations $p \colon E \twoheadrightarrow B$ of $\calK$;
			\item[$\bullet$] hom-object from $p_1 \colon E_1 \twoheadrightarrow B_1$ to $p_2 \colon E_2 \twoheadrightarrow B_2$ given by the pullback
			\[\begin{tikzcd}[ampersand replacement=\&]
				{\calK^{\isof}(p_1, p_2)} \& {\calK(E_1, E_2)} \\
				{\calK(B_1, B_2)} \& {\calK(E_1, B_2)}
				\arrow[from=1-1, to=1-2]
				\arrow[two heads, from=1-1, to=2-1]
				\arrow["\lrcorner"{anchor=center, pos=0.125}, draw=none, from=1-1, to=2-2]
				\arrow["{{p_2}_*}", two heads, from=1-2, to=2-2]
				\arrow["{p_1^*}"', from=2-1, to=2-2]
			\end{tikzcd}  ;\]
			\item[$\bullet$] isofibrations given by commutative squares
			\[\begin{tikzcd}[ampersand replacement=\&]
				{E_1} \&\& {E_2} \\
				\& \bullet \\
				{B_1} \&\& {B_2}
				\arrow["e", two heads, from=1-1, to=1-3]
				\arrow[dashed, two heads, from=1-1, to=2-2]
				\arrow["{p_1}"', two heads, from=1-1, to=3-1]
				\arrow["{p_2}", from=1-3, to=3-3]
				\arrow[dashed, two heads, from=2-2, to=1-3]
				\arrow[dashed, two heads, from=2-2, to=3-1]
				\arrow["\lrcorner"{anchor=center, pos=0.125}, draw=none, from=2-2, to=3-3]
				\arrow["b"', two heads, from=3-1, to=3-3]
			\end{tikzcd}\]
			in which the horizontal maps and the induced map from the initial vertex to the pullback of the cospans are isofibrations of $\calK$;
		\end{enumerate}
		where limits are computed pointwise in $\calK$. Furthermore, a $0$-arrow $(e, b) \colon p_1 \to p_2$
		\[\begin{tikzcd}[ampersand replacement=\&]
			{E_1} \& {E_2} \\
			{B_1} \& {B_2}
			\arrow["e", from=1-1, to=1-2]
			\arrow["{p_1}"', two heads, from=1-1, to=2-1]
			\arrow["{p_2}", from=1-2, to=2-2]
			\arrow["b"',  from=2-1, to=2-2]
		\end{tikzcd}\]
		is an equivalence if and only if both $e$ and $b$ are equivalences in $\calK$.
	\end{example}
	
	The next example is crucial to the study of $(\infty, 1)$-categories with structure, because many of the above $(\infty, 1)$-categories with structure can actually be recovered from the following.
	
	\begin{example}[$\calLali(\calK)$ and $\calRali(\calK)$]
		\label{eg:calLali}
		Recall that an isofibration $p \colon E \twoheadrightarrow B$ of an $\infty$-cosmos $\calK$ is said to be a \emph{lali} precisely when it is a lali (left-adjoint-left-inverse) in the homotopy $2$-category $h\calK$, i.e., it admits a right adjoint and that the counit is an isomorphism; a morphism of isofibrations
		\[\begin{tikzcd}[ampersand replacement=\&]
			{E_1} \& {E_2} \\
			{B_1} \& {B_2}
			\arrow["e", from=1-1, to=1-2]
			\arrow["{p_1}"', two heads, from=1-1, to=2-1]
			\arrow["{p_2}", two heads, from=1-2, to=2-2]
			\arrow["b"', from=2-1, to=2-2]
		\end{tikzcd}\]
		is a \emph{morphism of lalis} precisely if its mate is an isomorphism, i.e. the composite
		\[\begin{tikzcd}[ampersand replacement=\&]
			\& {E_1} \& {E_2} \& {E_2} \\
			{B_1} \& {B_1} \& {B_2}
			\arrow["e", from=1-2, to=1-3]
			\arrow["{p_1}"', two heads, from=1-2, to=2-2]
			\arrow[""{name=0, anchor=center, inner sep=0}, no head, equal, from=1-3, to=1-4]
			\arrow["{p_2}", two heads, from=1-3, to=2-3]
			\arrow[""{name=1, anchor=center, inner sep=0}, "{r_1}", from=2-1, to=1-2]
			\arrow[""{name=2, anchor=center, inner sep=0}, no head, equal, from=2-1, to=2-2]
			\arrow["b"', from=2-2, to=2-3]
			\arrow[""{name=3, anchor=center, inner sep=0}, "{r_2}"', from=2-3, to=1-4]
			\arrow["{\eta_2}"', shorten <=2pt, shorten >=2pt, Rightarrow, from=0, to=3]
			\arrow["{\epsilon_1}", shorten <=2pt, shorten >=2pt, Rightarrow, from=1, to=2]
		\end{tikzcd}\]
		is invertible as a $2$-morphism in $h\calK$, where $r_i$ denotes the right adjoint for $p_i$, and that $\epsilon_i$ and $\eta_i$ denote the corresponding counit and unit, respectively, for $i = 1, 2$.
		
		The isofibrations of $\calK$ which are lalis form an $\infty$-cosmos $\calLali(\calK)$, with $0$-arrows given by the morphisms of lalis.
		
		Dually,  an isofibration $p \colon E \twoheadrightarrow B$ is said to be a \emph{rali} just when it is a rali (right-adjoint-left-inverse) in the homotopy $2$-category $h\calK$, i.e., it admits a left adjoint and that the unit is an isomorphism; a morphism of isofibrations
		\[\begin{tikzcd}[ampersand replacement=\&]
			{E_1} \& {E_2} \\
			{B_1} \& {B_2}
			\arrow["e", from=1-1, to=1-2]
			\arrow["{p_1}"', two heads, from=1-1, to=2-1]
			\arrow["{p_2}", two heads, from=1-2, to=2-2]
			\arrow["b"', from=2-1, to=2-2]
		\end{tikzcd}\]
		is a \emph{morphism of ralis} precisely if its mate is an isomorphism, i.e. the composite
		\[\begin{tikzcd}[ampersand replacement=\&]
			\& {E_1} \& {E_2} \& {E_2} \\
			{B_1} \& {B_1} \& {B_2}
			\arrow["e", from=1-2, to=1-3]
			\arrow["{p_1}"', two heads, from=1-2, to=2-2]
			\arrow[""{name=0, anchor=center, inner sep=0}, equal, from=1-3, to=1-4]
			\arrow["{p_2}", two heads, from=1-3, to=2-3]
			\arrow[""{name=1, anchor=center, inner sep=0}, "{l_1}", from=2-1, to=1-2]
			\arrow[""{name=2, anchor=center, inner sep=0}, equal, from=2-1, to=2-2]
			\arrow["b"', from=2-2, to=2-3]
			\arrow[""{name=3, anchor=center, inner sep=0}, "{l_2}"', from=2-3, to=1-4]
			\arrow["{\eta_1}"', shorten <=2pt, shorten >=2pt, Rightarrow, from=2, to=1]
			\arrow["{\epsilon_2}", shorten <=2pt, shorten >=2pt, Rightarrow, from=3, to=0]
		\end{tikzcd}\]
		is invertible, where $l_i$ denotes the left adjoint for $p_i$, and that $\epsilon_i$ and $\eta_i$ denote the corresponding counit and unit, respectively, for $i = 1, 2$.
		
		The isofibrations of $\calK$ which are ralis form an $\infty$-cosmos $\calRali(\calK)$, with $0$-arrows given by the morphisms of ralis.
	\end{example}
	
	Due to the importance of these concepts, we decide to give them a name.
	
	\begin{defi}
		A \emph{lali-isofibration} of an $\infty$-cosmos $\calK$ is an isofibration of $\calK$ which is also a lali.
		
		A \emph{rali-isofibration} of an $\infty$-cosmos $\calK$ is an isofibration of $\calK$ which is also a rali.
	\end{defi}
	
	We can then consider the previous examples \longref{Example}{eg:JLimQCat} and \longref{Example}{eg:CartQCat} of enhanced simplicial categories model-independently using the language of $\infty$-cosmoi. We will also provide the characterisations of different $(\infty, 1)$-categories with structure in terms of lali-isofibrations or rali-isofibrations below.
	
	\begin{example}[$\JLim(\calK)$ and $\JColim(\calK)$]
		\label{eg:JLimK}
		For any $\infty$-cosmos $\calK$, the $\infty$-categories in $\calK$ that have $J$-shaped limits for a simplicial set $J$ form an enhanced simplicial category $\JLim(\calK)$. The loose $0$-arrows are $\infty$-functors, whereas the tight $0$-arrows are those limit-preserving $\infty$-functors.
		
		A characterisation of $\infty$-categories with $J$-limits is given as follows. Consider the cone over $J$, denoted by $J_\triangleleft$, which is given by the join $\Delta^0 \star J$ of the simplicial sets $\Delta^0$ and $J$. Then, as illustrated in \cite[Chapter 4.2]{book:RV:2022}, the inclusion $J \hookrightarrow J_\triangleleft$ induces a restriction map
		$$\res \colon A^{J_\triangleleft} \twoheadrightarrow A^J,$$
		which is an isofibration of $\calK$. By \cite[Corollary 4.3.5]{book:RV:2022}, $A$ admits $J$-shaped limits if and only if $\res \colon A^{J_\triangleleft} \twoheadrightarrow A^J$ is a lali of $\calK$; moreover, for two $\infty$-categories $A$ and $B$ that admit $J$-shaped limits, a $0$-arrow $f \colon A \to B$ preserves limits precisely when the square
		\begin{equation}
			\label{diag:res}
			\begin{tikzcd}[ampersand replacement=\&]
				{A^{J_\triangleleft}} \& {B^{J_\triangleleft}} \\
				{A^J} \& {B^J}
				\arrow["{f^{J_\triangleleft}}", from=1-1, to=1-2]
				\arrow["{\res_A}"', two heads, from=1-1, to=2-1]
				\arrow["{\res_B}", two heads, from=1-2, to=2-2]
				\arrow["{f^J}"', from=2-1, to=2-2]
			\end{tikzcd}
		\end{equation}
		is a morphism of lalis.
		
		Dually, the $\infty$-categories in $\calK$ that have $J$-shaped colimits form an enhanced simplicial category $\JColim(\calK)$. The loose $0$-arrows are $\infty$-functors, whereas the tight $0$-arrows are those colimit-preserving $\infty$-functors.
		
		Similarly, consider this time the cone under $J$, denoted by $J_\triangleright$, which is given by the join $J \star \Delta^0$. Again, the inclusion $J \hookrightarrow J_\triangleright$ induces an isofibration
		$$\res \colon A^{J_\triangleright} \twoheadrightarrow A^J$$
		of $\calK$. The $\infty$-category $A$ admits $J$-shaped colimits precisely when this restriction map is a rali, and that a $0$-arrow $f \colon A \to B$ between $\infty$-categories with $J$-shaped colimits preserves colimits just when the evident square is a morphism of ralis.
	\end{example}
	
	\begin{example}[$\Cart(\calK)$ and $\CoCart(\calK)$]
		\label{eg:CartK}
		Similarly, the Cartesian fibrations between $\infty$-categories in an $\infty$-cosmos $\calK$ form an enhanced simplicial category $\Cart(\calK)$. The loose $0$-arrows are the morphisms of isofibrations, whereas the tight $0$-arrows are the Cartesian functors.
		
		As formulated in \cite[Theorem 5.2.8, Theorem 5.3.4]{book:RV:2022}, an isofibration $p \colon E \twoheadrightarrow B$ of $\calK$ is a Cartesian fibration precisely when its Leibniz exponential $i_1 \hat{\pitchfork} p$ with $i_1 \colon \Delta^0 \hookrightarrow \Delta^1$ which picks out the target is a lali; a commutative square
		\[\begin{tikzcd}[ampersand replacement=\&]
			{E_1} \& {E_2} \\
			{B_1} \& {B_2}
			\arrow["e", from=1-1, to=1-2]
			\arrow["{p_1}"', two heads, from=1-1, to=2-1]
			\arrow["{p_2}", two heads, from=1-2, to=2-2]
			\arrow["b"', from=2-1, to=2-2]
		\end{tikzcd}\]
		where both $p_1$ and $p_2$ are Cartesian fibrations is a Cartesian functor if and only if the square
		\begin{equation}
			\label{diag:Leibniz}
			\begin{tikzcd}[ampersand replacement=\&]
				{E_1^{\Delta^1}} \& {E_2^{\Delta^1}} \\
				{B_1\downarrow p_1} \& {B_2 \downarrow p_2}
				\arrow["{e^{\Delta^1}}", from=1-1, to=1-2]
				\arrow["{i_1 \hat{\pitchfork}p_1}"', two heads, from=1-1, to=2-1]
				\arrow["{i_1 \hat{\pitchfork}p_2}", two heads, from=1-2, to=2-2]
				\arrow["{b \downarrow e}"', from=2-1, to=2-2]
			\end{tikzcd}
		\end{equation}		
		is a morphism of lalis. Here, $B_i \downarrow p_i$ denotes the comma $\infty$-category from  the identity $1_{B_i}$ to $p_i$, for $i = 1, 2$, and $b \downarrow p$ denotes the canonical map induced by the universal property of comma $\infty$-categories.
		
		Dually, the coCartesian fibrations between $\infty$-categories in an $\infty$-cosmos $\calK$ form an enhanced simplicial category $\CoCart(\calK)$, whose loose $0$-arrows are simply morphisms of isofibrations, whereas the tight $0$-arrows are coCartesian functors.
		
		An isofibration $p \colon E \twoheadrightarrow B$ of $\calK$ is a coCartesian fibration precisely when its Leibniz exponential $i_0 \hat{\pitchfork} p$ with $i_0 \colon \Delta^0 \hookrightarrow \Delta^1$ which picks out the source is a rali; a coCartesian functor between coCartesian fibrations $p_1$ and $p_2$ consists of a square
		\[\begin{tikzcd}[ampersand replacement=\&]
			{E_1} \& {E_2} \\
			{B_1} \& {B_2}
			\arrow["e", from=1-1, to=1-2]
			\arrow["{p_1}"', two heads, from=1-1, to=2-1]
			\arrow["{p_2}", two heads, from=1-2, to=2-2]
			\arrow["b"', from=2-1, to=2-2]
		\end{tikzcd}\]
		where
		\[\begin{tikzcd}[ampersand replacement=\&]
			{E_1^{\Delta^1}} \& {E_2^{\Delta^1}} \\
			{p_1\downarrow B_1} \& {p_2 \downarrow B_2}
			\arrow["{e^{\Delta^1}}", from=1-1, to=1-2]
			\arrow["{i_0 \hat{\pitchfork}p_1}"', two heads, from=1-1, to=2-1]
			\arrow["{i_0 \hat{\pitchfork}p_2}", two heads, from=1-2, to=2-2]
			\arrow["{e \downarrow b}"', from=2-1, to=2-2]
		\end{tikzcd}\]
		is a morphism of ralis.
	\end{example}
	
		%
	
	The next example serves as the key in this article that enables us to deduce numerous results regarding $(\infty, 1)$-categories with structure and their lax morphisms.
	
	\begin{example}[$\Lali(\calK)$ and $\Rali(\calK)$]
		\label{eg:bbLali}
		We see in \longref{Example}{eg:calLali} that lali-isofibrations of an $\infty$-cosmos form an $\infty$-cosmos $\calLali(\calK)$. We can then take the morphisms of isofibrations (which are not necessarily morphisms of lalis) as loose $0$-arrows, and obtain an enhanced simplicial category $\Lali(\calK)$.
		
		Dually, we could form an enhanced simplicial category $\Rali(\calK)$ from the $\infty$-cosmos $\calRali(\calK)$, in the same way.
		
		We will see in \longref{Section}{sec:complete} that $\Lali(\calK)$ and $\Rali(\calK)$ produce a lot of interesting examples of enhanced simplicial categories, including the above, and that limits in $\Lali(\calK)$ and $\Rali(\calK)$, respectively, are inherited by them.
	\end{example}
	
	We continue with other fundamental examples of enhanced simplicial categories. Enriched category theory provides the notion of functor categories in the enriched context.
	
	\begin{example}[Functor $\F_\Delta$-categories]
		Let $\bbA$ and $\bbB$ be enhanced simplicial categories. The functor category $[\bbA, \bbB]$ has
		\begin{enumerate}
			\item[$\bullet$] objects given by the enhanced simplicial functors $\bbA \to \bbB$;
			\item[$\bullet$] hom-object $[\bbA, \bbB](F, G)$ from $F$ to $G$ given by the enhanced simplicial set whose loose part is the simplicial set $[\calA_\lambda, \calB_\lambda](F_\lambda, G_\lambda)$, and the tight vertices given by  simplicial natural transformations $F_\lambda \to G_\lambda$ that restrict to enhanced simplicial natural transformations.
		\end{enumerate}
		More explicitly, a loose $0$-arrow from $F$ to $G$ in $[\bbA, \bbB]$ is a simplicial map $\Delta^0 \to [\calA_\lambda, \calB_\lambda](F_\lambda, G_\lambda)$, which is simply a simplicial natural transformation between loose parts; a tight $0$-arrow is an enhanced simplicial natural transformation. For $i > 0$, an $i$-arrow from $F$ to $G$ is a simplicial map $\Delta^i \to [\calA_\lambda, \calB_\lambda](F_\lambda, G_\lambda)$.
	\end{example}
	
	Finally, we note that $\F_\Delta$ can also be viewed as an enhanced simplicial category.
	
	\begin{example}[$\bbF_\Delta$]
		Since $\F_\Delta$ is Cartesian-closed, it is actually self-enriched, and we denote it by $\bbF_\Delta$ when it is considered as an $\F_\Delta$-category.
		
		An object of $\bbF_\Delta$ is an object of $\F_\Delta$. A tight $0$-arrow in $\bbF_\Delta$ is a $0$-arrow in $\F_\Delta$, whereas a loose $0$-arrow in $\bbF_\Delta$ is simply a simplicial map between the loose parts. The higher arrows in $\bbF_\Delta$ coincide with those in $\F_\Delta$.
	\end{example}

	Enhanced simplicial categories are basically the extension of enhanced $2$-categories to the simplicial setting. Indeed, the following result tells us that any enhanced simplicial category has an enhanced homotopy $2$-category.
	
	\begin{coroll}
		There is a change-of-base adjunction
		\[\begin{tikzcd}[ampersand replacement=\&]
			{\FCat} \&\& {\FDeltaCat}
			\arrow[""{name=0, anchor=center, inner sep=0}, "{N^{\mathbb{2}}_*}"', shift right=2, hook, from=1-1, to=1-3]
			\arrow[""{name=1, anchor=center, inner sep=0}, "{h^{\mathbb{2}}_*}"', shift right=2, from=1-3, to=1-1]
			\arrow["\dashv"{anchor=center, rotate=-90}, draw=none, from=1, to=0]
		\end{tikzcd}\]
		between the $2$-category of enhanced $2$-categories and the $2$-category of enhanced simplicial categories.
		Therefore, an enhanced simplicial category, i.e., an $\F_\Delta$-category, has an enhanced homotopy $2$-category, i.e., a homotopy $\F$-category. Moreover, the nerve embedding ${N^{\mathbb{2}}_*}$ is fully faithful.
		
		In other words, $\FCat$ is a reflective sub-$2$-category of $\FDeltaCat$.
	\end{coroll}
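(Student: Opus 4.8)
The plan is to obtain the adjunction by \emph{change of base} along the adjunction of \longref{Proposition}{pro:adj1}, and then to deduce full faithfulness of $N^{\mathbb{2}}_*$ from that of the nerve $N$. First I would regard $\F$ and $\F_\Delta$ as monoidal categories under their Cartesian products --- this being the monoidal structure for which a category enriched in $\F$ is an enhanced $2$-category and a category enriched in $\F_\Delta$ is an enhanced simplicial category --- and check that the adjunction $h^{\mathbb{2}} \dashv N^{\mathbb{2}}$ of \longref{Proposition}{pro:adj1} is \emph{monoidal}. By \longref{Proposition}{pro:adj1} the left adjoint $h^{\mathbb{2}}$ preserves finite products, so it is strong monoidal for the Cartesian monoidal structures; and $N^{\mathbb{2}}$, being a right adjoint, preserves all limits and in particular finite products, so it too is strong monoidal. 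With the canonical product-comparison maps as constraints, the unit and counit of $h^{\mathbb{2}} \dashv N^{\mathbb{2}}$ are monoidal natural transformations, and hence $h^{\mathbb{2}} \dashv N^{\mathbb{2}}$ is a monoidal adjunction.

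Next I would invoke the standard change-of-base machinery for enriched categories (Eilenberg--Kelly and Kelly's \emph{Basic Concepts of Enriched Category Theory}, with the $2$-categorical refinement in Cruttwell's thesis): a monoidal functor $\Phi \colon \mathcal{V} \to \mathcal{W}$ induces a $2$-functor $\Phi_*$ between the $2$-categories of enriched categories, applying $\Phi$ to each hom-object, and a monoidal adjunction $\Phi \dashv \Psi$ induces a $2$-adjunction $\Phi_* \dashv \Psi_*$. Applying this to $h^{\mathbb{2}} \dashv N^{\mathbb{2}}$ yields the desired
\[ h^{\mathbb{2}}_* \colon \FDeltaCat \rightleftarrows \FCat \colon N^{\mathbb{2}}_*, \qquad h^{\mathbb{2}}_* \dashv N^{\mathbb{2}}_*. \]
Concretely, $h^{\mathbb{2}}_*$ sends an enhanced simplicial category $\bbK$ to the $\F$-category with the same objects and hom-objects $h^{\mathbb{2}}\bigl(\bbK(a,b)\bigr)$, i.e.\ it replaces each mapping space by its (enhanced) homotopy category; this is precisely the \emph{enhanced homotopy $2$-category} of $\bbK$.

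It then remains to show that $N^{\mathbb{2}}_*$ is fully faithful, whence reflectivity is immediate. Since $N \colon \mathbf{Cat} \hookrightarrow \sSet$ is fully faithful and faithful, so is $N^{\mathbb{2}} \colon \F \to \F_\Delta$, because a morphism in either of these categories is a commuting square and $N^{\mathbb{2}}$ acts by applying $N$ to each side. It follows that the comparison
\[ \FCat(\bbA, \bbB) \longrightarrow \FDeltaCat\bigl(N^{\mathbb{2}}_*\bbA,\, N^{\mathbb{2}}_*\bbB\bigr) \]
is an isomorphism of categories: an $\F_\Delta$-functor, resp.\ an $\F_\Delta$-natural transformation, between objects in the image has hom-maps, resp.\ components, valued in hom-objects of the form $N^{\mathbb{2}}(-)$, which lift uniquely along the fully faithful $N^{\mathbb{2}}$; the functoriality and naturality axioms are equations that the faithful $N^{\mathbb{2}}$ reflects, and $N^{\mathbb{2}}$ preserves the unit object so that components correspond. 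Equivalently, $N^{\mathbb{2}}$ fully faithful makes the counit $h^{\mathbb{2}} N^{\mathbb{2}} \Rightarrow \id_\F$ invertible, and change of base carries this to an invertible counit $h^{\mathbb{2}}_* N^{\mathbb{2}}_* \Rightarrow \id_{\FCat}$. Either way, $N^{\mathbb{2}}_*$ is a fully faithful $2$-functor with left adjoint $h^{\mathbb{2}}_*$, i.e.\ $\FCat$ is a reflective sub-$2$-category of $\FDeltaCat$.

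I expect the only real work, as opposed to the idea, to be coherence bookkeeping: checking that the product-comparison constraints genuinely make $h^{\mathbb{2}} \dashv N^{\mathbb{2}}$ a monoidal adjunction, and that $\Phi \mapsto \Phi_*$ preserves identities, composition, and the triangle identities strictly enough for the lift to be a $2$-adjunction on the nose rather than merely a biadjunction. All of this is routine and documented in the literature, so I would cite it rather than reprove it.
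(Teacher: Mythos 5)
Your proposal is correct and follows essentially the same route as the paper, which simply observes that both adjoints of \longref{Proposition}{pro:adj1} preserve finite products and hence that the adjunction lifts by change of base. You spell out the details the paper leaves implicit --- the monoidal-adjunction formalism and the deduction of full faithfulness of $N^{\mathbb{2}}_*$ from that of $N$ via the invertible counit --- but these are exactly the standard facts the paper is tacitly invoking.
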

	
	\begin{proof}
		Since both the left and the right adjoints in \longref{Proposition}{pro:adj1} preserve finite products, the adjunction lifts to one as claimed.
	\end{proof}
	
	\begin{rk}
		For an $\F_\Delta$-category $\bbA$, the underlying $2$-category of $h^\mathbb{2}_*(\bbA)$ is the homotopy $2$-category of the underlying simplicial category of $\bbA$.
		
		Indeed, for objects $a, b \in \bbA_0$, the hom-object $\bbA(a, b)$ consists of a fully faithful and injective-on-vertices simplicial map
		$$j_\bbA \colon \bbA(a, b)_\tau \hookrightarrow \bbA(a, b)_\lambda.$$
		The action of $h^\mathbb{2}_*$ on $\bbA$ takes $\bbA$ to an $\F$-category $h\bbA$ whose object set is given by $\bbA_0$, whereas the hom-object $h\bbA(a, b)$ consists of a full embedding
		$$h\bbA(a, b)_\tau \hookrightarrow h\bbA(a, b)_\lambda,$$
		where $h\bbA(a, b)_\tau$ is the homotopy category of the simplicial set $\bbA(a, b)_\tau$, and $h\bbA(a, b)_\lambda$ is the homotopy category of the simplicial set $\bbA(a, b)_\lambda$.
	\end{rk}
	
	Now, we look into $\F_\Delta$-weights and limits weighted by them.
	
	\begin{defi}
		An \emph{$\F_\Delta$-weight} is an enhanced simplicial functor $W \colon \bbA \to \bbF_\Delta$ from any small enhanced simplicial category $\bbA$.
	\end{defi}
	
	More explicitly, an $\F_\Delta$-weight $W \colon \bbA \to \bbF_\Delta$ consists of a pair of simplicial functors $W_\tau \colon \calA_\tau \to \F_\Delta$ and $W_\lambda \colon \calA_\lambda \to \sSet$ satisfying
	\begin{center}
		\begin{tikzcd}
			\calA_\tau \ar[r, hook, "J_\bbA"] \ar[d, "W_\tau"'] & \calA_\lambda 
			\ar[d, 
			"W_\lambda"]
			\\
			\F_\Delta \ar[r, hook, "J_{\bbF_\Delta}"'] & \sSet
		\end{tikzcd}.
	\end{center}
	So for each object $S \in \calA_\tau$, the image $W_\tau(S)$ is an enhanced simplicial set, and that $W_\lambda J_\bbA(S)$ is only a simplicial set. And for each tight $0$-arrow $f \in \calA_\tau$, its image $W_\tau(f)$ is a simplicial map between enhanced simplicial sets that preserves tight vertices, whereas $W_\lambda J_\bbA(f)$ is only a simplicial map.
	
	Altogether, the $\F_\Delta$-weight $W$ then amounts to a pair of simplicial functors $\Phi_\tau \colon \calA_\tau \to \sSet$ and $\Phi_\lambda \colon \calA_\lambda \to \sSet$, together with a simplicial natural transformation $\varphi \colon \Phi_\tau \to \Phi_\lambda J_\bbA$ whose $1$-components are fully faithful and injective-on-vertices simplicial map. We can rewrite the $\F_\Delta$-weight as a triple $\Phi := (\Phi_\tau, \Phi_\lambda, \varphi)$.
	
	If $\Phi_1 := (\Phi_{1\tau}, \Phi_{1\lambda}, \varphi_1)$ and $\Phi_2 := (\Phi_{2\tau}, \Phi_{2\lambda}, \varphi_2)$ are two $\F_\Delta$-weights, then an $\F_\Delta$-natural transformation $\beta \colon \Phi_1 \to \Phi_2$ amounts to a pair of simplicial natural transformations $\beta_\tau \colon \Phi_{1\tau} \to \Phi_{2\tau}$ and $\beta_\lambda \colon \Phi_{1\lambda} \to \Phi_{2\lambda}$, satisfying $\beta_\lambda J_\bbA \varphi_1 = \varphi_2 \beta_\tau$.
	
	Now, let $F \colon \bbA \to \bbB$ be an enhanced simplicial functor. A \emph{cone over $F$} consists of an object $Z \in \ob\bbB$, together with an enhanced simplicial natural transformation $\zeta \colon \Phi \to \bbB(Z, F-)$. A \emph{limit cone over $F$} is a cone $\eta \colon \Phi \to \bbB(\{\Phi, F\}, F-)$ over $F$, such that the enhanced simplicial map
	\begin{equation*}
		\label{def:sigmalim}
		\bbB(B, \{\Phi, F\}) \cong [\bbA, \bbF_\Delta](\Phi, 
		\bbB(B, F-))
	\end{equation*}
	induced by applying Yoneda's Lemma to each component $\eta_A$ at  $A \in \ob\bbA$ of the limit cone is an isomorphism in $\F_\Delta$. The object $\{\Phi, F\}$ in $\bbB$ is called the \emph{weighted limit of $F$ by $\Phi$}. For any vertex $v \in \Phi(A)$, the image $\eta_A(v) \colon \{\Phi, F\} \to F(A)$ is called a \emph{limit projection}. In particular, if $v$ is a tight vertex, i.e. $v \in \Phi_\tau(A)$, then the corresponding limit projection is a tight $0$-arrow; moreover, the universal property implies that these limit projections jointly reflect tightness, i.e. a $0$-arrow $b \colon B \leadsto \{\Phi, F\}$ is tight if and only if $\eta_A(v) \cdot b$ is tight for any $A$ and $v \in \Phi_\tau(A)$.

	%
	
	\begin{example}[Tight limits]
		\label{eg:tight_lim}
		When the indexing enhanced simplicial category for an $\F_\Delta$-weighted limit is in fact chordate, the limit is called a \emph{tight limit}.
		
		More precisely, consider an $\F_\Delta$-weight $\Phi := (\Phi_\tau, \Phi_\lambda, \varphi) \colon \twoA \to \bbF_\Delta$, in which $\twoA$ is a simplicial category, viewed as a chordate enhanced simplicial category. In this case, $\Phi_\tau = \Phi_\lambda$, and $\varphi = 1$. For an enhanced simplicial functor $F \colon \twoA \to \bbB$, which becomes a simplicial functor $F_\tau \colon \twoA \to \twoB_\tau$, the $\F_\Delta$-weighted limit $\{\Phi, F\}$ is a tight limit. Indeed, it is given by the $\sSet$-weighted limit $\{\Phi_\tau, F_\tau\}$ in $\twoB_\tau$, which is preserved by ${J_\bbB} \colon \twoB_\tau \to \twoB_\lambda$; in other words, $\{\Phi, F\}$ can be understood as a $\sSet$-weighted limit in $\twoB_\lambda$, in addition, the limit projections are tight, and they jointly reflect tightness, which precisely means that any arbitrary $0$-arrow $X \leadsto \{\Phi_\tau, F_\tau\}$ of $\bbB$  is tight if and only if its composites with the limit projections are all tight.
		
		An important example of tight limits is \emph{simplicial powers by inchordate enhanced simplicial sets}. Given any simplicial set $J$, we can view $J$ as an inchordate enhanced simplicial set in $\F_\Delta$. Then, for any object $A$ in an enhanced simplicial category $\bbK$, the simplicial power $A^J$ of $A$ by $J$ is given by the usual power in enriched category theory. In this case, the projections $\ev[[n]] \colon A^J \to A$ for all $n$ are tight and jointly reflect tightness. 
		
		Another important class of tight limits is \emph{limits of a countable chain of tight $0$-arrows}. Suppose $\bbK$ is an enhanced simplicial category. 
		Then, for any countable chain of tight $0$-arrows, 
		its limit is given by the usual one in  $\calK_\tau$, with the extra property that the projections are tight and they jointly reflect tightness.
		
		When $\calK_\tau$ is an $\infty$-cosmos, then a cosmological limit in $\calK_\tau$ which is preserved by ${J_\bbK} \colon \calK_\tau \to \calK_\lambda$ is called a \emph{tight cosmological limit}. In this case, the limit projections are tight isofibrations, and they jointly reflect tightness.
	\end{example}
	
	\begin{nota}
		\label{nt:co}
		Let $\bbK$ be an enhanced simplicial category. We denote by $\bbK^\co$ the enhanced simplicial category which has the same objects as $\bbK$; but hom-object
		$\bbK^\co(A, B) := \bbK(A, B)^\op$
		for any $A, B$ in $\bbK^\co$ as the opposite simplicial set of the hom-object $\bbK(A, B)$ in $\bbK$. 
		%
		
		For an enhanced simplicial functor $F \colon \bbA \to \bbB$, there is an associated functor $F^\co \colon \bbA^\co \to \bbB^\co$. On objects, $F^\co$ is the same as $F$; on any hom-object $\bbA^\co(x, y) = \bbA(x, y)^\op$, $(F^\co)_{x, y}$ is simply given by $(F_{x, y})^\op$.
		
		Note that there is an enhanced simplicial functor $\cdot\;^\op \colon \bbF_\Delta^\co \to \bbF_\Delta$, which sends every enhanced simplicial set $S$ to its opposite $S^\op$, every enhanced simplicial map $f \colon S \to T$ to the associated map $f^\op \colon S^\op \to T^\op$, but every $\Delta^1 \times S \xrightarrow{(f_1, f_2)} T$ to $(\Delta^1)^\op \times S \xrightarrow{(f_1^\op, f_2^\op)} T$ which becomes $\Delta^1 \times S^\op \xrightarrow{(f_2^\op, f_1^\op)} T^\op$; the higher dimensional maps are mapped similarly, in which the directions of simplicial maps are reversed.
		
		Then, for any enhanced simplicial functor $G \colon \bbA \to \bbF_\Delta$, we denote by $G_\co$ the composite
		$$\bbA^\co \xrightarrow{G^\co} \bbF_\Delta^\co \xrightarrow{\cdot\;^\op} \bbF_\Delta$$
		of $\cdot\;^\op$ and $G^\co$.
	\end{nota}
	
	As expected, completeness of $\bbK$ and of $\bbK^\co$ are indeed dual to each other.
	
	\begin{lemma}
		\label{lem:dual_fun}
		Let $\bbA$ be an enhanced simplicial category. Given two enhanced simplicial functors $G, H \colon \bbA \rightrightarrows \bbF_\Delta$, we always have an isomorphism
		$$[\bbA, \bbF_\Delta](G, H)^\op \cong [\bbA^\co, \bbF_\Delta](G_\co, H_\co)$$
		in $\F_\Delta$.
	\end{lemma}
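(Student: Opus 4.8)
The plan is to get the isomorphism by combining two formal facts: that enriched functor $\F_\Delta$-categories are self-dual under the $(-)^\co$ construction of \longref{Notation}{nt:co}, and that $\bbF_\Delta$ is ``self-dual'' in the sense that $\cdot\;^\op$ is an isomorphism $\bbF_\Delta^\co \cong \bbF_\Delta$. First I would recall that for $\F_\Delta$-functors $G, H \colon \bbA \to \bbB$ into a complete $\F_\Delta$-category $\bbB$, the hom-object $[\bbA, \bbB](G, H)$ is the enriched end $\int_{a \in \bbA} \bbB(Ga, Ha)$, presented in $\F_\Delta$ as the equalizer
\[
	[\bbA, \bbB](G, H) \longrightarrow \prod_{a} \bbB(Ga, Ha) \rightrightarrows \prod_{a, b} [\bbA(a, b), \bbB(Ga, Hb)],
\]
whose two legs are assembled from the internal hom of $\F_\Delta$, the composition of $\bbB$, and the actions $G_{a,b}$, $H_{a,b}$ on hom-objects. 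Since $\F_\Delta$ is (co)complete and Cartesian closed, $\bbF_\Delta$ is itself a complete $\F_\Delta$-category, so this applies with $\bbB = \bbF_\Delta$.

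Next I would observe that passage to the opposite enhanced simplicial set is an \emph{involutive, symmetric strong monoidal autoequivalence} $(-)^\op \colon \F_\Delta \to \F_\Delta$: it is induced by the order-reversing automorphism of $\Delta$, which fixes the object $[0]$ and hence preserves the marked-vertex (tight/loose) structure, and it preserves finite products on the nose, $(S \times T)^\op = S^\op \times T^\op$. A symmetric monoidal equivalence is automatically closed, so the comparison $[S, T]^\op \to [S^\op, T^\op]$ is an isomorphism natural in $S$ and $T$, and $(-)^\op$ carries evaluation and composition morphisms to evaluation and composition morphisms. Being an equivalence, it also preserves all limits, in particular the equalizer above. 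Applying $(-)^\op$ to that equalizer, and using the identities $\bbB(Ga, Ha)^\op = \bbB^\co(G^\co a, H^\co a)$ and $[\bbA(a,b), \bbB(Ga, Hb)]^\op \cong [\bbA^\co(a,b), \bbB^\co(G^\co a, H^\co b)]$ together with the compatibility of $(-)^\op$ with composition and transposition, one identifies the resulting equalizer with the one computing $\int_{a \in \bbA^\co} \bbB^\co(G^\co a, H^\co a) = [\bbA^\co, \bbB^\co](G^\co, H^\co)$. This yields a natural isomorphism $[\bbA, \bbB](G, H)^\op \cong [\bbA^\co, \bbB^\co](G^\co, H^\co)$ in $\F_\Delta$.

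Finally, I would specialise $\bbB = \bbF_\Delta$ to get $[\bbA, \bbF_\Delta](G, H)^\op \cong [\bbA^\co, \bbF_\Delta^\co](G^\co, H^\co)$, and then invoke the enhanced simplicial functor $\cdot\;^\op \colon \bbF_\Delta^\co \to \bbF_\Delta$ of \longref{Notation}{nt:co}. Its action on hom-objects is precisely the isomorphism $[X, Y]^\op \cong [X^\op, Y^\op]$, so it is an \emph{isomorphism} of enhanced simplicial categories; hence postcomposition with it is an isomorphism $[\bbA^\co, \bbF_\Delta^\co] \cong [\bbA^\co, \bbF_\Delta]$ of enhanced simplicial functor categories, and by the very definition of $(-)_\co$ it carries $G^\co \mapsto G_\co$ and $H^\co \mapsto H_\co$. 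Composing the two isomorphisms gives $[\bbA, \bbF_\Delta](G, H)^\op \cong [\bbA^\co, \bbF_\Delta](G_\co, H_\co)$ in $\F_\Delta$. (Alternatively one can unwind both sides into the triple description $(\Phi_\tau, \Phi_\lambda, \varphi)$ of $\F_\Delta$-functors into $\bbF_\Delta$ and exhibit the bijection directly by reversing the directions of simplices, but the end-based argument keeps the variances under control more transparently.)

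The hard part is the bookkeeping in the second step: one must check that each structural morphism feeding the two legs of the end-equalizer --- the functorial actions $\bbA(a,b) \to \bbB(Ga, Gb)$ and $\bbA(a,b) \to \bbB(Ha, Hb)$, the composition of $\bbB$, and the transposes across the closed structure of $\F_\Delta$ --- is carried by $(-)^\op$ to \emph{exactly} the corresponding morphism for the dualised data $\bbA^\co$, $\bbB^\co$, $G^\co$, $H^\co$, with no stray applications of the symmetry constraint. This is entirely formal once $(-)^\op$ is known to be a symmetric monoidal closed autoequivalence of $\F_\Delta$; the one input specific to the enhanced setting is that $(-)^\op$ respects the tight/loose distinction, which holds because order reversal fixes $[0] \in \Delta$.
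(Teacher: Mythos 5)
Your proof is correct, but it is organised differently from the paper's. The paper argues at the level of simplices: it identifies an $n$-simplex of $[\bbA, \bbF_\Delta](G, H)^\op$ as a natural family of maps $(\Delta^n)^\op \times G(A) \to H(A)$, applies $\cdot\;^\op$ componentwise to land in $\Delta^n \times G_\co(A) \to H_\co(A)$, and then checks separately that tight $0$-simplices (enhanced natural transformations) correspond to tight $0$-simplices, using that $\cdot\;^\op \colon \bbF_\Delta^\co \to \bbF_\Delta$ preserves enhanced simplicial maps. You instead package the same content as two formal facts: that $(-)^\op$ is a symmetric monoidal closed involution of $\F_\Delta$ compatible with the tight/loose structure, so that change of base along it commutes with the end presentation of functor-category homs, giving $[\bbA,\bbB](G,H)^\op \cong [\bbA^\co,\bbB^\co](G^\co,H^\co)$; and that $\cdot\;^\op \colon \bbF_\Delta^\co \to \bbF_\Delta$ is an isomorphism of $\F_\Delta$-categories whose postcomposition carries $G^\co$ to $G_\co$ by definition. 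Both arguments rest on the same essential inputs — that $(-)^\op$ strictly preserves products, induces $[S,T]^\op \cong [S^\op,T^\op]$, and restricts to $\F_\Delta$ because it sends a fully faithful injective-on-vertices inclusion $S_\tau \hookrightarrow S_\lambda$ to one again (note the enhancement is a full simplicial subset, not merely a set of marked vertices, though this changes nothing). Your route is more general (it proves the statement for any complete target $\bbB$, not just $\bbF_\Delta$) and keeps the variances automatic, at the cost of the equalizer-leg bookkeeping you flag and an implicit smallness hypothesis on $\bbA$ needed for the end presentation (which the paper also tacitly assumes whenever it forms $[\bbA,\bbF_\Delta]$); the paper's computation is more elementary and makes the tightness check explicit rather than absorbing it into the claim that $(-)^\op$ is an autoequivalence of $\F_\Delta$ itself.
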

	
	\begin{proof}
		Let $n \in \N.$ An $n$-simplex in $[\bbA, \bbF_\Delta](G, H)$ is a simplicial natural transformation $\Delta^n \times G \xrightarrow{\alpha} H$, whose component at $A \in \ob\bbA$ is a simplicial map $\Delta^n \times G(A) \xrightarrow{\alpha_A} H(A)$. So, an $n$-simplex in $[\bbA, \bbF_\Delta](G, H)^\op$ has component given by $(\Delta^n)^\op \times G(A) \xrightarrow{\overline{\alpha_A}} H(A)$. Applying the functor $\cdot\;^\op$, this becomes a simplicial map $\Delta^n \times G(A)^\op \xrightarrow{\overline{\alpha_A}^\op} H(A)^\op$, which is equivalently $\Delta^n \times (G^\co(A))^\op \xrightarrow{\overline{\alpha_A}^\op} (H^\co(A))^\op$. By construction, this is indeed $\Delta^n \times G_\co(A) \xrightarrow{\overline{\alpha_A}^\op} H_\co(A)$, which is clearly the component of an $n$-simplex in $[\bbA^\co, \bbF_\Delta](G_\co, H_\co)$ at $A$.
		
		Now, we consider the $0$-simplices. It is clear that an ordinary simplicial map $G(A) \to H(A)$ corresponds bijectively to a simplicial map $G_\co(A) \to H_\co(A)$, following the argument above. Suppose that $G \xrightarrow{\alpha} H$ is an enhanced simplicial natural transformation. This means that the component $G(A) \to H(A)$ at $A \in \bbA$ is tight, which means it is an enhanced simplicial map. Since $\cdot\;^\op \colon \bbF_\Delta^\co \to \bbF_\Delta$ is an enhanced simplicial functor, it sends enhanced simplicial maps to enhanced simplicial maps, therefore, $\alpha$ corresponds to an enhanced simplicial natural transformation $G_\co \to H_\co$. This proves that the isomorphism
		$$[\bbA, \bbF_\Delta](G, H)^\op \cong [\bbA^\co, \bbF_\Delta](G_\co, H_\co)$$
		is in $\F_\Delta$.
	\end{proof}
	
	\begin{pro}
		\label{pro:K^co_has_dual_lim}
		Let $\bbK$ be an enhanced simplicial category. Let $\Phi \colon \bbA \to \bbF_\Delta$ be an $\F_\Delta$-weight, and $F \colon \bbA \to \bbK$ be an enhanced simplicial functor. Then, $\bbK$ admits the weighted limit $\{\Phi, F\}$ of $F$ by $\Phi$ if and only if $\bbK^\co$ admits the weighted limit $\{\Phi_\co, F^\co\}$ of $F^\co$ by $\Phi_\co$.
	\end{pro}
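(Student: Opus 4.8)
The plan is to deduce both implications directly from the defining universal property of an $\F_\Delta$-weighted limit, transported across the duality isomorphisms already in hand. Recall that $\bbK$ admits $\{\Phi, F\}$ precisely when there is an object $L \in \ob\bbK$ together with an isomorphism
\[
\bbK(B, L) \;\cong\; [\bbA, \bbF_\Delta]\bigl(\Phi, \bbK(B, F-)\bigr)
\]
in $\F_\Delta$, enhanced-simplicially natural in $B \in \ob\bbK$; by the enriched Yoneda lemma in $\F_\Delta$ such an $L$, if it exists, is determined up to isomorphism, and conversely any object carrying such a natural family represents the limit. So it suffices to produce, from a representing object $L$ for $\{\Phi, F\}$ in $\bbK$, an $\F_\Delta$-natural isomorphism $\bbK^\co(B, L) \cong [\bbA^\co, \bbF_\Delta](\Phi_\co, \bbK^\co(B, F^\co-))$ exhibiting $L$ as $\{\Phi_\co, F^\co\}$ in $\bbK^\co$; the reverse implication then follows by running the same argument with $(\bbK, \Phi, F)$ replaced by $(\bbK^\co, \Phi_\co, F^\co)$ and using the involutivity of the $\co$- and $(-)_\co$-constructions, namely $(\bbK^\co)^\co = \bbK$, $(F^\co)^\co = F$, and $(\Phi_\co)_\co \cong \Phi$.

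First I would apply the endofunctor $\cdot\;^\op$ on $\F_\Delta$ to the displayed isomorphism. On the left, $\bbK(B, L)^\op$ is by \longref{Notation}{nt:co} precisely the hom-object $\bbK^\co(B, L)$. On the right, \longref{Lemma}{lem:dual_fun} gives
\[
[\bbA, \bbF_\Delta]\bigl(\Phi, \bbK(B, F-)\bigr)^\op \;\cong\; [\bbA^\co, \bbF_\Delta]\bigl(\Phi_\co, \bigl(\bbK(B, F-)\bigr)_\co\bigr).
\]
It then remains to identify the enhanced simplicial functor $\bigl(\bbK(B, F-)\bigr)_\co \colon \bbA^\co \to \bbF_\Delta$ with $\bbK^\co(B, F^\co-)$. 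On an object $A$, both send $A$ to $\bbK(B, F(A))^\op = \bbK^\co(B, F^\co(A))$; on a $0$-arrow of $\bbA^\co$, i.e. a $0$-arrow $g\colon A' \to A$ of $\bbA$, both give $(F(g)_*)^\op \colon \bbK(B, F(A'))^\op \to \bbK(B, F(A))^\op$; and on higher arrows one checks that the coordinate-swapping behaviour of $\cdot\;^\op \colon \bbF_\Delta^\co \to \bbF_\Delta$ on the factors of $\Delta^i \times (-)$ is exactly what is needed to match the formation of the opposite hom-simplicial-set in \longref{Notation}{nt:co}, so that tight $0$-arrows correspond to tight $0$-arrows and the identification is one of \emph{enhanced} simplicial functors. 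Splicing the three isomorphisms yields $\bbK^\co(B, L) \cong [\bbA^\co, \bbF_\Delta](\Phi_\co, \bbK^\co(B, F^\co-))$ in $\F_\Delta$.

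Finally I would check that this composite isomorphism is enhanced-simplicially natural in $B$: each of the three steps is natural in the relevant data — $\cdot\;^\op$ is an $\F_\Delta$-functor, \longref{Lemma}{lem:dual_fun} is natural, and the identification $\bigl(\bbK(B, F-)\bigr)_\co \cong \bbK^\co(B, F^\co-)$ is natural in $B$ — so the $\F_\Delta$-naturality of the original representing isomorphism is preserved. By the enriched Yoneda lemma applied in $\bbK^\co$, the object $L$ together with the transpose of the identity under this isomorphism is a limit cone for $F^\co$ weighted by $\Phi_\co$; hence $\bbK^\co$ admits $\{\Phi_\co, F^\co\}$, and the converse follows as indicated above.

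I expect the main obstacle to be the bookkeeping in the second paragraph: verifying that $\bigl(\bbK(B, F-)\bigr)_\co$ and $\bbK^\co(B, F^\co-)$ agree as enhanced simplicial functors $\bbA^\co \to \bbF_\Delta$ — not merely pointwise but compatibly with the enhanced (tight/loose) structure and functorially in $\bbA^\co$ — since this is where the three distinct uses of ``op'' (on hom-categories via \longref{Notation}{nt:co}, on the weight via $(-)_\co$, and inside $\cdot\;^\op$ with its coordinate swap on higher simplices) must all be reconciled. Everything else is a formal consequence of the universal property and the already-proven \longref{Lemma}{lem:dual_fun}.
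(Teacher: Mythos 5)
Your proposal is correct and follows essentially the same route as the paper: apply $\cdot\;^\op$ to the representing isomorphism, invoke \longref{Lemma}{lem:dual_fun}, and identify $\bigl(\bbK(X,F-)\bigr)_\co$ with $\bbK^\co(X,F^\co-)$ (the paper gets this identification directly from the definition of $\bbK^\co(X,\cdot)$ as the composite $\cdot\;^\op \circ \bbK(X,\cdot)^\co$), then conclude by the universal property, with the converse by involutivity of the dualisations. The extra bookkeeping you flag is exactly the point the paper disposes of by that composite description, so no gap remains.
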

	
	\begin{proof}
		To establish
		$$\bbK^\co(X, \{\Phi_\co, F^\co\}) \cong \bbK(X, \{\Phi, F\})$$
		for any $X \in \ob\bbK$, it suffices to show that
		$$[\bbA, \bbF_\Delta](\Phi, \bbK(X, F))^\op \cong [\bbA^\co, \bbF_\Delta](W_\co, \bbK^\co(X, F^\co))$$
		in $\F_\Delta$. Indeed, $\bbK^\co(X, \cdot)$ is equal to the composite
		$$\bbK^\co \xrightarrow{\bbK(X, \cdot)^\co} \bbF_\Delta^\co \xrightarrow{\cdot\;^\op} \bbF_\Delta,$$
		which implies that $\bbK^\co(X, F^\co) = \bbK^\co(X, \cdot) \cdot F^\co$ is equal to
		$(\bbK(X, F)^\co)^\op =: \bbK(X, F)_\co$. Therefore, by \longref{Lemma}{lem:dual_fun}, we are done.
	\end{proof}
	
	%
		%
	
	A very important example of $\F_\Delta$-weighted limits for us is \emph{Eilenberg-Moore objects over loose monads}.
	
	\begin{nota}
		We write $\Delta_{\emptyset}$ for the category of finite ordinals and the order-preserving maps. This category is generated by
		\[\begin{tikzcd}[ampersand replacement=\&, scale cd=0.7]
			{[-1]} \& {[0]} \& {[1]} \& {[2]\cdots\cdots} 
			\arrow[from=1-1, to=1-2]
			\arrow["{\delta_0}"{description}, shift left=3, from=1-2, to=1-3]
			\arrow["{\delta_1}"{description}, shift right=3, from=1-2, to=1-3]
			\arrow["{\sigma_0}"{description}, from=1-3, to=1-2]
			\arrow["{\delta_1}"{description}, from=1-3, to=1-4]
			\arrow["{\delta_0}"{description}, shift left=5, from=1-3, to=1-4]
			\arrow["{\delta_2}"{description}, shift right=5, from=1-3, to=1-4]
			\arrow["{\sigma_1}"{description}, shift left=3, from=1-4, to=1-3]
			\arrow["{\sigma_0}"{description}, shift right=3, from=1-4, to=1-3]
		\end{tikzcd}\]
		subject to the simplicial relations, where $[-1]$ denotes the empty ordinal.
		
		Denote by $\Delta$ the full sub-category of $\Delta_{\emptyset}$ with non-empty ordinals, which is usually called the \emph{simplex category}. This means that $\Delta$ is generated by
		\[\begin{tikzcd}[ampersand replacement=\&, scale cd = 0.7]
			{[0]} \& {[1]} \& {[2]\cdots\cdots}  
			\arrow["{\delta_0}"{description}, shift left=3, from=1-1, to=1-2]
			\arrow["{\delta_1}"{description}, shift right=3, from=1-1, to=1-2]
			\arrow["{\sigma_0}"{description}, from=1-2, to=1-1]
			\arrow["{\delta_1}"{description}, from=1-2, to=1-3]
			\arrow["{\delta_0}"{description}, shift left=5, from=1-2, to=1-3]
			\arrow["{\delta_2}"{description}, shift right=5, from=1-2, to=1-3]
			\arrow["{\sigma_1}"{description}, shift left=3, from=1-3, to=1-2]
			\arrow["{\sigma_0}"{description}, shift right=3, from=1-3, to=1-2]
		\end{tikzcd}\]
		subject to the simplicial relations.
		
		Denote by $\Delta_\top$ the wide sub-category of $\Delta$ with top-preserving maps. So, $\Delta_\top$ is generated by
		\[\begin{tikzcd}[ampersand replacement=\&, scale cd = 0.7]
			{[0]} \& {[1]} \& {[2]\cdots\cdots}
			\arrow["{\delta_0}"{description}, shift left, from=1-1, to=1-2]
			\arrow["{\sigma_0}"{description}, shift left, from=1-2, to=1-1]
			\arrow["{\delta_1}"{description}, shift right, from=1-2, to=1-3]
			\arrow["{\delta_0}"{description}, shift left=4, from=1-2, to=1-3]
			\arrow["{\sigma_1}"{description}, shift left=4, from=1-3, to=1-2]
			\arrow["{\sigma_0}"{description}, shift right, from=1-3, to=1-2]
		\end{tikzcd}\]
		subject to the simplicial relations.
	\end{nota}
	
	\begin{example}[(Co)Eilenberg-Moore objects over loose (co)monads]
		\label{eg:EM}
		Recall that in \cite{RV:2016}, the free (homotopy coherent) adjunction $\Adj$ is a simplicial category with two objects ${+}$ and ${-}$, which is equivalently the  $2$-category of a free adjunction by Schanuel and Street in \cite{SS:1986}; then ${\Mnd}$ denotes the full simplicial sub-category of ${\Adj}$ determined by ${+}$, with ${\Mnd}(+, +) := N(\Delta_{\emptyset})$, the nerve of the category of finite ordinals and the order-preserving maps.
		
		We then view $\Mnd$ as an inchordate enhanced simplicial category.
		
		Now, since $\Mnd$ is inchordate, an $\F_\Delta$-weight $\Mnd \to \bbF_\Delta$ is simply a simplicial functor $\Mnd \to {\F_\Delta}_\lambda$, where ${\F_\Delta}_\lambda$ is regarded as a simplicial category by forgetting the tight $0$-arrows in $\bbF_\Delta$. Any simplicial functor $F \colon \Mnd \to {\F_\Delta}_\lambda$ amounts to an enhanced  simplicial set $F(+)$, together with a left action 
		$$N(\Delta_{\emptyset}) \to {\F_\Delta}_\lambda(F(+), F(+))$$
		by $N(\Delta_{\emptyset})$. 
		
		Let $W \colon \Mnd \to \bbF_\Delta$ be the $\F_\Delta$-weight, which sends the unique object $+$ to the enhanced simplicial set $N(\Delta_{\top})_{[0]}$, where only the terminal vertex $[0]$ is tight, with left action
		$$
		\begin{aligned}
			N(\Delta_{\emptyset}) &\to {\F_\Delta}_\lambda(N(\Delta_{\top})_{[0]}, N(\Delta_{\top})_{[0]})
			\\
			[n] &\mapsto {[n] \oplus \cdot}
		\end{aligned}
		$$
		given by the usual ordinal sum. Note that ${[n] \oplus \cdot}$ is not required to preserve tight vertices, because we are considering the hom in ${\F_\Delta}_\lambda$, whose $0$-arrows are just simplicial maps (between enhanced simplicial sets).
		
		A \emph{loose (homotopy coherent) monad} on an object $A$ in an enhanced simplicial category $\bbK$ consists of an enhanced simplicial functor $T \colon \Mnd \to \bbK$, which sends $+$ to $A$, with left action
		$$
		\begin{aligned}
			N(\Delta_{\emptyset}) &\to \bbK(A, A)
			\\
			[n] &\mapsto T([0])^n
		\end{aligned}
		$$
		given by the $n$-fold composition of $T([0])$.
		
		The \emph{Eilenberg-Moore object over $T$} is defined as the $\F_\Delta$-weighted limit $\{W, T\}$.
		
		Dually, consider the opposite $\Mnd^\co$ of $\Mnd$, where $\Mnd^\co(+, +) := N(\Delta_{\emptyset}^\op)$, the nerve of the opposite category of $\Delta_{\emptyset}$.
		
		A \emph{loose (homotopy coherent) comonad} $C$ on an object $A$ in an enhanced simplicial category $\bbK$ is a monad on $A$ in $\bbK^\co$. In other words, $C = T^\co$, for some loose monad $T$ in $\bbK$.
		
		More precisely, the comonad $C$ consists of an enhanced simplicial functor $C \colon \Mnd^\co \to \bbK$, which sends $+$ to $A$, with left action
		$$
		\begin{aligned}
			N(\Delta_{\emptyset}^\op) &\to \bbK(A, A)
			\\
			[n] &\mapsto C([0])^n
		\end{aligned}
		$$
		given by the $n$-fold composition of $C([0])$.	
		
		The \emph{coEilenberg-Moore object over $C$} is defined as the $\F_\Delta$-weighted limit $\{W_\co, C\}$.
		
		
		In fact, the coEilenberg-Moore object over a loose comonad $C$ is exactly the Eilenberg-Moore object over the loose monad $C^\co$.
		
		As (co)Eilenberg-Moore objects over loose (co)monads are formulated as $\F_\Delta$-weighted limits, the forgetful functor from a (co)Eilenberg-Moore object over a loose (co)monad on $A$ to $A$ is always tight and reflects tightness.
	\end{example}
	
	We will see in \longref{Section}{sec:inserter} that (co)Eilenberg-Moore objects over loose (co)monads can be constructed from simpler limits.
	
	\section{Rigged $n$-inserters}
	\label{sec:inserter}
	In this section, we present a kind of $\F_\Delta$-weighted limits, called \emph{rigged $n$-inserters}, which can actually be seen as the $\infty$-categorical versions of inserters and equifiers in $2$-category theory.
	
	\subsection{Terminally rigged $n$-inserters}
	\begin{nota}
		Let $n \in \N_0$. Denote by $\mathbb{D}^n_{[[n]]}$ the enhanced simplicial category with two distinct objects $x$ and $y$, where 
		$$
		\begin{aligned}
			&\mathbb{D}^n_{[[n]]}(x, y) := \partial \Delta^n_{[[n]]},
			\\
			&\mathbb{D}^n_{[[n]]}(x, x) := \mathbb{D}^n_{[[n]]}(y, y) := \Delta^0_{[[0]]},
			\\
			&\mathbb{D}^n_{[[n]]}(y, x) := \emptyset.
		\end{aligned}
		$$
	\end{nota}
	
	\begin{rk}
		The only non-identity tight $0$-arrow is given by the terminal vertex $[[n]]$ in $\mathbb{D}^n_{[[n]]}(x, y) := \partial \Delta^n_{[[n]]}$.
	\end{rk}
	
	\begin{example}
		Set $n = 2$. Then $\mathbb{D}^2_{[[2]]}$ is depicted as
		\begin{center}
			\begin{tikzcd}[ampersand replacement=\&]
				x \ar[rr, loose, bend left = 60, "1"{name=B}, near end] \ar[rr, loose, "0"{name=A}, very near start, crossing over] \ar[Rightarrow, from=A, to=B, shorten=1.5mm]  \ar[rr, bend right = 60, "2"'{name=C}] \ar[Rightarrow, from=A, to=C, shorten=0.5mm] \ar[Rightarrow,  from=B, to=C, shorten=1mm]  \& \& y \ar[from=1-1, to=1-3, loose, crossing over, "0", very near start] 
			\end{tikzcd},
		\end{center}
		in which $0, 1, 2$ are non-degenerate $0$-arrows from $x$ to $y$, and that only $2$ is tight; moreover, we have non-degenerate $1$-arrows from $0$ to $1$, $1$ to $2$, and $0$ to $2$, respectively.
	\end{example}
	
	\begin{construct}
		\label{construct:weight}
		Let $\Psi \colon \mathbb{D}^n_{[[n]]} \to \bbF_\Delta$ be an $\F_\Delta$-weight $(\Psi_\tau, \Psi_\lambda, \psi)$ as follows.
		
		Define $\Psi_\lambda \colon {\mathbb{D}^n_{[[n]]} }_\lambda  \to \sSet$ such that $\Psi_\lambda x = \Delta^0$ and $\Psi_\lambda y = \Delta^n$, so that
		$${\mathbb{D}^n_{[[n]]}}_\lambda(x, y) \to \sSet(\Delta^0, \Delta^n)$$
		is given by the canonical inclusion $\partial \Delta^n \hookrightarrow \Delta^n$.
		
		Define $\Psi_\tau \colon {\mathbb{D}^n_{[[n]]} }_\tau  \to \sSet$ such that $\Psi_\tau x = \Psi_\tau y = \Delta^0$.
		
		Define the simplicial natural transformation $\psi \colon \Psi_\tau \to \Psi_\lambda  J_{\mathbb{D}^n_{[[n]]}}$ such that the $1$-components $\psi_x \colon \Psi_\tau x \hookrightarrow \Psi_\lambda x$ and $\psi_y \colon \Psi_\tau y \hookrightarrow \Psi_\lambda y$ are given by the identity at $\Delta^0$ and the map that picks out the terminal vertex $[[n]] \colon \Delta^0 \to \Delta^n$, respectively.
	\end{construct}
	
	\begin{defi}
		\label{def:rigged_weight}
		Let $F \colon \mathbb{D}^n_{[[n]]} \to \bbK$ be an  enhanced simplicial functor. The \emph{terminally rigged $n$-inserter of $F$} is defined as the limit $\{\Psi, F\}$ of $F$ weighted by $\Psi$ constructed in \longref{Construction}{construct:weight}. The \emph{projection of the terminally rigged $n$-inserter} is defined as the unique limit projection $\{\Psi, F\} \to Fx$. 
	\end{defi}
	
	\begin{rk}
		\label{rk:term_flexible}
		It is not hard to see that terminally rigged $n$-inserters in a chordate enhanced simplicial category, i.e., one that every $0$-arrow is tight, are \emph{flexible weighted limits}, introduced in \cite[Chapter 6.2]{book:RV:2022}. For, the $\F_\Delta$-weight $\Psi$ can be built by attaching a projective $0$-cell at the terminal vertex, followed by a projective $n$-cell at the terminal vertex:
		\[\begin{tikzcd}[ampersand replacement=\&, row sep=small]
			{\partial\Delta^0 \times \mathbb{D}^n_{[[n]]}(x, \cdot)} \& \emptyset \\
			{\Delta^0 \times \mathbb{D}^n_{[[n]]}(x, \cdot)} \\
			\& {\Psi_0} \\
			{\partial\Delta^n \times \mathbb{D}^n_{[[n]]}(y, \cdot)} \\
			{\Delta^n \times \mathbb{D}^n_{[[n]]}(y, \cdot)} \& \Psi
			\arrow[from=1-1, to=1-2]
			\arrow[from=1-1, to=2-1]
			\arrow[from=1-2, to=3-2]
			\arrow[from=2-1, to=3-2]
			\arrow["\lrcorner"{anchor=center, pos=0.125, rotate=180}, draw=none, from=3-2, to=1-1]
			\arrow[from=3-2, to=5-2]
			\arrow[from=4-1, to=3-2]
			\arrow[from=4-1, to=5-1]
			\arrow[from=5-1, to=5-2]
			\arrow["\lrcorner"{anchor=center, pos=0.125, rotate=180}, draw=none, from=5-2, to=4-1]
		\end{tikzcd}.\]
	\end{rk}
	
	\begin{rk}
		\label{rk:F_as_D}
		Let $\bbK$ be an enhanced simplicial category. Then an enhanced simplicial functor $F \colon \mathbb{D}^n_{[[n]]} \to \bbK$ amounts to a pair of objects $Fx$ and $Fy$ of $\bbK$, and an enhanced simplicial map
		$$F_{x, y} \colon \partial\Delta^n_{[[n]]} \to \bbK(Fx, Fy),$$
		which can be equivalently seen as a simplicial map $\partial \Delta^n \xrightarrow{F_{x, y}} \bbK(Fx, Fy)$ together with an enhanced simplicial map $\Delta^0_{[[0]]} \xrightarrow{[[n]]} \partial \Delta^n \xrightarrow{F_{x, y}} \bbK(Fx, Fy)$. If $\bbK$ admits simplicial powers by inchordate enhanced simplicial sets, then the transpose of $F_{x, y}$ is a (loose) $0$-arrow $\overline{F_{x, y}} \colon Fx \leadsto (Fy)^{\partial \Delta^n}$ in $\bbK$, where the composite
		$$Fx \xleadsto{\overline{F_{x, y}}} (Fy)^{\partial \Delta^n} \xrightarrow{\ev [[n]]} Fy$$
		of $\overline{F_{x, y}}$ with the evaluation at the terminal vertex is a tight $0$-arrow in $\bbK$.	
		
		In other words, if $\bbK$ is an enhanced simplicial category which admits simplicial powers by inchordate enhanced simplicial sets, then a diagram $\mathbb{D}^n_{[[n]]} \to \bbK$ precisely consists of two objects $S$ and $T$ of $\bbK$, and a loose $0$-arrow $D \colon S \leadsto T^{\partial \Delta^n}$, where the composite $\ev[[n]] \cdot D$ is tight.
	\end{rk}
	
	Alternatively, we can describe terminally rigged $n$-inserters via pullbacks.

	\begin{pro}
		\label{pro:as_pullbacks}
		Let $\bbK$ be an enhanced simplicial category with simplicial powers by inchordate enhanced simplicial sets. The terminally rigged $n$-inserter $\{\Psi, F\}$ of $F\colon \mathbb{D}^n_{[[n]]} \to \bbK$ in \longref{Definition}{def:rigged_weight} can be equivalently described as the pullback
		\begin{equation}
			\label{diag:pullback}
			\begin{tikzcd}
				{\rins{n}{[[n]]}{\overline{F_{x, y}}}} \arrow[dr, phantom, "\lrcorner", very near start] \ar[r, "\phi", loose] \ar[d,  "p"'] & (Fy)^{\Delta^n } \ar[d,  "{(Fy)^{\partial}}"]
				\\
				Fx \ar[r, "\overline{F_{x, y}}"', loose]  & (Fy)^{\partial\Delta^n }
			\end{tikzcd}
		\end{equation} 		
		of the transpose $\overline{F_{x, y}}$ of the action $F_{x, y}$ of $F$ along the restriction $(Fy)^\partial \colon (Fy)^{\Delta^n} \twoheadrightarrow (Fy)^{\partial \Delta^n}$ in $\calK_\lambda$, in which the projection $p \colon \rins{n}{[[n]]}{\overline{F_{x, y}}} \to Fx$ is tight and reflects tightness.
	\end{pro}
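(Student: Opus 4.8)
The plan is to verify the claim by computing the representable enhanced simplicial set $\bbK(X,-)$ on both sides and invoking the $\F_\Delta$-enriched Yoneda lemma. Write $S := Fx$ and $T := Fy$, and let $D := \overline{F_{x,y}} \colon S \leadsto T^{\partial\Delta^n}$ be the transpose of the action of $F$; by \longref{Remark}{rk:F_as_D} the datum $F$ is precisely the triple $S$, $T$, $D$ together with tightness of the composite $\ev[[n]] \cdot D = F_{x,y}([[n]])$. By \longref{Definition}{def:rigged_weight} and the definition of $\F_\Delta$-weighted limits, it suffices to produce, naturally in $X \in \bbK$, an isomorphism in $\F_\Delta$ between $[\mathbb{D}^n_{[[n]]}, \bbF_\Delta](\Psi, \bbK(X, F-))$ and the enhanced simplicial set whose loose part is $\calK_\lambda(X, S) \times_{\calK_\lambda(X, T^{\partial\Delta^n})} \calK_\lambda(X, T^{\Delta^n})$ and whose tight vertices are exactly those $0$-arrows into the pullback $P$ of \eqtref{Diagram}{diag:pullback} that $p$ sends to a tight $0$-arrow.

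First I would unravel the loose part. An $i$-arrow of $[\mathbb{D}^n_{[[n]]}, \bbF_\Delta](\Psi, \bbK(X, F-))$ is a loose natural family $\Delta^i \times \Psi_\lambda(-) \to \bbK(X, F-)_\lambda$; since $\mathbb{D}^n_{[[n]]}$ has the single nontrivial hom-object $\partial\Delta^n$ from $x$ to $y$, with $\Psi_\lambda x = \Delta^0$, $\Psi_\lambda y = \Delta^n$, and $\Psi_{x,y}$ the inclusion $\partial\Delta^n \hookrightarrow \Delta^n$, such a family is precisely a pair of simplicial maps $\theta_x \colon \Delta^i \to \calK_\lambda(X,S)$ and $\theta_y \colon \Delta^i \times \Delta^n \to \calK_\lambda(X,T)$ subject to the single naturality constraint that the restriction of $\theta_y$ to $\Delta^i \times \partial\Delta^n$ is the $\calK_\lambda$-composite of $\theta_x$ with $F_{x,y}$. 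Transposing $\theta_y$ across the simplicial power $T^{\Delta^n}$ and using that $D$ represents $F_{x,y}$, this constraint becomes $T^\partial \circ \overline{\theta_y} = D \circ \theta_x$; hence the $i$-arrows on both sides agree, for every $i$ and naturally in $X$. By the characterisation of pullbacks in $\sSet$-enriched categories and the $\sSet$-enriched Yoneda lemma, this identifies the underlying object of $\{\Psi, F\}$ with the pullback $P$ of \eqtref{Diagram}{diag:pullback}, with the limit projection over the (unique) vertex of $\Psi x$ being the displayed map $p$; it also shows $\{\Psi, F\}$ exists precisely when $P$ does.

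Next I would pin down the tight part. By the description of functor $\F_\Delta$-categories, the tight $0$-arrows of $[\mathbb{D}^n_{[[n]]}, \bbF_\Delta](\Psi, \bbK(X, F-))$ are the $\F_\Delta$-natural transformations $\Psi \to \bbK(X, F-)$, i.e.\ the loose ones all of whose components are tight. Since $\Psi_\tau x = \Delta^0$ with $\psi_x = \id$, tightness of the $x$-component says exactly that $\theta_x$ (at degree $0$) is a tight $0$-arrow $q \colon X \to S$; since $\Psi_\tau y = \Delta^0$ with $\psi_y = [[n]]$, tightness of the $y$-component says that the restriction of $\theta_y$ to the terminal vertex is tight --- but that $0$-arrow equals $F_{x,y}([[n]]) \cdot q$, a composite of tight $0$-arrows, hence automatically tight. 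Therefore, under the identification above, the tight $0$-arrows $X \to \{\Psi, F\}$ are precisely the $0$-arrows $b \colon X \leadsto \{\Psi, F\}$ with $p \cdot b$ tight. Taking $X = \{\Psi, F\}$, $b = \id$ shows $p$ is itself tight, and the general statement is exactly that $p$ reflects tightness.

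The part I expect to require the most care is the bookkeeping: performing the two unravellings above at the level of \emph{all} $i$-arrows (not just $0$-arrows), so that the matchings are isomorphisms of (enhanced) simplicial sets, and keeping the tight/loose distinction straight throughout. In particular one must check that the a priori two tight limit projections --- over the tight vertex of $\Psi x$ and over the tight vertex of $\Psi y$ --- collapse, via the commuting pullback square, into the single condition ``$p \cdot b$ tight''; this collapse is what makes ``tight and reflects tightness'' come out in the clean form stated. The degenerate case $n = 0$, where $\partial\Delta^0 = \emptyset$, $T^{\partial\Delta^0}$ is the terminal object, and $P$ is the product $S \times T$, is covered by the same argument.
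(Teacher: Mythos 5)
Your proposal is correct and follows essentially the same route as the paper's proof: identify the loose $i$-arrows of $[\mathbb{D}^n_{[[n]]}, \bbF_\Delta](\Psi, \bbK(X, F))$ with cones over the cospan by transposing across the simplicial power, conclude via the universal property of the pullback (and its compatibility with powers), and then observe that tightness of the $y$-component at the terminal vertex is automatic from tightness of the $x$-component together with the rigging hypothesis on $\ev[[n]] \cdot \overline{F_{x,y}}$, so that the enhancement is exactly ``$p$ tight and reflecting tightness.'' No gaps.
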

	
	\begin{proof}
		Note that \longref{Diagram}{diag:pullback} is a pullback implies that for any (positive) $i \in \N$,
		\begin{equation}
			\label{diag:pullback_i}
			\begin{tikzcd}
				{{\rins{n}{[[n]]}{\overline{F_{x, y}}}}^{\Delta^i}} \arrow[dr, phantom, "\lrcorner", very near start] \ar[r, "\phi^{\Delta^i}", loose] \ar[d,  "p^{\Delta^i}"'] & (Fy)^{\Delta^n \times \Delta^i} \ar[d,  "{{(Fy)^{{\Delta^i}\partial}} = {(Fy)^\partial}^{\Delta^i}}"]
				\\
				(Fx)^{\Delta^i} \ar[r, "\overline{F_{x, y}}^{\Delta^i}"', loose]  & (Fy)^{\partial\Delta^n \times {\Delta^i}}
			\end{tikzcd}
		\end{equation} 
		is a pullback, since pullbacks commute with simplicial powers.
		
		We shall verify that there is an isomorphism
		$$\bbK(X, {\rins{n}{[[n]]}{\overline{F_{x, y}}}} ) \cong [\mathbb{D}^n_{[[n]]}, \bbF_\Delta](\Psi, \bbK(X, F))$$
		of enhanced simplicial sets, natural in any object $X$ of $\bbK$.
		
		Consider a loose $0$-arrow in $[\mathbb{D}^n_{[[n]]}, \bbF_\Delta](\Psi, \bbK(X, F))$, which is a simplicial natural transformation between the loose parts
		$$\alpha \colon \Psi_\lambda \to \calK_{\lambda}(X, F_\lambda).$$
		It has components
		$$
		\begin{aligned}
			&\alpha_x \colon \Delta^0 \to \calK_{\lambda}(X, Fx),
			&\alpha_y \colon \Delta^n \to \calK_{\lambda}(X, Fy),
		\end{aligned}
		$$
		whose transposes are (loose) $0$-arrows
		$$
		\begin{aligned}
			&\overline{\alpha_x} \colon X \leadsto Fx,
			&\overline{\alpha_y} \colon X \leadsto (Fy)^{\Delta^n},
		\end{aligned}
		$$
		in $\calK_\lambda$, respectively. The naturality of $\alpha$ says that for any map $[[j]] \colon \Delta^0 \to \Delta^n$ that picks out the $j^\mathrm{th}$ vertex, the square
		\[\begin{tikzcd}[ampersand replacement=\&]
			{\Delta^0} \& {\calK_\lambda(X, Fx)} \\
			{\Delta^n} \& {\calK_\lambda(X, Fy)}
			\arrow["{\alpha_x}", from=1-1, to=1-2]
			\arrow["{[[j]]}"', from=1-1, to=2-1]
			\arrow["{\calK_\lambda(X, \ev[[j]] \cdot \overline{F_{x, y}})}", from=1-2, to=2-2]
			\arrow["{\alpha_y}"', from=2-1, to=2-2]
		\end{tikzcd}\]
		commutes. Similarly, we have the naturality with respect to the higher simplices in $[\Delta^0, \Delta^n]$.
		
		Unravelling, the commutative square above tells us that the composite
		$$X \xleadsto{\overline{\alpha_y}} (Fy)^{\Delta^n} \xrightarrow{\ev[[j]]} Fy$$
		is equal to
		$$X \xleadsto{\overline{\alpha_x}} Fx \xleadsto{\overline{F_{x, y}}} (Fy)^{\partial \Delta^n} \xrightarrow{\ev[[j]]} Fy$$
		for any $j \in \{0, 1, \cdots, n\}$. This is equivalent to say that $(Fy)^\partial \cdot \overline{\alpha_y} = \overline{F_{x, y}} \cdot \overline{\alpha_x}$, where $(Fy)^\partial$ is the restriction $(Fy)^{\Delta^n} \to (Fy)^{\partial \Delta^n}$. So, by the universal property of the pullback \longref{Diagram}{diag:pullback}, there is unique $0$-arrow $u^0 \colon \Delta^0 \to \calK(X,  \rins{n}{[[n]]}{\overline{F_{x, y}}})$ of $\calK_\lambda$, satisfying $p \cdot \overline{u^0} = \overline{\alpha_x}$ and $\phi \cdot \overline{u^i} = \overline{\alpha_y}$, where $\overline{u^0} \colon X \leadsto \rins{n}{[[n]]}{\overline{F_{x, y}}}$ is the transpose of $u^0$.
		
		Moreover, for any (positive) $i \in \N$, an $i$-arrow in $[{\mathbb{D}^n_{[[n]]}}_\lambda, \sSet](\Psi_\lambda, \calK_\lambda(X, F_\lambda))$ is given by a simplicial natural transformation
		$$\alpha^{\Delta^i} \colon  \Psi_\lambda \to \calK_{\lambda}(X, F_\lambda)^{\Delta^i},$$
		which has components
		$$
		\begin{aligned}
			&\alpha_x^{\Delta^i} \colon \Delta^i \to \calK_{\lambda}(X, Fx),
			&\alpha_y^{\Delta^i} \colon \Delta^n \times \Delta^i \to \calK_{\lambda}(X, Fy),
		\end{aligned}
		$$
		whose transposes are, respectively, given by
		$$
		\begin{aligned}
			&\overline{\alpha_x}^{\Delta^i} \colon X \leadsto (Fx)^{\Delta^i},
			&\overline{\alpha_y}^{\Delta^i} \colon X \leadsto (Fy)^{\Delta^n \times \Delta^i}
		\end{aligned}
		$$
		in $\calK_{\lambda}$, such that the composite
		$$X \xleadsto{\overline{\alpha_y}^{\Delta^i}} (Fy)^{\Delta^n \times \Delta^i} \xrightarrow{\ev[[j]]} (Fy)^{\Delta^i}$$
		is equal to the composite
		$$X \xleadsto{\overline{\alpha_x}^{\Delta^i}} (Fx)^{\Delta^i} \xleadsto{\overline{F_{x, y}}^{\Delta^i}} (Fy)^{\partial \Delta^n \times \Delta^i} \xrightarrow{\ev[[j]]} (Fy)^{\Delta^i}$$
		for any $j \in \{0, 1, \cdots, n\}$. In other words, we have $(Fy)^{{\Delta^i}\partial} \cdot \overline{\alpha_y}^{\Delta^i} = \overline{F_{x, y}}^{\Delta^i} \cdot \overline{\alpha_x}^{\Delta^i}$.
		
		According to the universal property of the pullback \longref{Diagram}{diag:pullback_i}, the above data then correspond bijectively to a unique $i$-arrow $u^i \colon \Delta^i \to \calK_\lambda(X, \rins{n}{[[n]]}{\overline{F_{x, y}}})$ of $\calK_\lambda$, satisfying $p^{\Delta^i} \cdot \overline{u^i} = \overline{\alpha_x}^{\Delta^i}$ and $\phi^{\Delta^i} \cdot \overline{u^i} = \overline{\alpha_y}^{\Delta^i}$, where $\overline{u^i}$ is the transpose of $u^i$. That is to say, we have an isomorphism
		$$\calK_\lambda(X, {\rins{n}{[[n]]}{\overline{F_{x, y}}}} ) \cong [{\mathbb{D}^n_{[[n]]} }_\lambda, \sSet](\Psi_\lambda, \calK_\lambda(X, F_\lambda))$$
		of simplicial sets, natural in any object $X$ of $\calK_\lambda$.

		Now, if $\alpha \colon \Psi \to \bbK(X, F)$ is an enhanced simplicial natural transformation, then we have a simplicial natural transformation
		$$\alpha_\tau \colon \Psi_\tau \to \calK_\tau(X, F_\tau),$$
		whose components are a pair of simplicial maps
		$$
		\begin{aligned}
			&{\alpha_\tau}_x \colon \Delta^0 \to \calK_\tau(X, Fx),
			&{\alpha_\tau}_y \colon \Delta^0 \to \calK_\tau(X, Fy).
		\end{aligned}
		$$
		Denote by $\alpha_\lambda$ the simplicial natural transformation $\Psi_\lambda \to \calK_\lambda(X, F_\lambda)$ considered above. We then have 
		$$\Delta^0 \xrightarrow{{\alpha_\tau}_y} \calK_\tau(X, Fy) \xhookrightarrow{J_\bbK} \calK_\lambda(X, Fy)$$
		being equal to 
		$$\Delta^0 \xrightarrow{[[n]]} \Delta^n \xrightarrow{{\alpha_\lambda}_y} \calK_\lambda(X, Fy),$$
		and that ${\alpha_\lambda}_x = {\alpha_\tau}_x$.
		This means that 
		$$
		\begin{aligned}
			& X \xrightarrow{\overline{{\alpha_\lambda}_x}} Fx,
			& X \xleadsto{\overline{{\alpha_\lambda}_y}} (Fy)^{\Delta^n} \xrightarrow{\ev[[n]]} Fy,
		\end{aligned}
		$$
		are tight $0$-arrows in $\bbK$. Indeed, the former is tight already implies that the latter is also tight; for, $\ev[[n]] \cdot \overline{{\alpha_\lambda}_y} = \ev[[n]] \cdot  (Fy)^\partial \cdot \overline{{\alpha_\lambda}_y} = \ev[[n]] \cdot \overline{F_{x, y}} \cdot \overline{{\alpha_\lambda}_x}$.
		
		Therefore, the additional information that we obtain by considering $\alpha$ as a tight $0$-arrow in $[\mathbb{D}^n_{[[n]]}, \bbF_\Delta]$ is that the component $\alpha_x$ becomes a tight $0$-arrow in $\bbK$, i.e., $\overline{\alpha_x} \colon X \to Fx$ is tight. As we required the projection $p \colon \rins{n}{[[n]]}{\overline{F_{x, y}}} \to Fx$ in \longref{Diagram}{diag:pullback} to reflect tightness, the induced $0$-arrow $\overline{u^0} \colon X \to \rins{n}{[[n]]}{\overline{F_{x, y}}}$ then becomes tight. 
		
		Altogether, we have shown that there is an isomorphism
		$$\bbK(X, {\rins{n}{[[n]]}{\overline{F_{x, y}}}} ) \cong [\mathbb{D}^n_{[[n]]}, \bbF_\Delta](\Psi, \bbK(X, F))$$
		in $\F_\Delta$, natural in any object $X$ of $\bbK$.
	\end{proof}
	
	\begin{nota}
		Using the description in \longref{Remark}{rk:F_as_D}, we denote by ${\rins{n}{[[n]]}{D}}$ the terminally rigged $n$-inserter of $D$.
	\end{nota}
	
	We now demonstrate how the notion of terminally rigged $n$-inserters subsumes several $2$-categorical limits, such as products, $c$-rigged inserters and equifiers discussed in \cite{LS:2012}.
	
	Let $\bbK$ be an enhanced simplicial category with simplicial powers by inchordate simplicial sets.
	
	\begin{example}[Products]
		Let $n = 0$. Then $T^{\partial \Delta^0}$ is the terminal $\infty$-category $1$. Therefore, the rigged $0$-inserter amounts to the pullback diagram
		\begin{center}
			\begin{tikzcd}
				{\rins{0}{0}{!}} \arrow[dr, phantom, "\lrcorner", very near start] \ar[r, "\phi"] \ar[d,  "p"'] & T \ar[d,  "!"]
				\\
				S \ar[r, "!"']  & 1
			\end{tikzcd}
		\end{center} 
		in the tight part $\calK_\tau$ of an enhanced simplicial category $\bbK$. And this is equivalently the product $S \times T$, with the extra property that the projections are tight, and they jointly reflect tightness.
	\end{example}
	
	\begin{example}[$c$-rigged inserters] 
		Consider when $n = 1$. In this case, the restriction $T^\partial \colon T^{\Delta^1} \to T^{\partial \Delta^1}$ combines the domain and codomain projections. A loose $0$-arrow $D \colon S \leadsto T \times T$ consists of a pair  of $0$-arrows $f \colon S \leadsto T$ and $g \colon S \to T$ in $\bbK$, where $g$ has to be tight, because the composite $D \cdot \ev[[1]]$ is assumed to be a tight $0$-arrow. The terminally rigged $1$-inserter $\rins{1}{1}{f, g}$ of $f$ and $g$ is given by the pullback diagram
		\begin{center}
			\begin{tikzcd}
				{\rins{1}{1}{f, g}} \arrow[dr, phantom, "\lrcorner", very near start] \ar[r, "\phi",  loose] \ar[d,  "p"'] & {T^{\Delta^1}} \ar[d,  "{(\Dom, \Cod)}"]
				\\
				S \ar[r, "{(f, g)}"', loose]  & T \times T
			\end{tikzcd},
		\end{center}
		which says that $\phi$ is indeed depicted as
		\begin{center}
			\begin{tikzcd}[row sep=3]
				& S \ar[dr, "f", loose] \ar[Rightarrow, from=1-2, to=3-2, shorten=2mm, "\phi"] &
				\\
				{\rins{1}{1}{f, g}} \ar[ur,  "p"] \ar[dr,  "p"'] & & T
				\\
				& S \ar[ur, "g"'] &
			\end{tikzcd},
		\end{center}
		which would be a $c$-rigged inserter in enhanced $2$-category theory.
	\end{example}
	
	\begin{example}[$c$-rigged comma objects] 
		Consider again when $n = 1$. Let $D \colon A \times B \leadsto T \times T$ be a loose $0$-arrow in $\bbK$, which is given by a pair of $0$-arrows $f \colon A \leadsto T$ and $g \colon B \to T$. Again, we need $g$ to be tight, because the composite $D \cdot \ev[[1]]$ is tight. The terminally rigged $1$-inserter $\rins{1}{1}{f, g}$ of $f$ and $g$ is then given by the pullback diagram
		\begin{center}
			\begin{tikzcd}
				{f \downarrow g} \arrow[dr, phantom, "\lrcorner", very near start] \ar[r, "\phi"] \ar[d,  "{p = (p_1, p_2)}"'] & {T^{\Delta^1}} \ar[d,  "{(\Dom, \Cod)}"]
				\\
				{A \times B} \ar[r, "{(f, g)}"', loose]  & T \times T
			\end{tikzcd},
		\end{center}
		which says that $\phi$ is indeed depicted as
		\begin{center}
			\begin{tikzcd}
				{f \downarrow g} \ar[r,  "p_1"] \ar[d,  "p_2"'] & A \ar[d, loose, "f"] \ar[Rightarrow, from=1-2, to=2-1, "\phi", shorten=4mm]
				\\
				B \ar[r, "g"']  &  T
			\end{tikzcd}.
		\end{center}
	\end{example}
	
	\begin{example}[$c$-rigged equifiers] 
		Consider when $n = 2$. Then a loose $0$-arrow $D \colon S \leadsto T^{\partial\Delta^2}$ amounts to a simplicial map $\partial\Delta^2 \to \calK(S, T)$.
		
		Let $a \colon S \leadsto T$ and $b \colon S \to T$  be $0$-arrows, where only $b$ is tight. Let $f$ and $g$ be $1$-arrows from $a$ to $b$. Let $D := (f, g, \deg) \colon S \leadsto T^{\partial\Delta^2}$. This means that $D$ is given by the diagram
		\begin{center}
			\begin{tikzcd}[ampersand replacement=\&]
				{S} \ar[rr, bend left = 60, "b"{name=B}, near end] \ar[rr, loose, "a"{name=A}, very near start, crossing over] \ar[Rightarrow, from=A, to=B, shorten=1.5mm, "f"description]  \ar[rr, bend right = 60, "b"'{name=C}] \ar[Rightarrow, from=A, to=C, shorten=0.5mm, "g"description] \ar[Rightarrow, equal, from=B, to=C, shorten=1mm]  \& \& {T} \ar[from=1-1, to=1-3, loose, crossing over, "a", very near start] 
			\end{tikzcd}
		\end{center}
		in $\bbK$. The terminally rigged $2$-inserter $\rins{2}{2}{f, g, \deg}$ is then given by the pullback diagram
		\begin{center}
			\begin{tikzcd}[ampersand replacement=\&]
				{\rins{2}{2}{f, g, \deg}} \arrow[dr, phantom, "\lrcorner", very near start] \ar[r, "\phi", loose] \ar[d,  "p"'] \& {T^{\Delta^2}} \ar[d,  "{T^\partial}"]
				\\
				S \ar[r, "{(f, g, \deg)}"', loose]  \& {T^{\partial\Delta^2}}
			\end{tikzcd},
		\end{center}
		which means $\phi$ is depicted as
		\begin{center}
			\begin{tikzcd}[ampersand replacement=\&]
				{\rins{2}{2}{f, g, \deg}} \ar[r, "p"] \& S \ar[rr, bend left = 60, "b"{name=B}, near end] \ar[rr, loose, "a"{name=A}, very near start, crossing over] \ar[Rightarrow, from=A, to=B, shorten=1.5mm, "f"description]  \ar[rr, bend right = 60, "b"'{name=C}] \ar[Rightarrow, from=A, to=C, shorten=0.5mm, "g"description] \ar[Rightarrow, equal, from=B, to=C, shorten=1mm]  \& \& T \ar[from=1-2, to=1-4, loose, crossing over, "a", very near start] 
			\end{tikzcd}.
		\end{center}
		If we are actually in an enhanced $2$-category, the triangle spanned by $f, g, \deg$ would collapse, and we would obtain the $c$-rigged equifier of $f$ and $g$. 
	\end{example}
	
	\subsection{Initially rigged $n$-inserters}
	We have the dual notion of terminally rigged $n$-inserters, that generalises $l$-rigged limits in \cite{LS:2012}.

	\begin{defi}
		\label{def:initial}
		Let $F \colon \mathbb{D}^n_{[[n]]} \to \bbK$ be an enhanced simplicial functor. The \emph{initially rigged $n$-inserter of $F^\co$} in $\bbK^\co$ is defined as the limit $\{\Phi_\co, F^\co\}$ of $F^\co$ weighted by $\Phi_\co$ constructed in \longref{Construction}{construct:weight}. The \emph{projection of the initially rigged $n$-inserter} is defined as the unique limit projection $\{\Phi_\co, F^\co\} \to F^\co x$. 
	\end{defi}
	
	\begin{pro}
		\label{pro:terminal_initial}
		Let $\bbK$ be an enhanced simplicial category. Then, $\bbK^\co$ admits terminally rigged $n$-inserters if and only if $\bbK$ admits initially rigged $n$-inserters.
		
		In other words, an initially rigged $n$-inserter in an enhanced simplicial category $\bbK$ is simply a terminally rigged $n$-inserter in $\bbK^\co$.
	\end{pro}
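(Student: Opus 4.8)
The plan is to deduce this from \longref{Proposition}{pro:K^co_has_dual_lim} by a pure unwinding of definitions. Write $\Psi = (\Psi_\tau, \Psi_\lambda, \psi) \colon \mathbb{D}^n_{[[n]]} \to \bbF_\Delta$ for the $\F_\Delta$-weight of \longref{Construction}{construct:weight} (this is the weight written $\Phi$ in \longref{Definition}{def:initial}, so that $\Phi_\co = \Psi_\co$). By \longref{Definition}{def:rigged_weight}, applied with $\bbK^\co$ as ambient enhanced simplicial category, $\bbK^\co$ admits terminally rigged $n$-inserters precisely when the weighted limit $\{\Psi, G\}$ exists in $\bbK^\co$ for every enhanced simplicial functor $G \colon \mathbb{D}^n_{[[n]]} \to \bbK^\co$. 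On the other hand, reading \longref{Definition}{def:initial} with $\bbK^\co$ in place of $\bbK$ (so that $(\bbK^\co)^\co = \bbK$ by \longref{Notation}{nt:co}), $\bbK$ admits initially rigged $n$-inserters precisely when $\{\Psi_\co, G^\co\}$ exists in $\bbK$ for every $G \colon \mathbb{D}^n_{[[n]]} \to \bbK^\co$; here $G^\co \colon (\mathbb{D}^n_{[[n]]})^\co \to \bbK$, and as $G$ ranges over all such functors $G^\co$ ranges, bijectively, over all enhanced simplicial functors $(\mathbb{D}^n_{[[n]]})^\co \to \bbK$, since $(-)^\co$ is an involution.

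Next I would apply \longref{Proposition}{pro:K^co_has_dual_lim} with ambient enhanced simplicial category $\bbK^\co$, indexing category $\bbA = \mathbb{D}^n_{[[n]]}$, weight $\Psi$, and diagram $G$: it gives that $\bbK^\co$ admits $\{\Psi, G\}$ if and only if $(\bbK^\co)^\co = \bbK$ admits $\{\Psi_\co, G^\co\}$. Quantifying over all $G$ and combining with the previous paragraph yields the claimed equivalence between ``$\bbK^\co$ admits terminally rigged $n$-inserters'' and ``$\bbK$ admits initially rigged $n$-inserters''. For the ``in other words'' reformulation I would invoke the isomorphism $\bbK(X, \{\Psi_\co, G^\co\}) \cong \bbK^\co(X, \{\Psi, G\})$, natural in $X \in \ob\bbK$, produced in the proof of \longref{Proposition}{pro:K^co_has_dual_lim} via \longref{Lemma}{lem:dual_fun}: it identifies the underlying object of the initially rigged $n$-inserter of $G$ in $\bbK$ with that of the terminally rigged $n$-inserter of $G$ in $\bbK^\co$.

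I do not expect a genuine obstacle: the argument is bookkeeping with the $(-)^\co$ dualisation. The only points requiring a moment's care are to confirm that the dual weight $\Psi_\co$ implicit in \longref{Definition}{def:initial} is exactly the weight $\Phi_\co$ to which \longref{Proposition}{pro:K^co_has_dual_lim} applies when $\Phi = \Psi$ — which is immediate from \longref{Notation}{nt:co} — and to keep consistent track of which of $\bbK$ and $\bbK^\co$ is the ambient category in each invocation of the definitions and of the duality proposition.
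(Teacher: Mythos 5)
Your proposal is correct and follows essentially the same route as the paper: the paper's proof also simply invokes \longref{Proposition}{pro:K^co_has_dual_lim} and replaces $\bbK$ with $\bbK^\co$, so that the terminally rigged $n$-inserter $\{\Phi, F\}$ corresponds to the initially rigged $n$-inserter $\{\Phi_\co, F^\co\}$ in the co-dual. Your version just makes the bookkeeping (which category is ambient, quantification over diagrams via the involutivity of $(-)^\co$, and the $\Psi$ versus $\Phi$ notation) explicit.
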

	
	\begin{proof}
		%
		%
		By \longref{Proposition}{pro:K^co_has_dual_lim}, the terminally rigged $n$-inserter $\{\Phi, F\}$ in $\bbK$ corresponds to the initially rigged $n$-inserter $\{\Phi_\co, F^\co\}$ in $\bbK^\co$. Replacing $\bbK$ with $\bbK^\co$ yields the desired statement.
	\end{proof}
	
	
	\begin{rk}
		\label{rk:initial_flexible}
		By \longref{Remark}{rk:term_flexible}, initially rigged $n$-inserters in a chordate enhanced simplicial category are {flexible weighted limits}.
	\end{rk}
	
	\begin{rk}
		\label{rk:F_as_D_initial}
		Following \longref{Remark}{rk:F_as_D}, when an enhanced simplicial category $\bbK$ admits simplicial powers by inchordate enhanced simplicial sets,  a diagram ${\mathbb{D}^n_{[[n]]}}^\co \to \bbK$ is specified by two objects $S$ and $T$ of $\bbK$, and a loose $0$-arrow $D \colon S \leadsto T^{\partial \Delta^n}$, where the composite
		\[ S \xleadsto{D} T^{\partial \Delta^n} \xrightarrow{\ev[[0]]} T \]
		of $D$ with the evaluation at the initial vertex is a tight $0$-arrow.
	\end{rk}

	\subsection{Constructing (co)Eilenberg-Moore objects over loose (co)monads from rigged $n$-inserters}
	Rigged $n$-inserters are significant, not only because it subsumes a wide class of limits; in fact, they serve as the building blocks for (co)Eilenberg-Moore objects over loose (co)monads. The concept of rigged $n$-inserters allows a better understanding and analysis on (co)Eilenberg-Moore objects over loose (co)monads.
	
	\begin{lemma}
		\label{lem:EM}
		Let $\bbK$ be an enhanced simplicial category, where the tight part $\calK_\tau$ is an $\infty$-cosmos. Suppose $\bbK$ admits limits of a countable chain of tight isofibrations, simplicial powers by inchordate enhanced simplicial sets, and terminally rigged $n$-inserters, where the projection of a terminally rigged $n$-inserter is always a tight isofibration of $\bbK$.
		
		Then, for any loose monad $T \colon \Mnd \to \bbK$ on an object $A$ in $\bbK$, the Eilenberg-Moore object over $T$ exists, and that the forgetful functor from the Eilenberg-Moore object to $A$ is a tight isofibration.
	\end{lemma}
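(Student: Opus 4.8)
The plan is to build the Eilenberg--Moore object $\{W, T\}$ as the limit of a countable tower of terminally rigged $n$-inserters, mirroring the construction of $\TAlg$ recalled in the introduction, but in its homotopy coherent form and enhanced so as to keep track of the tight vertices. To begin, I would observe that the unique non-identity tight vertex of $W(+) = N(\Delta_\top)_{[0]}$ --- the terminal vertex $[0]$ --- corresponds, via the enriched Yoneda isomorphism $[\Mnd, \bbF_\Delta](\Mnd(+,-), W) \cong W(+)$, to a \emph{tight} map of $\F_\Delta$-weights $\iota \colon \Mnd(+,-) \to W$. Since $\{\Mnd(+,-), T\} \cong T(+) = A$, the forgetful functor is exactly the limit projection of $\{W, T\}$ at this vertex; because the vertex is tight, the forgetful functor is then automatically a tight $0$-arrow once $\{W, T\}$ is known to exist. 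So it remains to prove that $\{W, T\}$ exists and that this projection is an isofibration.

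The technical heart is a presentation of $W$ in the functor $\F_\Delta$-category $[\Mnd, \bbF_\Delta]$ as a countable colimit
\[ \Mnd(+,-) = W^{(0)} \longrightarrow W^{(1)} \longrightarrow W^{(2)} \longrightarrow \cdots, \qquad W \cong \colim_k W^{(k)}, \]
in which each $W^{(k+1)}$ is obtained from $W^{(k)}$ by a single enhanced cell attachment: there is an integer $n_k$ and a pushout square
\[\begin{tikzcd}[ampersand replacement=\&]
	{\partial\Delta^{n_k} \otimes \Mnd(+,-)} \& {W^{(k)}} \\
	{\Delta^{n_k} \otimes \Mnd(+,-)} \& {W^{(k+1)}}
	\arrow[from=1-1, to=1-2]
	\arrow[from=1-1, to=2-1]
	\arrow["\lrcorner"{anchor=center, pos=0.125, rotate=180}, draw=none, from=2-2, to=1-1]
	\arrow[from=1-2, to=2-2]
	\arrow[from=2-1, to=2-2]
\end{tikzcd}\]
where the left-hand map is the representable $\Mnd(+,-)$ copowered with the inclusion of $\partial\Delta^{n_k}$, regarded as an inchordate enhanced simplicial set, into $\Delta^{n_k}$ equipped with the enhanced structure carrying only the terminal vertex $[[n_k]]$ as tight. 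This is the skeletal filtration of the simplicial set $N(\Delta_\top)$ --- which has only countably many nondegenerate orbits under the ordinal-sum left action, enumerated in an order respecting faces --- organised equivariantly and refined to carry the marking data; the combinatorial fact that makes it work, which is the input of \cite{RV:2015} together with the cellular machinery of \cite{book:RV:2022}, is that the algebra weight is generated by cells each of which is attached \emph{along its terminal vertex}, so that the tight vertex $[0]$ of $W(+)$ persists in every $W^{(k)}(+)$. Carrying out this cellular filtration and checking that the tightness bookkeeping behaves correctly under the pushouts in $[\Mnd, \bbF_\Delta]$ is the main obstacle of the proof.

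Granting the filtration, one finishes as follows. The weighted-limit functor $\{-, T\} \colon [\Mnd, \bbF_\Delta]^{\op} \to \bbK$ carries colimits of weights to limits in $\bbK$ whenever the relevant limits exist, so by induction on $k$ each $\{W^{(k)}, T\}$ exists, with $\{W^{(0)}, T\} \cong A$ by Yoneda, and the pushout square above is sent to a pullback square which, by \longref{Remark}{rk:F_as_D} and \longref{Proposition}{pro:as_pullbacks}, realises $\{W^{(k+1)}, T\}$ as the terminally rigged $n_k$-inserter of a loose $0$-arrow $D_k \colon \{W^{(k)}, T\} \leadsto (\cdot)^{\partial\Delta^{n_k}}$ whose composite with $\ev[[n_k]]$ is tight --- the latter being precisely the ``attached along the terminal vertex'' property. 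By hypothesis $\bbK$ admits such terminally rigged $n$-inserters, and their projections $\{W^{(k+1)}, T\} \twoheadrightarrow \{W^{(k)}, T\}$ are tight isofibrations. Hence $\{W, T\} \cong \lim_k \{W^{(k)}, T\}$ is the limit of a countable chain
\[ \cdots \twoheadrightarrow \{W^{(2)}, T\} \twoheadrightarrow \{W^{(1)}, T\} \twoheadrightarrow \{W^{(0)}, T\} \cong A \]
of tight isofibrations of $\bbK$; by assumption $\bbK$ admits this limit, and by \longref{Example}{eg:tight_lim} it is computed in the $\infty$-cosmos $\calK_\tau$, where isofibrations are closed under limits of countable chains. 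Therefore the Eilenberg--Moore object $\{W, T\}$ over $T$ exists, and its projection to $\{W^{(0)}, T\} \cong A$, which by the first paragraph is the forgetful functor, is a tight isofibration, as claimed.
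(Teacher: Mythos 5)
Your proposal follows essentially the same route as the paper's proof: the enhanced refinement of the Riehl--Verity filtration $W_0 \hookrightarrow W_1 \hookrightarrow \cdots$ of the monad weight, with each stage a pushout of a cell attached at the terminal (tight) vertex, sent by $\{-, T\}$ to a countable tower of terminally rigged $n$-inserters over $A$ whose limit, a limit of a countable chain of tight isofibrations, is the Eilenberg--Moore object. The ``tightness bookkeeping'' you defer is exactly what the paper verifies via \cite[Observation 5.15]{RV:2015} and the Yoneda lemma (the attaching map restricted along the terminal vertex coincides with the tight map $[0]\oplus\cdot$), so the approaches agree in substance.
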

	
	\begin{proof}
		Following \cite[Lemma 5.10]{RV:2015} and \cite[Corollary 5.12]{RV:2015}, we refine the constructions and adapt to the enhanced setting.
		
		For any $k \in \N_0$, an $\F_\Delta$-weight $W_k \colon \Mnd \to \bbF_\Delta$ amounts to an enhanced simplicial set $W_k(+)$, which is equipped with a left action by $N(\Delta_{\emptyset})$. We define $W_k(+)$ to be an enhanced simplicial set whose loose part is the same as the simplicial set $W_k(+)$ in \cite[Lemma 5.10]{RV:2015}, where only the terminal vertex $[0]$ is tight. It is clear that $W_k(+)$ is an enhanced simplicial subset of $W(+)$ in \longref{Example}{eg:EM}, and the inclusion preserves tight vertices. The left action for $W_k$ is then inherited from $W$. To conclude, each $W_k$ is a sub-$\F_\Delta$-weight of $W$. In particular, $W_0$ is the representable $\Mnd(+, \cdot) \colon \Mnd \to \bbF_\Delta$.
		
		We then obtain a sequence of $\F_\Delta$-weights
		$$W_0 \hookrightarrow W_1 \hookrightarrow \cdots \hookrightarrow W_k \hookrightarrow \cdots$$
		whose colimit is the $\F_\Delta$-weight $W \colon \Mnd \to \bbF_\Delta$, where each $W_{k + 1}$ can be constructed from $W_k$ as a pushout
		\begin{center}
			\begin{tikzcd}[ampersand replacement=\&]
				{\partial \Delta^{m_k} \times {\Mnd(+, \cdot)}}  \ar[rr, "\scriptscriptstyle{i^{m_k} \times {\Mnd(+, \cdot)}}", hook] \ar[d,   "d_k"'] \& \phantom{} \& {\Delta^{m_k} \times {\Mnd(+, \cdot)}} \ar[d]
				\\
				W_{k} \ar[rr, hook]  \& \phantom{} \& {W_{k + 1}}\arrow[ul, phantom, "\ulcorner", very near start]
			\end{tikzcd}
		\end{center}
		in the enhanced simplicial category $[\Mnd, \bbF_\Delta]$, for some $m_k \ge 1$.
		
		We proceed by induction to show the existence of each $\{W_k, T\}$ in $\bbK$. 
		
		When $k = 0$, it is clear that $\{{\Mnd(+, \cdot)}, T\} \cong A$ exists, by Yoneda's Lemma. Assume that $\{W_k, T\}$ exists. The enhanced simplicial functor $\{\cdot, T\} \colon [\Mnd, \bbF_\Delta] \to \bbK$ sends the span in the above pushout diagram to a cospan
		\[\begin{tikzcd}[ampersand replacement=\&]
			\& {\{\Delta^{m_k}\times {\Mnd(+, \cdot)}, T\}} \\
			{\{W_k, T\}} \& {\{\partial\Delta^{m_k}\times {\Mnd(+, \cdot)}, T\}}
			\arrow["{\{i^{m_k} \times {\Mnd(+, \cdot)}, T\}}", from=1-2, to=2-2, loose]
			\arrow["{D_k:= \{d_k, T\}}"', from=2-1, to=2-2, loose]
		\end{tikzcd}\]
		in $\bbK$. We show that the pullback $\{W_{k + 1}, T\}$ of this cospan exists, and is indeed a terminally rigged $n$-inserter.
		
		For any simplicial set $J$, and for any $X \in \ob\bbK$, we have a chain of isomorphisms
		$$
		\begin{aligned}
			[\Mnd, \bbF_\Delta](J \times {\Mnd(+, \cdot)}, \bbK(X, T)) & \cong [\Mnd, \bbF_\Delta]( {\Mnd(+, \cdot)}, \bbK(X, T))^J
			\\
			&\cong \bbK(X, \{{\Mnd(+, \cdot)}, T\})^J \cong \bbK(X, \{{\Mnd(+, \cdot)}, T\}^J)
		\end{aligned}
		$$
		in $\F_\Delta$, and therefore, $\{J \times {\Mnd(+, \cdot)}, T\} \cong \{{\Mnd(+, \cdot)}, T\}^J \cong A^J$. Similarly, for any simplicial map $j \colon J_1 \to J_2$, the image $\{j \times {\Mnd(+, \cdot)}, T\} \colon \{J_2 \times {\Mnd(+, \cdot)}, T\} \to \{J_1 \times W_0, T\}$ of $j \times {\Mnd(+, \cdot)} \colon J_1 \times {\Mnd(+, \cdot)} \to J_2 \times {\Mnd(+, \cdot)}$ under $\{\cdot, T\}$ is defined to be the universal map associated to the composite of 
		$j \times {\Mnd(+, \cdot)}$ with the unit $J_2 \times \Mnd(+, \cdot) \to \bbK(A^{J_2}, T)$ for $\{J_2 \times {\Mnd(+, \cdot)}, T\}$, which is simply given by the pre-composition $A^j\colon A^{J_2} \to A^{J_1}$ by $j$.
		
		Moreover, we check that 
		$$\{W_k, T\} \xleadsto{D := \{d, T\}} A^{\partial \Delta^{m_k}} \xtwoheadrightarrow{\ev[[m_k]]} A$$
		is a tight $0$-arrow in $\bbK$. From \cite[Observation 5.15]{RV:2015}, we have a commutative triangle
		\[\begin{tikzcd}[ampersand replacement=\&]
			\& {\partial\Delta^{m_k} \times {\Mnd(+, \cdot)}} \\
			{\Delta^0_{[[0]]} \times {\Mnd(+, \cdot)}} \& {W_k}
			\arrow["{d_k}", from=1-2, to=2-2]
			\arrow["{[[m_k]] \times 1}", from=2-1, to=1-2]
			\arrow["{[0] \oplus \cdot}"', hook, from=2-1, to=2-2]
		\end{tikzcd}.\]
		By Yoneda's Lemma, for any enhanced simplicial set $S$, we have a natural isomorphism
		$$[\Mnd, \bbF_\Delta](S \times \Mnd(+, \cdot), W_k) \cong \bbF_\Delta(S, W_k(+)),$$
		which particularly sends $\Delta^0_{[[0]]} \times \Mnd(+, \cdot) \xhookrightarrow{[0] \oplus \cdot} W_k$ to its component
		$$\Delta^0_{[[0]]} \xrightarrow{[0] \oplus [-1]} W_k(+)$$
		at the identity $[-1]$. So altogether, the above commutative triangle is translated to
		\[\begin{tikzcd}[ampersand replacement=\&]
			\& {\partial\Delta^{m_k} } \\
			{\Delta^0_{[[0]]} } \& {W_k(+)}
			\arrow[from=1-2, to=2-2]
			\arrow["{[[m_k]] }", from=2-1, to=1-2]
			\arrow["{[0] }"', from=2-1, to=2-2]
		\end{tikzcd}.\]
		Since $\Delta^0_{[[0]]} \xrightarrow{[0]} W_k(+)$ is clearly an enhanced simplicial map, we conclude that
		$$\Delta^0_{[[0]]} \times \Mnd(+, \cdot) \xrightarrow{[[m_k]] \times 1} \partial \Delta^{m_k} \times \Mnd(+, \cdot) \xrightarrow{d_k} W_k$$
		is an enhanced simplicial natural transformation, which means it is a tight $0$-arrow in $[\Mnd, \bbF_\Delta]$. Now, the contravariant $\F_\Delta$-functor $\{\cdot, T\}$ takes this tight $0$-arrow to a tight $0$-arrow
		$$\{W_k, T\} \xleadsto{D_k := \{d_k, T\}} \{\partial \Delta^{m_k} \times \Mnd(+, \cdot), T\} \cong A^{\partial \Delta^{m_k}} \xrightarrow{\{[[m_k]] \times \Mnd(+, \cdot), T\}} \{\Mnd(+, \cdot), T\} \cong A$$
		in $\bbK$, where ${\{[[m_k]] \times \Mnd(+, \cdot), T\}}$ is indeed $\ev[[m_k]]$.
		
		Altogether, the pullback of the above cospan is given by
		\begin{center}
			\begin{tikzcd}[ampersand replacement=\&]
				{\{W_{k + 1}, T\}} \arrow[dr, phantom, "\lrcorner", very near start] \ar[r, "", loose] \ar[d, two heads,  ""'] \& {A^{\Delta^{m_k}}} \ar[d, two heads, "{A^\partial}"]
				\\
				\{W_{k}, T\} \ar[r, "D"', loose]  \& {A^{\partial\Delta^{m_k}}}
			\end{tikzcd},
		\end{center}
		which is a terminally rigged $n$-inserter, where the projection $\{W_{k + 1}, T\} \twoheadrightarrow \{W_k, T\}$ is a tight isofibration.
		
		In conclusion, the $\F_\Delta$-functor $\{\cdot, T\} \colon \Mnd \to \bbF_\Delta$ sends the sequence to a countable chain of tight isofibrations
		$$\cdots \twoheadrightarrow \{W_k, T\} \twoheadrightarrow \cdots \twoheadrightarrow \{\Mnd(+, \cdot), T\} \cong A$$
		of rigged $n$-inserters, whose limit is the Eilenberg-Moore object over $T$.
		
		The forgetful functor is a tight isofibration which also reflects tightness, because the Eilenberg-Moore object over $T$ is constructed as a limit of a countable chain of tight isofibrations.
	\end{proof}
	
		%
	
	And we have the dual version as follows.
	
	\begin{lemma}
		\label{lem:coEM}
		Let $\bbK$ be an enhanced simplicial category, where the tight part $\calK_\tau$ is an $\infty$-cosmos. Suppose $\bbK$ admits limits of a countable chain of tight isofibrations, simplicial powers by inchordate enhanced simplicial sets, and initially rigged $n$-inserters, where the projection of an initially rigged $n$-inserter is always a tight isofibration of $\bbK$.
		
		Then, for any loose comonad $C \colon \Mnd \to \bbK$ on an object $A$ in $\bbK$, the coEilenberg-Moore object over $C$ exists, and that the forgetful functor from the coEilenberg-Moore object to $A$ is a tight isofibration.
	\end{lemma}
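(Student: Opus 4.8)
The plan is to deduce \longref{Lemma}{lem:coEM} from \longref{Lemma}{lem:EM} by passing to the dual enhanced simplicial category $\bbK^\co$ of \longref{Notation}{nt:co}. Recall from \longref{Example}{eg:EM} that a loose comonad $C$ on an object $A$ in $\bbK$ is precisely a loose monad $C^\co$ on $A$ in $\bbK^\co$, and that the coEilenberg-Moore object $\{W_\co, C\}$ over $C$ in $\bbK$ is, as noted there (and as follows from \longref{Proposition}{pro:K^co_has_dual_lim} since $(W_\co)_\co \cong W$), the Eilenberg-Moore object $\{W, C^\co\}$ over the loose monad $C^\co$ in $\bbK^\co$. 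So it suffices to verify that $\bbK^\co$ satisfies all the hypotheses of \longref{Lemma}{lem:EM}, and then to transport the conclusion back across this identification.

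The bulk of the work is checking that the hypotheses transfer. First I would observe that $(\calK^\co)_\tau = (\calK_\tau)^\co$ is again an $\infty$-cosmos: passing to the $\co$-dual reverses only the $1$-arrows and not the $0$-arrows, the hom-objects $\calK_\tau(A, B)^\op$ are again quasi-categories, and declaring the isofibrations to be the same class of $0$-arrows works because quasi-categorical isofibrations and all of the cosmological limits of \longref{Definition}{def:cosmos} are stable under taking opposites of hom-quasi-categories (a power by $J$ becoming a power by $J^\op$). Second, limits of a countable chain of tight isofibrations transfer directly: the indexing shape has only trivial $1$-arrows, so by \longref{Proposition}{pro:K^co_has_dual_lim} such a limit in $\bbK$ is the same-shaped limit over the same family of tight isofibrations in $\bbK^\co$. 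Third, for the same reason simplicial powers by inchordate enhanced simplicial sets transfer, with $J$ replaced by $J^\op$, which again ranges over all simplicial sets. Finally, by \longref{Proposition}{pro:terminal_initial}, $\bbK$ admitting initially rigged $n$-inserters is exactly $\bbK^\co$ admitting terminally rigged $n$-inserters, and under this correspondence the projection of an initially rigged $n$-inserter of $\bbK$ is the projection of the corresponding terminally rigged $n$-inserter of $\bbK^\co$; hence the hypothesis that these projections are tight isofibrations carries over as well.

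With all hypotheses verified, \longref{Lemma}{lem:EM} applied to $\bbK^\co$ and the loose monad $C^\co$ produces the Eilenberg-Moore object over $C^\co$ in $\bbK^\co$ together with a forgetful functor that is a tight isofibration and reflects tightness; translating back along the identification of the first paragraph, this is exactly the coEilenberg-Moore object over $C$ in $\bbK$ with the asserted properties. Alternatively, for a self-contained argument one may re-run the proof of \longref{Lemma}{lem:EM} verbatim in $\bbK$ itself, with $W$ replaced by $W_\co$, $\Mnd$ by $\Mnd^\co$, ``terminally rigged $n$-inserter'' by ``initially rigged $n$-inserter'', \longref{Remark}{rk:F_as_D} by \longref{Remark}{rk:F_as_D_initial}, and \cite[Observation 5.15]{RV:2015} by its evident dual; the combinatorial filtration $W_{\co,0}\hookrightarrow W_{\co,1}\hookrightarrow\cdots$ of the weight $W_\co$ is obtained by dualising the one used there.

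The proof is essentially formal once the $\co$-machinery is in place, so I do not expect a genuine obstacle; the one step needing care — and hence the thing I would double-check — is confirming that each admissibility hypothesis of \longref{Lemma}{lem:coEM} dualises to \emph{precisely} the corresponding hypothesis of \longref{Lemma}{lem:EM} for $\bbK^\co$, in particular that ``limit of a countable chain of tight isofibrations'' is not silently converted into a colimit. This is exactly what \longref{Proposition}{pro:K^co_has_dual_lim} guarantees: the $\co$-duality reverses only $1$-arrows, so conical limits over diagrams with trivial $1$-arrows are genuinely preserved, while the non-conical rigged $n$-inserter weights are handled separately by \longref{Proposition}{pro:terminal_initial}.
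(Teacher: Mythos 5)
Your argument is correct and matches the paper's intent: the paper states \longref{Lemma}{lem:coEM} as "the dual version" of \longref{Lemma}{lem:EM} with the proof left to exactly the $\co$-duality you spell out, via \longref{Proposition}{pro:K^co_has_dual_lim} and \longref{Proposition}{pro:terminal_initial}. Your careful check that the hypotheses (dual $\infty$-cosmos, chain limits of tight isofibrations, powers with $J$ replaced by $J^\op$, and the inserter projections) transfer to $\bbK^\co$ is precisely the content the paper takes for granted.
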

	
		%
	
	\section{Completeness results for $(\infty, 1)$-categories with structure and their lax morphisms}
	\label{sec:complete}
	We are going to establish our main results in this section.
	
	\subsection{\texorpdfstring{$\Lali(\calK)$}{Lali(K)}}
	Recall in \longref{Example}{eg:bbLali} that $\Lali(\calK)$ denotes the enhanced simplicial category of lali-isofibrations of an $\infty$-cosmos $\calK$. 
	
	First of all, it is important that $\Lali(\calK)$ has tight cosmological limits.
	
	\begin{pro}
		\label{pro:tight_lim}
		Tight cosmological limits exist in the enhanced simplicial category $\Lali(\calK)$ of lali-isofibrations of an $\infty$-cosmos $\calK$, and are preserved by the inclusion
		$$\Lali(\calK) \hookrightarrow \calK^{\isof}_\chor.$$
	\end{pro}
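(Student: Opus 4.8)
The plan is to reduce the claim to two facts: that the tight part $\calLali(\calK)$ is an $\infty$-cosmos, hence possesses all cosmological limits, and that the forgetful inclusion $J \colon \calLali(\calK) \hookrightarrow \calK^{\isof}$ --- which is exactly the functor on loose parts underlying the inclusion $\Lali(\calK) \hookrightarrow \calK^{\isof}_\chor$ --- is a cosmological functor, hence preserves these limits. Granted both, a cosmological limit computed in $\calLali(\calK) = \calK_\tau$ is carried by $J_{\Lali(\calK)}$ to the corresponding cosmological limit in $\calK^{\isof} = \calK_\lambda$; by \longref{Example}{eg:tight_lim} this is precisely what it means for it to be a \emph{tight cosmological limit} in $\Lali(\calK)$, and the stated properties of the limit projections (tight isofibrations, jointly reflecting tightness) then come for free from that example.

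The first fact is \longref{Example}{eg:calLali}, following \longcite{book:RV:2022}. For the second, I would check that $J$ preserves isofibrations and each class of cosmological limit in \longref{Definition}{def:cosmos}: the terminal object, small products, pullbacks of isofibrations, limits of countable chains of isofibrations, and simplicial powers. Since limits of isofibrations in $\calK^{\isof}$ are computed pointwise in $\calK$ by \longref{Example}{eg:isof_of_cosmos}, in each case one forms the limit there and verifies: (i) the resulting isofibration is again a lali-isofibration; (ii) the legs of the limit cone are morphisms of lalis; and (iii) the $\calLali(\calK)$-enriched universal property follows by restricting the $\calK^{\isof}$-enriched one along the fully faithful inclusion $\calLali(\calK)(p_1, p_2) \hookrightarrow \calK^{\isof}(p_1, p_2)$ of hom-quasi-categories. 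Items (i) and (ii) are assertions about the homotopy $2$-category $h\calK$ only: an adjunction with invertible counit is preserved by the product, pullback, limit-of-chains, and cotensor $2$-functors, and the mate calculus shows the canonical cone legs have invertible mates --- these are essentially the verifications already underlying the proof that $\calLali(\calK)$ is an $\infty$-cosmos in \longcite{book:RV:2022}. Item (iii) is formal once (i) and (ii) are known, since a cosmological limit is characterised by a universal property tested against a single object and the inclusion on hom-objects is full on the simplices that matter.

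The main obstacle will be items (i)--(ii): one must keep track of the fact that being a lali is a condition in $h\calK$ rather than in $\calK$, and that the homotopy $2$-category of $\calK^{\isof}$ matches the pointwise description of its limits; here I would rely on the corresponding results in \longcite{book:RV:2022} rather than redo the $2$-categorical bookkeeping. With preservation by $J$ established, \longref{Example}{eg:tight_lim} immediately yields the conclusion, including that the limit projections are tight isofibrations and jointly reflect tightness.
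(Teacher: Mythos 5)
Your reduction to (a) $\calLali(\calK)$ being an $\infty$-cosmos and (b) the inclusion $\calLali(\calK)\hooktwoheadrightarrow\calK^{\isof}$ being cosmological matches the inputs the paper also takes as given (minor slip: that inclusion is the \emph{tight}-part functor of $\Lali(\calK)\hookrightarrow\calK^{\isof}_\chor$; the loose-part functor is the full inclusion of the lali-isofibrations with \emph{all} morphisms of isofibrations into $\calK^{\isof}$). The genuine gap is your last step: you assert that the projections jointly reflecting tightness ``comes for free'' from \longref{Example}{eg:tight_lim}. That example does not prove this; it only unpacks what the $\F_\Delta$-enriched universal property of a tight limit consists of, and joint reflection of tightness (here: of morphisms of lalis) is precisely the extra piece of that universal property beyond ``limit in the tight part, preserved by $J$''. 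This extra piece is the entire content of the paper's proof: for an existing $\sSet$-weighted limit $\{W,F\}$ in $\calLali(\calK)$ and a morphism of isofibrations $q\colon C\to\{W,F\}$ with $C$ a lali-isofibration, the paper pastes the counit of $C$ with the unit of $\{W,F\}$, uses the compatibility \eqtref{Equation}{eqt:eta_proj} to identify the whiskerings with the projections with the mates of the $p_i\cdot q$, and invokes the ($2$-dimensional) universal property of $\{W,F\}$ to conclude that the mate of $q$ is invertible if and only if all mates of $p_i\cdot q$ are. Your proposal contains no argument for this reflection statement.

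Your route can be repaired without the mate calculus, but only by an explicit argument you do not give: since the tight homs $\calLali(\calK)(p_1,p_2)\hookrightarrow\calK^{\isof}(p_1,p_2)$ are full on positive-dimensional arrows (a higher arrow is tight as soon as its boundary $0$-arrows are morphisms of lalis), a loose $0$-arrow $b\colon C\leadsto\{W,F\}$ all of whose composites with the projections are tight induces a cone that factors through the tight homs; the $\sSet$-enriched universal property in $\calLali(\calK)$ then produces a tight $b'$ inducing the same cone, and the universal property in the loose part (available because the limit is created in $\calK^{\isof}$ and the loose part is a full subcategory of $\calK^{\isof}$ containing the limit) forces $b=b'$, so $b$ is tight. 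Either this factorisation argument or the paper's direct $2$-categorical computation must be supplied; as written, the central claim of the proposition is left unproved.
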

	
	\begin{proof}
		As explained in \longref{Example}{eg:tight_lim}, a tight limit in an enhanced simplicial category $\bbK$ is a $\sSet$-weighted limit in $\calK_\tau$, which is also a $\sSet$-weighted limit in $\calK_\lambda$ with the additional universal property that the projections jointly reflect tightness. In our case, a tight cosmological limit in $\Lali(\calK)$ amounts to a $\sSet$-weighted limit in the $\infty$-cosmos $\calLali(\calK)$, in addition, the limit projections jointly reflect morphisms of lalis. In short, our goal is to show that for an existing $\sSet$-weighted limit in $\Lali(\calK)$, the limit projections jointly reflect morphisms of lalis.
		
		More precisely, let $W \colon \mathcal{I} \to \sSet$ be a  $\sSet$-weight, and $F \colon \mathcal{I} \to \calLali(\calK)$ be a simplicial functor. For each object $i$ in $\mathcal{I}$, $Fi \colon (Fi)_1 \twoheadrightarrow (Fi)_2$ is a lali-isofibration with right adjoint $r_i \colon (Fi)_2 \to (Fi)_1$. The corresponding unit in the homotopy $2$-category for the lali-isofibration $Fi$ is denoted by
		\[\begin{tikzcd}[ampersand replacement=\&]
			{(Fi)_1} \&\& {(Fi)_1} \\
			\& {(Fi)_2}
			\arrow[""{name=0, anchor=center, inner sep=0}, equal, from=1-1, to=1-3]
			\arrow["Fi"', two heads, from=1-1, to=2-2]
			\arrow["{r_i}"', from=2-2, to=1-3]
			\arrow["{\eta_i}", shorten <=6pt, shorten >=5pt, Rightarrow, from=0, to=2-2]
		\end{tikzcd}.\]
		Suppose $\{W, F\} \colon \{W, F\}_1 \twoheadrightarrow \{W, F\}_2$ exists in $\calLali(\calK)$, whose right adjoint is given by $R \colon \{W, F\}_2 \to \{W, F\}_1$. Denote by $p_{i} \colon \{W, F\} \to Fi$ a limit projection to $Fi$, which amounts to a commutative square
		\[\begin{tikzcd}[ampersand replacement=\&]
			{\{W, F\}_1} \& {(Fi)_1} \\
			{\{W, F\}_2} \& {(Fi)_2}
			\arrow["{{p_i}_1}", two heads, from=1-1, to=1-2]
			\arrow["{\{W, F\}}"', two heads, from=1-1, to=2-1]
			\arrow["Fi", two heads, from=1-2, to=2-2]
			\arrow["{{p_i}_2}"', two heads, from=2-1, to=2-2]
		\end{tikzcd},\] 
		for any $i$ in $\mathcal{I}$. By construction, the $\eta_i$'s induce the unit
		\[\begin{tikzcd}[ampersand replacement=\&]
			{\{W, F\}_1} \&\& {\{W, F\}_1} \\
			\& {\{W, F\}_2}
			\arrow[""{name=0, anchor=center, inner sep=0}, equal, from=1-1, to=1-3]
			\arrow["{\{W, F\}}"', two heads, from=1-1, to=2-2]
			\arrow["R"', from=2-2, to=1-3]
			\arrow["\eta", shorten <=6pt, shorten >=5pt, Rightarrow, from=0, to=2-2]
		\end{tikzcd}\]
		in the homotopy $2$-category for $\{W, F\}$, which satisfies the equation
		\begin{equation}
			\label{eqt:eta_proj}
			\begin{tikzcd}[ampersand replacement=\&, scale cd = 0.9]
				{\{W, F\}_1} \&\& {\{W, F\}_1} \& {(Fi)_1} \\
				\& {\{W, F\}_2}
				\arrow[""{name=0, anchor=center, inner sep=0}, equal, from=1-1, to=1-3]
				\arrow["{\{W, F\}}"', two heads, from=1-1, to=2-2]
				\arrow["{{p_i}_1}", two heads, from=1-3, to=1-4]
				\arrow["R"', from=2-2, to=1-3]
				\arrow["\eta", shorten <=6pt, shorten >=5pt, Rightarrow, from=0, to=2-2]
			\end{tikzcd}
			=
			\begin{tikzcd}[ampersand replacement=\&, scale cd = 0.9]
				{\{W, F\}_1} \& {(Fi)_1} \&\& {(Fi)_1} \\
				\&\& {(Fi)_2}
				\arrow["{{p_i}_1}", two heads, from=1-1, to=1-2]
				\arrow[""{name=0, anchor=center, inner sep=0}, equal, from=1-2, to=1-4]
				\arrow["Fi"', two heads, from=1-2, to=2-3]
				\arrow["{r_i}"', from=2-3, to=1-4]
				\arrow["{\eta_i}", shorten <=6pt, shorten >=5pt, Rightarrow, from=0, to=2-3]
			\end{tikzcd},
		\end{equation}
		for each object $i$.
		
		Now, let $C \colon C_1 \twoheadrightarrow C_2$ be a lali-isofibration of $\calK$, whose right adjoint is given by $R_C \colon C_2 \to C_1$ and counit is denoted by $\epsilon_C$. Let $q \colon C \to \{W, F\}$ be a morphism of isofibrations, i.e., a commutative square
		\[\begin{tikzcd}[ampersand replacement=\&]
			{C_1} \& {\{W, F\}_1} \\
			{C_2} \& {\{W, F\}_2}
			\arrow["{q_1}", from=1-1, to=1-2]
			\arrow["C"', two heads, from=1-1, to=2-1]
			\arrow["{\{W, F\}}", two heads, from=1-2, to=2-2]
			\arrow["{q_2}"', from=2-1, to=2-2]
		\end{tikzcd}.\]
		From \longref{Equation}{eqt:eta_proj}, we deduce that
		\[\begin{tikzcd}[ampersand replacement=\&, scale cd = 0.85]
			\& {C_1} \& {\{W, F\}_1} \& {(Fi)_1} \& {(Fi)_1} \\
			{C_2} \& {C_2} \& {\{W, F\}_2} \& {(Fi)_2}
			\arrow["{q_1}", from=1-2, to=1-3]
			\arrow["C", two heads, from=1-2, to=2-2]
			\arrow["{{p_i}_1}", from=1-3, to=1-4]
			\arrow["{\{W, F\}}"{description}, two heads, from=1-3, to=2-3]
			\arrow[""{name=0, anchor=center, inner sep=0}, equal, from=1-4, to=1-5]
			\arrow["Fi"{description}, two heads, from=1-4, to=2-4]
			\arrow[""{name=1, anchor=center, inner sep=0}, "{R_C}", from=2-1, to=1-2]
			\arrow[""{name=2, anchor=center, inner sep=0}, equal, from=2-1, to=2-2]
			\arrow["{q_2}"', from=2-2, to=2-3]
			\arrow["{{p_i}_2}"', from=2-3, to=2-4]
			\arrow[""{name=3, anchor=center, inner sep=0}, "{r_i}"', from=2-4, to=1-5]
			\arrow["{\eta_i}"', shorten <=2pt, shorten >=2pt, Rightarrow, from=0, to=3]
			\arrow["{\epsilon_C}", shorten <=2pt, shorten >=2pt, Rightarrow, from=1, to=2]
		\end{tikzcd}
		=
		\begin{tikzcd}[ampersand replacement=\&, scale cd = 0.85]
			\& {C_1} \& {\{W, F\}_1} \& {\{W, F\}_1} \& {(Fi)_1} \\
			{C_2} \& {C_2} \& {\{W, F\}_2}
			\arrow["{q_1}", from=1-2, to=1-3]
			\arrow["C", two heads, from=1-2, to=2-2]
			\arrow[""{name=0, anchor=center, inner sep=0}, equal, from=1-3, to=1-4]
			\arrow["{\{W, F\}}"{description}, two heads, from=1-3, to=2-3]
			\arrow["{{p_i}_1}", two heads, from=1-4, to=1-5]
			\arrow[""{name=1, anchor=center, inner sep=0}, "{R_C}", from=2-1, to=1-2]
			\arrow[""{name=2, anchor=center, inner sep=0}, equal, from=2-1, to=2-2]
			\arrow["{q_2}"', from=2-2, to=2-3]
			\arrow[""{name=3, anchor=center, inner sep=0}, "R"', from=2-3, to=1-4]
			\arrow["\eta"', shorten <=2pt, shorten >=2pt, Rightarrow, from=0, to=3]
			\arrow["{\epsilon_C}", shorten <=2pt, shorten >=2pt, Rightarrow, from=1, to=2]
		\end{tikzcd},
		\]
		for each $i \in \mathcal{I}$. By the universal property of $\{W, F\}$, the right hand side is an isomorphism for every $i$ if and only if
		\[\begin{tikzcd}[ampersand replacement=\&]
			\& {C_1} \& {\{W, F\}_1} \& {\{W, F\}_1} \\
			{C_2} \& {C_2} \& {\{W, F\}_2}
			\arrow["{q_1}", from=1-2, to=1-3]
			\arrow["C", two heads, from=1-2, to=2-2]
			\arrow[""{name=0, anchor=center, inner sep=0}, equal, from=1-3, to=1-4]
			\arrow["{\{W, F\}}"{description}, two heads, from=1-3, to=2-3]
			\arrow[""{name=1, anchor=center, inner sep=0}, "{R_C}", from=2-1, to=1-2]
			\arrow[""{name=2, anchor=center, inner sep=0}, equal, from=2-1, to=2-2]
			\arrow["{q_2}"', from=2-2, to=2-3]
			\arrow[""{name=3, anchor=center, inner sep=0}, "R"', from=2-3, to=1-4]
			\arrow["\eta"', shorten <=2pt, shorten >=2pt, Rightarrow, from=0, to=3]
			\arrow["{\epsilon_C}", shorten <=2pt, shorten >=2pt, Rightarrow, from=1, to=2]
		\end{tikzcd}\]
		is an isomorphism. In other words, the mates of $p_i \cdot q$ are invertible if and only if the mate of $q$ is invertible; henceforth, every $p_i \cdot q$ is a morphism of lalis precisely when $q$ is a morphism of lalis.
	\end{proof}
	
	In particular, when $\calK = \qCat$, we recover the isofibrations of quasi-categories which are also lalis.
	
	In order to establish the existence of terminally rigged $n$-inserters in $\Lali(\calK)$ for an arbitrary $\calK$, our strategy is to first approach the problem in $\qCat$, specifically. In fact, our proof relies on the notion of universal elements of lalis of quasi-categories introduced by Bourke and Lack.
	
	\begin{defi}[{\cite[Definition 5.4]{BL:2023}}]
		\label{def:5.4}
		Let $p \colon E \twoheadrightarrow B$ be an isofibration of $\qCat$. A vertex $x$ in the quasi-category $E$ is said to be a \emph{$p$-universal element} if and only if for every diagram in the solid part of
		\[\begin{tikzcd}[ampersand replacement=\&]
			{\Delta^0} \& {\partial\Delta^n} \& E \\
			\& {\Delta^n} \& B
			\arrow["{[[n]]}", from=1-1, to=1-2]
			\arrow["x", bend left = 30, from=1-1, to=1-3]
			\arrow["{[[n]]}"', from=1-1, to=2-2]
			\arrow[from=1-2, to=1-3]
			\arrow["{i^n}"', hook, from=1-2, to=2-2]
			\arrow["p", two heads, from=1-3, to=2-3]
			\arrow["l"', dashed, from=2-2, to=1-3]
			\arrow[from=2-2, to=2-3]
		\end{tikzcd},\]
		there is a lift $l \colon \Delta^n \to E$ making the whole diagram commute.
	\end{defi}
	
	\begin{rk}
		Universal elements in a quasi-category are isomorphic; conversely, if a vertex is isomorphic to a universal element, then it is also a universal element.
	\end{rk}
	
	\begin{pro}[{\cite[Proposition 5.5]{BL:2023}}]
		\label{pro:5.5}
		Consider a morphism of isofibrations
		\[\begin{tikzcd}[ampersand replacement=\&]
			{E_1} \& {E_2} \\
			{B_1} \& {B_2}
			\arrow["e", from=1-1, to=1-2]
			\arrow["{p_1}"', two heads, from=1-1, to=2-1]
			\arrow["{p_2}", two heads, from=1-2, to=2-2]
			\arrow["b"', from=2-1, to=2-2]
		\end{tikzcd}\]
		of quasi-categories. Then,
		\begin{enumerate}
			\item[$\bullet$] $p_1$ is a lali if and only if for any $y \colon \Delta^0 \to B_1$, there exists a $p_1$-universal element $x \colon \Delta^0 \to E_1$ satisfying $p_1 x = y$;
			\item[$\bullet$] if both $p_1$ and $p_2$ are lalis, then the square is a morphism of lalis if and only if $e$ sends $p_1$-universal elements to $p_2$-universal elements.
		\end{enumerate}
	\end{pro}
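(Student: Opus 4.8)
The plan is to derive the proposition from two standard facts about quasi-categories: the pointwise recognition of adjunctions in the sense of Riehl--Verity, and the elementary theory of reflective subcategories of a quasi-category. Since $p$ is an isofibration, a lali structure on $p$ may always be normalised: given $p \dashv r$ with invertible counit, transporting $r$ along the counit (using that $p$ is an isofibration) yields a right adjoint---still written $r$---with $pr = 1_B$ and counit the identity $2$-cell, and dually for every lali. The bridge to \longref{Definition}{def:5.4} is the following reformulation, which I would establish first by unwinding the lifting property against $i^n \colon \partial\Delta^n \hookrightarrow \Delta^n$ and using that $p$ is an isofibration: a vertex $x$ of $E$ is a $p$-universal element if and only if, for every vertex $a$ of $E$, the map $\Hom_E(a, x) \to \Hom_B(pa, px)$ induced by $p$ is a trivial fibration of quasi-categories (a convenient route being to recognise the lifting property as saying that the comparison $E_{/x} \to B_{/px} \times_B E$ out of the Joyal slice is a trivial fibration, and then passing to fibres over $a$). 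Being already an isofibration, this map is a trivial fibration as soon as it is an equivalence. I expect this combinatorial reformulation to be the main obstacle; everything afterwards is formal.

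For the first bullet, suppose $p$ is a lali, normalised as above, fix $b \in B_0$, and set $x := r(b)$, so $px = b$. The adjunction gives, naturally in $a$, an equivalence $\Hom_E(a, rb) \simeq \Hom_B(pa, b)$; because the counit is the identity, the triangle identities force this equivalence to be \emph{precisely} the map induced by $p$. Thus $p \colon \Hom_E(a, rb) \to \Hom_B(pa, b)$ is an equivalence for every $a$, and by the reformulation $r(b)$ is a $p$-universal element over $b$. Conversely, assume that over every vertex $y$ of $B$ there is a $p$-universal element $x_y$ with $p x_y = y$. By the reformulation, $p \colon \Hom_E(a, x_y) \to \Hom_B(pa, y)$ is an equivalence for all $a$; this is exactly the pointwise universal property exhibiting $x_y$ as the value at $y$ of a right adjoint to $p$, so by the pointwise recognition of adjunctions these values assemble into a right adjoint $r$ with $r(y) \cong x_y$ and $p \dashv r$. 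Finally, since the universal property was taken with comparison equivalence $p$ itself, the counit of $p \dashv r$ at $y$ is the $p$-image of the identity of $r(y)$ transported along $r(y) \cong x_y$, hence invertible; so $p$ is a lali.

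For the second bullet, let $p_1$ and $p_2$ be lalis, normalised, with units $\eta_1, \eta_2$, and let $(e, b)$ be the given morphism of isofibrations, so $p_2 e = b p_1$. Its mate with respect to $p_1 \dashv r_1$ and $p_2 \dashv r_2$ simplifies (using $p_1 r_1 = 1$) to the $2$-cell $e r_1 \Rightarrow r_2 b$ whose component at a vertex $c$ of $B_1$ is $(\eta_2)_{e(r_1 c)} \colon e(r_1 c) \to r_2 p_2 e r_1 c = r_2 b c$. Now recall that $r_2$ is fully faithful, because the counit of $p_2 \dashv r_2$ is invertible, so $r_2$ exhibits $B_2$ as a reflective subcategory of $E_2$; its essential image is precisely the set of $p_2$-universal elements (each $r_2 b'$ is $p_2$-universal by the first bullet, every $p_2$-universal element over $b'$ is isomorphic to $r_2 b'$, and isomorphs of $p_2$-universal elements are $p_2$-universal by the Remark after \longref{Definition}{def:5.4}), and a vertex $z$ of $E_2$ lies in this essential image exactly when its reflection unit $(\eta_2)_z$ is invertible. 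Since a $2$-cell between functors of quasi-categories is invertible if and only if all of its components at vertices are isomorphisms, the square is a morphism of lalis if and only if $e(r_1 c)$ is a $p_2$-universal element for every vertex $c$ of $B_1$.

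It remains to identify this with the condition that $e$ preserve universal elements. If $e$ sends $p_1$-universal elements to $p_2$-universal elements, then since every $r_1 c$ is $p_1$-universal (first bullet), each $e(r_1 c)$ is $p_2$-universal, so the square is a morphism of lalis. Conversely, suppose the square is a morphism of lalis, so $e(r_1 c)$ is $p_2$-universal for all $c$, and let $x$ be any $p_1$-universal element; putting $c := p_1 x$, the vertices $x$ and $r_1 c$ are both $p_1$-universal over $c$, hence isomorphic, so $e(x) \cong e(r_1 c)$ is isomorphic to a $p_2$-universal element and is therefore itself $p_2$-universal. This completes the plan; beyond the cited recognition theorem for adjunctions, the only non-formal ingredient is the slice/hom reformulation of \longref{Definition}{def:5.4} flagged at the outset.
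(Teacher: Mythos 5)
There is no internal proof to compare you against here: the paper takes this statement verbatim from Bourke and Lack, \cite[Proposition 5.5]{BL:2023}, and uses it as a black box (notably in \longref{Proposition}{pro:Lali_qCat} and \longref{Theorem}{thm:Lali(K)}). So the only meaningful comparison is with that external source, and judged on its own terms your reconstruction is essentially correct. The load-bearing step is exactly the one you flag: by the join--slice adjunction, the lifting property of \longref{Definition}{def:5.4} (for all $n\ge 1$, using $\partial\Delta^n=(\partial\Delta^{n-1}\star\Delta^0)\cup_{\partial\Delta^{n-1}}\Delta^{n-1}$) is the right lifting property of the comparison map $E_{/x}\to B_{/px}\times_B E$ against all boundary inclusions, i.e.\ the assertion that this comparison is a trivial fibration; since $p$ is an inner fibration this comparison is a right fibration over $E$, so that is equivalent to it inducing equivalences on fibres over vertices, which are precisely your maps $\Hom_E(a,x)\to \Hom_B(pa,px)$. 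Granting this, the first bullet is the pointwise-existence criterion for right adjoints of quasi-category functors, with the counit normalised to an identity by lifting it through the isofibration $p$ (which is what gives $p_1x=y$ on the nose), and the second bullet reduces, via pointwise detection of invertibility of a natural transformation, to identifying the essential image of the fully faithful right adjoint $r_2$ with the $p_2$-universal elements.

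Two points to keep honest if this were written out in full. First, you use that any two $p$-universal elements over the same vertex are isomorphic; the remark following \longref{Definition}{def:5.4} asserts this without proof, and in your framework it deserves the one-line argument that a universal element over $b$ is a terminal object of $B_{/b}\times_B E$. Second, the external inputs you lean on (the pointwise construction of adjoints, the fact that invertibility of a $2$-cell is detected on vertex components, the characterisation of the essential image of a fully faithful right adjoint by invertibility of the unit) are genuine theorems that need citations, though all are standard and available. Neither point is a gap in the argument, only in its self-containedness, so your proposal stands as a sound route to the cited result.
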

	
	The following result is the technical heart of this article.
	
	\begin{pro}
		\label{pro:Lali_qCat}
		The enhanced simplicial category $\Lali(\qCat)$ of lali-isofibrations of $\qCat$ admits terminally rigged $m$-inserters, which are preserved by the inclusion
		$$\Lali(\qCat) \hookrightarrow \qCat^{\isof}_\chor,$$
		moreover, the projection is an isofibration of $\Lali(\qCat)$. 
		
		In elementary terms, let $A \colon A_1 \twoheadrightarrow A_2$ and $B \colon B_1 \twoheadrightarrow B_2$ be lali-isofibrations of $\qCat$. Suppose $D \colon A \to B^{\partial \Delta^m}$ is a diagram in $\qCat^{\isof}$, where the composite
		$$A \xrightarrow{D} B^{\partial \Delta^m} \xtwoheadrightarrow{\ev[[m]]} B$$
		amounts to a morphism of lalis, i.e., $D$ consists of a square
		\[\begin{tikzcd}[ampersand replacement=\&]
			{A_1} \& {B_1^{\partial \Delta^m}} \\
			{A_2} \& {B_2^{\partial \Delta^m}}
			\arrow["{D_1}", from=1-1, to=1-2]
			\arrow["A"', two heads, from=1-1, to=2-1]
			\arrow["{B^{\partial \Delta^m}}", two heads, from=1-2, to=2-2]
			\arrow["{D_2}"', from=2-1, to=2-2]
		\end{tikzcd}\]
		in $\qCat$, where 
		\[\begin{tikzcd}[ampersand replacement=\&]
			{A_1} \& {B_1^{\partial \Delta^m}} \& {B_1} \\
			{A_2} \& {B_2^{\partial \Delta^m}} \& {B_2}
			\arrow["{D_1}", from=1-1, to=1-2]
			\arrow["A"', two heads, from=1-1, to=2-1]
			\arrow["{\ev[[m]]_1}", from=1-2, to=1-3]
			\arrow["B", two heads, from=1-3, to=2-3]
			\arrow["{D_2}"', from=2-1, to=2-2]
			\arrow["{\ev[[m]]_2}"', from=2-2, to=2-3]
		\end{tikzcd}\]
		is a morphism of lalis. Then, the pullback
		\[\begin{tikzcd}[ampersand replacement=\&]
			{\rins{m}{[[m]]}{D}} \& {B^{\Delta^m}} \\
			A \& {B^{\partial\Delta^m}}
			\arrow["\phi", from=1-1, to=1-2]
			\arrow["p"', two heads, from=1-1, to=2-1]
			\arrow["\lrcorner"{anchor=center, pos=0.125}, draw=none, from=1-1, to=2-2]
			\arrow["{B^{\partial}}", two heads, from=1-2, to=2-2]
			\arrow["D"', from=2-1, to=2-2]
		\end{tikzcd}\]
		of $D$ along $B^\partial$ is also a lali-isofibration, and that the projection $p$, which is the square
		\[\begin{tikzcd}[ampersand replacement=\&]
			{\rins{m}{[[m]]}{D}_1} \& {A_1} \\
			{\rins{m}{[[m]]}{D}_2} \& {A_2}
			\arrow["{p_1}", two heads, from=1-1, to=1-2]
			\arrow["{\rins{m}{[[m]]}{D}}"', two heads, from=1-1, to=2-1]
			\arrow[""{anchor=center, pos=0.125}, draw=none, from=1-1, to=2-2]
			\arrow["A", two heads, from=1-2, to=2-2]
			\arrow["{p_2}"', from=2-1, to=2-2]
		\end{tikzcd}\]	
		where $\rins{m}{[[m]]}{D}_i := \rins{m}{[[m]]}{D_i}$ for $i = 1, 2$, is a morphism of lalis and reflects morphisms of lalis. Moreover, the projection $p$
		is an isofibration.
	\end{pro}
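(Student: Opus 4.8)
The plan is to reduce the statement, via \longref{Proposition}{pro:as_pullbacks}, to three concrete claims about the displayed pullback square, and to settle those using Bourke--Lack's notion of universal element (\longref{Definition}{def:5.4} and \longref{Proposition}{pro:5.5}). First, $\calLali(\qCat)$ is an $\infty$-cosmos, so it has simplicial powers; these are tight cosmological limits by \longref{Proposition}{pro:tight_lim}, hence $\Lali(\qCat)$ has simplicial powers by inchordate enhanced simplicial sets and \longref{Proposition}{pro:as_pullbacks} applies. It therefore suffices to prove: (i) the pullback $\rins{m}{[[m]]}{D}$ of $D$ along $B^{\partial}$ lies again in the loose part of $\Lali(\qCat)$, i.e.\ is a lali-isofibration, with structure map $\rins{m}{[[m]]}{D}_1\twoheadrightarrow\rins{m}{[[m]]}{D}_2$ an isofibration of $\qCat$ and projection $p$ an isofibration of $\qCat^{\isof}$; (ii) $p$ is a morphism of lalis; (iii) $p$ reflects morphisms of lalis. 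The ``isofibration'' halves of (i) are formal: $B^{\partial}\colon B^{\Delta^m}\to B^{\partial\Delta^m}$ is the Leibniz exponential $i^m \hat{\pitchfork} B$ of the monomorphism $i^m\colon\partial\Delta^m\hookrightarrow\Delta^m$ with the isofibration $B$, hence an isofibration in the $\infty$-cosmos $\qCat^{\isof}$; pulling it back along the $0$-arrow $D$ yields an object $\rins{m}{[[m]]}{D}$ of $\qCat^{\isof}$, computed pointwise as $\rins{m}{[[m]]}{D}_i=A_i\times_{B_i^{\partial\Delta^m}}B_i^{\Delta^m}$, together with an isofibration $p$. What remains is to see that $\rins{m}{[[m]]}{D}$ is a \emph{lali} and that $p$ is a morphism of, and reflects, lalis; this is where the rigging hypothesis is used.

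The technical core is the following description of the $\rins{m}{[[m]]}{D}$-universal elements: a vertex $(a,\beta)$ of $\rins{m}{[[m]]}{D}_1$, so $a$ a vertex of $A_1$ and $\beta\colon\Delta^m\to B_1$ with $\beta|_{\partial\Delta^m}=D_1a$, is a $\rins{m}{[[m]]}{D}$-universal element if and only if $a$ is an $A$-universal element of $A_1$. Granting the ``if'' direction, the ``only if'' direction follows from uniqueness of universal elements over a fixed base vertex up to isomorphism over an identity (the remark after \longref{Definition}{def:5.4}): any $\rins{m}{[[m]]}{D}$-universal element over $\rins{m}{[[m]]}{D}(a,\beta)$ is isomorphic over an identity to one built as in the ``if'' direction from an $A$-universal element, and $p_1$ carries such an isomorphism to an isomorphism of $A_1$ over an identity, so $a$ is $A$-universal by closure of universal elements under isomorphism. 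For the ``if'' direction, assume $a$ is $A$-universal; using the pointwise pullback, a lifting problem for $\rins{m}{[[m]]}{D}$ against $\partial\Delta^n\hookrightarrow\Delta^n$ whose distinguished vertex $n$ maps to $(a,\beta)$ splits into (a) a lifting problem for $A$ against $\partial\Delta^n\hookrightarrow\Delta^n$ with distinguished vertex $n\mapsto a$, solvable since $a$ is $A$-universal, determining $L_A\colon\Delta^n\to A_1$, and (b) a lifting problem for $B$ against the Leibniz product $i^n \mathbin{\hat{\times}} i^m\colon(\partial\Delta^n\times\Delta^m)\cup(\Delta^n\times\partial\Delta^m)\hookrightarrow\Delta^n\times\Delta^m$, where the two pieces carry the given boundary datum and $\widehat{D_1L_A}$ (agreeing on $\partial\Delta^n\times\partial\Delta^m$) and the terminal vertex $(n,m)$ maps to $(\ev[[m]]_1\cdot D_1)(a)$. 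Now the rigging hypothesis says $\ev[[m]]\cdot D$ is a morphism of lalis, so by \longref{Proposition}{pro:5.5} the functor $\ev[[m]]_1\cdot D_1$ preserves universal elements, whence $(\ev[[m]]_1\cdot D_1)(a)$ is a $B$-universal element and (b) is solved by the lemma below.

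\textbf{Main obstacle.} The crux is the lifting lemma: \emph{for an isofibration $B\colon B_1\twoheadrightarrow B_2$ of quasi-categories and a $B$-universal element $w$, $B$ has the right lifting property against $i^n \mathbin{\hat{\times}} i^m$ with respect to every lifting problem whose top component sends the terminal vertex $(n,m)$ of $\Delta^n\times\Delta^m$ to $w$.} I plan to prove it by filtering $\Delta^n\times\Delta^m$ over $(\partial\Delta^n\times\Delta^m)\cup(\Delta^n\times\partial\Delta^m)$ through its non-degenerate simplices surjecting onto both factors, adjoined in an order (by dimension, then lexicographically on shuffles) for which each adjunction is the filling of an \emph{inner} horn --- hence possible because $B$ is an isofibration --- except for finitely many outer attachments, all of which contain the terminal vertex $(n,m)$ and are therefore fillable by the defining lifting property of the $B$-universal element $w$; \longref{Definition}{def:5.4} is precisely the base case $n=0$, after reindexing. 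Every other step is formal bookkeeping with pullbacks and transposition, so this combinatorial filtration is the only place where real work is needed.

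With the characterisation in hand the remaining claims fall out of \longref{Proposition}{pro:5.5}. Given any vertex $(a_2,\beta_2)$ of $\rins{m}{[[m]]}{D}_2$, choose an $A$-universal $a$ over $a_2$ (possible since $A$ is a lali) and extend $D_1a$ to $\beta\colon\Delta^m\to B_1$ with $B\beta=\beta_2$ using \longref{Definition}{def:5.4} for $B$ at $n=m$; then $(a,\beta)$ is a $\rins{m}{[[m]]}{D}$-universal element over $(a_2,\beta_2)$, so $\rins{m}{[[m]]}{D}$ is a lali. Since $p_1$ sends each $\rins{m}{[[m]]}{D}$-universal element to its $A$-universal first coordinate, $p$ is a morphism of lalis. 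And if $q\colon C\to\rins{m}{[[m]]}{D}$ is a morphism of isofibrations with $C$ a lali-isofibration and $p\cdot q$ a morphism of lalis, then for any $C$-universal $c$ the first coordinate of $q_1(c)$ is $(p\cdot q)_1(c)$, which is $A$-universal, so $q_1(c)$ is $\rins{m}{[[m]]}{D}$-universal by the characterisation; thus $q_1$ preserves universal elements and $q$ is a morphism of lalis, i.e.\ $p$ reflects morphisms of lalis. The degenerate case $m=0$, where $\partial\Delta^0=\emptyset$ and the rigged $0$-inserter is the product $A\times B$, is a tight cosmological limit covered by \longref{Proposition}{pro:tight_lim}. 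Finally, preservation by $\Lali(\qCat)\hookrightarrow\qCat^{\isof}_\chor$ is immediate, since the same pullback computes the terminally rigged $m$-inserter there by \longref{Proposition}{pro:as_pullbacks}.
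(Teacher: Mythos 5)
Your proposal is correct and follows essentially the same route as the paper: reduce via \longref{Proposition}{pro:as_pullbacks} to the explicit pullback, characterise its universal elements in terms of $A$-universal elements using that $\ev[[m]]\cdot D$ preserves universal elements because it is a morphism of lalis, and conclude with \longref{Proposition}{pro:5.5}; the decisive lifting lemma for $B$ against the Leibniz product of $i^n$ and $i^m$ relative to a $B$-universal image of the terminal vertex is likewise the paper's technical core. The only divergence is in how that lemma is proved: where you propose a mixed filtration by inner-horn fillings plus finitely many boundary attachments (whose lexicographic bookkeeping you do not verify), the paper attaches every missing cell of $\Delta^n\times\Delta^m$ along its boundary inclusion and fills each one directly by \longref{Definition}{def:5.4}, using the observation from the proof of \cite[Proposition 5.17]{RV:2015} that every simplex outside the Leibniz domain has terminal vertex $([[n]],[[m]])$ --- which renders the inner-horn steps unnecessary.
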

	
	\begin{proof}
		We first prove the following. Given a commutative diagram as in the solid part of
		\begin{equation}
			\label{diag:new}
			\begin{tikzcd}[ampersand replacement=\&]
				\& {\partial \Delta^n} \& {\rins{m}{[[m]]}{D}_1} \& {\rins{m}{[[m]]}{D}_2} \\
				{\Delta^0} \& {\Delta^n} \& {A_1} \& {A_2}
				\arrow["g", from=1-2, to=1-3]
				\arrow["{i^n}"', hook, from=1-2, to=2-2]
				\arrow["{\rins{m}{[[m]]}{D}}", two heads, from=1-3, to=1-4]
				\arrow["{p_1}"{description, pos=0.3}, from=1-3, to=2-3]
				\arrow["{p_2}", from=1-4, to=2-4]
				\arrow["{[[n]]}"', from=2-1, to=2-2]
				\arrow["{u_{A_1}}"', bend right, from=2-1, to=2-3]
				\arrow["l", dashed, from=2-2, to=1-3]
				\arrow["d"'{pos=0.8}, from=2-2, to=1-4]
				\arrow["{a_1}"', from=2-2, to=2-3]
				\arrow["A"', two heads, from=2-3, to=2-4]
			\end{tikzcd},
		\end{equation}
		where $u_{A_1}$ is an $A$-universal element satisfying
		\begin{equation}
			\label{diag:u_A_1}
			\begin{tikzcd}[ampersand replacement=\&]
				\&\&\& {A_1} \\
				{\Delta^0} \& {\Delta^n} \& {\rins{m}{[[m]]}{D}_2} \& {A_2}
				\arrow["A", two heads, from=1-4, to=2-4]
				\arrow["{u_{A_1}}", from=2-1, to=1-4]
				\arrow["{[[n]]}"', from=2-1, to=2-2]
				\arrow["d"', from=2-2, to=2-3]
				\arrow["{p_2}"', from=2-3, to=2-4]
			\end{tikzcd},
		\end{equation}
		there exists a simplicial map $l \colon \Delta^n \to \rins{m}{[[m]]}{D}_1$ making \longref{Diagram}{diag:new} commute.

		First, note that $D_1 \cdot a_1 \cdot i^n = D_1 \cdot p_1 \cdot g = B_1^\partial \cdot \phi_1 \cdot g$, therefore, their transposes, $\overline{D_1 \cdot a_1}$ and $\overline{\phi_1 \cdot g}$, respectively, would make the solid part of the diagram
		\begin{equation}
			\label{eqt:h}
			\begin{tikzcd}[ampersand replacement=\&]
				{\partial \Delta^n \times \partial \Delta^m} \& {\partial \Delta^n \times \Delta^m} \\
				{\Delta^n \times \partial \Delta^m} \& {\partial(\Delta^n \times \Delta^m)} \\
				\&\&\& {B_1}
				\arrow[""{name=0, anchor=center, inner sep=0}, hook, from=1-1, to=1-2]
				\arrow[hook, from=1-1, to=2-1]
				\arrow["{j_2}", from=1-2, to=2-2]
				\arrow["{\overline{\phi_1 \cdot g}}", near end, from=1-2, to=3-4]
				\arrow["{j_1}", hook, from=2-1, to=2-2]
				\arrow["{\overline{D_1 \cdot a_1}}"', from=2-1, to=3-4]
				\arrow["{\exists ! h}"{description}, dashed, from=2-2, to=3-4]
				\arrow["\lrcorner"{anchor=center, pos=0.125, rotate=180}, draw=none, from=2-2, to=0]
			\end{tikzcd}
		\end{equation}
		commute, where ${\partial(\Delta^n \times \Delta^m)} := {(\Delta^n \times \partial \Delta^m) \coprod_{\partial \Delta^n \times \partial \Delta^m} (\partial \Delta^n \times  \Delta^m)}$. By the universal property of the pushout ${\partial(\Delta^n \times \Delta^m)}$, there is a unique map $h \colon {\partial(\Delta^n \times \Delta^m)} \to B_1$ making the above diagram commute. Similarly, we have a unique map $\iota \colon {\partial(\Delta^n \times \Delta^m)}  \hookrightarrow \Delta^n \times \Delta^m$ making
		\[\begin{tikzcd}[ampersand replacement=\&]
			{\partial \Delta^n \times \partial \Delta^m} \& {\partial \Delta^n \times \Delta^m} \\
			{\Delta^n \times \partial \Delta^m} \& {\partial(\Delta^n \times \Delta^m)} \\
			\&\&\& {\Delta^n \times \Delta^m}
			\arrow[""{name=0, anchor=center, inner sep=0}, hook, from=1-1, to=1-2]
			\arrow[hook, from=1-1, to=2-1]
			\arrow["{j_2}", from=1-2, to=2-2]
			\arrow[hook, from=1-2, to=3-4]
			\arrow["{j_1}", hook, from=2-1, to=2-2]
			\arrow[hook, from=2-1, to=3-4]
			\arrow["{\exists ! \iota}"{description}, hook, dashed, from=2-2, to=3-4]
			\arrow["\lrcorner"{anchor=center, pos=0.125, rotate=180}, draw=none, from=2-2, to=0]
		\end{tikzcd}\]
		commute. Now, we verify that
		\begin{equation}
			\label{eqt:h_iota}
			\begin{tikzcd}[ampersand replacement=\&]
				{{\partial(\Delta^n \times \Delta^m)} } \& {B_1} \\
				{\Delta^n \times \Delta^m} \& {B_2}
				\arrow["h", from=1-1, to=1-2]
				\arrow["\iota"', hook, from=1-1, to=2-1]
				\arrow["B", two heads, from=1-2, to=2-2]
				\arrow["{\overline{\phi_2 \cdot d}}"', from=2-1, to=2-2]
			\end{tikzcd},
		\end{equation}
		i.e., $B \cdot h = \overline{\phi_2 \cdot d} \cdot \iota$, where ${\overline{\phi_2 \cdot d}}$ denotes the transpose of the composite
		$$\Delta^n \xrightarrow{d} \rins{m}{[[m]]}{D} \xrightarrow{\phi_2} B_2^{\Delta^m}.$$
		For this, we make use of the universal property of the pushout, and check that $\overline{\phi_2 \cdot d} \cdot (1 \times i^m) = B \cdot \overline{D_1 \cdot a_1}$ and $\overline{\phi_2 \cdot d} \cdot (i^m \times 1) = B \cdot \overline{\phi_1 \cdot g}$. Note that $\phi_2 \cdot d \cdot i^n = \phi_2 \cdot \rins{m}{[[m]]}{D} \cdot g = B^{\Delta^m} \cdot \phi_1 \cdot g$, where $B^{\Delta^m} \colon B_1^{\Delta^m} \twoheadrightarrow B_2^{\Delta^m}$ is the simplicial power of $B$ by $\Delta^m$;  also, $B_2^\partial \cdot \phi_2 \cdot d = D_2 \cdot p_2 \cdot d = D_2 \cdot A \cdot a_1 = B^{\partial \Delta^m} \cdot D_1 \cdot a_1$, where $B_2^\partial \colon B_2^{\Delta^m} \twoheadrightarrow B_2^{\partial\Delta^m}$ is the restriction for $B_2$, and $B^{\partial\Delta^m} \colon B_1^{\partial\Delta^m} \twoheadrightarrow B_2^{\partial\Delta^m}$ is the simplicial power of $B$ by $\partial\Delta^m$, hence the two equations are verified.
		
		We now define $u_{B_1}$ and $b_1$ in terms of $u_{A_1}$ and $a_1$ via the following commutative diagram
		
		\begin{equation}
			\label{eqt:star}
			\begin{tikzcd}[ampersand replacement=\&]
				{\Delta^0} \& {\Delta^n} \& {A_1} \& {B_1^{\partial \Delta^m}} \\
				\&\&\& {B_1}
				\arrow["{[[n]]}", from=1-1, to=1-2]
				\arrow["{u_{A_1}}", bend left = 30, from=1-1, to=1-3]
				\arrow["{u_{B_1}}"', from=1-1, to=2-4]
				\arrow["{a_1}", from=1-2, to=1-3]
				\arrow["{b_1}"{description}, from=1-2, to=2-4]
				\arrow["{D_1}", from=1-3, to=1-4]
				\arrow["{\ev[[m]]_1}", two heads, from=1-4, to=2-4]
			\end{tikzcd}.
		\end{equation}
		Note that since $\ev[[m]] \cdot D$ is assumed to be a morphism of lalis, by \longref{Proposition}{pro:5.5}, $u_{B_1}$ is $B$-universal. Then from the bottom triangle in \longref{Diagram}{eqt:h}, we see that $h$ sends $([[n]], [[m]])$ to $u_{B_1}$, together with \longref{Diagram}{eqt:h_iota}, this means
		\[\begin{tikzcd}[ampersand replacement=\&]
			{\Delta^0} \&\& {{\partial(\Delta^n \times \Delta^m)}} \& {B_1} \\
			\&\& {\Delta^n \times \Delta^m} \& {B_2}
			\arrow["{([[n]], [[m]])}"', from=1-1, to=1-3]
			\arrow["{u_{B_1}}", bend left, from=1-1, to=1-4]
			\arrow["h"', from=1-3, to=1-4]
			\arrow["\iota"', hook, from=1-3, to=2-3]
			\arrow["B", two heads, from=1-4, to=2-4]
			\arrow["{\overline{\phi_2 \cdot d}}"', from=2-3, to=2-4]
		\end{tikzcd}\]
		commutes. Our goal is then to obtain a lift $\Delta^n \times \Delta^m \to B_1$ for this commutative diagram.
		
		Note that since $\iota$ is injective, by \cite[Proposition 3.2.2]{book:Hovey:1999}, $\iota$ can be written as a countable composite
		$$\partial(\Delta^n \times \Delta^m) =: X_0 \xhookrightarrow{\iota_0} X_1 \hookrightarrow \cdots \hookrightarrow X_j \xhookrightarrow{\iota_j} X_{j + 1} \hookrightarrow \cdots \hookrightarrow \Delta^n \times \Delta^m$$
		of injective simplicial maps, where each $X_{j + 1}$ is the pushout
		\[\begin{tikzcd}[ampersand replacement=\&]
			{\coprod_{|S_j|}  \partial\Delta^j} \& {X_j} \\
			{\coprod_{|S_j|}  \Delta^j} \& {X_{j + 1}}
			\arrow[from=1-1, to=1-2]
			\arrow["{\coprod i^j}"', hook, from=1-1, to=2-1]
			\arrow["{\iota_j}", hook, from=1-2, to=2-2]
			\arrow[from=2-1, to=2-2]
			\arrow["\lrcorner"{anchor=center, pos=0.125, rotate=180}, draw=none, from=2-2, to=1-1]
		\end{tikzcd}\]
		of the $|S_j|$-folded coproduct of $\Delta^j$ with $X_j$, where $S_j$ is the set of $j$-simplices of $\Delta^n \times \Delta^m$ which are not in the image of $\iota \colon \partial(\Delta^n \times \Delta^m) \hookrightarrow \Delta^n \times \Delta^m$.
		
		When $j = 0$, we have a simplicial map $h_0 := h$ making the diagram
		\[\begin{tikzcd}[ampersand replacement=\&]
			{\partial(\Delta^n \times \Delta^m)} \& {B_1} \\
			{X_0} \\
			{\Delta^n \times \Delta^m} \& {B_2}
			\arrow["h", from=1-1, to=1-2]
			\arrow[equals, from=1-1, to=2-1]
			\arrow["B", two heads, from=1-2, to=3-2]
			\arrow["{h_0}"', dashed, from=2-1, to=1-2]
			\arrow["\iota"', hook, from=2-1, to=3-1]
			\arrow["{\overline{\phi_2 \cdot d}}"', from=3-1, to=3-2]
		\end{tikzcd}\]
		commute. We then proceed by induction. Suppose we have a simplicial map $h_j \colon X_j \to B_1$ in the commutative diagram
		\[\begin{tikzcd}[ampersand replacement=\&]
			{\partial(\Delta^n \times \Delta^m)} \& {B_1} \\
			{X_j} \\
			{X_{j + 1}} \\
			{\Delta^n \times \Delta^m} \& {B_2}
			\arrow["h", from=1-1, to=1-2]
			\arrow["{\iota_{j - 1} \cdot \cdots \cdot \iota_0}"', hook, from=1-1, to=2-1]
			\arrow["B", two heads, from=1-2, to=4-2]
			\arrow["{h_j}"', from=2-1, to=1-2]
			\arrow["{\iota_j}"', hook, from=2-1, to=3-1]
			\arrow[hook, from=3-1, to=4-1]
			\arrow["{\overline{\phi_2 \cdot d}}"', from=4-1, to=4-2]
		\end{tikzcd}.\]
		Consider a $j$-simplex in $S_j$, which by definition is not in the image of $\iota$, then we have a commutative diagram
		\[\begin{tikzcd}[ampersand replacement=\&]
			{ \partial\Delta^j} \& {X_j} \&\& {B_1} \\
			{ \Delta^j} \& {X_{j + 1}} \& {\Delta^n \times \Delta^m} \& {B_2}
			\arrow[from=1-1, to=1-2]
			\arrow["{i^j}"', hook, from=1-1, to=2-1]
			\arrow["{h_j}", from=1-2, to=1-4]
			\arrow["{\iota_j}", hook, from=1-2, to=2-2]
			\arrow["B", two heads, from=1-4, to=2-4]
			\arrow["\sigma"', from=2-1, to=2-2]
			\arrow[hook, from=2-2, to=2-3]
			\arrow["{\overline{\phi_2 \cdot d}}"', from=2-3, to=2-4]
		\end{tikzcd}.\]
		From the proof of \cite[Proposition 5.17]{RV:2015}, every simplex of $\Delta^n \times \Delta^m$ which is not in the image of $\iota \colon \partial(\Delta^n \times \Delta^m) \hookrightarrow \Delta^n \times \Delta^m$
		has terminal vertex $([[n]], [[m]])$. Thus, we have a commutative diagram
		\[\begin{tikzcd}[ampersand replacement=\&]
			{\Delta^0} \& { \partial\Delta^j} \& {X_j} \&\& {B_1} \\
			\& { \Delta^j} \& {X_{j + 1}} \& {\Delta^n \times \Delta^m} \& {B_2}
			\arrow["{[[j]]}"', from=1-1, to=1-2]
			\arrow["{([[n]], [[m]])}",bend left=30, from=1-1, to=1-3]
			\arrow["{u_{B_1}}", bend left = 60, from=1-1, to=1-5]
			\arrow[from=1-2, to=1-3]
			\arrow["{i^j}"', hook, from=1-2, to=2-2]
			\arrow["{h_j}", from=1-3, to=1-5]
			\arrow["{\iota_j}", hook, from=1-3, to=2-3]
			\arrow["B", two heads, from=1-5, to=2-5]
			\arrow["{\eta_j}"'{pos=0.7}, dashed, from=2-2, to=1-5]
			\arrow["\sigma"', from=2-2, to=2-3]
			\arrow[hook, from=2-3, to=2-4]
			\arrow["{\overline{\phi_2 \cdot d}}"', from=2-4, to=2-5]
		\end{tikzcd},\]
		which then gives us a simplicial map $\eta_j \colon \Delta^j \to B_1$, by the universality of $u_{B_1}$ from \longref{Definition}{def:5.4}. As $\sigma \in S_j$ is arbitrary, by the universal property of coproducts, we then obtain a map $\eta \colon \coprod_{|S_j|} \Delta^j \to B_1$ in the commutative diagram
		\[\begin{tikzcd}[ampersand replacement=\&]
			{ \coprod_{|S_j|}\partial\Delta^j} \& {X_j} \&\& {B_1} \\
			{\coprod_{|S_j|} \Delta^j} \& {X_{j + 1}} \& {\Delta^n \times \Delta^m} \& {B_2}
			\arrow[from=1-1, to=1-2]
			\arrow["{\coprod i^j}"', hook, from=1-1, to=2-1]
			\arrow["{h_j}", from=1-2, to=1-4]
			\arrow["{\iota_j}", hook, from=1-2, to=2-2]
			\arrow["B", two heads, from=1-4, to=2-4]
			\arrow["\eta"{pos=0.4}, dashed, from=2-1, to=1-4]
			\arrow["\sigma"', from=2-1, to=2-2]
			\arrow["\lrcorner"{anchor=center, pos=0.125, rotate=180}, draw=none, from=2-2, to=1-1]
			\arrow["{h_{j + 1}}"'{pos=0.7}, dashed, from=2-2, to=1-4]
			\arrow[hook, from=2-2, to=2-3]
			\arrow["{\overline{\phi_2 \cdot d}}"', from=2-3, to=2-4]
		\end{tikzcd},\]
		and then by the universal property of pushouts, we have the desired simplicial map $h_{j + 1} \colon X_{j + 1} \to B_1$ making the above diagram commute.
		
		So, by induction, there is a simplicial map $\overline{f}\colon \Delta^n\times\Delta^m \to B_1$ such that the diagrams
		\[\begin{tikzcd}[ampersand replacement=\&]
			{{\partial(\Delta^n \times \Delta^m)} } \& {B_1} \\
			{\Delta^n \times \Delta^m} \& {B_2}
			\arrow["h", from=1-1, to=1-2]
			\arrow["\iota"', hook, from=1-1, to=2-1]
			\arrow["B", two heads, from=1-2, to=2-2]
			\arrow["{\overline{f}}"', from=2-1, to=1-2]
			\arrow["{\overline{\phi_2 \cdot d}}"', from=2-1, to=2-2]
		\end{tikzcd},\]
		\[\begin{tikzcd}[ampersand replacement=\&]
			{\Delta^n \times \partial \Delta^m} \\
			{{\partial(\Delta^n \times \Delta^m)} } \& {B_1} \\
			{\Delta^n \times \Delta^m}
			\arrow["{j_1}"', hook, from=1-1, to=2-1]
			\arrow["{\overline{D_1 \cdot a_1}}", from=1-1, to=2-2]
			\arrow["h"{description}, from=2-1, to=2-2]
			\arrow["\iota"', hook, from=2-1, to=3-1]
			\arrow["{\overline{f}}"', from=3-1, to=2-2]
		\end{tikzcd}\]
		both commute. Denote by $f \colon \Delta^n \to B_1^{\Delta^m}$ the transpose of $\overline{f}$. Then the above commutative diagrams say that $B^{\Delta^m} \cdot f = \phi_2 \cdot d$ and $B_1^\partial \cdot f = D_1 \cdot a_1$.
		
		Now, we can apply the universal property of the pullback $\rins{m}{[[m]]}{D}_1$ to obtain the lift $l \colon \Delta^n \to \rins{m}{[[m]]}{D}_1$ satisfying
		\[\begin{tikzcd}[ampersand replacement=\&]
			{\Delta^n} \\
			\& {\rins{m}{[[m]]}{D}_1} \& {B_1^{\Delta^m}} \\
			\& {A_1} \& {B_1^{\partial\Delta^m}}
			\arrow["{\exists! l}", dashed, from=1-1, to=2-2]
			\arrow["f", bend left, from=1-1, to=2-3]
			\arrow["{a_1}"', bend right, from=1-1, to=3-2]
			\arrow["{\phi_1}", from=2-2, to=2-3]
			\arrow["{p_1}"', two heads, from=2-2, to=3-2]
			\arrow["\lrcorner"{anchor=center, pos=0.125}, draw=none, from=2-2, to=3-3]
			\arrow["{B_1^\partial}", two heads, from=2-3, to=3-3]
			\arrow["{D_1}"', from=3-2, to=3-3]
		\end{tikzcd}.\]
		
		Moreover, from \longref{Diagram}{eqt:h}, we have
		\[\begin{tikzcd}[ampersand replacement=\&]
			{\Delta^n \times \partial \Delta^m} \\
			{{\partial(\Delta^n \times \Delta^m)} } \& {B_1} \\
			{\Delta^n \times \Delta^m}
			\arrow["{j_2}"', hook, from=1-1, to=2-1]
			\arrow["{\overline{\phi_1 \cdot g}}", from=1-1, to=2-2]
			\arrow["h"{description}, from=2-1, to=2-2]
			\arrow["\iota"', hook, from=2-1, to=3-1]
			\arrow["{\overline{f}}"', from=3-1, to=2-2]
		\end{tikzcd},\]
		which gives $f \cdot i^n = \phi_1 \cdot g$ by transposing. We check that $p_1 \cdot (l \cdot i^n) = a_1 \cdot i^n = p_1 \cdot g$, and that $\phi_1 \cdot (l \cdot i^n) = f\cdot i^n = \phi_1 \cdot g$. Thus, by the universal property of the pullback $\rins{m}{[[m]]}{D}_1$, we get $g = l \cdot i^n$. Also, we check that $p_2 \cdot (\rins{m}{[[m]]}{D} \cdot l) = A \cdot p_1 \cdot l = A \cdot a_1 = p_2 \cdot d$, and that $\phi_2 \cdot (\rins{m}{[[m]]}{D} \cdot l) = B^{\Delta^m} \cdot \phi_1 \cdot l = B^{\Delta^m} \cdot f = \phi_2 \cdot d$. Hence, $d = \rins{m}{[[m]]}{D} \cdot l$.
		
		Concluding, we obtain a lift $l$ such that \longref{Diagram}{diag:new} commutes, as desired.
		
		Now, take $g$ as the unique simplicial map $\emptyset \to \rins{m}{[[m]]}{D}_1$, and $a_1$ as any $A$-universal element $u_{A_1} \colon \Delta^0 \to A_1$ satisfying \longref{Diagram}{diag:u_A_1}, which exists since $A$ is a lali. For any $d \colon \Delta^0 \to \rins{m}{[[m]]}{D}_1$, we clearly have a commutative diaagram
		\[\begin{tikzcd}[ampersand replacement=\&]
			\& \emptyset \& {\rins{m}{[[m]]}{D}_1} \& {\rins{m}{[[m]]}{D}_2} \\
			{\Delta^0} \& {\Delta^0} \& {A_1} \& {A_2}
			\arrow["{!}", from=1-2, to=1-3]
			\arrow["{!}"', hook, from=1-2, to=2-2]
			\arrow["{\rins{m}{[[m]]}{D}}", two heads, from=1-3, to=1-4]
			\arrow["{p_1}"{description, pos=0.3}, from=1-3, to=2-3]
			\arrow["{p_2}", from=1-4, to=2-4]
			\arrow[equals, from=2-1, to=2-2]
			\arrow["l", dashed, from=2-2, to=1-3]
			\arrow["d"'{pos=0.8}, from=2-2, to=1-4]
			\arrow["{u_{A_1}}"', from=2-2, to=2-3]
			\arrow["A"', two heads, from=2-3, to=2-4]
		\end{tikzcd}.\] From the above, we then obtain $l \colon \Delta^0 \to \rins{m}{[[m]]}{D}_1$ such that
		\[\begin{tikzcd}[ampersand replacement=\&]
			\& {\rins{m}{[[m]]}{D}_1} \\
			{\Delta^0} \& {\rins{m}{[[m]]}{D}_2}
			\arrow["{\rins{m}{[[m]]}{D}}", two heads, from=1-2, to=2-2]
			\arrow["l", dashed, from=2-1, to=1-2]
			\arrow["d"', from=2-1, to=2-2]
		\end{tikzcd}.\]
		commutes. By \longref{Proposition}{pro:5.5}, it remains to verify that $l$ is $\rins{m}{[[m]]}{D}$-universal. Consider any commutative diagram
		\[\begin{tikzcd}[ampersand replacement=\&]
			{\Delta^0} \& {\partial\Delta^n} \& {\rins{m}{[[m]]}{D}_1} \\
			\& {\Delta^n} \& {\rins{m}{[[m]]}{D}_2}
			\arrow["{[[n]]}", from=1-1, to=1-2]
			\arrow["l", bend left, from=1-1, to=1-3]
			\arrow["{[[n]]}"', from=1-1, to=2-2]
			\arrow["{\delta_1}", from=1-2, to=1-3]
			\arrow["{i^n}"', hook, from=1-2, to=2-2]
			\arrow["{\rins{m}{[[m]]}{D}}", two heads, from=1-3, to=2-3]
			\arrow["{\delta_2}"', from=2-2, to=2-3]
		\end{tikzcd},\]
		we then compose with the projection $p$ to a commutative diagram given as the solid part of
		\[\begin{tikzcd}[ampersand replacement=\&]
			{\Delta^0} \& {\partial\Delta^n} \& {\rins{m}{[[m]]}{D}_1} \& {A_1} \\
			\& {\Delta^n} \& {\rins{m}{[[m]]}{D}_2} \& {A_2}
			\arrow["{[[n]]}", from=1-1, to=1-2]
			\arrow["l", bend left = 30, from=1-1, to=1-3]
			\arrow["{u_{A_1}}", bend left = 60, from=1-1, to=1-4]
			\arrow["{[[n]]}"', from=1-1, to=2-2]
			\arrow["{\delta_1}", from=1-2, to=1-3]
			\arrow["{i^n}"', hook, from=1-2, to=2-2]
			\arrow["{p_1}", from=1-3, to=1-4]
			\arrow["{\rins{m}{[[m]]}{D}}"', two heads, from=1-3, to=2-3]
			\arrow["A", two heads, from=1-4, to=2-4]
			\arrow["k"'{pos=0.8}, dashed, from=2-2, to=1-4]
			\arrow["{\delta_2}"', from=2-2, to=2-3]
			\arrow["{p_2}"', from=2-3, to=2-4]
		\end{tikzcd},\]
		because $u_{A_1} = a_1 = p_1 \cdot l$. Since $u_{A_1}$ is $A$-universal, there exists a lift $k \colon \Delta^n \to A_1$ making the diagram commute. In particular, $k \cdot [[n]] = u_{A_1}$, which means we have a commutative diagram
		\[\begin{tikzcd}[ampersand replacement=\&]
			\& {\partial \Delta^n} \& {\rins{m}{[[m]]}{D}_1} \& {\rins{m}{[[m]]}{D}_2} \\
			{\Delta^0} \& {\Delta^n} \& {A_1} \& {A_2}
			\arrow["{\delta_1}", from=1-2, to=1-3]
			\arrow["{i^n}"', hook, from=1-2, to=2-2]
			\arrow["{\rins{m}{[[m]]}{D}}", two heads, from=1-3, to=1-4]
			\arrow["{p_1}"{description, pos=0.3}, from=1-3, to=2-3]
			\arrow["{p_2}", from=1-4, to=2-4]
			\arrow["{[[n]]}"', from=2-1, to=2-2]
			\arrow["{u_{A_1}}"', bend right, from=2-1, to=2-3]
			\arrow["\lambda", dashed, from=2-2, to=1-3]
			\arrow["{\delta_2}"'{pos=0.8}, from=2-2, to=1-4]
			\arrow["k"', from=2-2, to=2-3]
			\arrow["A"', two heads, from=2-3, to=2-4]
		\end{tikzcd},\]
		which is simply \longref{Diagram}{diag:new}, by taking $g = \delta_1$, $a_1 = k$, and $d = \delta_2$. Therefore, by the previous argument, we obtain a lift $\lambda \colon \Delta^n \to \rins{m}{[[m]]}{D}_1$. This proves the universality of $l$.

		It remains to verify that $p \colon \rins{m}{[[m]]}{D} \twoheadrightarrow A$ is a morphism of lalis and reflects morphisms of lalis.
		
		Indeed, since $u_{A_1} = p_1 \cdot g \cdot [[n]] = p_1 \cdot u$, by \longref{Proposition}{pro:5.5}, we see that
		\[\begin{tikzcd}[ampersand replacement=\&]
			{\rins{m}{[[m]]}{D}_1} \& {A_1} \\
			{\rins{m}{[[m]]}{D}_2} \& {A_2}
			\arrow["{p_1}", two heads, from=1-1, to=1-2]
			\arrow["{\rins{m}{[[m]]}{D}}"', two heads, from=1-1, to=2-1]
			\arrow[""{anchor=center, pos=0.125}, draw=none, from=1-1, to=2-2]
			\arrow["A", two heads, from=1-2, to=2-2]
			\arrow["{p_2}"', from=2-1, to=2-2]
		\end{tikzcd}\]	
		is a morphism of lalis.
		
		Next, suppose $C \colon C_1 \twoheadrightarrow C_2$ is a lali and that $q = (q_1, q_2) \colon C \to \rins{m}{[[m]]}{D}$ is a morphism in $\qCat^{\isof}$. Assume that $p \cdot q$, depicted as
		\[\begin{tikzcd}[ampersand replacement=\&]
			{C_1} \& {\rins{m}{[[m]]}{D}_1} \& {A_1} \\
			{C_2} \& {\rins{m}{[[m]]}{D}_2} \& {A_2}
			\arrow["{q_1}", from=1-1, to=1-2]
			\arrow["C"', two heads, from=1-1, to=2-1]
			\arrow["{p_1}", two heads, from=1-2, to=1-3]
			\arrow["{\rins{m}{[[m]]}{D}}"', two heads, from=1-2, to=2-2]
			\arrow["A", two heads, from=1-3, to=2-3]
			\arrow["{q_2}"', from=2-1, to=2-2]
			\arrow["{p_2}"', from=2-2, to=2-3]
		\end{tikzcd},\]
		is a morphism of lalis. According to \longref{Proposition}{pro:5.5}, if $u_{C_1}$ is a $C$-universal element of $C_1$, then $p_1 \cdot q_1 \cdot u_{C_1}$ is $A$-universal. Our task is to prove that $q_1 \cdot u_{C_1}$ is  $\rins{m}{[[m]]}{D}$-universal. Consider a commutative diagram as in the solid part of
		\[\begin{tikzcd}[ampersand replacement=\&]
			{\Delta^0} \& {\partial\Delta^n} \& {\rins{m}{[[m]]}{D}_1} \& {A_1} \\
			\& {\Delta^n} \& {\rins{m}{[[m]]}{D}_2} \& {A_2}
			\arrow["{[[n]]}", from=1-1, to=1-2]
			\arrow["{q_1 \cdot u_{C_1}}", bend left, from=1-1, to=1-3]
			\arrow["{[[n]]}"', from=1-1, to=2-2]
			\arrow["{\gamma_1}", from=1-2, to=1-3]
			\arrow["{i^n}"', hook, from=1-2, to=2-2]
			\arrow["{p_1}", from=1-3, to=1-4]
			\arrow["{\rins{m}{[[m]]}{D}}"', two heads, from=1-3, to=2-3]
			\arrow["A", two heads, from=1-4, to=2-4]
			\arrow["\alpha"'{pos=0.8}, dashed, from=2-2, to=1-4]
			\arrow["{\gamma_2}"', from=2-2, to=2-3]
			\arrow["{p_2}"', from=2-3, to=2-4]
		\end{tikzcd},\]
		by the universality of $p_1 \cdot q_1 \cdot u_{C_1}$, there exists a lift $\alpha \colon \Delta^n \to A_1$. We then obtain \longref{Diagram}{diag:new} with $g = \gamma_1$, $a_1 = \alpha$, and $d = \gamma_2$. Therefore, we obtain a lift $\gamma \colon \Delta^n \to \rins{m}{[[m]]}{D}_1$. This proves the universality of $q_1 \cdot u_{C_1}$. Conversely, if both $p$ and $q$ send universal elements to universal elements, their composite would also do.
		
		Finally, the fact that $p$ is actually an isofibration of $\qCat^{\isof}$ follows directly from the stability of isofibrations under pullback in any $\infty$-cosmos.
	\end{proof}
	
	\begin{rk}
		This generalises \cite[Proposition 5.17]{RV:2015}, by taking every codomain of a lali to be the terminal quasi-category $\Delta^0$.
	\end{rk}
	
	In fact, the above result is true for models other than quasi-categories.
	
	\begin{theorem}
		\label{thm:Lali(K)}
		The enhanced simplicial category $\Lali(\calK)$ of lali-isofibrations of an $\infty$-cosmos $\calK$ admits terminally rigged $m$-inserters, and that the projection is an isofibration of $\Lali(\calK)$.

		In this case, the inclusion
		$$\Lali(\calK) \hookrightarrow \calK^{\isof}_\chor$$
		preserves terminally rigged $n$-inserters.
	\end{theorem}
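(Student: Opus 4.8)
The plan is to bootstrap from the quasi-categorical case, \longref{Proposition}{pro:Lali_qCat}, by feeding everything through representables. By \longref{Proposition}{pro:as_pullbacks} it suffices to treat a diagram $F\colon \mathbb{D}^m_{[[m]]}\to\Lali(\calK)$, which by \longref{Remark}{rk:F_as_D} amounts to lali-isofibrations $S,T$ and a loose $0$-arrow $D\colon S\leadsto T^{\partial\Delta^m}$ with $\ev[[m]]\cdot D$ tight, and to show that the pullback $P:=\rins{m}{[[m]]}{D}$ of $D$ along $T^\partial\colon T^{\Delta^m}\twoheadrightarrow T^{\partial\Delta^m}$ is again a lali-isofibration, that the projection $p\colon P\to S$ is a morphism of lalis which reflects morphisms of lalis, and that $p$ is an isofibration of $\Lali(\calK)$. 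The pullback itself exists in $\calK^{\isof}$, and $p$ is an isofibration there, because $\calK^{\isof}$ is an $\infty$-cosmos (\longref{Example}{eg:isof_of_cosmos}) in which pullbacks of isofibrations are computed pointwise in $\calK$. Granting the three ``lali'' clauses, the last assertion of the theorem — that $\Lali(\calK)\hookrightarrow\calK^{\isof}_\chor$ preserves terminally rigged $m$-inserters — is then immediate, since $P$ together with its projections is, by construction, the pullback in $\calK^{\isof}$, i.e.\ the terminally rigged $m$-inserter of the underlying chordate diagram (\longref{Remark}{rk:term_flexible}).

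Next I would run the representable cosmological functors. For each $X\in\calK$, the functor $\calK(X,-)\colon\calK\to\qCat$ is cosmological, hence preserves isofibrations, pullbacks of isofibrations, and powers by arbitrary simplicial sets (in particular by $\partial\Delta^m$ and $\Delta^m$); so it carries the pointwise pullback defining $P$ onto exactly the pullback defining the terminally rigged $m$-inserter $\rins{m}{[[m]]}{\calK(X,D)}$ in $\qCat^{\isof}$. Moreover $\calK(X,-)$ induces a $2$-functor $h\calK\to h\qCat$, and a $2$-functor preserves adjunctions, invertible $2$-cells, and hence mates; therefore it sends lali-isofibrations of $\calK$ to lali-isofibrations of $\qCat$, and morphisms of lalis to morphisms of lalis. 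In particular $\calK(X,S)$ and $\calK(X,T)$ are lali-isofibrations of $\qCat$ and $\ev[[m]]\cdot\calK(X,D)=\calK(X,\ev[[m]]\cdot D)$ is a morphism of lalis, so all hypotheses of \longref{Proposition}{pro:Lali_qCat} hold and it yields that $\calK(X,P)$ is a lali-isofibration of $\qCat$ and that $\calK(X,p)$ is a morphism of lalis which reflects morphisms of lalis, naturally in $X$.

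Finally I would descend these conclusions back to $\calK$ using the representable characterisation of lalis and of morphisms of lalis in an $\infty$-cosmos from \cite{book:RV:2022}: an isofibration of $\calK$ is a lali precisely when its image under every $\calK(X,-)$ is a lali of $\qCat$ (naturally in $X$), and a morphism of isofibrations between lalis is a morphism of lalis precisely when each of its representable images is. This gives at once that $P$ is a lali-isofibration and that $p$ is a morphism of lalis; reflection then follows because, for any $q\colon C\to P$ in $\calK^{\isof}$ with $C$ a lali-isofibration and $p\cdot q$ a morphism of lalis, each $\calK(X,p)\cdot\calK(X,q)=\calK(X,p\cdot q)$ is a morphism of lalis, whence $\calK(X,q)$ is for every $X$ by the reflection clause of \longref{Proposition}{pro:Lali_qCat}, whence $q$ is. The main obstacle is exactly this descent: one must ensure that the right adjoints, counits, and mates produced pointwise in $\qCat$ reassemble into genuine $2$-categorical data in $h\calK$, which is why the argument is routed through the representable characterisation rather than through ad hoc gluing; a fall-back, if that characterisation is not available in precisely the form needed, is to replay the universal-element argument of \longref{Proposition}{pro:Lali_qCat} verbatim with $\calK(X,E)$ in place of $E$, since every lifting problem used there is representably defined.
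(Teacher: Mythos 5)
Your route is the same one the paper takes: apply the representables $\calK(X,-)$, invoke \longref{Proposition}{pro:Lali_qCat} in $\qCat$, and descend back to $\calK$; the clauses about the projection being an isofibration and the inclusion $\Lali(\calK)\hookrightarrow\calK^{\isof}_\chor$ preserving the limit are handled exactly as in the paper (pullback-stability of isofibrations, and the fact that the limit is computed as the pullback in $\calK^{\isof}$).

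There is, however, a genuine gap at the descent step. For each fixed $X$, \longref{Proposition}{pro:Lali_qCat} produces a right adjoint (via choices of universal elements and lifts) making $\calK(X,\rins{m}{[[m]]}{D})$ a lali and $\calK(X,p)$ a morphism of lalis; these data are constructed separately for each $X$, so your parenthetical ``naturally in $X$'' is an assertion, not a consequence of the proposition. The representable detection of adjunctions you appeal to (the RV-style statement used in \longref{Lemma}{lem:la_as_lali_K}) either presupposes a candidate adjoint already given in $\calK$, or requires precisely the naturality you skip: for every $f\colon Y\to X$ the square $\calK(f,\rins{m}{[[m]]}{D})$ must itself be a morphism of lalis. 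This is the one non-formal step in the paper's proof and it is missing from yours; the paper closes it by observing that, since $A$ (your $S$) is a lali-isofibration of $\calK$, the naturality square $\calK(f,A)$ is a morphism of lalis, hence so is the composite $\calK(f,A)\cdot\calK(X,p)=\calK(Y,p)\cdot\calK(f,\rins{m}{[[m]]}{D})$, and then the \emph{reflection} clause of \longref{Proposition}{pro:Lali_qCat}, applied to the projection over $Y$, forces $\calK(f,\rins{m}{[[m]]}{D})$ to be a morphism of lalis; only after this can one conclude that $\rins{m}{[[m]]}{D}$ is a lali-isofibration of $\calK$ and $p$ a morphism of lalis. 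Your fallback of rerunning the universal-element argument with $\calK(X,E)$ in place of $E$ does not evade the issue, since it again only yields pointwise, a priori incompatible data. Once this naturality is supplied, the rest of your argument (reflection of morphisms of lalis by $p$ via the pointwise reflection, and the preservation statement) goes through as in the paper.
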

	
	\begin{proof}
		We employ the notations in \longref{Proposition}{pro:Lali_qCat}.
		
		According to \longref{Proposition}{pro:5.5}, for any $\infty$-category $X$ in $\calK$, the maps
		$$
		\begin{aligned}
			&\calK(X ,A) \colon \calK(X, A_1) \twoheadrightarrow \calK(X, A_2),
			\\
			&\calK(X, B^{\partial \Delta^m}) \colon \calK(X, B_1^{\partial \Delta^m}) \twoheadrightarrow \calK(X, B_2^{\partial \Delta^m}),
			\\
			&\calK(X, B^{\Delta^m}) \colon \calK(X, B_1^{\Delta^m}) \twoheadrightarrow \calK(X, B_2^{\Delta^m}),
			\\
			&\calK(X, B) \colon \calK(X, B_1) \twoheadrightarrow \calK(X, B_2),
		\end{aligned}
		$$
		are lali-isofibrations in $\qCat$; in addition, the composite
		$$\calK(X, A) \xrightarrow{{\calK(X, D)}} {\calK(X, B^{\partial \Delta^m})} \xrightarrow{{\calK(X, \ev[[m]])}} \calK(X, B)$$
		is a morphism of lalis in $\qCat$.
		
		By \longref{Proposition}{pro:Lali_qCat}, we conclude that $\calK(X, \rins{m}{[[m]]}{D})$ as in
		\[\begin{tikzcd}[ampersand replacement=\&]
			{\calK(X, \rins{m}{[[m]]}{D})} \& {\calK(X, B^{\Delta^m})} \\
			{\calK(X, A)} \& {\calK(X, B^{\partial\Delta^m})}
			\arrow["{\calK(X, \phi)}", from=1-1, to=1-2]
			\arrow["{\calK(X, p)}"', two heads, from=1-1, to=2-1]
			\arrow["{\calK(X, B^{\partial})}", two heads, from=1-2, to=2-2]
			\arrow[""{name=0, anchor=center, inner sep=0}, "{\calK(X, D)}"', from=2-1, to=2-2]
			\arrow["\lrcorner"{anchor=center, pos=0.125}, draw=none, from=1-1, to=0]
		\end{tikzcd}\]
		is actually a lali-isofibration of $\qCat$, where the projection $\calK(X, p)$ is a morphism of lalis and reflects morphisms of lalis. It remains to check that for any $\infty$-functor $f \colon Y \to X$ in $\calK$, the commutative square
		\[\begin{tikzcd}[ampersand replacement=\&, column sep=large]
			{\calK(X, \rins{m}{[[m]]}{D}_1)} \& {\calK(Y, \rins{m}{[[m]]}{D}_1)} \\
			{\calK(X, \rins{m}{[[m]]}{D}_2)} \& {\calK(Y, \rins{m}{[[m]]}{D}_2)}
			\arrow["{\calK(f, \rins{m}{[[m]]}{D}_1)}", from=1-1, to=1-2]
			\arrow["{\calK(X, \rins{m}{[[m]]}{D})}"', two heads, from=1-1, to=2-1]
			\arrow["{\calK(Y, \rins{m}{[[m]]}{D})}", two heads, from=1-2, to=2-2]
			\arrow[""{name=0, anchor=center, inner sep=0}, "{\calK(f, \rins{m}{[[m]]}{D}_2)}"', from=2-1, to=2-2]
			\arrow["\lrcorner"{anchor=center, pos=0.125}, draw=none, from=1-1, to=0]
		\end{tikzcd}\]
		is actually a morphism of lalis.
		
		Note that by the naturality of $\calK(f, \cdot)$, we have
		\[\begin{tikzcd}[ampersand replacement=\&, column sep=large]
			{\calK(X, \rins{m}{[[m]]}{D})} \& {\calK(Y, \rins{m}{[[m]]}{D})} \\
			{\calK(X, A)} \& {\calK(Y, A)}
			\arrow["{\calK(f, \rins{m}{[[m]]}{D})}", from=1-1, to=1-2]
			\arrow["{\calK(X, p)}"', two heads, from=1-1, to=2-1]
			\arrow["{\calK(Y, p)}", two heads, from=1-2, to=2-2]
			\arrow[""{name=0, anchor=center, inner sep=0}, "{\calK(f, A)}"', from=2-1, to=2-2]
			\arrow["\lrcorner"{anchor=center, pos=0.125}, draw=none, from=1-1, to=0]
		\end{tikzcd}\]
		and since $A$ is assumed to be a lali-isofibration of $\calK$, thus $\calK(f, A)$ is a morphism of lalis, and together since $\calK(X, p)$ is known to be a morphism of lalis, we conclude that $\calK(X, p) \cdot \calK(f, \rins{m}{[[m]]}{D}) = \calK(f, A) \cdot \calK(X, p)$ is a morphism of lalis. Now, recall that $\calK(X, p)$ reflects morphisms of lalis, which implies $\calK(f, \rins{m}{[[m]]}{D})$ is a morphism of lalis. Then, by \longref{Proposition}{pro:5.5}, we arrive at the conclusion that $\rins{m}{[[m]]}{D}$ is a lali-isofibration of $\calK$, and that $p$ is a morphism of lalis.
		
		Finally, we show that $p$ reflects morphisms of lalis, and is an isofibration.
		
		By \longref{Proposition}{pro:Lali_qCat}, we know that $\calK(X, p)$ reflects morphisms of lalis, in particular, for a lali-isofibration $C \colon C_1 \twoheadrightarrow C_2$ of $\calK$ and a $0$-arrow $q \colon C \to \rins{m}{[[m]]}{D}$ in $\calK^{\isof}$, we have that $\calK(X, p) \cdot \calK(X, q)$ is a morphism of lalis in $\qCat$ if and only if $\calK(X, q)$ is one in $\qCat$. That means, $p \cdot q$ is a morphism of lalis if and only if $q$ is a morphism of lalis. Lastly, $p$ is an isofibration of $\calK^{\isof}$, because it is the pullback of the isofibration $B^\partial$, and isofibrations are pullback-stable in any $\infty$-cosmos.
	\end{proof}
	
	\begin{coroll}
		\label{cor:EM_Lali}
		The enhanced simplicial category $\Lali(\calK)$ of lali-isofibrations in an $\infty$-cosmos $\calK$ admits Eilenberg-Moore objects over loose monads, and that the forgetful functor is a morphism of lalis and reflects morphisms of lalis.
		
		In this case, by viewing $\calK^{\isof}$ as a chordate enhanced simplicial category, the inclusion
		$$\Lali(\calK) \hookrightarrow \calK^{\isof}_\chor$$
		preserves Eilenberg-Moore objects over loose monads.
	\end{coroll}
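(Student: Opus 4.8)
The plan is to obtain this as an application of \longref{Lemma}{lem:EM} to the enhanced simplicial category $\bbK = \Lali(\calK)$, so the first task is to verify its hypotheses. The tight part of $\Lali(\calK)$ is the $\infty$-cosmos $\calLali(\calK)$ of \longref{Example}{eg:calLali}. By \longref{Proposition}{pro:tight_lim}, $\Lali(\calK)$ admits all tight cosmological limits, and these are preserved by the inclusion $\Lali(\calK) \hookrightarrow \calK^{\isof}_\chor$; since limits of a countable chain of isofibrations and powers with simplicial sets are among the cosmological limits of $\calLali(\calK)$, it follows (cf.\ \longref{Example}{eg:tight_lim}) that $\Lali(\calK)$ has limits of countable chains of tight isofibrations and simplicial powers by inchordate enhanced simplicial sets. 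Finally, \longref{Theorem}{thm:Lali(K)} provides terminally rigged $n$-inserters in $\Lali(\calK)$ whose projections are isofibrations of $\calLali(\calK)$ and morphisms of lalis, hence tight isofibrations of $\Lali(\calK)$. With the hypotheses in place, \longref{Lemma}{lem:EM} yields Eilenberg--Moore objects over loose monads in $\Lali(\calK)$ together with a forgetful functor that is a tight isofibration and reflects tightness; as tightness in $\Lali(\calK)$ is precisely being a morphism of lalis, this is the first assertion.

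For the preservation statement I would unwind the construction performed in the proof of \longref{Lemma}{lem:EM}. For a loose monad $T \colon \Mnd \to \Lali(\calK)$ on $A$, the Eilenberg--Moore object $\{W, T\}$ is produced there as the limit of a countable chain
\[ \cdots \twoheadrightarrow \{W_k, T\} \twoheadrightarrow \cdots \twoheadrightarrow \{W_0, T\} \cong A \]
of tight isofibrations, in which each step $\{W_{k+1}, T\} \twoheadrightarrow \{W_k, T\}$ is a terminally rigged $m_k$-inserter: the pullback of $D_k \colon \{W_k, T\} \leadsto A^{\partial\Delta^{m_k}}$ --- whose composite $\ev[[m_k]] \cdot D_k$ is tight --- along the restriction $A^\partial \colon A^{\Delta^{m_k}} \twoheadrightarrow A^{\partial\Delta^{m_k}}$. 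Write $\iota$ for the inclusion $\Lali(\calK) \hookrightarrow \calK^{\isof}_\chor$ and set $T' = \iota \circ T$, a loose monad in $\calK^{\isof}_\chor$. The recipe of \longref{Lemma}{lem:EM} is purely weight-theoretic, and $\calK^{\isof}_\chor$ --- being the chordate enhanced simplicial category on the $\infty$-cosmos $\calK^{\isof}$ --- admits countable chains of tight isofibrations, simplicial powers, and terminally rigged $n$-inserters (which there are flexible weighted limits, see \longref{Remark}{rk:term_flexible}), so running the same recipe produces the Eilenberg--Moore object $\{W, T'\}$ as the analogous iterated limit. I would then show by induction on $k$ that $\iota\{W_k, T\} \cong \{W_k, T'\}$ compatibly with the chain maps: the base case is $\iota\{W_0, T\} \cong \iota A = \{W_0, T'\}$, and for the inductive step $\iota$ carries the cospan defining $\{W_{k+1}, T\}$ to the one defining $\{W_{k+1}, T'\}$ --- it preserves the powers $A^{\partial\Delta^{m_k}}$, $A^{\Delta^{m_k}}$ and the restriction between them by \longref{Proposition}{pro:tight_lim}, it preserves $\{W_k, T\}$ by the inductive hypothesis, and it preserves the pullback exhibiting the rigged inserter by \longref{Theorem}{thm:Lali(K)}. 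Since $\iota$ also preserves the limit of the countable chain of tight isofibrations by \longref{Proposition}{pro:tight_lim}, we conclude $\iota\{W, T\} \cong \{W, T'\}$, with the forgetful cone carried along; this is the second assertion.

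The substantive content is already contained in \longref{Theorem}{thm:Lali(K)} (hence in \longref{Proposition}{pro:Lali_qCat}) and in \longref{Lemma}{lem:EM}, so I expect the corollary itself to be routine once these are in hand. The point that warrants care, and which I would spell out rather than assert, is the bookkeeping in the induction of the previous paragraph: one must confirm that at each stage the cospan whose pullback defines the next rigged inserter in $\Lali(\calK)$ is exactly the $\iota$-image of the corresponding cospan in $\calK^{\isof}_\chor$ --- which hinges on $\iota$ preserving the auxiliary simplicial powers $A^{\partial\Delta^{m_k}}$ and $A^{\Delta^{m_k}}$ as well as the stage-$k$ object --- and that the Eilenberg--Moore object over $T'$ in $\calK^{\isof}_\chor$ is indeed realised as the same iterated limit of rigged inserters; both facts are delivered by the cited results but deserve to be made explicit.
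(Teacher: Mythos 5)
Your proposal is correct and takes essentially the same route as the paper, whose entire proof is the one-line observation that the corollary follows from \longref{Proposition}{pro:tight_lim}, \longref{Theorem}{thm:Lali(K)}, and \longref{Lemma}{lem:EM} — exactly the three ingredients you assemble. Your second and third paragraphs merely make explicit the preservation bookkeeping (the inclusion preserving each rigged inserter, each power, and the countable-chain limit in the construction of \longref{Lemma}{lem:EM}) that the paper leaves implicit in that citation.
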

	
	\begin{proof}
		This follows directly from \longref{Proposition}{pro:tight_lim},  \longref{Theorem}{thm:Lali(K)}, and \longref{Lemma}{lem:EM}.
	\end{proof}
	
	\subsection{\texorpdfstring{$\Rali(\calK)$}{Rali(K)} }
	We first note that the enhanced simplicial category $\Rali(\calK)$ of rali-isofibrations admit the most basic kind of limits.
	
	\begin{pro}
		\label{pro:tight_lim_Rali}
		Tight cosmological limits exist in the enhanced simplicial category $\Rali(\calK)$ of rali-isofibrations of an $\infty$-cosmos $\calK$, and are preserved by the inclusion
		$$\Rali(\calK) \hookrightarrow \calK^{\isof}_\chor.$$
	\end{pro}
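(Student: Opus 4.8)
The plan is to mirror the argument of \longref{Proposition}{pro:tight_lim}, interchanging throughout the roles of left and right adjoints and of units and counits. As there, the assertion reduces to a single new claim: given an $\sSet$-weight $W \colon \mathcal{I} \to \sSet$ and a simplicial functor $F \colon \mathcal{I} \to \calRali(\calK)$, the $\sSet$-weighted limit $\{W, F\} \colon \{W, F\}_1 \twoheadrightarrow \{W, F\}_2$ exists in the $\infty$-cosmos $\calRali(\calK)$ (\longref{Example}{eg:calLali}) and is preserved by $\calRali(\calK) \hookrightarrow \calK^{\isof}$, so the only thing left to check is that the limit projections $p_i \colon \{W, F\} \to Fi$ \emph{jointly reflect morphisms of ralis}. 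Granting this, a tight cosmological limit in $\Rali(\calK)$ is exactly such an $\sSet$-weighted limit in $\calRali(\calK)$ equipped with this extra joint-reflection property, and the inclusion $\Rali(\calK) \hookrightarrow \calK^{\isof}_\chor$ preserves it because it already does so at the level of the underlying $\infty$-cosmoi.

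To establish the joint-reflection claim I would proceed as follows. Each $Fi$, being a rali-isofibration, carries in $h\calK$ a left adjoint $l_i \colon (Fi)_2 \to (Fi)_1$ with invertible unit and a counit $\epsilon_i \colon l_i \cdot Fi \Rightarrow 1_{(Fi)_1}$; likewise $\{W, F\}$ carries a left adjoint $L$ with invertible unit and a counit $\epsilon \colon L \cdot \{W, F\} \Rightarrow 1_{\{W, F\}_1}$, obtained by assembling the $\epsilon_i$ through the universal property of the limit $\{W, F\}_1$ in $h\calK$; this assembly yields, for each $i$, the compatibility equation dual to \longref{Equation}{eqt:eta_proj} expressing $\epsilon$ whiskered along $({p_i})_1$ in terms of $\epsilon_i$. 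Then, for a rali-isofibration $C \colon C_1 \twoheadrightarrow C_2$ with invertible unit $\eta_C$ and a morphism of isofibrations $q \colon C \to \{W, F\}$, I would form the mate of $q$ out of $\eta_C$ and $\epsilon$ and, using that compatibility, identify the mate of $p_i \cdot q$ (formed from $\eta_C$ and $\epsilon_i$) as the whiskering of the mate of $q$ along $({p_i})_1$. Since, by the universal property of $\{W, F\}$ exactly as in the proof of \longref{Proposition}{pro:tight_lim}, a $2$-cell with codomain $\{W, F\}_1$ is invertible in $h\calK$ precisely when all of its whiskerings along the $({p_i})_1$ are, it follows that the mate of $q$ is invertible precisely when every mate of $p_i \cdot q$ is; that is, $q$ is a morphism of ralis precisely when every $p_i \cdot q$ is one.

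A more conceptual alternative, if one invokes that the co-dual $\calK^\co$ of an $\infty$-cosmos is again an $\infty$-cosmos, is to note that \longref{Example}{eg:calLali} together with the fact that $\op \colon \sSet \to \sSet$ preserves pullbacks produces an isomorphism of enhanced simplicial categories $\Rali(\calK) \cong \Lali(\calK^\co)^\co$ matching tight and loose $0$-arrows (a rali in $h\calK$ is a lali in $h(\calK^\co) = (h\calK)^\co$, and the mate condition for morphisms of ralis dualises to that for morphisms of lalis), so that \longref{Proposition}{pro:tight_lim} and \longref{Proposition}{pro:K^co_has_dual_lim} give the claim once one checks that co-dualising a chordate $\F_\Delta$-weight yields a chordate $\F_\Delta$-weight carrying cosmological weights to cosmological weights (powers by $J$ to powers by $J^\op$; products, pullbacks of isofibrations, and limits of countable chains of isofibrations to themselves). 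In either route the real content is bookkeeping rather than a genuine obstacle: for the first, getting the mate calculus exactly right — in particular, that invertibility of a $2$-cell into $\{W, F\}_1$ is detected by the projections in $h\calK$, and that the induced counit genuinely satisfies the dual of \longref{Equation}{eqt:eta_proj} — and for the second, verifying the identification $\Rali(\calK) \cong \Lali(\calK^\co)^\co$ on hom-objects and on the tight/loose structure and the stability of cosmological weights under co-dualisation. I would carry out the first route in the paper, to keep the treatment parallel with \longref{Proposition}{pro:tight_lim}.
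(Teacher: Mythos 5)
Your first route is exactly the paper's proof: the paper establishes this proposition by rerunning the argument of \longref{Proposition}{pro:tight_lim} with rali-isofibrations, counits, and morphisms of ralis in place of lalis, units, and morphisms of lalis, which is precisely the dualisation you carry out (assembling the counit, the dual of \longref{Equation}{eqt:eta_proj}, and joint reflection of invertible mates via the projections). Your alternative route through $\Rali(\calK) \cong \Lali(\calK^\co)^\co$ is sound as well and is the mechanism the paper itself uses later (\longref{Lemma}{lem:Rali_Lali}, \longref{Corollary}{cor:Lali_Rali_co}) for the rigged-inserter statements, but for this proposition the paper takes the direct dual argument, as you do.
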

	
	\begin{proof}
		By considering rali-isofibrations, counits, and morphisms of rali-isofibrations, instead, in the proof of \longref{Proposition}{pro:tight_lim}, we obtain a proof for this proposition.
	\end{proof}

	The completeness results above have their dual counterparts for $\Rali(\calK)$, which directly follow from the relationship between lali-isofibrations and rali-isofibrations.
	
	\begin{lemma}
		\label{lem:Rali_Lali}
		$\Lali(\calK) = \Rali(\calK^\co)^\co$.
	\end{lemma}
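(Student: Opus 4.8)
The strategy is to verify the asserted identity of enhanced simplicial categories level by level: objects, loose hom-quasi-categories together with their composition, and finally the tight parts of the hom-objects, checking at each stage that $\Lali(\calK)$ and $\Rali(\calK^\co)^\co$ agree on the nose. I would begin by recalling the relevant facts about the co-dual of an $\infty$-cosmos. The simplicial category $\calK^\co$ with the same objects as $\calK$ and $\calK^\co(A,B):=\calK(A,B)^\op$ is again an $\infty$-cosmos, with the same isofibrations, and its homotopy $2$-category is $(h\calK)^\co$, namely $h\calK$ with the directions of $2$-cells reversed. The only input I need about $2$-cell duality is elementary: reversing $2$-cells turns an adjunction $f \dashv u$ in $h\calK$ into the adjunction $u \dashv f$ in $(h\calK)^\co$, interchanging the roles of unit and counit, and sends the mate of a commutative square with respect to a pair of adjunctions to the mate of the same square with respect to the transported adjunctions, read with its $2$-cell reversed; in particular invertibility is preserved throughout.

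For the objects, an isofibration $p\colon E\twoheadrightarrow B$ of $\calK^\co$ is precisely an isofibration of $\calK$, and it is a rali-isofibration of $\calK^\co$ --- it admits a left adjoint $l$ in $(h\calK)^\co$ whose unit is invertible --- exactly when, back in $h\calK$, the arrow $p$ admits a right adjoint $r=l$ whose counit (the image of that unit under $2$-cell duality) is invertible, i.e.\ exactly when $p$ is a lali-isofibration of $\calK$. Since $(-)^\co$ of \longref{Notation}{nt:co} leaves objects untouched, $\Lali(\calK)$ and $\Rali(\calK^\co)^\co$ have the same objects. For the loose hom-objects, $(\calK^\co)^{\isof}(p_1,p_2)$ is the pullback computing morphisms of isofibrations of $\calK^\co$ (\longref{Example}{eg:isof_of_cosmos}); since $(-)^\op\colon\sSet\to\sSet$ strictly preserves pullbacks this is $(\calK^{\isof}(p_1,p_2))^\op$, and a second application of $(-)^\op$ gives $\Rali(\calK^\co)^\co(p_1,p_2)_\lambda=\calK^{\isof}(p_1,p_2)=\Lali(\calK)(p_1,p_2)_\lambda$. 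Composition and identities match because composition in $\calK^\co$, hence in $(\calK^\co)^{\isof}$, is componentwise the opposite of composition in $\calK$.

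It remains to match the tight $0$-arrows. A commutative square $(e,b)\colon p_1\to p_2$ is a morphism of ralis of $\calK^\co$ iff its mate, formed in $(h\calK)^\co$ from the units and counits of the left-adjoint structures $l_i\dashv p_i$, is invertible. Running the dictionary from the first paragraph --- $l_i$ is the right adjoint $r_i$ of $p_i$ in $h\calK$, the unit of $l_i\dashv p_i$ in $(h\calK)^\co$ is the counit $\epsilon_i$ of $p_i\dashv r_i$ in $h\calK$, and the counit is the unit $\eta_i$ --- this mate, rewritten in $h\calK$, is exactly the pasting of $\eta_2$ and $\epsilon_1$ that defines a morphism of lalis of $\calK$ in \longref{Example}{eg:calLali}, merely with the direction of the $2$-cell reversed; hence it is invertible iff $(e,b)$ is a morphism of lalis. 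On both sides the tight hom-object is the full sub-quasi-category of the loose one (respectively of its opposite) spanned by these $0$-arrows, and the opposite of a full sub-simplicial-set is the full sub-simplicial-set on the same vertices, so the inclusions $\bbA(p_1,p_2)_\tau\hookrightarrow\bbA(p_1,p_2)_\lambda$ coincide for the two enhanced simplicial categories. Combining the three steps yields $\Lali(\calK)=\Rali(\calK^\co)^\co$.

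The one genuinely delicate point is the assertion in the last paragraph that the mate of $(e,b)$ computed in $(h\calK)^\co$ is the $2$-cell-reverse of the mate computed in $h\calK$ that appears in the definition of a morphism of lalis. I would establish this by a direct pasting-diagram computation, or by invoking the general compatibility of mates with $2$-cell duality; everything else is a routine unwinding of the explicit descriptions in \longref{Notation}{nt:co} and Examples \ref{eg:isof_of_cosmos}, \ref{eg:calLali} and \ref{eg:bbLali}.
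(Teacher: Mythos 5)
Your proof is correct, but it takes a different route from the paper. The paper disposes of the tight level in one line by citing Bourke--Lack \cite[Corollary 5.8]{BL:2023}, which already establishes $\calLali(\calK) = \calRali(\calK^\co)^\co$ as $\infty$-cosmoi, and then merely observes that the loose $0$-arrows (morphisms of isofibrations) trivially agree on both sides, so the identity extends to the enhanced setting. You instead re-prove the whole identification by hand: matching objects via the fact that $2$-cell duality turns $p \dashv r$ with invertible counit into $r \dashv p$ with invertible unit, matching loose hom-objects via $(-)^\op$ preserving the pullback defining $(\calK^\co)^{\isof}(p_1,p_2)$, and matching tight $0$-arrows via compatibility of mates with the co-duality, so that the rali-mate of a square in $(h\calK)^\co$ is exactly the $2$-cell-reverse of the lali-mate in $h\calK$ from \longref{Example}{eg:calLali}, with invertibility preserved. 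Both arguments are sound; yours is self-contained and makes explicit precisely which duality facts are being used (the unit/counit swap and the mate computation, which you correctly flag as the only point needing a pasting check), at the cost of length, whereas the paper's citation also silently carries along the identification of the cosmos structure (isofibrations, limits) on $\calLali(\calK)$ and $\calRali(\calK^\co)^\co$, which it later exploits, e.g.\ in the proof of \longref{Theorem}{thm:Rali(K)}; your argument, as written, establishes the $\F_\Delta$-categorical equality that the lemma literally asserts.
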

	
	\begin{proof}
		In \cite[Corollary 5.8]{BL:2023}, it is seen that $\calLali(\calK) = \calRali(\calK^\co)^\co$ as an $\infty$-cosmos. Now, since a loose $0$-arrow is simply a morphism of isofibrations, the equality extends to the enhanced setting.
	\end{proof}

	\begin{coroll}
		\label{cor:Lali_Rali_co}
		For an $\infty$-cosmos $\calK$, $\Lali(\calK)$ admits terminally rigged $n$-inserters if and only if $\Rali(\calK^\co)$ admits initially rigged $n$-inserters.
		
		In addition,  the inclusion
		$$\Lali(\calK) \hookrightarrow \calK^{\isof}_\chor$$
		preserves terminally rigged $n$-inserters precisely if the inclusion
		$$\Rali(\calK) \hookrightarrow \calK^{\isof}_\chor$$
		preserves initially rigged $n$-inserters.
	\end{coroll}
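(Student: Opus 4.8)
The plan is to obtain the corollary as a purely formal consequence of the duality results already established, with no further analysis of $\Lali(\calK)$ or $\Rali(\calK)$ themselves. The three ingredients I will use are \longref{Lemma}{lem:Rali_Lali}, which gives $\Lali(\calK) = \Rali(\calK^\co)^\co$; \longref{Proposition}{pro:terminal_initial}, which identifies a terminally rigged $n$-inserter in $\bbK^\co$ with an initially rigged $n$-inserter in $\bbK$; and the combination of \longref{Lemma}{lem:dual_fun} with \longref{Proposition}{pro:K^co_has_dual_lim}, which says that an $\F_\Delta$-weighted limit exists, and is preserved by a given enhanced simplicial functor, exactly when the dual weighted limit exists and is preserved by the dual functor.

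For the existence half I would simply chain these. By \longref{Lemma}{lem:Rali_Lali} the assertion ``$\Lali(\calK)$ admits terminally rigged $n$-inserters'' is literally the assertion ``$\Rali(\calK^\co)^\co$ admits terminally rigged $n$-inserters'', and by \longref{Proposition}{pro:terminal_initial}, applied with $\bbK := \Rali(\calK^\co)$, the latter holds if and only if $\Rali(\calK^\co)$ admits initially rigged $n$-inserters. That is precisely the first claim.

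For the preservation half I would first dispose of the bookkeeping about $(-)^\co$. Since the opposite of a quasi-categorically enriched category is again one with the same isofibrations, one has $(\calK^{\isof})^\co = (\calK^\co)^{\isof}$ as $\infty$-cosmoi; and because $(-)^\op$ on a simplicial set fixes the vertex set, chordification commutes with $(-)^\co$, so $(\calK^{\isof}_\chor)^\co = (\calK^\co)^{\isof}_\chor$. Applying $(-)^\co$ to the canonical inclusion $J \colon \Lali(\calK) \hookrightarrow \calK^{\isof}_\chor$ then yields the canonical inclusion $J^\co \colon \Rali(\calK^\co) \hookrightarrow (\calK^\co)^{\isof}_\chor$ — it is the identity on objects and, on hom-objects, the inclusion of the loose hom followed by $(-)^\op$. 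Now \longref{Lemma}{lem:dual_fun} and \longref{Proposition}{pro:K^co_has_dual_lim} supply a natural isomorphism showing that an enhanced simplicial functor $F$ preserves $\{\Phi, G\}$ if and only if $F^\co$ preserves $\{\Phi_\co, G^\co\}$; together with the identification in \longref{Definition}{def:initial} and \longref{Proposition}{pro:terminal_initial} of a terminally rigged $n$-inserter of $G$ in $\bbK$ with the initially rigged $n$-inserter of $G^\co$ in $\bbK^\co$, this gives that $J$ preserves terminally rigged $n$-inserters exactly when $J^\co$ preserves initially rigged $n$-inserters. Finally, since $\calK \mapsto \calK^\co$ is an involution on $\infty$-cosmoi, relabelling $\calK^\co$ as $\calK$ turns this into the statement as phrased (one may alternatively note that, by \longref{Theorem}{thm:Lali(K)} and its dual, both sides in fact hold unconditionally, so the equivalence is automatic).

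I do not anticipate a genuine obstacle: every step is a direct application of an already-proven proposition. The only place where care is needed — and where a sloppy identification could silently fail — is the compatibility of $(-)^\co$ with $(-)^{\isof}$, with chordification, and with the canonical inclusion; this should be checked against the explicit descriptions recalled in \longref{Example}{eg:isof_of_cosmos} and \longref{Notation}{nt:co}, after which the remainder is immediate.
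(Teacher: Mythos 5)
Your argument is essentially the paper's: the existence half is exactly \longref{Lemma}{lem:Rali_Lali} combined with \longref{Proposition}{pro:terminal_initial}, and the preservation half, which the paper dismisses as immediate, you justify by the same duality machinery (\longref{Lemma}{lem:dual_fun}, \longref{Proposition}{pro:K^co_has_dual_lim}) together with the routine compatibility of $(-)^\co$ with $(-)^{\isof}$, chordification, and the canonical inclusion. The only thing to drop is your parenthetical fallback: the dual of \longref{Theorem}{thm:Lali(K)}, namely \longref{Theorem}{thm:Rali(K)}, is proved later \emph{using} this corollary, so invoking it here would be circular — but your main argument does not need it.
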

	
	\begin{proof}
		By \longref{Lemma}{lem:Rali_Lali}, we have $\Lali(\calK) = \Rali(\calK^\co)^\co$, so by \longref{Proposition}{pro:terminal_initial}, this is equivalent to say that $\Rali(\calK^\co)$ admits initially rigged $n$-inserters.
		
		The statement for the inclusions follows immediately.
	\end{proof}
	
	We then establish the following dual statement to \longref{Theorem}{thm:Lali(K)}.
	
	\begin{theorem}
		\label{thm:Rali(K)}
		For any $\infty$-cosmos $\calK$, the enhanced simplicial category $\Rali(\calK)$ of rali-isofibrations of $\calK$ admits initially rigged $m$-inserters, and that the projection is an isofibration of $\Rali(\calK)$.
		
		In this case, the inclusion
		$$\Rali(\calK) \hookrightarrow \calK^{\isof}_\chor$$
		preserves initially rigged $n$-inserters.
	\end{theorem}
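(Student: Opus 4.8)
The plan is to derive \longref{Theorem}{thm:Rali(K)} from \longref{Theorem}{thm:Lali(K)} purely by dualisation, along the lines already set up in \longref{Lemma}{lem:Rali_Lali} and \longref{Corollary}{cor:Lali_Rali_co}. First I would apply \longref{Theorem}{thm:Lali(K)} to the $\infty$-cosmos $\calK^\co$ --- which is again an $\infty$-cosmos, with the same maps serving as isofibrations, since the opposite of an isofibration of quasi-categories is again an isofibration. This yields that $\Lali(\calK^\co)$ admits terminally rigged $m$-inserters, that their projections are isofibrations of $\calLali(\calK^\co)$, and that the inclusion $\Lali(\calK^\co) \hookrightarrow (\calK^\co)^{\isof}_\chor$ preserves these limits.

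Next I would transport this back to $\calK$. For the existence statement this is immediate: by \longref{Lemma}{lem:Rali_Lali} applied to $\calK^\co$ we have $\Lali(\calK^\co) = \Rali(\calK)^\co$, and by \longref{Proposition}{pro:terminal_initial} the assertion that $\Rali(\calK)^\co$ admits terminally rigged $m$-inserters is precisely the assertion that $\Rali(\calK)$ admits initially rigged $m$-inserters, the limit being the weighted limit $\{\Phi_\co, F^\co\}$ of \longref{Definition}{def:initial} computed in $\Rali(\calK)$ (this is exactly the content of the first clause of \longref{Corollary}{cor:Lali_Rali_co}). The inclusion-preservation statement likewise follows from the second clause of \longref{Corollary}{cor:Lali_Rali_co}, given that \longref{Theorem}{thm:Lali(K)} already tells us the inclusion $\Lali(\calK) \hookrightarrow \calK^{\isof}_\chor$ preserves terminally rigged $n$-inserters.

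It then only remains to upgrade the conclusion to the statement that the projection of an initially rigged $m$-inserter is an isofibration of $\Rali(\calK)$. Here I would unwind the duality from \longref{Proposition}{pro:K^co_has_dual_lim}: its proof shows that the canonical isomorphism witnessing a terminally rigged $m$-inserter in $\bbK^\co$ corresponds to the one witnessing the initially rigged $m$-inserter in $\bbK$, and in particular sends limit projections to limit projections. Applying this with $\bbK = \Rali(\calK)$, the projection of the initially rigged $m$-inserter in $\Rali(\calK)$ is identified with the projection of the corresponding terminally rigged $m$-inserter in $\Rali(\calK)^\co = \Lali(\calK^\co)$, which by the first paragraph is an isofibration of $\calLali(\calK^\co)$; since an $\infty$-cosmos $\calK$ and its co-dual $\calK^\co$ have the same isofibrations, and $\calLali(\calK^\co)^\co = \calRali(\calK)$ by \longref{Lemma}{lem:Rali_Lali}, this is the same as being an isofibration of $\calRali(\calK)$, i.e. of $\Rali(\calK)$. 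The one piece of genuine care needed --- and the only place where the argument is more than a reference-chase --- is this compatibility bookkeeping: verifying that $(-)^{\isof}$, the chordate construction, the formation of rigged $n$-inserters, and being an isofibration of the enhanced simplicial category all interact with $(-)^\co$ exactly as the dualisation requires; for instance one checks $(\calK^\co)^{\isof} = (\calK^{\isof})^\co$ directly from the defining pullback, using that $(-)^\op$ preserves pullbacks of simplicial sets. None of this is deep, and with these identifications in hand the statement of \longref{Theorem}{thm:Rali(K)} follows from \longref{Theorem}{thm:Lali(K)} with no further work.
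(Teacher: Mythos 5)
Your proposal is correct and follows essentially the same route as the paper: apply Theorem \ref{thm:Lali(K)} to the $\infty$-cosmos $\calK^\co$, transport the conclusion back through Lemma \ref{lem:Rali_Lali}, Proposition \ref{pro:terminal_initial} and Corollary \ref{cor:Lali_Rali_co}, and identify isofibrations of $\Lali(\calK^\co)$ with isofibrations of $\calK^{\isof}$ between rali-isofibrations of $\calK$. The extra bookkeeping you flag (compatibility of $(-)^{\isof}$, chordation and the inserter weights with $(-)^\co$) is exactly the content the paper disposes of by citing these duality statements, so no further work is needed.
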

	
	\begin{proof}
		Note that $\calK^\co$ is also an $\infty$-cosmos. Hence, by \longref{Theorem}{thm:Lali(K)}, we know that $\Lali(\calK^\co)$ admits terminally rigged $m$-inserters. According to \longref{Corollary}{cor:Lali_Rali_co}, we conclude that $\Rali(\calK) = \Lali((\calK^\co)^\co)$ has initially rigged $m$-inserters.
		
		Lastly, an isofibration of $\Lali(\calK^\co)$ is an isofibration of $\calK^{\isof}$ between lali-isofibrations of $\calK^\co$, which is equivalently an isofibration of $\calK^{\isof}$ between rali-isofibrations of $\calK$, as described in the proof of \longref{Lemma}{lem:Rali_Lali}.
	\end{proof}
	
	\begin{coroll}
		$\Rali(\calK)$ admits coEilenberg-Moore objects over loose comonads, and that the forgetful functor is a morphism of ralis and reflects morphisms of ralis.
		
		In this case, the inclusion
		$$\Rali(\calK) \hookrightarrow \calK^{\isof}_\chor$$
		preserves coEilenberg-Moore objects over loose comonads.
	\end{coroll}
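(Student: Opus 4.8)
The plan is to mirror the proof of \longref{Corollary}{cor:EM_Lali}, replacing its three inputs by their duals. Concretely, the statement will follow from \longref{Proposition}{pro:tight_lim_Rali}, \longref{Theorem}{thm:Rali(K)}, and \longref{Lemma}{lem:coEM}, so the only thing to do is check that $\bbK = \Rali(\calK)$ meets the hypotheses of \longref{Lemma}{lem:coEM}. The tight part of $\Rali(\calK)$ is the $\infty$-cosmos $\calRali(\calK)$ from \longref{Example}{eg:calLali}. By \longref{Proposition}{pro:tight_lim_Rali}, $\Rali(\calK)$ admits all tight cosmological limits; in particular it has limits of countable chains of tight isofibrations and simplicial powers by inchordate enhanced simplicial sets, with limit projections jointly reflecting tightness, as recorded in \longref{Example}{eg:tight_lim}. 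By \longref{Theorem}{thm:Rali(K)}, it admits initially rigged $n$-inserters whose projection is a tight isofibration. These are exactly the assumptions of \longref{Lemma}{lem:coEM}, which then produces the coEilenberg-Moore object over any loose comonad and shows the forgetful functor is a tight isofibration; since that object is built as the limit of a countable chain of tight isofibrations, the forgetful functor also reflects tightness, i.e.\ reflects morphisms of ralis.

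For preservation by the inclusion $\Rali(\calK) \hookrightarrow \calK^{\isof}_\chor$, I would observe that the construction carried out in \longref{Lemma}{lem:coEM} uses only tight cosmological limits and initially rigged $n$-inserters, and both of these are preserved by that inclusion by \longref{Proposition}{pro:tight_lim_Rali} and \longref{Theorem}{thm:Rali(K)}; hence so is the coEilenberg-Moore object.

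As an alternative (and shorter) route, everything can be read off from \longref{Corollary}{cor:EM_Lali} by duality. Since $\calK^\co$ is again an $\infty$-cosmos and $\Rali(\calK) = \Lali(\calK^\co)^\co$ by \longref{Lemma}{lem:Rali_Lali}, a loose comonad $C$ on $A$ in $\Rali(\calK)$ is precisely a loose monad $C^\co$ on $A$ in $\Lali(\calK^\co)$, whose Eilenberg-Moore object is by definition the coEilenberg-Moore object over $C$ (see \longref{Example}{eg:EM}). Applying \longref{Corollary}{cor:EM_Lali} to $\calK^\co$ and translating morphisms of lalis in $\Lali(\calK^\co)$ back to morphisms of ralis in $\Rali(\calK)$ exactly as in the proof of \longref{Lemma}{lem:Rali_Lali} gives both the existence and the two properties of the forgetful functor; the preservation clause follows by taking opposites, using that $(-)^\op$ on simplicial sets preserves limits, so $(\calK^\co)^{\isof} \cong (\calK^{\isof})^\co$ and hence $\bigl((\calK^\co)^{\isof}_\chor\bigr)^\co \cong \calK^{\isof}_\chor$.

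I do not expect a genuine obstacle: all the mathematical content sits in \longref{Theorem}{thm:Rali(K)} and \longref{Lemma}{lem:coEM} --- themselves dual to results already established --- and the remaining work is bookkeeping, chiefly making sure that the $\co$-duality identification of morphisms of ralis is the one used consistently, and that the abstract ``reflects tightness'' of \longref{Example}{eg:tight_lim} unwinds in $\Rali(\calK)$ to ``reflects morphisms of ralis''.
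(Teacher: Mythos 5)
Your proposal is correct, and your ``shorter alternative'' is essentially the paper's own proof: the paper simply cites \longref{Corollary}{cor:EM_Lali} together with \longref{Proposition}{pro:terminal_initial}, i.e.\ the duality $\Rali(\calK) = \Lali(\calK^\co)^\co$ and the identification of coEilenberg-Moore objects over $C$ with Eilenberg-Moore objects over $C^\co$. Your primary route via \longref{Proposition}{pro:tight_lim_Rali}, \longref{Theorem}{thm:Rali(K)}, and \longref{Lemma}{lem:coEM} also works and just re-runs the $\Lali$ argument with the dual inputs the paper has already established.
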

	
	\begin{proof}
		This follows immediately from \longref{Corollary}{cor:EM_Lali} and 
		\longref{Proposition}{pro:terminal_initial}.
	\end{proof}
	
	\subsection{From \texorpdfstring{$\Lali(\calK)$}{Lali(K)} or \texorpdfstring{$\Rali(\calK)$}{Rali(K)} to other enhanced simplicial categories}
	From the above results for $\Lali(\calK)$ or $\Rali(\calK)$, we are able to derive completeness results for some other enhanced simplicial categories. All these enhanced simplicial categories share one similarity, that they are indeed obtained as a pullback of $\Lali(\calK)$ or $\Rali(\calK)$ along an isofibration of $\FDeltaCat$.
	
	We recall the notion of isofibrations in enriched category theory.
	
	\begin{defi}
		Let $\V$ be a symmetric monoidal category. Let $\mathcal{E}$ and $\mathcal{B}$ be $\V$-categories. A $\V$-functor $p \colon \mathcal{E} \to \mathcal{B}$ is an \emph{isofibration} precisely when for any object $e$ of $\mathcal{E}$, if there is an isomorphism $p(e) \xrightarrow{\phi} b$ in $\mathcal{B}$, then there exists an isomorphism $e \xrightarrow{\psi} \tilde{b}$ in $E$ such that $p(\psi) = \phi$.
	\end{defi}
	
	\begin{lemma}
		Isofibrations between $\V$-categories are pullback-stable.
	\end{lemma}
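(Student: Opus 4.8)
The plan is to recognise the notion of isofibration in the definition as a right lifting property and then invoke the general fact that classes of maps defined by a right lifting property are stable under pullback. First I would introduce two small $\V$-categories: the unit $\V$-category $\mathbf{1}$, with a single object and hom-object the monoidal unit $I$ of $\V$; and the \emph{free-standing isomorphism} $\V$-category $\mathbf{I}$, with two objects $0,1$, all four hom-objects equal to $I$, and the evident compositions and identities, together with the inclusion $\V$-functor $j\colon \mathbf{1}\hookrightarrow\mathbf{I}$ picking out $0$. Unwinding definitions, a $\V$-functor $\mathbf{1}\to\mathcal{C}$ is precisely an object of $\mathcal{C}$, and a $\V$-functor $\mathbf{I}\to\mathcal{C}$ is precisely an isomorphism in the underlying ordinary category of $\mathcal{C}$ (unit compatibility forces the endo-components to be identities, and composition compatibility forces the two off-diagonal components to be mutually inverse); under these identifications, a commutative square with $j$ on the left and a $\V$-functor $p\colon\mathcal{E}\to\mathcal{B}$ on the right is exactly the datum of an object $e$ of $\mathcal{E}$ together with an isomorphism $p(e)\xrightarrow{\phi}b$ in $\mathcal{B}$, and a diagonal filler is exactly an isomorphism $e\xrightarrow{\psi}\tilde b$ in $\mathcal{E}$ with $p(\psi)=\phi$. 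Hence $p$ is an isofibration in the sense of the preceding definition if and only if $p$ has the right lifting property against $j$.

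Next I would prove, using only the universal property of the pullback in $\VCat$, that the class of $\V$-functors with the right lifting property against $j$ is closed under pullback. Given a pullback square in $\VCat$ with vertex $\mathcal{P}$, projections $q\colon\mathcal{P}\to\mathcal{A}$ and $g\colon\mathcal{P}\to\mathcal{E}$, and legs $f\colon\mathcal{A}\to\mathcal{B}$ and $p\colon\mathcal{E}\to\mathcal{B}$ with $fq=pg$ and $p$ having the right lifting property against $j$, a lifting problem of $j$ against $q$ consists of $\V$-functors $u\colon\mathbf{1}\to\mathcal{P}$ and $v\colon\mathbf{I}\to\mathcal{A}$ with $qu=vj$. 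Post-composing, $(gu\colon\mathbf{1}\to\mathcal{E},\ fv\colon\mathbf{I}\to\mathcal{B})$ is a lifting problem of $j$ against $p$, since $p(gu)=f(qu)=f(vj)=(fv)j$; let $w\colon\mathbf{I}\to\mathcal{E}$ be a solution, so $pw=fv$ and $wj=gu$. Then $w$ and $v$ form a cone on the cospan $\mathcal{E}\xrightarrow{p}\mathcal{B}\xleftarrow{f}\mathcal{A}$ with apex $\mathbf{I}$, so the pullback property yields a unique $\tilde w\colon\mathbf{I}\to\mathcal{P}$ with $g\tilde w=w$ and $q\tilde w=v$; and $\tilde w j$ and $u$ have the same composite with $g$ (namely $wj=gu$) and with $q$ (namely $vj=qu$), so the uniqueness clause of the pullback property gives $\tilde w j=u$. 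Thus $\tilde w$ solves the original lifting problem, $q$ has the right lifting property against $j$, and by the first paragraph $q$ is an isofibration.

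Alternatively, one could argue concretely: the pullback $\mathcal{P}$ in $\VCat$ has objects the pairs $(a,e)$ with $f(a)=p(e)$ and hom-objects $\mathcal{P}\big((a,e),(a',e')\big)=\mathcal{A}(a,a')\times_{\mathcal{B}(fa,fa')}\mathcal{E}(e,e')$ formed in $\V$; given an object $(a,e)$ and an isomorphism $\phi\colon a\to a'$ in $\mathcal{A}$, transport it along $f$, lift it through the isofibration $p$ at $e$ to an isomorphism $\psi\colon e\to e'$ with $p(\psi)=f(\phi)$, note that $p(\psi^{-1})=f(\phi^{-1})$ as well, and glue $\phi,\psi$ and then $\phi^{-1},\psi^{-1}$ via the pullback universal property in $\V$ applied to maps out of $I$; the uniqueness clause then forces the glued morphism to be an isomorphism lying over $\phi$. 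Either way there is essentially no real obstacle: the only points needing (routine) care are the identification of $\V$-functors out of $\mathbf{1}$ and $\mathbf{I}$ with objects and isomorphisms, and — for the concrete route — the fact that pullbacks of $\V$-categories are computed objectwise and hom-objectwise, so that the one-dimensional universal property in $\V$ suffices to produce the required lift.
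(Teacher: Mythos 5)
Your proof is correct, and your primary route is genuinely different from the paper's. The paper argues concretely: it takes an object $Q$ of the pullback $\mathcal{P}$ and an isomorphism $\phi$ out of its image under one projection, pushes $\phi$ forward along $F$, uses the isofibration $G$ to lift it to an isomorphism $\theta$ with $G(\theta)=F(\phi)$, and then observes that the pair $(\phi,\theta)$ is an isomorphism in $\mathcal{P}$ lying over $\phi$ --- which is exactly your ``alternative'' concrete argument, with the small bonus that you spell out why the pair is invertible (composition in $\mathcal{P}$ is computed componentwise, so $(\phi^{-1},\theta^{-1})$ is a two-sided inverse), a point the paper leaves implicit. Your main argument instead characterises isofibrations as the $\V$-functors with the right lifting property against the inclusion $\mathbf{1}\hookrightarrow\mathbf{I}$ of the unit $\V$-category into the free-standing isomorphism, and then invokes the purely formal fact that a class defined by a right lifting property is stable under pullback; the identification of $\V$-functors out of $\mathbf{1}$ and $\mathbf{I}$ with objects and underlying isomorphisms is carried out correctly (unit and composition axioms force the diagonal components to be units and the off-diagonal ones to be mutually inverse). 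What the abstract route buys is that it needs only the one-dimensional universal property of the pullback in $\VCat$ and no explicit description of $\mathcal{P}$, and it generalises immediately to any lifting-defined class; what the paper's (and your alternative) route buys is brevity and the explicit description of the lifted isomorphism, which is also closer in spirit to how the lemma is used later via \longref{Lemma}{lem:enriched_pullback}.
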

	
	\begin{proof}
		Consider a pullback diagram
		\[\begin{tikzcd}[ampersand replacement=\&]
			{\mathcal{P}} \& {\mathcal{B}} \\
			{\mathcal{A}} \& {\mathcal{C}}
			\arrow["{p_\mathcal{B}}", from=1-1, to=1-2]
			\arrow["{p_\mathcal{A}}"', from=1-1, to=2-1]
			\arrow["\lrcorner"{anchor=center, pos=0.125}, draw=none, from=1-1, to=2-2]
			\arrow["G", two heads, from=1-2, to=2-2]
			\arrow["F"', from=2-1, to=2-2]
		\end{tikzcd}\]
		in $\VCat$, where $G$ is an isofibration. 
		
		Let $Q$ be an object of $\mathcal{P}$, and $\phi \colon P_\twoA(Q) \xrightarrow{\cong} A$ be an isomorphism in $\twoA$. Then, $F(\phi) \colon FP_\twoA(Q) \xrightarrow{\cong} F(A)$ is an isomorphism in $\twoC$. Now, since $G$ is an isofibration, and that $FP_\twoA(Q) = GP_\twoB(Q)$, there exists an isomorphism $\theta \colon P_\twoB(Q) \xrightarrow{\cong} \widetilde{F(A)}$ in $\twoB$, where $G(\theta) = F(\phi)$.
		
		Then, the map $(\phi, \theta) \colon (P_\twoA(Q), P_\twoB(B)) \xrightarrow{\cong} (A, \widetilde{F(A)})$ is an isomorphism in $\mathcal{P}$, where $P_\twoA(\phi, \theta) = \phi$. 
	\end{proof}
	
	\begin{lemma}
		\label{lem:enriched_pullback}
		Let $\V$ be a symmetric monoidal category. Let $\twoA, \twoB,$ and $\twoC$ be $\V$-categories, and $F \colon \twoA \to \twoC$ and $G \colon \twoB \to \twoC$ be $\V$-functors, where $G$ is an isofibration. 
		
		Consider the pullback
		\[\begin{tikzcd}[ampersand replacement=\&]
			{\mathcal{P}} \& {\mathcal{B}} \\
			{\mathcal{A}} \& {\mathcal{C}}
			\arrow["{p_\mathcal{B}}", from=1-1, to=1-2]
			\arrow["{p_\mathcal{A}}"', two heads, from=1-1, to=2-1]
			\arrow["\lrcorner"{anchor=center, pos=0.125}, draw=none, from=1-1, to=2-2]
			\arrow["G", two heads, from=1-2, to=2-2]
			\arrow["F"', from=2-1, to=2-2]
		\end{tikzcd}\]
		of $F$ along $G$. Let $W \colon \mathcal{J} \to \V$ be a $\V$-weight, and $H \colon \mathcal{J} \to \mathcal{P}$ be a $\V$-functor. Suppose that $\twoA$ admits the $\V$-limit $\{W, P_\twoA H\}$ and $F$ preserves it, whereas $\twoB$ admits the $\V$-limit $\{W, P_\twoB H\}$ and $G$ preserves it. Then, the pullback $\mathcal{P}$ admits the $\V$-limit $\{W, H\}$, which is preserved by the projections.
	\end{lemma}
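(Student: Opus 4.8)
The plan is to construct $\{W, H\}$ out of the weighted limits $\{W, P_\twoA H\}$ in $\twoA$ and $\{W, P_\twoB H\}$ in $\twoB$ that we are given, using the isofibration hypothesis on $G$ to rectify the resulting pair into a genuine object of the strict pullback $\mathcal{P}$. Recall that in $\VCat$ the pullback $\mathcal{P}$ has as objects the pairs $(a, b)$ with $Fa = Gb$ and as hom-objects the pullbacks $\mathcal{P}((a,b),(a',b')) = \twoA(a,a') \times_{\twoC(Fa, Fa')} \twoB(b, b')$ in $\V$. Set $L_\twoA := \{W, P_\twoA H\}$ in $\twoA$ and $M := \{W, P_\twoB H\}$ in $\twoB$, with their limit cones. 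Since $F P_\twoA = G P_\twoB$, we have $F P_\twoA H = G P_\twoB H$ as $\V$-functors $\mathcal{J} \to \twoC$; as $F$ preserves $\{W, P_\twoA H\}$ and $G$ preserves $\{W, P_\twoB H\}$, both $F(L_\twoA)$ and $G(M)$ are weighted limits of this common diagram, so comparing their cones yields a canonical isomorphism $\varphi \colon G(M) \congto F(L_\twoA)$ in $\twoC$. Because $G$ is an isofibration, we may lift $\varphi$ to an isomorphism $\psi \colon M \congto L_\twoB$ in $\twoB$ with $G(\psi) = \varphi$; transporting the limit cone of $M$ along $\psi$ exhibits $L_\twoB$ as a copy of the weighted limit $\{W, P_\twoB H\}$, now satisfying $G(L_\twoB) = F(L_\twoA)$ on the nose. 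Thus $L := (L_\twoA, L_\twoB)$ is an object of $\mathcal{P}$ with $p_\twoA(L) = L_\twoA$ and $p_\twoB(L) = L_\twoB$.

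Next I would equip $L$ with a cone $W \to \mathcal{P}(L, H-)$. Pointwise the hom-object $\mathcal{P}(L, Hj)$ is the pullback of $\twoA(L_\twoA, P_\twoA Hj) \to \twoC(FL_\twoA, F P_\twoA Hj) \leftarrow \twoB(L_\twoB, P_\twoB Hj)$, and the limit cones of $L_\twoA$ and $L_\twoB$ agree over $\twoC$ — precisely because $L_\twoB$ was built so that $G$ carries its cone to the $F$-image of the cone of $L_\twoA$ — so they assemble into a $\V$-natural transformation $W \to \mathcal{P}(L, H-)$. To see that this is a limit cone, fix $X = (X_\twoA, X_\twoB) \in \mathcal{P}$ and compute
\[ \mathcal{P}(X, L) = \twoA(X_\twoA, L_\twoA) \times_{\twoC(FX_\twoA, FL_\twoA)} \twoB(X_\twoB, L_\twoB). \]
Using that $L_\twoA$, $L_\twoB$ are the relevant weighted limits in $\twoA$, $\twoB$ and that $F L_\twoA$ is the weighted limit $\{W, F P_\twoA H\}$ in $\twoC$ (as $F$ preserves it), each vertex of this cospan is identified with $[\mathcal{J}, \V](W, -)$ applied to the corresponding hom-functor. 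Since $[\mathcal{J}, \V](W, -) \colon [\mathcal{J}, \V] \to \V$ preserves all limits, and in particular pullbacks, the pullback passes through it, giving
\[ \mathcal{P}(X, L) \;\cong\; [\mathcal{J}, \V]\bigl(W,\; \twoA(X_\twoA, P_\twoA H-) \times_{\twoC(FX_\twoA, F P_\twoA H-)} \twoB(X_\twoB, P_\twoB H-)\bigr) \;=\; [\mathcal{J}, \V]\bigl(W, \mathcal{P}(X, H-)\bigr), \]
naturally in $X$; tracing through the identifications, this isomorphism is the one induced by the cone built above, so $L = \{W, H\}$. Finally $p_\twoA$ and $p_\twoB$ preserve $\{W, H\}$, since by construction $p_\twoA(L) = \{W, P_\twoA H\}$ and $p_\twoB(L) = \{W, P_\twoB H\}$ together with the expected cones.

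The main obstacle is the very first step: $F(L_\twoA)$ and $G(M)$ are only \emph{canonically isomorphic}, not equal, so $(L_\twoA, M)$ need not live in the strict pullback $\mathcal{P}$. Repairing this is exactly the role of the isofibration hypothesis on $G$, and the only point requiring genuine care is checking that transporting the limit cone of $M$ along the chosen lift $\psi$ of the comparison isomorphism $\varphi$ indeed yields a weighted-limit cone that is compatible, over $\twoC$, with that of $L_\twoA$. After that, the argument reduces to the formal fact that the functor $[\mathcal{J}, \V](W, -)$ preserves pullbacks.
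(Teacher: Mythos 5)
Your proposal is correct and follows essentially the same route as the paper: use the isofibration property of $G$ to lift the canonical comparison isomorphism $G(\{W, P_\twoB H\}) \cong F(\{W, P_\twoA H\})$ to a strict equality, making the pair an object of $\mathcal{P}$, and then verify the universal property via the pullback description of the hom-objects of $\mathcal{P}$ and the fact that $[\mathcal{J}, \V](W, -)$ preserves pullbacks. The extra care you flag about transporting the cone of $M$ along the lifted isomorphism is handled implicitly in the paper in exactly the same way.
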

	
	\begin{proof}
		Since $F$ and $G$ preserve $\{W, P_\twoA H\}$ and $\{W, P_\twoB H\}$, respectively, we have
		$$F(\{W, P_\twoA H\}) \cong \{W, F P_\twoA H\} = \{W, G P_\twoB H\} \cong G(\{W, P_\twoB H\})$$
		in $\twoC$.
		
		We proceed to construct $\{W, H\}$ in $\mathcal{P}$.
		
		Since $G$ is an isofibration, the above isomorphism $\phi \colon G(\{W, P_\twoB H\}) \xrightarrow{\cong} F(\{W, P_\twoA H\})$ can be lifted to an isomorphism $\psi \colon \{W, P_\twoB H\} \xrightarrow{\cong} L$ in $\twoB$, where $G(L) = F(\{W, P_\twoA H\})$. 
		
		It remains to verify that $(\{W, P_\twoA H\}, L)$ shares the universal property of $\{W, H\}$, i.e., for any $(X, Y) \in \mathcal{P}$,
		$$\mathcal{P}((X, Y), (\{W, P_\twoA H\}, L)) \cong [\mathcal{J}, \V](W, \mathcal{P}((X, Y), H)).$$ 
		Note that in general, we have
		$$\mathcal{P}((X, Y), (M, N)) \cong \twoA(X, M) \times_{\twoC(F(X), F(H)) = \twoC(G(Y), G(N))} \twoB(Y, N)$$
		for any objects $(X, Y), (M, N)$ in $\mathcal{P}$, so, it amounts to ask for
		$$\twoA(X, \{W, P_\twoA H\}) \times_{\twoC(G(Y), G(L))} \twoB(Y, L) \cong [\mathcal{J}, \V](W, \twoA(X, P_\twoA H) \times_{\twoC(G(Y), G P_\twoB H)} \twoB(Y, P_\twoB H)).$$ Since the representables preserve limits, the right hand side is isomorphic to
		$$[\mathcal{J}, \V](W, \twoA(X, P_\twoA H)) \times_{[\mathcal{J}, \V](W, \twoC(G(Y), GP_\twoB H))} [\mathcal{J}, \V](W, \twoB(Y, P_\twoB H)).$$ And, by assumption, we have
		$$
		\begin{aligned}
			\twoA(X, \{W, P_\twoA H\}) &\cong [\mathcal{J}, \V](W, \twoA(X, P_\twoA H)),  
			\\
			\twoB(Y, L) &\cong [\mathcal{J}, \V](W, \twoB(Y, P_\twoB H)),
			\\
			\twoC(G(Y), G(L)) 
			&\cong [\mathcal{J}, \V](W, \twoC(G(Y), G P_\twoB H)),
		\end{aligned}
		$$
		therefore, the desired characterising isomorphism is satisfied.
		
		Finally, it is clear that the projections $P_\twoA$ and $P_\twoB$ both preserve $(\{W, P_\twoA H\}, L)$, by construction.
	\end{proof}
	
	\begin{rk}
		In the above lemma, we specifically assume that $\twoA$ and $\twoB$ admit particular kinds of limits, instead of general $\V$-limits. This allows us to apply the lemma to limits that involve isofibrations, which are not encoded by general $\V$-weights or $\V$-functors.
	\end{rk}
	
	Very often, the interesting enhanced simplicial categories are pullbacks along a certain embedding into an $\infty$-cosmos, which is actually an isofibration of enhanced simplicial categories.
	
	\begin{defi}[{\cite[Chapter 6.3]{book:RV:2022}}]
		Let $\mathcal{L}$ be a simplicial sub-category of an $\infty$-cosmos $\calK$, which is full on $n$-arrows for $n > 0$. Then $\mathcal{L}$ is said to be \emph{replete in $\calK$} if and only if
		\begin{enumerate}
			\item[$\bullet$] every $\infty$-category in $\calK$ is equivalent to an object in $\mathcal{L}$ belongs to $\mathcal{L}$;
			\item[$\bullet$] any equivalence in $\calK$ between objects in $\mathcal{L}$ belongs to $\mathcal{L}$;
			\item[$\bullet$] any $0$-arrow in $\calK$ that is isomorphic in $\calK$ to an $0$-arrow in $\mathcal{L}$ belongs to $\mathcal{L}$.
		\end{enumerate}	
		
		If furthermore $\mathcal{L}$ defines an $\infty$-cosmos with isofibrations, equivalences, trivial fibrations, and simplicial limits created by the inclusion $\mathcal{L} \hookrightarrow \calK$, then $\mathcal{L}$ is called a \emph{cosmologically embedded $\infty$-cosmos of $\calK$}, and the inclusion is called a \emph{cosmological embedding}.
	\end{defi}
	
	\begin{nota}
		For a cosmological embedding, we denote by $\mathcal{L} \hooktwoheadrightarrow \calK$.
		%
		%
	\end{nota}
	
	\begin{lemma}
		\label{lem:inclusion_isof}
		Let $\bbA$ be an enhanced simplicial category, whose tight part $\twoA_\tau$ is an $\infty$-cosmos. Let $\twoB$ be an $\infty$-cosmos, which can be seen as a chordate enhanced simplicial category. Suppose that $\twoA_\lambda \hookrightarrow \twoB$ is a full simplicial sub-category, and that $\twoA_\tau \hooktwoheadrightarrow \twoB$ is a cosmological embedding, then, the inclusion $\twoA_\lambda \hookrightarrow \twoB$ extends to an enhanced simplicial functor
		$$U \colon \bbA \twoheadrightarrow \twoB_\chor,$$
		which is an isofibration of $\FDeltaCat$.
	\end{lemma}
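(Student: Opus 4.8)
The plan is to produce $U$ from the two given inclusions and then check, directly from the definition of an isofibration of $\V$-categories given above with $\V = \F_\Delta$, that it lifts isomorphisms. First I would define $U$: by \longref{Diagram}{diag:F_functor}, an enhanced simplicial functor $\bbA \to \twoB_\chor$ is exactly a pair of simplicial functors $U_\tau \colon \twoA_\tau \to \twoB$ and $U_\lambda \colon \twoA_\lambda \to \twoB$ with $U_\lambda \circ J_\bbA = U_\tau$, the tight and loose parts of $\twoB_\chor$ both being $\twoB$. Take $U_\lambda$ to be the given full inclusion $\twoA_\lambda \hookrightarrow \twoB$ and $U_\tau$ its restriction along $J_\bbA \colon \twoA_\tau \hookrightarrow \twoA_\lambda$, which by the standing hypotheses coincides with the cosmological embedding $\twoA_\tau \hooktwoheadrightarrow \twoB$. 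The required triangle commutes by construction, and $U_\lambda$ preserves tightness trivially because every $0$-arrow of $\twoB_\chor$ is tight; hence $U$ is a well-defined enhanced simplicial functor extending $\twoA_\lambda \hookrightarrow \twoB$.

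Next I would unwind what an isomorphism means on each side. Since the monoidal unit of $\F_\Delta$ is the chordate point $\Delta^0_{[[0]]}$, a morphism of the underlying ordinary category of an enhanced simplicial category is precisely a tight $0$-arrow, so an isomorphism in $\bbA$ (in the $\VCat$ sense) is a tight $0$-arrow $\psi \colon a \to \tilde a$ admitting a tight $0$-arrow inverse, with the two composites equal to identities. As $\twoB_\chor$ is chordate, an isomorphism in $\twoB_\chor$ is simply an invertible $0$-arrow of $\twoB$. I would also record that $\ob\bbA = \ob\twoA_\tau = \ob\twoA_\lambda$ since $J_\bbA$ is identity-on-objects, and that these are objects of $\twoB$.

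The lifting step is then short, and is where the cosmological-embedding hypothesis does its work. Given $a \in \ob\bbA$ and an isomorphism $\phi \colon U(a) \congto b$ in $\twoB_\chor$, put $\tilde a := b$ and $\psi := \phi$. Because $\phi$ is in particular an equivalence in $\twoB$ and $a \in \twoA_\tau$, repleteness of $\twoA_\tau$ in $\twoB$ forces $b \in \twoA_\tau$ by its first clause, so $b \in \ob\bbA$; similarly $\phi$ and $\phi^{-1}$ are equivalences between objects of $\twoA_\tau$, hence belong to $\twoA_\tau$ by its second clause, i.e. are tight $0$-arrows of $\bbA$. The relations $\phi^{-1}\phi = \id_a$ and $\phi\phi^{-1} = \id_b$ hold in $\twoB$, hence in $\twoA_\tau$ by faithfulness of the embedding, hence among tight $0$-arrows of $\bbA$; thus $\psi$ is an isomorphism $a \to \tilde a$ in $\bbA$ with $U(\psi) = \phi$, which is exactly the required lift. (The same argument works verbatim if one prefers to read ``isomorphism'' as ``equivalence'', using that equivalences between tight objects are tight and that the comparison $1$-arrows lie in $\twoA_\tau$ by fullness above dimension $0$.)

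I expect the only genuine obstacle to be purely definitional: correctly identifying the enriched isomorphisms in $\bbA$ as the invertible tight $0$-arrows — the point at which the chordate unit $\Delta^0_{[[0]]}$ of $\F_\Delta$ enters — and recognising that it is precisely the repleteness clauses of the cosmological embedding $\twoA_\tau \hooktwoheadrightarrow \twoB$ that permit the chosen codomain $b$ and the comparison $\phi$ to be pulled back into $\bbA$. The remaining content of ``cosmological embedding'' — that $\twoA_\tau$ is an $\infty$-cosmos with isofibrations, equivalences, and simplicial limits created by the inclusion — is not used here; it is what gets invoked downstream, together with \longref{Lemma}{lem:enriched_pullback}, to transport limits along $U$. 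The one hypothesis I would make explicit is that the composite $\twoA_\tau \xhookrightarrow{J_\bbA} \twoA_\lambda \hookrightarrow \twoB$ equals the given cosmological embedding, so that $U_\tau$ and $U_\lambda$ genuinely agree on $\twoA_\tau$.
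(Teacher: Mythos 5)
Your proposal is correct and follows essentially the same route as the paper: define $U$ by taking $U_\lambda$ to be the given inclusion (tightness is automatic in the chordate target), and then lift an isomorphism $\phi\colon U(a)\to b$ by using the repleteness clauses of the cosmological embedding $\twoA_\tau \hooktwoheadrightarrow \twoB$ to place $b$ and $\phi$ (hence $\phi^{-1}$) inside $\twoA_\tau$. The paper's proof is just a terser version of this; your extra unwinding of enriched isomorphisms as invertible tight $0$-arrows is a harmless elaboration of the same argument.
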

	
	\begin{proof}
		When we view $\twoB$ as a chordate enhanced simplicial category, it is clear that $\twoA_\lambda \hookrightarrow \twoB$ becomes an enhanced simplicial functor $U \colon \bbA \hooktwoheadrightarrow \twoB_\chor$, because tightness is always preserved here.
		
		Now, let $a$ be an object of $\bbA$. Assume that there is an isomorphism
		$$U(a) = a \xrightarrow{\phi} b$$ 
		in $\twoB_\chor$, where $b$ is an object of $\twoB$. Since $\twoA_\tau$ is cosmologically embedded into $\twoB$, which means we have $b \in \ob\bbA$, and that $\phi$ actually lies in $\twoA_\tau$.
	\end{proof}
	
	\begin{lemma}
		\label{lem:create_tight}
		Let $\bbA$ and $\bbB$ be enhanced simplicial categories, where $\twoA_\tau$ and $\twoB_\tau$ are both $\infty$-cosmoi. Suppose that 
		$$\bbA \hookrightarrow \bbB$$
		is a full sub-$\F_\Delta$-category, which means both inclusions $\twoA_\tau \hookrightarrow \twoB_\tau$ and $\twoA_\lambda \hookrightarrow \twoB_\lambda$ are full. If the inclusion
		$$\twoA_\tau \hookrightarrow \twoB_\tau$$
		creates cosmological limits, and $\bbB$ admits tight cosmological limits, then the inclusion
		$$\bbA \hookrightarrow \bbB$$
		creates tight cosmological limits.
	\end{lemma}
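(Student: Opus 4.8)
The plan is to reduce everything to three inputs: the inclusion $\iota_\tau \colon \twoA_\tau \hookrightarrow \twoB_\tau$ creates cosmological limits, $\bbB$ admits tight cosmological limits, and --- because $\bbA \hookrightarrow \bbB$ is a full sub-$\F_\Delta$-category --- both legs $\iota_\tau$ and $\iota_\lambda \colon \twoA_\lambda \hookrightarrow \twoB_\lambda$ of the inclusion $\iota \colon \bbA \hookrightarrow \bbB$ are $\sSet$-fully faithful, hence reflect simplicially enriched limits. First I would unravel, via \longref{Example}{eg:tight_lim}, what a tight cosmological limit diagram in $\bbA$ is: a cosmological weight $W \colon \mathcal{J} \to \sSet$ on a chordate $\mathcal{J}$ together with an enhanced simplicial functor $F \colon \mathcal{J}_\chor \to \bbA$, which is the same datum as a simplicial functor $F_\tau \colon \mathcal{J} \to \twoA_\tau$ into the $\infty$-cosmos $\twoA_\tau$ (and in the countable-chain case, $F_\tau$ is read as such a chain of tight isofibrations, covered by the hypothesis that $\iota_\tau$ creates \emph{all} the cosmological limits of \longref{Definition}{def:cosmos}). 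Its image under $\iota$ is $\iota_\tau F_\tau$, whose tight cosmological limit exists in $\bbB$ by hypothesis, so in particular $\{W, \iota_\tau F_\tau\}$ is a cosmological limit in $\twoB_\tau$; since $\iota_\tau$ creates cosmological limits, $\twoA_\tau$ then admits $\{W, F_\tau\}$, with $\iota_\tau\{W, F_\tau\} \cong \{W, \iota_\tau F_\tau\}$ compatibly with the limit cones.

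Next I would verify that $\{W, F_\tau\}$, equipped with its limit projections, satisfies the three clauses characterising a tight cosmological limit in $\bbA$ from \longref{Example}{eg:tight_lim}. That the projections are tight isofibrations is immediate: they are $0$-arrows of the $\infty$-cosmos $\twoA_\tau$, hence tight in $\bbA$, and they are isofibrations by the axioms of \longref{Definition}{def:cosmos}. For joint reflection of tightness I would use that both $\iota_\tau$ and $\iota_\lambda$ are full, so a $0$-arrow of $\bbA$ is tight exactly when its image in $\bbB$ is; since $\iota$ carries the projections $p_v$ of $\{W, F_\tau\}$ to those of the tight cosmological limit $\{W, \iota_\tau F_\tau\}$, which jointly reflect tightness in $\bbB$, and $\iota$ is functorial, i.e. $\iota_\lambda(p_v \cdot b) = \iota(p_v)\cdot\iota_\lambda(b)$, the reflection property transports back to $\bbA$: for $b \colon X \leadsto \{W, F_\tau\}$ with $X \in \ob\bbA$, $b$ is tight iff $\iota_\lambda b$ is tight iff each $\iota(p_v)\cdot \iota_\lambda b$ is tight iff each $p_v \cdot b$ is tight.

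The one step requiring a genuine argument --- and the place I expect to be the main obstacle --- is that $J_\bbA \colon \twoA_\tau \to \twoA_\lambda$ preserves $\{W, F_\tau\}$. Here I would invoke the commutative square \longref{Diagram}{diag:F_functor} for the $\F_\Delta$-functor $\iota$, namely $\iota_\lambda J_\bbA = J_\bbB \iota_\tau$: since $\iota_\tau$ preserves $\{W, F_\tau\}$ and $J_\bbB$ preserves $\iota_\tau\{W, F_\tau\} \cong \{W, \iota_\tau F_\tau\}$ (this being exactly what it means for the latter to be a tight cosmological limit in $\bbB$), the composite $\iota_\lambda J_\bbA$ sends the limit cone to a limit cone in $\twoB_\lambda$; and because $\iota_\lambda$ is $\sSet$-fully faithful it reflects enriched limits, forcing $J_\bbA$ to preserve $\{W, F_\tau\}$. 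Assembling these, $\{W, F_\tau\}$ is a tight cosmological limit in $\bbA$ which $\iota$ preserves; the remaining reflection clause in the definition of ``creates'' is then formal, since a cone in $\bbA$ mapping to the limit cone of $\{W, \iota_\tau F_\tau\}$ has tight part a cone over $F_\tau$ sent by the fully faithful $\iota_\tau$ to a limit cone, hence itself a limit cone, and being canonically isomorphic to $\{W, F_\tau\}$ over $F_\tau$ it inherits the three clauses just checked.
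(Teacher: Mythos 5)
Your proof is correct and follows essentially the same route as the paper's: existence is obtained on the tight part via creation along $\twoA_\tau \hookrightarrow \twoB_\tau$, and the joint reflection of tightness is transferred from $\bbB$ using fullness of the inclusion, exactly as in the paper. The only difference is that you explicitly verify that $J_\bbA$ preserves the created limit (via $\iota_\lambda J_\bbA = J_\bbB \iota_\tau$ and the fact that fully faithful simplicial functors reflect weighted limits), a point the paper's terser proof leaves implicit.
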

	
	\begin{proof}
		Certainly, the $\infty$-cosmos $\twoA_\tau$ admits cosmological limits. It suffices to show that the limit projections jointly reflect tightness.
		
		Since the inclusion of $\bbA$ into $\bbB$ is full, a $0$-arrow in $\bbA$ is tight if and only if it is a tight $0$-arrow in $\bbB$. Now, because the limit projections of a cosmological limit in $\bbB$ jointly reflect limits, their preimages in $\bbA$ also do.
	\end{proof}
	
	With these machineries, we are ready to show that terminally rigged $n$-inserters and tight cosmological limits exist in several enhanced simplicial categories.

	\subsubsection{\texorpdfstring{$\JLim(\calK)$}{J-Lim(K)}}
	Recall from \longref{Example}{eg:JLimK} that for an $\infty$-cosmos $\calK$, $\JLim(\calK)$ is the enhanced simplicial category of $\infty$-categories in $\calK$ which admits $J$-shaped limits for a simplicial set $J$.
	
	\begin{theorem}
		\label{thm:trio_JLim}
		The enhanced simplicial category $\JLim(\calK)$ admits tight cosmological limits, and terminally rigged $n$-inserters, and that the projection of a terminally rigged $n$-inserter is always a tight isofibration.
		
		Consequently, $\JLim(\calK)$ admits Eilenberg-Moore objects over monads which do not preserve limits, and that the forgetful functor preserves and reflects $J$-shaped limits.
		
		These $\F_\Delta$-weighted limits are all preserved by the inclusion
		$$\JLim(\calK) \hookrightarrow \calK_\chor.$$
	\end{theorem}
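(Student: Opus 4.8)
The plan is to realise $\JLim(\calK)$ as a pullback of $\Lali(\calK)$ in $\FDeltaCat$ and then transport the limits supplied by \longref{Proposition}{pro:tight_lim} and \longref{Theorem}{thm:Lali(K)} along it by means of \longref{Lemma}{lem:enriched_pullback}. First I would observe that the assignment $A \mapsto \big(\res_A \colon A^{J_\triangleleft} \twoheadrightarrow A^J\big)$ is a cosmological functor $\calK \to \calK^{\isof}$: the two simplicial cotensors $(-)^{J_\triangleleft}$ and $(-)^J$ are cosmological, the maps $\res_A$ are isofibrations, and the induced gap maps are Leibniz cotensors of the monomorphism $J \hookrightarrow J_\triangleleft$ against isofibrations; it therefore induces an enhanced simplicial functor $R \colon \calK_\chor \to \calK^{\isof}_\chor$. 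Next, since $\calLali(\calK) \hooktwoheadrightarrow \calK^{\isof}$ is a cosmological embedding (\cite{book:RV:2022}) and $\Lali(\calK)_\lambda \hookrightarrow \calK^{\isof}$ is a full sub-simplicial-category, \longref{Lemma}{lem:inclusion_isof} shows that the inclusion $U \colon \Lali(\calK) \twoheadrightarrow \calK^{\isof}_\chor$ is an isofibration of $\FDeltaCat$. I would then check, using the characterisation in \longref{Example}{eg:JLimK}, that the pullback
\[\begin{tikzcd}[ampersand replacement=\&]
	{\JLim(\calK)} \& {\Lali(\calK)} \\
	{\calK_\chor} \& {\calK^{\isof}_\chor}
	\arrow["{P_2}", from=1-1, to=1-2]
	\arrow["{P_1}"', from=1-1, to=2-1]
	\arrow["\lrcorner"{anchor=center, pos=0.125}, draw=none, from=1-1, to=2-2]
	\arrow["U", two heads, from=1-2, to=2-2]
	\arrow["R"', from=2-1, to=2-2]
\end{tikzcd}\]
is indeed $\JLim(\calK)$: an object of the pullback is an $\infty$-category $A$ together with a lali-isofibration equal to $\res_A$, i.e.\ an $A$ admitting $J$-shaped limits; a loose $0$-arrow is a functor $f$ of $\calK$ together with a morphism of isofibrations equal to the naturality square of $\res$ at $f$, i.e.\ simply a functor; and it is tight exactly when that square is a morphism of lalis, i.e.\ when $f$ preserves $J$-shaped limits. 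Under this identification $P_1$ is the inclusion $\JLim(\calK) \hookrightarrow \calK_\chor$.

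For the terminally rigged $n$-inserters, I would fix a diagram $H \colon \mathbb{D}^n_{[[n]]} \to \JLim(\calK)$ and the weight $\Psi$ of \longref{Construction}{construct:weight}. In $\calK_\chor$ (which is chordate) the limit $\{\Psi, P_1 H\}$ is the flexible weighted limit described in \longref{Remark}{rk:term_flexible}, so it exists in the $\infty$-cosmos $\calK$ and is preserved by the cosmological functor inducing $R$; in $\Lali(\calK)$, \longref{Theorem}{thm:Lali(K)} provides $\{\Psi, P_2 H\}$ with isofibration projection, preserved by $U$. Hence \longref{Lemma}{lem:enriched_pullback} gives the terminally rigged $n$-inserter $\rins{n}{[[n]]}{D} = \{\Psi, H\}$ in $\JLim(\calK)$, preserved by the projections $P_1, P_2$ — in particular by the inclusion into $\calK_\chor$. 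Its projection $p \colon \rins{n}{[[n]]}{D} \to Hx$ is tight, being the limit projection at a tight weight-vertex, and it is an isofibration: by \longref{Proposition}{pro:as_pullbacks} it is a pullback of the restriction isofibration $(Hy)^{\partial}$, and isofibrations are pullback-stable in the $\infty$-cosmos $\calJLim(\calK) = \JLim(\calK)_\tau$.

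The tight cosmological limits are handled in the same spirit: $\calK_\chor$ admits them (they are the cosmological limits of $\calK$, vacuously reflecting tightness) and $R$ preserves them since it is cosmological, while \longref{Proposition}{pro:tight_lim} supplies them in $\Lali(\calK)$ preserved by $U$. Applying \longref{Lemma}{lem:enriched_pullback} to the $\F_\Delta$-weighted cosmological limits (products, pullbacks of tight isofibrations, and simplicial powers) and its evident variant to limits of countable chains of tight isofibrations (cf.\ the remark following \longref{Lemma}{lem:enriched_pullback}) produces all tight cosmological limits in $\JLim(\calK)$, preserved by the inclusion into $\calK_\chor$.

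Finally, $\JLim(\calK)_\tau = \calJLim(\calK)$ is an $\infty$-cosmos, and by the previous two steps $\JLim(\calK)$ admits limits of countable chains of tight isofibrations, simplicial powers by inchordate enhanced simplicial sets, and terminally rigged $n$-inserters with tight-isofibration projections; so \longref{Lemma}{lem:EM} yields Eilenberg-Moore objects over loose monads, with the forgetful functor a tight isofibration that reflects tightness — equivalently, it preserves $J$-shaped limits, and a $0$-arrow into the Eilenberg-Moore object preserves $J$-shaped limits precisely when its composite with the forgetful functor does. Since \longref{Lemma}{lem:EM} builds the Eilenberg-Moore object as a limit of a countable chain of terminally rigged $n$-inserters, and the inclusion $\JLim(\calK) \hookrightarrow \calK_\chor$ preserves each rigged $n$-inserter and the chain limit, it preserves the Eilenberg-Moore object as well. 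The step demanding the most care is the identification of the pullback with $\JLim(\calK)$ and the verification that $U$ is an isofibration of $\FDeltaCat$ — both resting on the lali-theoretic description of $J$-shaped limits and limit-preserving functors and on $\calLali(\calK)$ being cosmologically embedded in $\calK^{\isof}$ — together with the point that limits of chains of isofibrations lie slightly outside the literal scope of \longref{Lemma}{lem:enriched_pullback}.
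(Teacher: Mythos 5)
Your proposal is correct and follows essentially the same route as the paper: realise $\JLim(\calK)$ as the pullback of $U \colon \Lali(\calK) \twoheadrightarrow \calK^{\isof}_\chor$ along the cosmological functor $A \mapsto \res_A$, invoke \longref{Lemma}{lem:inclusion_isof}, \longref{Lemma}{lem:enriched_pullback}, \longref{Proposition}{pro:tight_lim}, \longref{Theorem}{thm:Lali(K)}, the flexibility of the weight from \longref{Remark}{rk:term_flexible}, and finally \longref{Lemma}{lem:EM}. The one small imprecision is your justification that the projection is a tight isofibration: since $D$ is loose, the pullback square of \longref{Proposition}{pro:as_pullbacks} is not a diagram in the $\infty$-cosmos $\calJLim(\calK)$, so rather than pullback-stability of isofibrations in $\calJLim(\calK)$ one should argue, as the paper does, that cosmological embeddings are pullback-stable, so $\calJLim(\calK) \hooktwoheadrightarrow \calK$ creates isofibrations and the projection—an isofibration of $\calK$ which is a tight $0$-arrow—is therefore a tight isofibration.
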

	
	\begin{proof}
		As described in \longref{Example}{eg:JLimK} and the proof of \cite[Proposition 6.3.13]{book:RV:2022}, the $\infty$-cosmos  $\calJLim(\calK)$ of $\infty$-categories of $\calK$ admitting $J$-shaped limits can be formed as the pullback
		\[\begin{tikzcd}[ampersand replacement=\&]
			{\calJLim(\calK)} \& {\calLali(\calK)} \\
			\calK \& {\calK^{\isof}}
			\arrow[from=1-1, to=1-2]
			\arrow[hook, two heads, from=1-1, to=2-1]
			\arrow["\lrcorner"{anchor=center, pos=0.125}, draw=none, from=1-1, to=2-2]
			\arrow["U", hook, two heads, from=1-2, to=2-2]
			\arrow["{F_{J_\triangleleft}}"', from=2-1, to=2-2]
		\end{tikzcd},\]
		where the inclusion $U \colon \calLali(\calK) \hooktwoheadrightarrow \calK^{\isof}$ is a cosmological embedding, and that
		$$
		\begin{aligned}
			{F_{J_\triangleleft}} \colon \calK &\to \calK^{\isof}
			\\
			A &\mapsto (A^{\triangleleft} \xtwoheadrightarrow{\res_A} A^J)
		\end{aligned}
		$$
		is a cosmological functor sending each $\infty$-category $A$ to the restriction $\res_A \colon A^{J_\triangleleft} \twoheadrightarrow A^J$ described in \longref{Example}{eg:JLimK}, and each $0$-arrow $A \to B$ to the commutative square in \longref{Diagram}{diag:res}. The higher dimensional arrows are mapped accordingly.
		
		Note that the loose part $\calJLim(\calK)_\lambda$ of $\JLim(\calK)$ is actually a full simplicial sub-category of $\calK$, and similarly, the loose part $\calLali(\calK)_\lambda$ of $\Lali(\calK)$ is a full simplicial sub-category of $\Lali(\calK)$, as a consequence, we have a pullback diagram
		\[\begin{tikzcd}[ampersand replacement=\&]
			{\calJLim(\calK)_\lambda} \& {\calLali(\calK)_\lambda} \\
			\calK \& {\calK^{\isof}}
			\arrow[from=1-1, to=1-2]
			\arrow[hook, from=1-1, to=2-1]
			\arrow["\lrcorner"{anchor=center, pos=0.125}, draw=none, from=1-1, to=2-2]
			\arrow[hook, from=1-2, to=2-2]
			\arrow["{F_{J_\triangleleft}}"', from=2-1, to=2-2]
		\end{tikzcd}.\]
		
		Also, by taking $\twoA_\tau = \calLali(\calK)$ in \longref{Lemma}{lem:inclusion_isof}, we deduce that the inclusion $U \colon \calLali(\calK) \hooktwoheadrightarrow \calK^{\isof}$ extends to the inclusion
		$$U \colon \Lali(\calK) \twoheadrightarrow \calK^{\isof}_\chor,$$
		which is an isofibration of enhanced simplicial categories. 
		
		Altogether, the above pullback diagrams combine to a pullback
		\begin{equation}
			\label{diag:JLim_pb}
			\begin{tikzcd}[ampersand replacement=\&]
				{\JLim(\calK)} \& {\Lali(\calK)} \\
				\calK_\chor \& {\calK^{\isof}_\chor}
				\arrow[from=1-1, to=1-2]
				\arrow[ two heads, from=1-1, to=2-1]
				\arrow["\lrcorner"{anchor=center, pos=0.125}, draw=none, from=1-1, to=2-2]
				\arrow["U", two heads, from=1-2, to=2-2]
				\arrow["{F_{J_\triangleleft}}"', from=2-1, to=2-2]
			\end{tikzcd}
		\end{equation}
		in $\FDeltaCat$. By \longref{Lemma}{lem:enriched_pullback}, it remains to verify that $U$ and $F_{J_\triangleleft}$ preserve tight cosmological limits and terminally rigged $n$-inserters.
		
		By \longref{Lemma}{lem:create_tight}, $U$ preserves tight cosmological limits; moreover, $F_{J_\triangleleft}$ is cosmological.
		
		Next, according to \longref{Theorem}{thm:Lali(K)}, the inclusion $U \colon \Lali(\calK) \twoheadrightarrow \calK^{\isof}_\chor$ automatically preserves terminally rigged $n$-inserters. Moving on, since $F_{J_\triangleleft}$ is a cosmological functor, by \cite[Proposition 6.2.8]{book:RV:2022}, it preserves any flexible weighted limits. Following \longref{Remark}{rk:term_flexible}, we conclude that $F_{J_\triangleleft}$ preserves terminally rigged $n$-inserters.
		
		By \cite[Proposition 6.3.12]{book:RV:2022}, cosmological embeddings are pullback-stable. Therefore, the projection of a terminally rigged $n$-inserter in $\JLim(\calK)$, which is created by one in $\calK$, is an isofibration.
		
		Finally, by \longref{Lemma}{lem:EM}, $\JLim(\calK)$ has Eilenberg-Moore objects over loose monads, and that the forgetful functor preserves and reflects $J$-shaped limits.
		%
	\end{proof}
	
	\subsubsection{\texorpdfstring{$\JColim(\calK)$}{J-Colim(K)}}
	Next, we consider the case for $\infty$-categories of an $\infty$-cosmos $\calK$ that admit $J$-shaped \emph{colimits} for a simplicial set $J$.
	
	Recall from \longref{Example}{eg:JLimK} that for an $\infty$-cosmos $\calK$, $\JColim(\calK)$ is the enhanced simplicial category of $\infty$-categories in $\calK$ which admits $J$-shaped colimits for a simplicial set $J$. The loose $0$-arrows are $\infty$-functors in $\calK$, whereas the tight $0$-arrows are $\infty$-functors that preserve $J$-shaped colimits.
	
	\begin{theorem}
		\label{thm:trio_JColim}
		The enhanced simplicial category $\JColim(\calK)$ admits tight cosmological limits, and initially rigged $n$-inserters, and that the projection of an initially rigged $n$-inserter is always a tight isofibration.
		
		Consequently, $\JColim(\calK)$ admits coEilenberg-Moore objects over comonads which do not preserve colimits, and that the forgetful functor preserves and reflects $J$-shaped colimits.
		
		These $\F_\Delta$-weighted limits are all preserved by the inclusion
		$$\JColim(\calK) \hookrightarrow \calK_\chor.$$
	\end{theorem}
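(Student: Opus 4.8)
The argument is formally dual to that of \longref{Theorem}{thm:trio_JLim}, obtained by replacing $\Lali(\calK)$ with $\Rali(\calK)$, the cone-over $J_\triangleleft = \Delta^0 \star J$ with the cone-under $J_\triangleright = J \star \Delta^0$, and terminally rigged $n$-inserters with initially rigged ones throughout. The plan is as follows. First I would recall from \longref{Example}{eg:JLimK} that an $\infty$-category $A$ of $\calK$ admits $J$-shaped colimits exactly when the restriction $\res_A \colon A^{J_\triangleright} \twoheadrightarrow A^J$ induced by $J \hookrightarrow J_\triangleright$ is a rali, and that a $0$-arrow $f \colon A \to B$ preserves $J$-colimits exactly when the naturality square for $\res$ is a morphism of ralis. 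Mirroring \cite[Proposition 6.3.13]{book:RV:2022}, this presents the $\infty$-cosmos $\calJColim(\calK)$ as the pullback of the cosmological embedding $U \colon \calRali(\calK) \hooktwoheadrightarrow \calK^{\isof}$ along the cosmological functor
$$
\begin{aligned}
	F_{J_\triangleright} \colon \calK &\to \calK^{\isof}
	\\
	A &\mapsto (A^{J_\triangleright} \xtwoheadrightarrow{\res_A} A^J),
\end{aligned}
$$
and, restricting to loose parts (which are full simplicial sub-categories of $\calK$ and $\calK^{\isof}$ respectively), the same square is a pullback of plain simplicial categories.

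Next, applying \longref{Lemma}{lem:inclusion_isof} with $\twoA_\tau = \calRali(\calK)$, the cosmological embedding $U$ extends to an isofibration $U \colon \Rali(\calK) \twoheadrightarrow \calK^{\isof}_\chor$ of $\FDeltaCat$, and the two squares combine into a pullback
\[\begin{tikzcd}[ampersand replacement=\&]
	{\JColim(\calK)} \& {\Rali(\calK)} \\
	{\calK_\chor} \& {\calK^{\isof}_\chor}
	\arrow[from=1-1, to=1-2]
	\arrow[two heads, from=1-1, to=2-1]
	\arrow["\lrcorner"{anchor=center, pos=0.125}, draw=none, from=1-1, to=2-2]
	\arrow["U", two heads, from=1-2, to=2-2]
	\arrow["{F_{J_\triangleright}}"', from=2-1, to=2-2]
\end{tikzcd}\]
in $\FDeltaCat$. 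By \longref{Lemma}{lem:enriched_pullback} it then remains to check that $U$ and $F_{J_\triangleright}$ preserve tight cosmological limits and initially rigged $n$-inserters; the same lemma simultaneously yields that the projection onto $\calK_\chor$ — that is, the inclusion $\JColim(\calK) \hookrightarrow \calK_\chor$ — preserves these limits. That $U$ preserves tight cosmological limits follows, exactly as in the $\JLim$ case, from \longref{Lemma}{lem:create_tight} together with \longref{Proposition}{pro:tight_lim_Rali}; $F_{J_\triangleright}$ preserves them since it is cosmological. For initially rigged $n$-inserters, $U \colon \Rali(\calK) \twoheadrightarrow \calK^{\isof}_\chor$ preserves them by \longref{Theorem}{thm:Rali(K)}, while $F_{J_\triangleright}$, being cosmological, preserves all flexible weighted limits by \cite[Proposition 6.2.8]{book:RV:2022}, hence preserves initially rigged $n$-inserters, which by \longref{Remark}{rk:initial_flexible} are flexible weighted limits in the chordate enhanced simplicial categories $\calK_\chor$ and $\calK^{\isof}_\chor$.

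Finally, the remaining claims follow as in the $\JLim$ case: the projection of an initially rigged $n$-inserter in $\JColim(\calK)$ is created from the corresponding isofibration in $\calK_\chor$, and since cosmological embeddings are pullback-stable by \cite[Proposition 6.3.12]{book:RV:2022} this projection is an isofibration of $\JColim(\calK)$, hence a tight isofibration; and with tight cosmological limits (in particular limits of countable chains of tight isofibrations and simplicial powers by inchordate enhanced simplicial sets) and initially rigged $n$-inserters with tight-isofibration projections now in hand, \longref{Lemma}{lem:coEM} produces coEilenberg-Moore objects over loose comonads whose forgetful functor is a tight isofibration that reflects tightness — equivalently, one preserving and reflecting $J$-shaped colimits. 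I expect the only delicate point to be the verification that $F_{J_\triangleright}$ genuinely preserves initially rigged $n$-inserters, but this is no new obstacle relative to \longref{Theorem}{thm:trio_JLim}: it reduces, via \longref{Remark}{rk:initial_flexible}, to preservation of flexible weighted limits by cosmological functors, which is \cite[Proposition 6.2.8]{book:RV:2022}.
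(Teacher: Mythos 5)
Your proposal is correct and is exactly the argument the paper intends: the paper's own proof simply says that $\JColim(\calK)$ is a pullback of $\Rali(\calK)$ and that the arguments of \longref{Theorem}{thm:trio_JLim} dualise, which is precisely what you have spelled out (via $F_{J_\triangleright}$, \longref{Lemma}{lem:inclusion_isof}, \longref{Lemma}{lem:enriched_pullback}, \longref{Lemma}{lem:create_tight} with \longref{Proposition}{pro:tight_lim_Rali}, \longref{Theorem}{thm:Rali(K)}, \longref{Remark}{rk:initial_flexible}, and \longref{Lemma}{lem:coEM}).
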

	
	\begin{proof}
		Similar to the case of $\JLim(\calK)$ in the proof of \longref{Theorem}{thm:trio_JLim}, $\JColim(\calK)$ is a pullback of $\Rali(\calK)$. With similar arguments, we arrive at the conclusion for $\JColim(\calK)$.
	\end{proof}

	\subsubsection{\texorpdfstring{$\La(\calK)$}{La(K)}}
	\label{sec:La}
	Now, we introduce the $\infty$-cosmos $\calLa(\calK)$ for any arbitrary $\infty$-cosmos $\calK$.
	
	\begin{nota}
		Let $\calK$ be an $\infty$-cosmos. Denote by $\calLa(\calK)$ the simplicial sub-category of $\calK^{\isof}$ with objects given by isofibrations of $\calK$ which are \emph{left adjoints}, i.e. it is a left adjoint in the homotopy $2$-category $h\calK$. A morphism of isofibrations
		\[\begin{tikzcd}[ampersand replacement=\&]
			{E_1} \& {E_2} \\
			{B_1} \& {B_2}
			\arrow["e", from=1-1, to=1-2]
			\arrow["{p_1}"', two heads, from=1-1, to=2-1]
			\arrow["{p_2}", two heads, from=1-2, to=2-2]
			\arrow["b"', from=2-1, to=2-2]
		\end{tikzcd}\]
		is a \emph{morphism of left adjoints} precisely if its mate is an isomorphism, i.e. the composite
		\[\begin{tikzcd}[ampersand replacement=\&]
			\& {E_1} \& {E_2} \& {E_2} \\
			{B_1} \& {B_1} \& {B_2}
			\arrow["e", from=1-2, to=1-3]
			\arrow["{p_1}"', two heads, from=1-2, to=2-2]
			\arrow[""{name=0, anchor=center, inner sep=0}, no head, equal, from=1-3, to=1-4]
			\arrow["{p_2}", two heads, from=1-3, to=2-3]
			\arrow[""{name=1, anchor=center, inner sep=0}, "{r_1}", from=2-1, to=1-2]
			\arrow[""{name=2, anchor=center, inner sep=0}, no head, equal, from=2-1, to=2-2]
			\arrow["b"', from=2-2, to=2-3]
			\arrow[""{name=3, anchor=center, inner sep=0}, "{r_2}"', from=2-3, to=1-4]
			\arrow["{\eta_2}"', shorten <=2pt, shorten >=2pt, Rightarrow, from=0, to=3]
			\arrow["{\epsilon_1}", shorten <=2pt, shorten >=2pt, Rightarrow, from=1, to=2]
		\end{tikzcd}\]
		is invertible as a $2$-morphism in $h\calK$, where $r_i$ denotes the right adjoint for $p_i$, and that $\epsilon_i$ and $\eta_i$ denote the corresponding counit and unit, respectively, for $i = 1, 2$.
		
		In other words, $\calLa(\calK) \subset \calK^{\isof}$ has objects given by left adjoints in $\calK$, $0$-arrows given by morphisms of left adjoints, and it is full on higher dimensional arrows.
	\end{nota}
	
	For convenience, we introduce a name.
	
	\begin{defi}
		Let $\calK$ be an $\infty$-cosmos. An isofibration of $\calK$ is said to  be a \emph{la-isofibration} precisely when it belongs to $\calLa(\calK)$. 
		
		A morphism of isofibrations is said to be \emph{a morphism of la-isofibrations} precisely when it is a $0$-arrow in $\calLa(\calK)$. 
	\end{defi}
	
	Indeed, the simplicial category $\calLa(\calK)$ is an $\infty$-cosmos. For this, we first establish a few supporting lemmata.
	
	\begin{lemma}
		\label{lem:la_as_lali}
		In the $2$-category $\Cat$ of categories, a functor $F \colon A \to B$ is a left adjoint if and only if 
		the canonical limit projection $p_B \colon F \downarrow B \to B$ from the lax limit $F \downarrow B$ of $F$ to $B$ is a lali.
	\end{lemma}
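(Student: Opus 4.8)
The plan is to argue entirely inside the comma category $F\downarrow B$, whose objects are triples $(a,b,\phi\colon Fa\to b)$ and whose two projections I will write $\mathrm{dom}\colon F\downarrow B\to A$ and $p_B\colon F\downarrow B\to B$. The one structural input I will use repeatedly is the canonical functor $s\colon A\to F\downarrow B$, $a\mapsto(a,Fa,\mathrm{id}_{Fa})$, which satisfies $p_B s=F$ and is left adjoint to $\mathrm{dom}$: indeed a morphism out of $sa$ is determined by its $A$-component alone, the $B$-component being forced to be $\phi'\circ F(-)$, which yields a bijection $\Hom_{F\downarrow B}(sa,y)\cong\Hom_A(a,\mathrm{dom}\,y)$ natural in $a$ and $y$.

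For the implication from lali to left adjoint, suppose $p_B$ admits a right adjoint $r\colon B\to F\downarrow B$. Then I would simply compose the adjunction bijection for $p_B\dashv r$, evaluated at the object $sa$, with the bijection above: $\Hom_B(Fa,b)=\Hom_B(p_B sa,b)\cong\Hom_{F\downarrow B}(sa,rb)\cong\Hom_A(a,\mathrm{dom}\,rb)$, naturally in $a$ and $b$. This exhibits $\mathrm{dom}\circ r$ as a right adjoint of $F$, so $F$ is a left adjoint. (I note that this half uses only that $p_B$ has a right adjoint, and neither the left inverse nor the invertibility of the counit enters.)

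For the converse, suppose $F\dashv U$ with counit $\epsilon\colon FU\Rightarrow\mathrm{Id}_B$. The candidate right adjoint of $p_B$ is $r\colon b\mapsto(Ub,b,\epsilon_b)$, which is functorial in $b$ by naturality of $\epsilon$. I would then verify $p_B\dashv r$ using the transposition bijection of $F\dashv U$: a morphism $(a,b',\phi)\to(Ub,b,\epsilon_b)$ in $F\downarrow B$ consists of a morphism $g\colon b'\to b$ together with an $f\colon a\to Ub$ satisfying $\epsilon_b\circ Ff=g\circ\phi$, and for each $g$ this equation has the unique solution $f=\widehat{g\circ\phi}$, the transpose of $g\circ\phi$. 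Hence $\Hom_{F\downarrow B}(x,rb)\cong\Hom_B(p_B x,b)$ naturally. Finally $p_B r=\mathrm{Id}_B$ on the nose and the counit of $p_B\dashv r$ is the identity, so $p_B$ is in fact a lali with identity (in particular invertible) counit, as required.

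The argument is elementary, and I anticipate no genuine obstacle: everything reduces to two hom-set bijections and their naturality, which follow from functoriality of $s$, $\mathrm{dom}$ and $r$ together with the (co)units involved. The only points demanding care are fixing conventions once and for all — which projection of the comma category is $p_B$, and which way the structure arrow $Fa\to b$ points — and keeping the triangle identities straight when identifying the counit of $p_B\dashv r$. (A fibrewise alternative via the characterisation ``$F$ has a right adjoint iff every $F\downarrow b$ has a terminal object'' is also available, but the hom-set computation above is shorter and, unlike that route, directly produces the lali structure rather than merely the existence of the adjoint.)
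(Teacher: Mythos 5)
Your proof is correct, but it takes a genuinely different route from the paper. The paper argues at the level of $2$-cells: in the direction ``$F$ left adjoint $\Rightarrow$ $p_B$ lali'' it defines $u\colon b\mapsto \epsilon_b$ exactly as you do, but then writes down the unit $\eta$ (with component $F\phi^{\#}$ at an object $Fa\xrightarrow{\phi}b$) and the identity counit explicitly and checks the triangle identities; in the converse direction it again works by hand, introducing the functor $v\colon a\mapsto \mathrm{id}_{Fa}$ (your $s$) and a natural transformation $\alpha\colon 1_{F\downarrow B}\Rightarrow v\,p_A$, setting $U:=p_A u$, and verifying the triangle identities for $F\dashv U$ by pasting computations. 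You replace all of this by two hom-set bijections: the observation that $s\dashv \mathrm{dom}$, so that in the lali-to-left-adjoint direction $F=p_B\circ s$ is a composite of left adjoints with right adjoint $\mathrm{dom}\circ r$, and in the other direction the direct computation $\Hom_{F\downarrow B}(x,rb)\cong\Hom_B(p_Bx,b)$ with identity counit. This is shorter and more elementary, and it makes visible that the lali-to-left-adjoint implication uses only the existence of a right adjoint to $p_B$, not the invertibility of its counit. What the paper's more laborious construction buys is the explicit unit/counit data ($u$, $v$, $\alpha$, $H$, $E$), which are reused verbatim in the subsequent mate computation (its Lemma on morphisms of left adjoints versus morphisms of lalis); your argument produces the same $u$ and $U=\mathrm{dom}\circ r$, but one would still have to unwind the $2$-cells to run that later argument.
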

	
	\begin{proof}
		Suppose $F \dashv U$ is an adjunction with unit $H$ and counit $E$. Then,  the $1$-component 
		$$E_b \colon FUb \to b$$
		at an object $b \in \ob B$ is an object of $F \downarrow B$.
		
		We define a functor
		$$
		\begin{aligned}
			u \colon B &\to F \downarrow B
			\\
			b &\mapsto E_b \quad ,
		\end{aligned}
		$$
		and show that $p_B \dashv u$.
		
		We construct a pair of natural transformations
		\begin{center}
			\begin{tikzcd}[ampersand replacement=\&]
				{F\downarrow B} \&\& {F\downarrow B} \\
				\& B
				\arrow[""{name=0, anchor=center, inner sep=0}, equal, from=1-1, to=1-3]
				\arrow["{p_B}"', from=1-1, to=2-2]
				\arrow["u"', from=2-2, to=1-3]
				\arrow["\eta", shorten <=6pt, shorten >=3pt, Rightarrow, from=0, to=2-2]
			\end{tikzcd},
			\begin{tikzcd}[ampersand replacement=\&]
				\& {F\downarrow B} \\
				B \&\& B
				\arrow["{p_B}", from=1-2, to=2-3]
				\arrow["u", from=2-1, to=1-2]
				\arrow[""{name=0, anchor=center, inner sep=0}, equal, from=2-1, to=2-3]
				\arrow["\epsilon", shorten <=6pt, shorten >=3pt, Rightarrow, from=1-2, to=0]
			\end{tikzcd}
		\end{center}
		as follows.	It is clear that $p_B u = 1_B$, so we can set $\epsilon = 1$. Besides, for any $Fa \xrightarrow{\phi} b$, there is a corresponding adjunct $a \xrightarrow{\phi^\#}$, which makes the triangle
		\[\begin{tikzcd}[ampersand replacement=\&]
			Fa \&\& FUb \\
			\& b
			\arrow["{F\phi^\#}", from=1-1, to=1-3]
			\arrow["\phi"', from=1-1, to=2-2]
			\arrow["{E_b}", from=1-3, to=2-2]
		\end{tikzcd}\]
		commute. Thus, we can construct a natural transformation $\eta \colon 1_{F\downarrow B} \Rightarrow up_B$, whose $1$-component at $Fa \xrightarrow{\phi} b$ is given by $F\phi^\#$.
		
		We shall verify the triangle identities. It is obvious that the composite natural transformation
		\[\begin{tikzcd}[ampersand replacement=\&]
			{F\downarrow  B} \& {F\downarrow  B} \\
			B \& B
			\arrow[equal, from=1-1, to=1-2]
			\arrow[""{name=0, anchor=center, inner sep=0}, "{p_B}"', from=1-1, to=2-1]
			\arrow[""{name=1, anchor=center, inner sep=0}, "{p_B}", from=1-2, to=2-2]
			\arrow["u"{description}, from=2-1, to=1-2]
			\arrow[equal, from=2-1, to=2-2]
			\arrow["\eta", shift left=5, shorten >=1pt, Rightarrow, from=1-1, to=0]
			\arrow["\epsilon"', shift right=5, shorten >=1pt, equals, from=1, to=2-2]
		\end{tikzcd}\]
		has $1$-component at $Fa \xrightarrow{\phi} b$ given by the identity at $b$. So, this is clearly an identity. Similarly, the composite natural transformation
		\[\begin{tikzcd}[ampersand replacement=\&]
			B \& B \\
			{F\downarrow  B} \& {F\downarrow  B}
			\arrow[no head, from=1-1, to=1-2]
			\arrow[""{name=0, anchor=center, inner sep=0}, "u"', from=1-1, to=2-1]
			\arrow[""{name=1, anchor=center, inner sep=0}, "u", from=1-2, to=2-2]
			\arrow["{p_B}"{description}, from=2-1, to=1-2]
			\arrow[no head, from=2-1, to=2-2]
			\arrow["\epsilon", shift left=5, shorten >=1pt, equals, from=1-1, to=0]
			\arrow["\eta", shift left=5, shorten >=1pt, Rightarrow, from=2-2, to=1]
		\end{tikzcd}\]
		has $1$-component at $b$ given by the identity at $E_b$, which means it is also an identity.
		
		Conversely, suppose $p_B$ is a lali, with right adjoint $u \colon B \to F\downarrow B$, unit $\eta$, and counit $\epsilon = 1$.
		
		We define a functor
		$$
		\begin{aligned}
			v \colon A &\to F \downarrow B
			\\
			a &\mapsto 1_{Fa} \quad , 
		\end{aligned}
		$$
		which gives us $p_B v (a) = Fa$.
		
		Define $U := p_A u$, where $p_A$ denotes the canonical projection $\F\downarrow B \to A$. We show that $F \dashv U$.
		
		Consider a natural transformation
		\[\begin{tikzcd}[ampersand replacement=\&]
			\& A \\
			{F\downarrow B} \&\& {F\downarrow B}
			\arrow["v", from=1-2, to=2-3]
			\arrow["{p_A}", from=2-1, to=1-2]
			\arrow[""{name=0, anchor=center, inner sep=0}, equal, from=2-1, to=2-3]
			\arrow["\alpha", shorten <=6pt, shorten >=3pt, Rightarrow, from=1-2, to=0]
		\end{tikzcd}\]
		whose $1$-component $\alpha_\phi$ at $Fa \xrightarrow{\phi} b$ is $(1_{Fa}, \phi)$, as in
		\[\begin{tikzcd}[ampersand replacement=\&]
			Fa \& Fa \\
			Fa \& b
			\arrow[equal, from=1-1, to=1-2]
			\arrow[equal, from=1-1, to=2-1]
			\arrow["\phi", from=1-2, to=2-2]
			\arrow["\phi"', from=2-1, to=2-2]
		\end{tikzcd}.\]
		We set
		\[
		\begin{tikzcd}[ampersand replacement=\&]
			A \&\& A \\
			\& B
			\arrow[""{name=0, anchor=center, inner sep=0}, equal, from=1-1, to=1-3]
			\arrow["F"', from=1-1, to=2-2]
			\arrow["U"', from=2-2, to=1-3]
			\arrow["H", shorten <=6pt, shorten >=3pt, Rightarrow, from=0, to=2-2]
		\end{tikzcd}
		\quad := \quad
		\begin{tikzcd}[ampersand replacement=\&]
			A \&\& A \\
			{F\downarrow B} \&\& {F\downarrow B} \\
			\& B
			\arrow[equal, from=1-1, to=1-3]
			\arrow["v"', from=1-1, to=2-1]
			\arrow[""{name=0, anchor=center, inner sep=0}, equal, from=2-1, to=2-3]
			\arrow["{p_B}"', from=2-1, to=3-2]
			\arrow["{p_A}"', from=2-3, to=1-3]
			\arrow["u"', from=3-2, to=2-3]
			\arrow["\eta", shorten <=6pt, shorten >=3pt, Rightarrow, from=0, to=3-2]
		\end{tikzcd}
		\]
		and
		\[
		\begin{tikzcd}[ampersand replacement=\&]
			\& A \\
			B \&\& B
			\arrow["F", from=1-2, to=2-3]
			\arrow["U", from=2-1, to=1-2]
			\arrow[""{name=0, anchor=center, inner sep=0}, equal, from=2-1, to=2-3]
			\arrow["E", shorten <=6pt, shorten >=3pt, Rightarrow, from=1-2, to=0]
		\end{tikzcd}
		\quad := \quad
		\begin{tikzcd}[ampersand replacement=\&]
			\& A \\
			{F\downarrow B} \&\& {F\downarrow B} \\
			B \&\& B
			\arrow["v", from=1-2, to=2-3]
			\arrow["{p_A}", from=2-1, to=1-2]
			\arrow[""{name=0, anchor=center, inner sep=0}, equal, from=2-1, to=2-3]
			\arrow["{p_B}", from=2-3, to=3-3]
			\arrow["u", from=3-1, to=2-1]
			\arrow[""{name=1, anchor=center, inner sep=0}, equal, from=3-1, to=3-3]
			\arrow["\alpha", shorten <=6pt, shorten >=3pt, Rightarrow, from=1-2, to=0]
			\arrow["\epsilon", shorten <=9pt, shorten >=9pt, equals, from=0, to=1]
		\end{tikzcd}.
		\]
		
		It remains to verify the triangle identities. We have
		\[
		\begin{tikzcd}[ampersand replacement=\&]
			A \&\& A \\
			\\
			B \&\& B
			\arrow[equals, from=1-1, to=1-3]
			\arrow[""{name=0, anchor=center, inner sep=0}, "F"', from=1-1, to=3-1]
			\arrow[""{name=1, anchor=center, inner sep=0}, "F", from=1-3, to=3-3]
			\arrow["U"{description}, from=3-1, to=1-3]
			\arrow[equals, from=3-1, to=3-3]
			\arrow["H", shift left=5, shorten <=6pt, shorten >=3pt, Rightarrow, from=1-1, to=0]
			\arrow["E"', shift right=5, shorten >=10pt, Rightarrow, from=1, to=3-3]
		\end{tikzcd}
		\quad = \quad
		\begin{tikzcd}[ampersand replacement=\&]
			A \&\& A \\
			{F\downarrow B} \& {F\downarrow B} \& {F\downarrow B} \\
			B \&\& B
			\arrow[equals, from=1-1, to=1-3]
			\arrow["v"', from=1-1, to=2-1]
			\arrow[""{name=0, anchor=center, inner sep=0}, "v", from=1-3, to=2-3]
			\arrow[equals, from=2-1, to=2-2]
			\arrow[""{name=1, anchor=center, inner sep=0}, "{p_B}"', from=2-1, to=3-1]
			\arrow["{p_A}", from=2-2, to=1-3]
			\arrow[equals, from=2-2, to=2-3]
			\arrow["{p_B}", from=2-3, to=3-3]
			\arrow["u"{description}, from=3-1, to=2-2]
			\arrow[""{name=2, anchor=center, inner sep=0}, equals, from=3-1, to=3-3]
			\arrow["\alpha"', shift right=5, shorten <=1pt, Rightarrow, from=0, to=2-3]
			\arrow["\eta", shift left=5, shorten >=1pt, Rightarrow, from=2-1, to=1]
			\arrow["\epsilon", shift left=5, shorten <=3pt, shorten >=8pt, equals, from=2-2, to=2]
		\end{tikzcd}
		\quad = \quad
		\begin{tikzcd}[ampersand replacement=\&]
			A \&\& A \\
			{F\downarrow B} \& {F\downarrow B} \& {F\downarrow B} \\
			B \&\& B
			\arrow[equals, from=1-1, to=1-3]
			\arrow["v"', from=1-1, to=2-1]
			\arrow[""{name=0, anchor=center, inner sep=0}, "v", from=1-3, to=2-3]
			\arrow[equals, from=2-1, to=2-2]
			\arrow["{p_B}"', from=2-1, to=3-1]
			\arrow["{p_A}", from=2-2, to=1-3]
			\arrow[equals, from=2-2, to=2-3]
			\arrow["{p_B}", from=2-3, to=3-3]
			\arrow[equals, from=3-1, to=3-3]
			\arrow["\alpha"', shift right=5, shorten <=1pt, Rightarrow, from=0, to=2-3]
		\end{tikzcd},
		\]
		whose $1$-component at $a \in \ob A$ is $p_B \cdot \alpha \cdot v(a) = p_B \cdot (1_{Fa}, 1_{Fa}) = 1_{Fa}$, we conclude that this is an identity natural transformation. Moreover, we have
		\[
		\begin{tikzcd}[ampersand replacement=\&]
			B \&\& B \\
			\\
			A \&\& A
			\arrow[equals, from=1-1, to=1-3]
			\arrow[""{name=0, anchor=center, inner sep=0}, "U"', from=1-1, to=3-1]
			\arrow[""{name=1, anchor=center, inner sep=0}, "U", from=1-3, to=3-3]
			\arrow["F"{description}, from=3-1, to=1-3]
			\arrow[equals, from=3-1, to=3-3]
			\arrow["E"', shift right=5, shorten <=6pt, shorten >=3pt, Rightarrow, from=0, to=1-1]
			\arrow["H", shift left=5, shorten >=10pt, Rightarrow, from=3-3, to=1]
		\end{tikzcd}
		\quad = \quad
		\begin{tikzcd}[ampersand replacement=\&]
			B \&\& B \\
			{F\downarrow B} \& {F\downarrow B} \& {F\downarrow B} \\
			A \&\& A
			\arrow[""{name=0, anchor=center, inner sep=0}, equals, from=1-1, to=1-3]
			\arrow["u"', from=1-1, to=2-1]
			\arrow[""{name=1, anchor=center, inner sep=0}, "u", from=1-3, to=2-3]
			\arrow[equals, from=2-1, to=2-2]
			\arrow[""{name=2, anchor=center, inner sep=0}, "{p_A}"', from=2-1, to=3-1]
			\arrow["{p_B}", from=2-2, to=1-3]
			\arrow[equals, from=2-2, to=2-3]
			\arrow["{p_A}", from=2-3, to=3-3]
			\arrow["v"', from=3-1, to=2-2]
			\arrow[equals, from=3-1, to=3-3]
			\arrow["\epsilon"', shift right=5, shorten <=6pt, shorten >=3pt, equals, from=0, to=2-2]
			\arrow["\alpha"', shift right=5, shorten >=1pt, Rightarrow, from=2, to=2-1]
			\arrow["\eta", shift left=5, shorten <=1pt, Rightarrow, from=2-3, to=1]
		\end{tikzcd}
		\quad = \quad
		\begin{tikzcd}[ampersand replacement=\&]
			B \&\& B \\
			{F\downarrow B} \& {F\downarrow B} \& {F\downarrow B} \\
			A \&\& A
			\arrow[equals, from=1-1, to=1-3]
			\arrow["u"', from=1-1, to=2-1]
			\arrow["u", from=1-3, to=2-3]
			\arrow[equals, from=2-1, to=2-2]
			\arrow[""{name=0, anchor=center, inner sep=0}, "{p_A}"', from=2-1, to=3-1]
			\arrow[equals, from=2-2, to=2-3]
			\arrow["{p_A}", from=2-3, to=3-3]
			\arrow["v"', from=3-1, to=2-2]
			\arrow[equals, from=3-1, to=3-3]
			\arrow["\alpha"', shift right=5, shorten >=1pt, Rightarrow, from=0, to=2-1]
		\end{tikzcd}
		\]
		which has $1$-component at $b \in \ob B$ given by $p_A \cdot \alpha \cdot u(b) = p_A \cdot (F(1_{Ub}), u(b)) = 1_{Ub}$, therefore, this is also an identity natural transformation.
	\end{proof}
	
	\begin{lemma}
		\label{lem:mor_la_mor_lali}
		Given adjunctions
		\[\begin{tikzcd}[ampersand replacement=\&]
			{A_i} \& {B_i}
			\arrow[""{name=0, anchor=center, inner sep=0}, "{F_i}", bend left, from=1-1, to=1-2]
			\arrow[""{name=1, anchor=center, inner sep=0}, "{U_i}", bend left, from=1-2, to=1-1]
			\arrow["\dashv"{anchor=center, rotate=-90}, draw=none, from=0, to=1]
		\end{tikzcd}\]
		and the corresponding lalis as in \longref{Lemma}{lem:la_as_lali}
		\[\begin{tikzcd}[ampersand replacement=\&]
			{F_i\downarrow B_i} \&\& {B_i}
			\arrow[""{name=0, anchor=center, inner sep=0}, "{p_{B_i}}", bend left, from=1-1, to=1-3]
			\arrow[""{name=1, anchor=center, inner sep=0}, "{u_i}", bend left, from=1-3, to=1-1]
			\arrow["\dashv"{anchor=center, rotate=-90}, draw=none, from=0, to=1]
		\end{tikzcd}\]
		in $\Cat$, a pair $(A_1 \xrightarrow{\gamma} A_2, B_1 \xrightarrow{\beta} B_2)$ of functors is a morphism of left adjoints if and only if the corresponding pair $(F_1 \downarrow B_1 \xrightarrow{\overline{\beta}} F_2\downarrow B_2, B_1 \xrightarrow{\beta} B_2)$ is a morphism of lalis, where $\overline{\beta}$ denotes the canonical induced map.
	\end{lemma}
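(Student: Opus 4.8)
The plan is to reduce both ``morphism of left adjoints'' and ``morphism of lalis'' to a single mate $2$-cell and verify it is literally the same in the two cases. A pair $(\gamma,\beta)$ as in the statement amounts to a commutative square
\[\begin{tikzcd}[ampersand replacement=\&]
	{A_1} \& {A_2} \\
	{B_1} \& {B_2}
	\arrow["\gamma", from=1-1, to=1-2]
	\arrow["{F_1}"', from=1-1, to=2-1]
	\arrow["{F_2}", from=1-2, to=2-2]
	\arrow["\beta"', from=2-1, to=2-2]
\end{tikzcd}\]
which is a morphism of left adjoints exactly when its mate $m\colon \gamma U_1 \Rightarrow U_2\beta$ --- the pasting of the unit $H_2$ of $F_2\dashv U_2$ with the counit $E_1$ of $F_1\dashv U_1$, whose $b$-component is $U_2(\beta E_{1,b})\cdot H_{2,\gamma U_1 b}$ --- is invertible, where $H_i,E_i$ denote the unit and counit of $F_i\dashv U_i$. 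By \longref{Lemma}{lem:la_as_lali}, the canonical induced functor $\overline{\beta}\colon F_1\downarrow B_1 \to F_2\downarrow B_2$, which on objects sends $(a,\phi\colon F_1 a\to b)$ to $(\gamma a,\beta\phi)$, fits into a strictly commuting square over $\beta$ with the projections $p_{B_i}$; I would first record from that lemma the adjunctions $p_{B_i}\dashv u_i$, namely $u_i(b)=(U_i b,E_{i,b})$, counit the identity, and unit $\eta_i$ whose component at $(a,\phi\colon F_i a\to b)$ is represented by the adjunct $\phi^{\#}\colon a\to U_i b$.

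Next I would compute the mate of the square $(\overline{\beta},\beta)$ with respect to $p_{B_1}\dashv u_1$ and $p_{B_2}\dashv u_2$. Since these counits are identities and $p_{B_1}u_1 = 1_{B_1}$, the mate $\overline{\beta}u_1 \Rightarrow u_2\beta$ collapses to the whiskered unit $\eta_2\,\overline{\beta}u_1$. Evaluating at $b\in B_1$: the object is $\overline{\beta}u_1(b)=(\gamma U_1 b,\ \beta E_{1,b})$, so the component is $\eta_2$ at that object, which by the description above is represented by the adjunct $(\beta E_{1,b})^{\#}\colon\gamma U_1 b\to U_2\beta b$, i.e.\ $U_2(\beta E_{1,b})\cdot H_{2,\gamma U_1 b}$. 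The crucial observation is that this is precisely the $b$-component of $m$; hence, under the identification of a morphism of $F_2\downarrow B_2$ with its underlying morphism of $A_2$, the two mate $2$-cells coincide on the nose.

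Finally I would invoke that a morphism in the comma category $F_2\downarrow B_2$ is invertible if and only if its underlying morphism in $A_2$ is (the $B_2$-component being an identity here), so the mate of $(\overline{\beta},\beta)$ is a natural isomorphism iff $m$ is a natural isomorphism, which settles both implications simultaneously. The main obstacle I anticipate is purely organisational: pinning down $\overline{\beta}$ via the universal property of the comma object so that its square with the $p_{B_i}$'s commutes strictly, keeping the adjunct and whiskering conventions consistent across the two adjunctions, and checking that the $\epsilon_i=1$ normalisation coming from \longref{Lemma}{lem:la_as_lali} is exactly the one making the two mates agree strictly rather than merely up to a further canonical isomorphism.
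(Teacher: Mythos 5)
Your proposal is correct and takes essentially the same route as the paper: both arguments use the explicit adjunctions $p_{B_i}\dashv u_i$ with identity counits from \longref{Lemma}{lem:la_as_lali} to identify the mate of $(\gamma,\beta)$ with the $A_2$-part of the mate of $(\overline{\beta},\beta)$, whose $B_2$-part is an identity, so that invertibility of one is equivalent to invertibility of the other. The only difference is presentational — the paper carries this out by pasting-diagram manipulations (checking a sub-pasting is the identity by evaluating at $b_1$), whereas you evaluate the components directly — but the underlying computation is the same.
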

	
	\begin{proof}
		Consider the mate
		\[
		\theta \quad := \quad 
		\begin{tikzcd}[ampersand replacement=\&]
			\& {A_1} \& {A_2} \& {A_2} \\
			{B_1} \& {B_1} \& {B_2}
			\arrow["\gamma", from=1-2, to=1-3]
			\arrow["{F_1}", from=1-2, to=2-2]
			\arrow[""{name=0, anchor=center, inner sep=0}, equals, from=1-3, to=1-4]
			\arrow["{F_2}"', from=1-3, to=2-3]
			\arrow[""{name=1, anchor=center, inner sep=0}, "{U_1}", from=2-1, to=1-2]
			\arrow[""{name=2, anchor=center, inner sep=0}, equals, from=2-1, to=2-2]
			\arrow["\beta"', from=2-2, to=2-3]
			\arrow[""{name=3, anchor=center, inner sep=0}, "{U_2}"', from=2-3, to=1-4]
			\arrow["{H_2}"', shorten <=2pt, shorten >=2pt, Rightarrow, from=0, to=3]
			\arrow["{E_1}", shorten <=2pt, shorten >=2pt, Rightarrow, from=1, to=2]
		\end{tikzcd},
		\]
		where $E_i$ and $H_2$ denote the corresponding counits and units, respectively, of $(\alpha, \beta)$. Using the notations in the proof of \longref{Lemma}{lem:la_as_lali}, this is equal to
		\[\begin{tikzcd}[ampersand replacement=\&]
			{A_1} \& {A_2} \&\& {A_2} \\
			{F_1\downarrow B_1} \& {F_1\downarrow B_1} \& {F_2\downarrow B_2} \& {F_2 \downarrow B_2} \\
			{B_1} \&\& {B_1} \& {B_2}
			\arrow["\gamma", from=1-1, to=1-2]
			\arrow["{v_1}", from=1-1, to=2-2]
			\arrow[equals, from=1-2, to=1-4]
			\arrow["{v_2}", from=1-2, to=2-3]
			\arrow[""{name=0, anchor=center, inner sep=0}, "{p_{A_1}}", from=2-1, to=1-1]
			\arrow[equals, from=2-1, to=2-2]
			\arrow["{\overline{\beta}}", from=2-2, to=2-3]
			\arrow["{p_{B_1}}", from=2-2, to=3-3]
			\arrow[equals, from=2-3, to=2-4]
			\arrow["{p_{B_2}}"', from=2-3, to=3-4]
			\arrow["{p_{A_2}}"', from=2-4, to=1-4]
			\arrow["{u_1}", from=3-1, to=2-1]
			\arrow[""{name=1, anchor=center, inner sep=0}, equals, from=3-1, to=3-3]
			\arrow["\beta"', from=3-3, to=3-4]
			\arrow[""{name=2, anchor=center, inner sep=0}, "{u_2}"', from=3-4, to=2-4]
			\arrow["{\alpha_1}", shift left=5, shorten <=1pt, Rightarrow, from=0, to=2-1]
			\arrow["{\epsilon_1}"', shorten <=3pt, shorten >=6pt, equals, from=2-2, to=1]
			\arrow["{\eta_2}"', shift right=5, shorten >=1pt, Rightarrow, from=2-4, to=2]
		\end{tikzcd},\]
		where the canonical induced map $\overline{\beta} \colon F_1 \downarrow B_1 \to F_2 \downarrow B_2$ is actually the application of $\beta$.
		
		Note that
		\[\begin{tikzcd}[ampersand replacement=\&]
			\&\& {A_1} \\
			{B_1} \& {F_1\downarrow B_1} \&\& {F_1\downarrow B_1} \& {F_2\downarrow B_2} \& {A_2}
			\arrow["{v_1}", from=1-3, to=2-4]
			\arrow["{u_1}"', from=2-1, to=2-2]
			\arrow["{p_{A_1}}", from=2-2, to=1-3]
			\arrow[""{name=0, anchor=center, inner sep=0}, equals, from=2-2, to=2-4]
			\arrow["{\overline{\beta}}"', from=2-4, to=2-5]
			\arrow["{p_{A_2}}"', from=2-5, to=2-6]
			\arrow["{\alpha_1}", shorten <=3pt, shorten >=3pt, Rightarrow, from=1-3, to=0]
		\end{tikzcd}\]
		is actually an identity natural transformation, because its $1$-component at $b_1 \ob B_1$ is given by the identity on $\gamma U_1 b_1$. Thus, the natural transformation $\theta$ is invertible precisely when the mate
		\[\begin{tikzcd}[ampersand replacement=\&]
			\& {F_1 \downarrow B_1} \& {F_2 \downarrow B_2} \& {F_2 \downarrow B_2} \\
			{B_1} \& {B_1} \& {B_2}
			\arrow["{\overline{\beta}}", from=1-2, to=1-3]
			\arrow[""{name=0, anchor=center, inner sep=0}, "{p_{B_1}}", from=1-2, to=2-2]
			\arrow[equals, from=1-3, to=1-4]
			\arrow[""{name=1, anchor=center, inner sep=0}, "{p_{B_2}}"', from=1-3, to=2-3]
			\arrow["{u_1}", from=2-1, to=1-2]
			\arrow[equals, from=2-1, to=2-2]
			\arrow["\beta"', from=2-2, to=2-3]
			\arrow["{u_2}"', from=2-3, to=1-4]
			\arrow["\epsilon"', shift right=5, shorten <=1pt, equals, from=0, to=2-2]
			\arrow["{\eta_2}", shift left=5, shorten >=1pt, Rightarrow, from=1-3, to=1]
		\end{tikzcd}\]
		of $(\overline{\beta}, \beta)$ is invertible.
	\end{proof}
	
	In \cite[Lemma 2.4.2]{RV:2015_2}, it is stated that a pair of $1$-morphisms in a $2$-category is an adjunction precisely when it is representably an adjunction in $\Cat$. Making use of this fact, we are able to transfer the above lemmata to an $\infty$-cosmos.
	
	\begin{lemma}
		\label{lem:la_as_lali_K}
		In an $\infty$-cosmos $\calK$, a $0$-arrow $F \colon A \to B$ is a left adjoint if and only if 
		the canonical limit projection $p_B \colon F \downarrow B \to B$ from the lax limit $F \downarrow B$ of $F$, i.e., the comma $\infty$-category of $F$ and $1_B$, to $B$ is a lali.
	\end{lemma}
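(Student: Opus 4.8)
The plan is to deduce this from its $2$-categorical shadow, Lemma~\ref{lem:la_as_lali}, by a representability argument in the homotopy $2$-category $h\calK$. Both conditions in play --- that $F$ is a left adjoint, and that $p_B$ is a lali --- are by definition assertions about $1$-cells of $h\calK$, so it suffices to prove the equivalence there. Two observations set this up. The comma $\infty$-category $F \downarrow B$ is built in $\calK$ from a simplicial power and a pullback, so it has a strict $1$-dimensional universal property, which I intend to use to manufacture the candidate adjoint; and for every $\infty$-category $X$ the representable $\calK(X, -) \colon \calK \to \qCat$ preserves this construction, carrying $F \downarrow B$ to the comma quasi-category $\calK(X, F) \downarrow \calK(X, B)$, the projection $p_B$ to the corresponding comma projection, and $F$ to the induced functor $\calK(X, F) \colon \calK(X, A) \to \calK(X, B)$, naturally in $X$.

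I would then proceed as follows. Using the strict universal property of $F \downarrow B$ in $\calK$, transport the explicit functors and $2$-cells from the proof of Lemma~\ref{lem:la_as_lali} --- the candidate right adjoint $u \colon B \to F \downarrow B$, the auxiliary $v \colon A \to F \downarrow B$, and the candidate unit, counit, $H$, and $E$ --- to corresponding data in $h\calK$. By \cite[Lemma 2.4.2]{RV:2015_2}, the resulting putative adjunction in $h\calK$ is a genuine one exactly when it is so representably in $\Cat$, i.e.\ after applying the $2$-functor $h\calK(X, -) = h\bigl(\calK(X, -)\bigr)$ for every $X$. Since $h\calK(X, -)$ turns the comma cone above into a comma cone for $h\calK(X, F)$, Lemma~\ref{lem:la_as_lali} applied inside each $h\calK(X, -)$ yields the required representable adjunctions, and their coherence in $X$ is precisely the naturality already recorded. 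Running the argument in the reverse direction --- now assuming $p_B$ is a lali and feeding its unit and (identity) counit into the construction --- produces a right adjoint to $F$ in the same manner, so ``lali'' transfers to ``left adjoint'' and conversely.

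The step I expect to be the main obstacle is that comma cones in the homotopy $2$-category are only \emph{weakly} universal: the homotopy category functor $h \colon \qCat \to \Cat$ does not send the comma quasi-category $\calK(X, F) \downarrow \calK(X, B)$ to the comma category $h\calK(X, F) \downarrow h\calK(X, B)$ on the nose, but only to an object equipped with a comparison functor into the latter that is surjective on objects, full, and conservative, though in general not faithful. My approach to this is the one Riehl and Verity use throughout their book: I would carry out every construction against the strict limit $F \downarrow B$ in $\calK$, never against the weak comma object in $h\calK$, and verify the triangle identities representably by pushing each identity of $2$-cells forward along these comparison functors, where conservativity reduces the check to an identity inside a genuine comma category and Lemma~\ref{lem:la_as_lali} closes it. Once both implications are in hand, the proof is complete.
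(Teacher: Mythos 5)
Your proposal is correct and follows essentially the same route as the paper: both reduce the statement to the homotopy $2$-category, invoke \cite[Lemma 2.4.2]{RV:2015_2} to check the adjunction representably in $\Cat$, and then apply Lemma~\ref{lem:la_as_lali}, using that the representables carry $F \downarrow B$ and $p_B$ to the corresponding comma construction and projection. Your additional care about the comparison functor $h\bigl(\calK(X,F)\downarrow\calK(X,B)\bigr) \to h\calK(X,F)\downarrow h\calK(X,B)$ being only smothering is a refinement of the step the paper dispatches by simply asserting $h\calK(X, F\downarrow B)\cong h\calK(X,F)\downarrow h\calK(X,B)$.
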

	
	\begin{proof}
		Any adjunction in an $\infty$-cosmos $\calK$ amounts to one in its homotopy $2$-category $h\calK$.
		
		By \cite[Lemma 2.4.2]{RV:2015_2}, it suffices to show that in $\Cat$, we have, for any category $X$, 
		\[\begin{tikzcd}[ampersand replacement=\&]
			{h\calK(X, A)} \&\& {h\calK(X, B)}
			\arrow[""{name=0, anchor=center, inner sep=0}, "{h\calK(X, F)}", bend left, from=1-1, to=1-3]
			\arrow[""{name=1, anchor=center, inner sep=0}, "{h\calK(X, U)}", bend left, from=1-3, to=1-1]
			\arrow["\dashv"{anchor=center, rotate=-90}, draw=none, from=0, to=1]
		\end{tikzcd}\]
		is an adjunction if and only if the corresponding pair
		\[\begin{tikzcd}[ampersand replacement=\&]
			{h\calK(X, F\downarrow B)} \&\& {h\calK(X, B)}
			\arrow[""{name=0, anchor=center, inner sep=0}, "{h\calK(X, p_B)}", bend left, from=1-1, to=1-3]
			\arrow[""{name=1, anchor=center, inner sep=0}, "{h\calK(X, u)}", bend left, from=1-3, to=1-1]
			\arrow["\dashv"{anchor=center, rotate=-90}, draw=none, from=0, to=1]
		\end{tikzcd}\]
		is an adjunction with identity counit. Note that $h\calK(X, F \downarrow B) \cong h\calK(X, F) \downarrow h\calK(X, B)$ and also $h\calK(X, p_B) = p_{h\calK(X, B)}$, because the Yoneda embedding preserves limits. So, by \longref{Lemma}{lem:la_as_lali}, this is true.
	\end{proof}
	
	\begin{lemma}
		\label{lem:mor_la_mor_lali_K}
		In an $\infty$-cosmos $\calK$, given adjunctions
		\[\begin{tikzcd}[ampersand replacement=\&]
			{A_i} \& {B_i}
			\arrow[""{name=0, anchor=center, inner sep=0}, "{F_i}", bend left, from=1-1, to=1-2]
			\arrow[""{name=1, anchor=center, inner sep=0}, "{U_i}", bend left, from=1-2, to=1-1]
			\arrow["\dashv"{anchor=center, rotate=-90}, draw=none, from=0, to=1]
		\end{tikzcd}\]
		and the corresponding lalis as in \longref{Lemma}{lem:la_as_lali_K}
		\[\begin{tikzcd}[ampersand replacement=\&]
			{F_i\downarrow B_i} \&\& {B_i}
			\arrow[""{name=0, anchor=center, inner sep=0}, "{p_{B_i}}", bend left, from=1-1, to=1-3]
			\arrow[""{name=1, anchor=center, inner sep=0}, "{u_i}", bend left, from=1-3, to=1-1]
			\arrow["\dashv"{anchor=center, rotate=-90}, draw=none, from=0, to=1]
		\end{tikzcd},\]
		a pair $(A_1 \xrightarrow{\gamma} A_2, B_1 \xrightarrow{\beta} B_2)$ of $0$-arrows is a morphism of left adjoints if and only if the corresponding pair $(F_1 \downarrow B_1 \xrightarrow{\overline{\beta}} F_2\downarrow B_2, B_1 \xrightarrow{\beta} B_2)$ is a morphism of lalis, where $\overline{\beta}$ denotes the canonical induced map.
	\end{lemma}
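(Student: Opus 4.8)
The plan is to mimic the proof of \longref{Lemma}{lem:la_as_lali_K} and reduce the assertion to its $\Cat$-counterpart \longref{Lemma}{lem:mor_la_mor_lali} by testing representably. Recall that an adjunction in $\calK$ is nothing but an adjunction in the homotopy $2$-category $h\calK$, and that being a \emph{morphism of left adjoints} (resp.\ a \emph{morphism of lalis}) is, by the definitions recalled in \longref{Example}{eg:calLali} and in the notation introducing $\calLa(\calK)$, precisely the condition that a certain mate $2$-cell in $h\calK$ is invertible. Now the $2$-categorical Yoneda lemma makes the $2$-functors $h\calK(X, -) \colon h\calK \to \Cat$ jointly conservative on invertibility of $2$-cells: a $2$-cell of $h\calK$ is invertible if and only if its image under $h\calK(X, -)$ is invertible for every object $X$ (a modification between representables is invertible iff each of its components is). So it suffices to prove the "if and only if" after applying each representable.

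First I would fix an object $X$ in $\calK$ and apply the $2$-functor $h\calK(X, -)$ to all the data. By \cite[Lemma 2.4.2]{RV:2015_2} (exactly as in \longref{Lemma}{lem:la_as_lali_K}), this sends the adjunctions $F_i \dashv U_i$ and $p_{B_i} \dashv u_i$ to genuine adjunctions $h\calK(X, F_i) \dashv h\calK(X, U_i)$ and $h\calK(X, p_{B_i}) \dashv h\calK(X, u_i)$ in $\Cat$; and since the Yoneda embedding preserves limits, one has $h\calK(X, F_i \downarrow B_i) \cong h\calK(X, F_i) \downarrow h\calK(X, B_i)$ and $h\calK(X, p_{B_i}) = p_{h\calK(X, B_i)}$ under this identification, as already observed. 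Moreover, by the universal property of comma objects, the canonically induced map $h\calK(X, \overline{\beta})$ is identified with the canonically induced map $\overline{h\calK(X, \beta)}$ associated to $h\calK(X, \beta)$. Finally, since $h\calK(X, -)$ is a $2$-functor it preserves the mate calculus, so it carries the mate of the square $(\gamma, \beta)$ with respect to $F_i \dashv U_i$ to the mate of the square $(h\calK(X, \gamma), h\calK(X, \beta))$ with respect to $h\calK(X, F_i) \dashv h\calK(X, U_i)$, and likewise for the square $(\overline{\beta}, \beta)$ with respect to the lali adjunctions.

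Then I would invoke \longref{Lemma}{lem:mor_la_mor_lali} in $\Cat$: for each $X$, the pair $(h\calK(X, \gamma), h\calK(X, \beta))$ is a morphism of left adjoints if and only if $(h\calK(X, \overline{\beta}), h\calK(X, \beta))$ is a morphism of lalis; equivalently, the image under $h\calK(X, -)$ of the first mate is invertible exactly when the image of the second is. As this holds for every $X$, the conservativity statement above upgrades it to the claim that the mate of $(\gamma, \beta)$ in $h\calK$ is invertible if and only if the mate of $(\overline{\beta}, \beta)$ in $h\calK$ is invertible, which is precisely the assertion that $(\gamma, \beta)$ is a morphism of left adjoints if and only if $(\overline{\beta}, \beta)$ is a morphism of lalis.

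The main obstacle I expect is the bookkeeping of the mate calculus through the representables: one must verify carefully that $h\calK(X, \overline{\beta})$ really is the map induced on comma objects by $h\calK(X, \beta)$ — so that the hypotheses of \longref{Lemma}{lem:mor_la_mor_lali} apply verbatim — and that the explicit composite pasting diagram defining a morphism of lalis is transported on the nose by $h\calK(X, -)$. This is a routine but slightly intricate diagram chase, entirely parallel to the one in $\Cat$ carried out in the proof of \longref{Lemma}{lem:mor_la_mor_lali}, together with the standard observation that invertibility of the relevant mate $2$-cells does not depend on the chosen adjoints.
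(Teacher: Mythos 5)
Your proposal is correct and is essentially the paper's argument: the paper's own proof just notes that invertibility of the relevant mate can be tested in $h\calK$ and invokes \longref{Lemma}{lem:mor_la_mor_lali}, with the representable reduction via $h\calK(X,-)$ (as in the proof of \longref{Lemma}{lem:la_as_lali_K}) left implicit. You have simply spelled out that reduction — $2$-functoriality preserving mates, identification of $h\calK(X,\overline{\beta})$ with the induced comma map, and Yoneda-detection of invertible $2$-cells — which matches the intended route.
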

	
	\begin{proof}
		A $0$-arrow in $\calK$ is an isomorphism, just when it is an isomorphism in the homotopy $2$-category $h\calK$. The statement follows directly from \longref{Lemma}{lem:mor_la_mor_lali}.
	\end{proof}
	
	\begin{pro}
		\label{pro:La_cosmos}
		Let $\calK$ be an $\infty$-cosmos. The simplicial category $\calLa(\calK)$ is an $\infty$-cosmos.
	\end{pro}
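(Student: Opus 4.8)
The plan is to realise $\calLa(\calK)$ as a pullback of the cosmological embedding $U\colon \calLali(\calK)\hooktwoheadrightarrow \calK^{\isof}$ from Example~\ref{eg:calLali}, in direct analogy with the pullback description of $\calCart(\calK)$ (where one uses the Leibniz-exponential functor $p\mapsto i_1\hat{\pitchfork} p$) and of $\calJLim(\calK)$. The two inputs are a cosmological functor that ``detects left adjointness'' and the characterisations already established in Lemmas~\ref{lem:la_as_lali_K} and~\ref{lem:mor_la_mor_lali_K}.

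First I would build a simplicial functor $\Phi\colon \calK^{\isof}\to\calK^{\isof}$. On objects it sends an isofibration $p\colon E\twoheadrightarrow B$ to the canonical projection $p_B\colon p\downarrow B\twoheadrightarrow B$ of the comma $\infty$-category of $p$ along $1_B$; this is an isofibration of $\calK$, hence an object of $\calK^{\isof}$. On a $0$-arrow $(e,b)\colon p_1\to p_2$ it returns the pair $(\overline{b},b)\colon p_{B_1}\to p_{B_2}$ with $\overline{b}$ the map induced by the universal property of comma objects, and on an $n$-arrow the $\Delta^n$-naturally induced map. I would then verify that $\Phi$ is cosmological: the comma $p\downarrow B$ is assembled from $B^{\Delta^1}$ and pullbacks of isofibrations of $\calK$, functorially in $p\in\calK^{\isof}$ through the cosmological-limit structure of $\calK^{\isof}$ (which is computed pointwise in $\calK$), so $\Phi$ should preserve isofibrations of $\calK^{\isof}$ and all cosmological limits.

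Next I would form the pullback
\[\begin{tikzcd}[ampersand replacement=\&]
	{\mathcal{P}} \& {\calLali(\calK)} \\
	{\calK^{\isof}} \& {\calK^{\isof}}
	\arrow[from=1-1, to=1-2]
	\arrow[hook, two heads, from=1-1, to=2-1]
	\arrow["\lrcorner"{anchor=center, pos=0.125}, draw=none, from=1-1, to=2-2]
	\arrow["U", hook, two heads, from=1-2, to=2-2]
	\arrow["\Phi"', from=2-1, to=2-2]
\end{tikzcd}\]
and invoke pullback-stability of cosmological embeddings \cite[Proposition 6.3.12]{book:RV:2022}: since $U$ is a cosmological embedding and $\Phi$ is cosmological, the left-hand leg $\mathcal{P}\hooktwoheadrightarrow\calK^{\isof}$ is a cosmological embedding, and in particular $\mathcal{P}$ is an $\infty$-cosmos. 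It then remains to identify $\mathcal{P}$ with $\calLa(\calK)$ as simplicial sub-categories of $\calK^{\isof}$. An object $p$ of $\calK^{\isof}$ lies in $\mathcal{P}$ exactly when $\Phi(p)=p_B$ is a lali-isofibration; being automatically an isofibration, this says $p_B$ is a lali, which by Lemma~\ref{lem:la_as_lali_K} holds iff $p$ is a left adjoint, i.e. $p\in\ob\calLa(\calK)$. A $0$-arrow $(e,b)$ lies in $\mathcal{P}$ iff $\Phi(e,b)=(\overline{b},b)$ is a morphism of lalis, which by Lemma~\ref{lem:mor_la_mor_lali_K} holds iff $(e,b)$ is a morphism of left adjoints; and both $\mathcal{P}$ and $\calLa(\calK)$ are full on $n$-arrows for $n>0$ inside $\calK^{\isof}$. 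Hence $\mathcal{P}=\calLa(\calK)$, so $\calLa(\calK)$ is an $\infty$-cosmos, cosmologically embedded in $\calK^{\isof}$.

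The step I expect to be the real obstacle is the verification that the ``comma with the codomain'' assignment $p\mapsto p_B$ genuinely organises into a cosmological functor $\calK^{\isof}\to\calK^{\isof}$ --- in particular that it carries isofibrations of $\calK^{\isof}$ (squares whose horizontal legs and whose induced map into the pullback of the cospan are isofibrations of $\calK$) to isofibrations of $\calK^{\isof}$, and that it commutes with the pointwise cosmological limits. Once $\Phi$ is known to be cosmological, the remainder is formal, via pullback-stability of cosmological embeddings together with the two lemmas.
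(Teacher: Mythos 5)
Your proposal is correct and follows essentially the same route as the paper: the paper also realises $\calLa(\calK)$ as the pullback of the cosmological embedding $U\colon \calLali(\calK)\hooktwoheadrightarrow\calK^{\isof}$ along the functor $p\mapsto(\pi_B\colon p\downarrow B\twoheadrightarrow B)$, identifies the pullback via Lemmas \ref{lem:la_as_lali_K} and \ref{lem:mor_la_mor_lali_K}, and concludes by pullback-stability of cosmological embeddings \cite[Proposition 6.3.12]{book:RV:2022}. The step you flag as the remaining obstacle is handled in the paper exactly as you sketch: since comma $\infty$-categories are cosmological limits, the functor preserves cosmological limits because limits commute, and it carries isofibrations of $\calK^{\isof}$ to isofibrations by \cite[Proposition 6.2.8]{book:RV:2022}.
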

	
	\begin{proof}
		From the above series of lemmata, we conclude that the simplicial category $\calLa(\calK)$ of la-isofibrations can be obtained as the pullback
		\begin{equation}
			\label{diag:pb_la}
			\begin{tikzcd}[ampersand replacement=\&]
				{\calLa(\calK)} \& {\calLali(\calK)} \\
				{\calK^{\isof}} \& {\calK^{\isof}}
				\arrow[from=1-1, to=1-2]
				\arrow[hook, two heads, from=1-1, to=2-1]
				\arrow["\lrcorner"{anchor=center, pos=0.125}, draw=none, from=1-1, to=2-2]
				\arrow["U", hook, two heads, from=1-2, to=2-2]
				\arrow["L"', from=2-1, to=2-2]
			\end{tikzcd}
		\end{equation}
		of $L$ along the inclusion $U \colon \calLali(\calK) \hooktwoheadrightarrow \calK^{\isof}$, where $L$ sends each isofibration $p \colon E \twoheadrightarrow B$ of $\calK$ to the corresponding lali $\pi_{B} \colon p \downarrow B \twoheadrightarrow B$, as in \longref{Lemma}{lem:la_as_lali_K}, and each square
		\[\begin{tikzcd}[ampersand replacement=\&]
			{E_1} \& {E_2} \\
			{B_1} \& {B_2}
			\arrow["e", from=1-1, to=1-2]
			\arrow["{p_1}"', two heads, from=1-1, to=2-1]
			\arrow["{p_2}", two heads, from=1-2, to=2-2]
			\arrow["b"', from=2-1, to=2-2]
		\end{tikzcd}\]
		to the commutative square
		\[\begin{tikzcd}[ampersand replacement=\&]
			{p_1 \downarrow B_1} \& {p_2 \downarrow B_2} \\
			{B_1} \& {B_2}
			\arrow["{\overline{b}}", from=1-1, to=1-2]
			\arrow["{\pi_{B_1}}"', two heads, from=1-1, to=2-1]
			\arrow["{\pi_{B_2}}", two heads, from=1-2, to=2-2]
			\arrow["b"', from=2-1, to=2-2]
		\end{tikzcd}\]
		as in \longref{Lemma}{lem:mor_la_mor_lali_K}.
		
		According to \cite[Proposition 6.3.12]{book:RV:2022}, it suffices to verify that $L$ is a cosmological functor, as we know already that $U$ is replete.
		
		In fact, since $p \downarrow B$ is a cosmological limit, and that limits commute, we conclude that $L$ preserves all cosmological limits.
		
		Next, consider an isofibration $(e, b)$
		\[\begin{tikzcd}[ampersand replacement=\&]
			{E_1} \&\& {E_2} \\
			\& P \\
			{B_1} \&\& {B_2}
			\arrow["e", two heads, from=1-1, to=1-3]
			\arrow[dashed, two heads, from=1-1, to=2-2]
			\arrow["{p_1}"', two heads, from=1-1, to=3-1]
			\arrow["{p_2}", from=1-3, to=3-3]
			\arrow[dashed, two heads, from=2-2, to=1-3]
			\arrow[dashed, two heads, from=2-2, to=3-1]
			\arrow["\lrcorner"{anchor=center, pos=0.125}, draw=none, from=2-2, to=3-3]
			\arrow["b"', two heads, from=3-1, to=3-3]
		\end{tikzcd}\]
		of $\calK^{\isof}$ from $p_1$ to $p_2$. Its image under $L$ is
		\[\begin{tikzcd}[ampersand replacement=\&]
			{p_1\downarrow B_1} \&\& {p_2\downarrow B_2} \\
			\& LP \\
			{B_1} \&\& {B_2}
			\arrow["{\overline{b}}", two heads, from=1-1, to=1-3]
			\arrow[dashed, two heads, from=1-1, to=2-2]
			\arrow["{\pi_{B_1}}"', two heads, from=1-1, to=3-1]
			\arrow["{\pi_{B_2}}", from=1-3, to=3-3]
			\arrow[dashed, two heads, from=2-2, to=1-3]
			\arrow[dashed, two heads, from=2-2, to=3-1]
			\arrow["\lrcorner"{anchor=center, pos=0.125}, draw=none, from=2-2, to=3-3]
			\arrow["b"', two heads, from=3-1, to=3-3]
		\end{tikzcd}.\]
		Since comma $\infty$-categories are cosmological limits, by \cite[Proposition 6.2.8]{book:RV:2022}, we see that $p_1 \downarrow B_1 \xrightarrow{\overline{b}} p_2 \downarrow B_2$ and the canonical induced map $p_1 \downarrow B_2 \xrightarrow{k} LP$ are both isofibrations.
		
		As a result,
		$$\calLa(\calK) \hooktwoheadrightarrow \calK^{\isof}$$
		is a cosmological embedding.
	\end{proof}
	
	\begin{nota}
		Denote by $\La(\calK)$ for the enhanced simplicial category of la-isofibrations of an $\infty$-cosmos $\calK$, whose tight part is exactly $\calLa(\calK)$, and whose loose $0$-arrows are given by the morphisms of isofibrations of $\calK$.
	\end{nota}
	
	We can establish several completeness results for $\La(\calK)$ just like before.
	
	\begin{theorem}
		\label{thm:trio_La}
		The enhanced simplicial category $\La(\calK)$ admits tight cosmological limits, and terminally rigged $n$-inserters, and that the projection of a terminally rigged $n$-inserter is always a tight isofibration.
		
		Consequently, $\La(\calK)$ admits Eilenberg-Moore objects over monads which are not morphisms of left adjoints, and that the forgetful functor is a morphism of left adjoints and reflects morphisms of left adjoints.
		
		These $\F_\Delta$-weighted limits are all preserved by the inclusion
		$$\La(\calK) \hookrightarrow \calK^{\isof}_\chor.$$
	\end{theorem}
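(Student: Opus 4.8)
The plan is to follow the proof of \longref{Theorem}{thm:trio_JLim} almost verbatim, with the cosmological functor $F_{J_\triangleleft}$ replaced by the functor $L$ of \longref{Proposition}{pro:La_cosmos}. Recall from \longref{Diagram}{diag:pb_la} that $\calLa(\calK)$ is the pullback of the cosmological embedding $U\colon \calLali(\calK) \hooktwoheadrightarrow \calK^{\isof}$ along the cosmological functor $L$, which sends an isofibration $p\colon E \twoheadrightarrow B$ to the comma projection $\pi_B\colon p\downarrow B \twoheadrightarrow B$. The loose part $\calLa(\calK)_\lambda$ is the full simplicial subcategory of $\calK^{\isof}$ on the la\nobreakdash-isofibrations, and $\calLali(\calK)_\lambda$ is full on the lali\nobreakdash-isofibrations; since, by \longref{Lemma}{lem:la_as_lali_K}, $p$ is a la\nobreakdash-isofibration exactly when $L(p)=\pi_B$ is a lali\nobreakdash-isofibration, the square with $U$ and the vertical inclusions replaced by their loose parts is again a pullback of simplicial categories. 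Applying \longref{Lemma}{lem:inclusion_isof} with $\twoA_\tau=\calLa(\calK)$ shows that $U$ extends to an isofibration $U\colon \Lali(\calK)\twoheadrightarrow\calK^{\isof}_\chor$ of $\FDeltaCat$, and pasting the tight and loose pullback squares yields a pullback
\[
\begin{tikzcd}[ampersand replacement=\&]
\La(\calK) \ar[r] \ar[d, two heads] \& \Lali(\calK) \ar[d, "U", two heads] \\
\calK^{\isof}_\chor \ar[r, "L"'] \& \calK^{\isof}_\chor
\end{tikzcd}
\]
in $\FDeltaCat$, with the left-hand leg the inclusion $\La(\calK)\hookrightarrow\calK^{\isof}_\chor$.

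Given this pullback, the existence claims follow from \longref{Lemma}{lem:enriched_pullback} once I check that both legs preserve tight cosmological limits and terminally rigged $n$\nobreakdash-inserters. The ambient category $\calK^{\isof}_\chor$ admits both (tight cosmological limits are $\sSet$-weighted limits in $\calK^{\isof}$; terminally rigged $n$-inserters are flexible weighted limits by \longref{Remark}{rk:term_flexible}, hence exist in the $\infty$-cosmos $\calK^{\isof}$), and $\Lali(\calK)$ admits both by \longref{Proposition}{pro:tight_lim} and \longref{Theorem}{thm:Lali(K)}. For preservation: $U$ preserves tight cosmological limits by \longref{Lemma}{lem:create_tight}, and preserves terminally rigged $n$-inserters by \longref{Theorem}{thm:Lali(K)}; $L$, being cosmological, preserves tight cosmological limits, and preserves flexible weighted limits by \cite[Proposition 6.2.8]{book:RV:2022}, hence terminally rigged $n$-inserters by \longref{Remark}{rk:term_flexible}. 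Thus \longref{Lemma}{lem:enriched_pullback} gives tight cosmological limits and terminally rigged $n$-inserters in $\La(\calK)$, all preserved by the two projections, and in particular by the inclusion $\La(\calK)\hookrightarrow\calK^{\isof}_\chor$. That the projection of a terminally rigged $n$-inserter is a tight isofibration follows exactly as in \longref{Theorem}{thm:trio_JLim}: it is an isofibration of $\calK^{\isof}$ by pullback-stability of isofibrations, and since $\calLa(\calK)\hooktwoheadrightarrow\calK^{\isof}$ is a cosmological embedding (\longref{Proposition}{pro:La_cosmos}), which are pullback-stable by \cite[Proposition 6.3.12]{book:RV:2022}, this isofibration is created in $\calLa(\calK)$.

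The Eilenberg-Moore statement is then immediate from \longref{Lemma}{lem:EM} applied to $\bbK=\La(\calK)$: its tight part $\calLa(\calK)$ is an $\infty$-cosmos, so it has limits of countable chains of tight isofibrations and simplicial powers by inchordate enhanced simplicial sets (as tight cosmological limits), and the terminally rigged $n$-inserters just produced have tight-isofibration projections. \longref{Lemma}{lem:EM} therefore yields Eilenberg-Moore objects over loose monads, with forgetful functor a (tight) morphism of left adjoints that reflects tightness; preservation by the inclusion is inherited because these objects are built as limits of countable chains of tight isofibrations, which the projection $\La(\calK)\twoheadrightarrow\calK^{\isof}_\chor$ preserves.

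I expect the only real work to be in the first paragraph: carefully verifying that the pullback of $\infty$-cosmoi in \longref{Diagram}{diag:pb_la} lifts to a pullback in $\FDeltaCat$ — i.e.\ that the loose parts assemble into the correct square and that $U$, the vertical inclusions, and the shape-changing functor $L$ interact as required. The genuinely non-formal ingredient, namely that $L$ (which replaces $p$ by the comma projection $\pi_B$) is cosmological and that morphisms of left adjoints correspond to morphisms of lalis under $L$, has already been established in \longref{Lemma}{lem:mor_la_mor_lali_K} and \longref{Proposition}{pro:La_cosmos}; everything else is bookkeeping parallel to the $\JLim$ case.
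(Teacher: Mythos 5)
Your proposal is correct and follows essentially the same route as the paper: the paper's proof simply extends the pullback defining $\calLa(\calK)$ to a pullback of $\La(\calK)$ along $U\colon\Lali(\calK)\twoheadrightarrow\calK^{\isof}_\chor$ in $\FDeltaCat$ and then cites \longref{Lemma}{lem:enriched_pullback}, \longref{Lemma}{lem:create_tight}, and \longref{Lemma}{lem:EM}, which is exactly the argument you spell out (with the preservation checks for $L$ and $U$ as in \longref{Theorem}{thm:trio_JLim}). The only blemish is a slip of notation: to see that $U$ is an isofibration of $\FDeltaCat$ you should apply \longref{Lemma}{lem:inclusion_isof} with $\twoA_\tau=\calLali(\calK)$, not $\calLa(\calK)$.
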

	
	\begin{proof}
		The pullback in \longref{Diagram}{diag:pb_la} extends to a pullback
		\begin{equation}
			\label{diag:La_pb}
			\begin{tikzcd}[ampersand replacement=\&]
				{\La(\calK)} \& {\Lali(\calK)} \\
				{\calK^{\isof}_\chor} \& {\calK^{\isof}_\chor}
				\arrow[from=1-1, to=1-2]
				\arrow[two heads, from=1-1, to=2-1]
				\arrow["\lrcorner"{anchor=center, pos=0.125}, draw=none, from=1-1, to=2-2]
				\arrow["U", two heads, from=1-2, to=2-2]
				\arrow["L"', from=2-1, to=2-2]
			\end{tikzcd}
		\end{equation}
		in $\FDeltaCat$. 
		%
		
		By \longref{Lemma}{lem:enriched_pullback},  \longref{Lemma}{lem:create_tight}, and also \longref{Lemma}{lem:EM}, we are done.
	\end{proof}

	\subsubsection{\texorpdfstring{$\Ra(\calK)$}{Ra(K)}}
	Next, we introduce the $\infty$-cosmos $\calRa(\calK)$ for any arbitrary $\infty$-cosmos $\calK$.
	
	\begin{nota}
		Let $\calK$ be an $\infty$-cosmos. Denote by $\calRa(\calK)$ the simplicial sub-category of $\calK^{\isof}$ with objects given by isofibrations of $\calK$ which are \emph{right adjoints}, i.e. it is a right adjoint in the homotopy $2$-category $h\calK$. A morphism of isofibrations
		\[\begin{tikzcd}[ampersand replacement=\&]
			{E_1} \& {E_2} \\
			{B_1} \& {B_2}
			\arrow["e", from=1-1, to=1-2]
			\arrow["{p_1}"', two heads, from=1-1, to=2-1]
			\arrow["{p_2}", two heads, from=1-2, to=2-2]
			\arrow["b"', from=2-1, to=2-2]
		\end{tikzcd}\]
		is a \emph{morphism of right adjoints} precisely if its mate is an isomorphism, i.e. the composite
		\[\begin{tikzcd}[ampersand replacement=\&]
			\& {E_1} \& {E_2} \& {E_2} \\
			{B_1} \& {B_1} \& {B_2}
			\arrow["e", from=1-2, to=1-3]
			\arrow["{p_1}"', two heads, from=1-2, to=2-2]
			\arrow[""{name=0, anchor=center, inner sep=0}, equal, from=1-3, to=1-4]
			\arrow["{p_2}", two heads, from=1-3, to=2-3]
			\arrow[""{name=1, anchor=center, inner sep=0}, "{l_1}", from=2-1, to=1-2]
			\arrow[""{name=2, anchor=center, inner sep=0}, equal, from=2-1, to=2-2]
			\arrow["b"', from=2-2, to=2-3]
			\arrow[""{name=3, anchor=center, inner sep=0}, "{l_2}"', from=2-3, to=1-4]
			\arrow["{\eta_1}"', shorten <=2pt, shorten >=2pt, Rightarrow, from=2, to=1]
			\arrow["{\epsilon_2}", shorten <=2pt, shorten >=2pt, Rightarrow, from=3, to=0]
		\end{tikzcd}\]
		is invertible, where $l_i$ denotes the left adjoint for $p_i$, and that $\epsilon_i$ and $\eta_i$ denote the corresponding counit and unit, respectively, for $i = 1, 2$.
		
		In other words, $\calRa(\calK) \subset \calK^{\isof}$ has objects given by right adjoints in $\calK$, $0$-arrows given by morphisms of right adjoints, which is full on higher dimensional arrows.
	\end{nota}
	
	For convenience, we introduce a name.
	
	\begin{defi}
		Let $\calK$ be an $\infty$-cosmos. An isofibration of $\calK$ is said to  be a \emph{ra-isofibration} precisely when it belongs to $\calRa(\calK)$. 
		
		A morphism of isofibrations is said to be \emph{a morphism of ra-isofibrations} precisely when it is a $0$-arrow in $\calRa(\calK)$. 
	\end{defi}

	\begin{pro}
		\label{pro:La_Ra}
		Let $\calK$ be an $\infty$-cosmos. We have $\La(\calK) = \Ra(\calK^\co)^\co$.
		
		As a consequence, the simplicial category $\calRa(\calK)$ is an $\infty$-cosmos.
	\end{pro}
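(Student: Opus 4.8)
The plan is to dualise, exactly as in the passage from $\La(\calK)$ to $\Ra(\calK^\co)^\co$ carried out for the lali/rali case in \longref{Lemma}{lem:Rali_Lali}. The key elementary fact is that the $\co$-dual of a $2$-category interchanges left and right adjoints: if $F \dashv U$ in a $2$-category $\mathcal{C}$ with unit $\eta$ and counit $\epsilon$, then $U \dashv F$ in $\mathcal{C}^\co$ with unit $\epsilon^\co$ and counit $\eta^\co$, so a $1$-cell is a left adjoint in $\mathcal{C}$ if and only if it is a right adjoint in $\mathcal{C}^\co$. Since $h(\calK^\co) = (h\calK)^\co$, and since the $0$-arrows which are isofibrations of $\calK$ coincide with those of $\calK^\co$ while $(-)^{\op}$ on $\sSet$ preserves the pullback defining the hom-objects of $\calK^{\isof}$, so that $(\calK^{\isof})^\co = (\calK^\co)^{\isof}$, the isofibrations of $\calK$ which are left adjoints in $h\calK$ are precisely the isofibrations of $\calK^\co$ which are right adjoints in $h(\calK^\co)$. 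Hence $\La(\calK)$ and $\Ra(\calK^\co)^\co$ have the same objects.

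First I would match the $0$-arrows. On both sides the loose $0$-arrows are morphisms of isofibrations of $\calK$, since passing to $\calK^\co$ and then to the $\co$-dual enhanced simplicial category leaves $0$-arrows untouched. For the tight ones, the claim is that a morphism of isofibrations is a morphism of left adjoints in $\calK$ if and only if it is a morphism of right adjoints in $\calK^\co$: the mate $2$-cell whose invertibility cuts out the former becomes, upon replacing the counits $\epsilon_i$ and units $\eta_i$ of the adjunctions $p_i \dashv r_i$ by $\epsilon_i^\co$ and $\eta_i^\co$, precisely the $\co$-dual of the mate $2$-cell cutting out the latter, and a $2$-cell of $h\calK$ is invertible exactly when its $\co$-dual is invertible in $h(\calK^\co)$. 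This is the same bookkeeping already implicit in the lali/rali comparison, minus the extra demand that a counit (resp. unit) be invertible. Then I would note that $\calLa(\calK)$ is full on higher arrows inside $\calK^{\isof}$ and $\calRa(\calK^\co)$ is full on higher arrows inside $(\calK^\co)^{\isof} = (\calK^{\isof})^\co$, so the higher arrows agree as well; composition is inherited from $\calK^{\isof}$ on both sides. Assembling these identifications of objects, $0$-arrows, and higher arrows yields $\La(\calK) = \Ra(\calK^\co)^\co$ as enhanced simplicial categories.

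For the consequence I would apply the identity just established with $\calK$ replaced by the $\infty$-cosmos $\calK^\co$, obtaining $\La(\calK^\co) = \Ra(\calK)^\co$; passing to tight parts this reads $\calRa(\calK) = (\calLa(\calK^\co))^\co$ as simplicial categories. By \longref{Proposition}{pro:La_cosmos} applied to $\calK^\co$, the simplicial category $\calLa(\calK^\co)$ is an $\infty$-cosmos, and the $\co$-dual of an $\infty$-cosmos is again an $\infty$-cosmos, as already invoked in the proof of \longref{Theorem}{thm:Rali(K)}; therefore $\calRa(\calK)$ is an $\infty$-cosmos.

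The only genuinely non-formal point — and the place where I would be most careful — is the verification that the mate $2$-cell defining a morphism of left adjoints in $\calK$ is the $\co$-dual of the one defining a morphism of right adjoints in $\calK^\co$; everything else is unwinding definitions together with the fact that $(-)^\co$ commutes with passage to the homotopy $2$-category and with the construction $\calK \mapsto \calK^{\isof}$. I do not expect any serious obstacle, since the whole argument mirrors \longref{Lemma}{lem:Rali_Lali} almost verbatim.
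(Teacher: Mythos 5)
Your proposal is correct and follows essentially the same route as the paper: the paper's proof simply says to argue as in \longref{Lemma}{lem:Rali_Lali} (dualising lalis/left adjoints to ralis/right adjoints under $(-)^\co$, using that loose $0$-arrows are just morphisms of isofibrations) to get $\calRa(\calK) = \calLa(\calK^\co)^\co$, and then invokes \longref{Proposition}{pro:La_cosmos}. You have merely spelled out the mate-dualisation and the identification $(\calK^{\isof})^\co = (\calK^\co)^{\isof}$ that the paper leaves implicit.
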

	
	\begin{proof}
		Using similar arguments as in the proof of \longref{Lemma}{lem:Rali_Lali}, we conclude that $\calRa(\calK) = \calLa(\calK^\co)^\co$. By \longref{Proposition}{pro:La_cosmos}, we conclude that $\calRa(\calK)$ is also an $\infty$-cosmos.
	\end{proof}
	
	\begin{nota}
		Denote by $\Ra(\calK)$ for the enhanced simplicial category of ra-isofibrations of an $\infty$-cosmos $\calK$, whose tight part is exactly $\calRa(\calK)$, and whose loose $0$-arrows are given by the morphisms of isofibrations of $\calK$.
	\end{nota}
	
	We can now prove the completeness results dual to those in \longref{Section}{sec:La}.
	
	\begin{theorem}
		\label{thm:trio_Ra}
		The enhanced simplicial category $\Ra(\calK)$ admits tight cosmological limits, and initially rigged $n$-inserters, and that the projection of an initially rigged $n$-inserter is always a tight isofibration.
		
		Consequently, $\Ra(\calK)$ admits coEilenberg-Moore objects over monads which are not morphisms of right adjoints, and that the forgetful functor is a morphism of right adjoints and reflects morphisms of right adjoints.
		
		These $\F_\Delta$-weighted limits are all preserved by the inclusion
		$$\Ra(\calK) \hookrightarrow \calK^{\isof}_\chor.$$
	\end{theorem}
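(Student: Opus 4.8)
The plan is to run the argument of \longref{Theorem}{thm:trio_La} in its dual form, with $\calRali$ in place of $\calLali$ and $\calRa$ in place of $\calLa$ throughout, appealing to the dual results already established. Concretely, \longref{Proposition}{pro:La_Ra} gives $\calRa(\calK) = \calLa(\calK^\co)^\co$ and tells us that $\calRa(\calK)$ is an $\infty$-cosmos; dualising \longref{Diagram}{diag:pb_la} (that is, applying \longref{Lemma}{lem:la_as_lali_K} and \longref{Lemma}{lem:mor_la_mor_lali_K} in $\calK^\co$), one sees that $\calRa(\calK)$ arises as the pullback of $\calRali(\calK)$ along the cosmological functor $\calK^{\isof}\to\calK^{\isof}$ sending an isofibration $p\colon E\twoheadrightarrow B$ to the canonical isofibration $B\downarrow p\twoheadrightarrow B$, and that $\calRa(\calK)\hooktwoheadrightarrow\calK^{\isof}$ is a cosmological embedding. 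By \longref{Lemma}{lem:inclusion_isof} this cosmological embedding extends to an isofibration $U\colon\Rali(\calK)\twoheadrightarrow\calK^{\isof}_\chor$ of enhanced simplicial categories, and, exactly as in the proof of \longref{Theorem}{thm:trio_La}, the loose parts being full simplicial subcategories lets the square of $\infty$-cosmoi be promoted to a pullback
\[\begin{tikzcd}[ampersand replacement=\&]
	{\Ra(\calK)} \& {\Rali(\calK)} \\
	{\calK^{\isof}_\chor} \& {\calK^{\isof}_\chor}
	\arrow[from=1-1, to=1-2]
	\arrow[two heads, from=1-1, to=2-1]
	\arrow["\lrcorner"{anchor=center, pos=0.125}, draw=none, from=1-1, to=2-2]
	\arrow["U", two heads, from=1-2, to=2-2]
	\arrow[from=2-1, to=2-2]
\end{tikzcd}\]
in $\FDeltaCat$.

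With this pullback in hand I would conclude exactly as for $\La(\calK)$: \longref{Theorem}{thm:Rali(K)} gives that $\Rali(\calK)$ admits tight cosmological limits and initially rigged $n$-inserters with the projection of the latter a tight isofibration, and that $U$ preserves both (initially rigged $n$-inserters because in the chordate case they are flexible weighted limits by \longref{Remark}{rk:initial_flexible}, hence preserved by the cosmological functor in the bottom leg as well, via \cite[Proposition 6.2.8]{book:RV:2022}); \longref{Lemma}{lem:create_tight} handles preservation and creation of tight cosmological limits along the bottom and right legs; then \longref{Lemma}{lem:enriched_pullback} transports both classes of $\F_\Delta$-weighted limits to $\Ra(\calK)$, with the projections preserving them. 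Pullback-stability of cosmological embeddings \cite[Proposition 6.3.12]{book:RV:2022} yields that the projection of an initially rigged $n$-inserter in $\Ra(\calK)$, created from one in $\calK^{\isof}$, is a tight isofibration. Finally \longref{Lemma}{lem:coEM}, whose hypotheses are now verified, produces coEilenberg-Moore objects over loose comonads in $\Ra(\calK)$ with tight, tightness-reflecting forgetful functor, and, being built from tight cosmological limits and initially rigged $n$-inserters which the inclusion $\Ra(\calK)\hookrightarrow\calK^{\isof}_\chor$ preserves, these are preserved by the inclusion as well.

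A shorter alternative is pure duality: $\Ra(\calK) = \La(\calK^\co)^\co$ by \longref{Proposition}{pro:La_Ra}, so by \longref{Proposition}{pro:terminal_initial} and \longref{Proposition}{pro:K^co_has_dual_lim} every assertion of \longref{Theorem}{thm:trio_La} applied to the $\infty$-cosmos $\calK^\co$ dualises to the corresponding assertion here (terminally rigged $\leftrightarrow$ initially rigged, Eilenberg-Moore over monads $\leftrightarrow$ coEilenberg-Moore over comonads, morphism of left adjoints in $\calK^\co$ $\leftrightarrow$ morphism of right adjoints in $\calK$). The one point needing care --- and the main obstacle --- is checking that the functor $\bbK\mapsto\bbK^\co$ sends tight cosmological limits to tight cosmological limits: conical cosmological limits are manifestly $\co$-stable since $(-)^\co$ reverses only $2$-cells, whereas a simplicial power $A^J$ in $\calK_\tau$ becomes the power $A^{J^\op}$ in $(\calK_\tau)^\co$, so one must observe that the class of powers by (inchordate) enhanced simplicial sets is closed under $J\mapsto J^\op$ and that both the isofibration property of the projections and their joint reflection of tightness survive the duality. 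I would present the pullback-based argument as the main proof and keep the duality observation as a brief sanity check.
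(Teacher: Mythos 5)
Your proposal is correct, but your primary argument is not the one the paper uses: the paper proves this theorem by the route you relegate to a ``sanity check''. Its proof cites \longref{Proposition}{pro:K^co_has_dual_lim} together with \longref{Proposition}{pro:La_Ra} (i.e.\ $\Ra(\calK)=\La(\calK^\co)^\co$) to transport initially rigged $n$-inserters and their isofibration projections from \longref{Theorem}{thm:trio_La} applied to $\calK^\co$, then handles tight cosmological limits \emph{directly}, by re-running the proof of \longref{Proposition}{pro:tight_lim} with ra-isofibrations, counits, and morphisms of ra-isofibrations, and finally invokes the (co)Eilenberg--Moore lemma. Notably, the subtlety you flag about whether $(-)^\co$ carries tight cosmological limits to tight cosmological limits (powers by $J$ becoming powers by $J^\op$, etc.) is exactly why the paper does not dualise that part and argues it directly instead --- so your instinct there matches the paper's structure. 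Your main, pullback-based argument is a genuine alternative: it dualises \longref{Theorem}{thm:trio_La} wholesale, presenting $\calRa(\calK)$ as the pullback of $\calRali(\calK)\hooktwoheadrightarrow\calK^{\isof}$ along the cosmological functor $p\mapsto(B\downarrow p\twoheadrightarrow B)$ and then applying \longref{Lemma}{lem:inclusion_isof}, \longref{Lemma}{lem:create_tight}, \longref{Lemma}{lem:enriched_pullback}, \longref{Theorem}{thm:Rali(K)} and \longref{Lemma}{lem:coEM}. This buys a self-contained treatment of $\Ra(\calK)$ that never passes through $\calK^\co$ (at the cost of having to establish the dual pullback presentation, which the paper only obtains implicitly via \longref{Proposition}{pro:La_Ra}), whereas the paper's duality argument is shorter given that \longref{Proposition}{pro:K^co_has_dual_lim} and \longref{Proposition}{pro:La_Ra} are already in place. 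Both routes are sound.
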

	
	\begin{proof}
		By \longref{Proposition}{pro:K^co_has_dual_lim} and \longref{Proposition}{pro:La_Ra}, we conclude that $\Ra(\calK)$ admits initially rigged $n$-inserters, and that the projection of an initially rigged $n$-inserter is always a tight isofibration. By considering ra-isofibrations, counits, and morphisms of ra-isofibrations, instead, in the proof of \longref{Proposition}{pro:tight_lim}, we see that tight cosmological limits exist in $\Ra(\calK)$. Now, by \longref{Lemma}{lem:EM}, we complete the proof.
	\end{proof}

	\subsubsection{\texorpdfstring{$\Cart(\calK)$}{Cart(K)} and \texorpdfstring{$\Cart(\calK)_{/B}$}{Cart(K)/B}}
	\label{sec:Cart_fixed}
	Recall from \longref{Example}{eg:CartK} that for an $\infty$-cosmos $\calK$, $\Cart(\calK)$ is the enhanced simplicial category of Cartesian fibrations between $\infty$-categories in $\calK$.
	
	\begin{theorem}
		\label{thm:trio_Cart}
		The enhanced simplicial category $\Cart(\calK)$ admits tight cosmological limits, and terminally rigged $n$-inserters, and that the projection of a terminally rigged $n$-inserter is always a tight isofibration.
		
		Consequently, $\Cart(\calK)$ admits Eilenberg-Moore objects over monads which are not Cartesian, and that the forgetful functor is a Cartesian functor, which reflects Cartesian functors.
		
		These $\F_\Delta$-weighted limits are all preserved by the inclusion
		$$\Cart(\calK) \hookrightarrow \calK^{\isof}_\chor.$$
	\end{theorem}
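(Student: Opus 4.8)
The plan is to run the same argument as in \longref{Theorem}{thm:trio_JLim}, exhibiting $\Cart(\calK)$ as a pullback of $\Lali(\calK)$ in $\FDeltaCat$ whose two legs preserve the limits in question. By \longref{Example}{eg:CartK} — which in turn rests on \cite[Theorem 5.2.8, Theorem 5.3.4]{book:RV:2022} — an isofibration $p\colon E\twoheadrightarrow B$ of $\calK$ is a Cartesian fibration precisely when its Leibniz exponential $i_1\hat{\pitchfork}p$ with $i_1\colon\Delta^0\hookrightarrow\Delta^1$ is a lali-isofibration, and a square between Cartesian fibrations is a Cartesian functor precisely when its image under $i_1\hat{\pitchfork}(-)$, namely \longref{Diagram}{diag:Leibniz}, is a morphism of lalis. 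First I would package this into a pullback of $\infty$-cosmoi
\[\begin{tikzcd}[ampersand replacement=\&]
{\calCart(\calK)} \& {\calLali(\calK)} \\
{\calK^{\isof}} \& {\calK^{\isof}}
\arrow[from=1-1, to=1-2]
\arrow[hook, two heads, from=1-1, to=2-1]
\arrow["\lrcorner"{anchor=center, pos=0.125}, draw=none, from=1-1, to=2-2]
\arrow["U", hook, two heads, from=1-2, to=2-2]
\arrow["{i_1\hat{\pitchfork}(-)}"', from=2-1, to=2-2]
\end{tikzcd},\]
where $U$ is the cosmological embedding of \longref{Example}{eg:calLali} and $i_1\hat{\pitchfork}(-)\colon\calK^{\isof}\to\calK^{\isof}$, $p\mapsto i_1\hat{\pitchfork}p$, is a cosmological functor (the Leibniz exponential of an isofibration along the monomorphism $i_1$ is again an isofibration, and it commutes with all cosmological limits). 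This is the construction of $\calCart(\calK)$ recorded in \cite[Chapter 6.3]{book:RV:2022}.

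Next I would upgrade this to a pullback in $\FDeltaCat$, just as in the proof of \longref{Theorem}{thm:trio_JLim}: the loose part $\calCart(\calK)_\lambda$ is a full simplicial subcategory of $\calK^{\isof}$ and $\calLali(\calK)_\lambda$ a full simplicial subcategory of $\calK^{\isof}$, so the square of loose parts is again a pullback, and combining this with \longref{Lemma}{lem:inclusion_isof} — which exhibits $U\colon\Lali(\calK)\twoheadrightarrow\calK^{\isof}_\chor$ as an isofibration of $\FDeltaCat$ — yields a pullback
\[\begin{tikzcd}[ampersand replacement=\&]
{\Cart(\calK)} \& {\Lali(\calK)} \\
{\calK^{\isof}_\chor} \& {\calK^{\isof}_\chor}
\arrow[from=1-1, to=1-2]
\arrow[two heads, from=1-1, to=2-1]
\arrow["\lrcorner"{anchor=center, pos=0.125}, draw=none, from=1-1, to=2-2]
\arrow["U", two heads, from=1-2, to=2-2]
\arrow["{i_1\hat{\pitchfork}(-)}"', from=2-1, to=2-2]
\end{tikzcd}\]
in $\FDeltaCat$. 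Then \longref{Lemma}{lem:enriched_pullback} applies once I verify that both legs preserve tight cosmological limits and terminally rigged $n$-inserters: $U$ preserves tight cosmological limits by \longref{Lemma}{lem:create_tight} and terminally rigged $n$-inserters by \longref{Theorem}{thm:Lali(K)}, while $i_1\hat{\pitchfork}(-)$, being cosmological, preserves all cosmological limits and, by \cite[Proposition 6.2.8]{book:RV:2022}, all flexible weighted limits, hence terminally rigged $n$-inserters by \longref{Remark}{rk:term_flexible}. Since the projection of a terminally rigged $n$-inserter in $\Cart(\calK)$ is created from one in $\calK^{\isof}$ and cosmological embeddings are pullback-stable by \cite[Proposition 6.3.12]{book:RV:2022}, that projection is a tight isofibration. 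The Eilenberg-Moore statement then follows from \longref{Lemma}{lem:EM} (with the forgetful functor's tightness reading as: it is a Cartesian functor and reflects Cartesian functors), and the claim that all these limits are preserved by $\Cart(\calK)\hookrightarrow\calK^{\isof}_\chor$ drops out of the pullback decomposition exactly as for $\JLim(\calK)$.

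I expect the only real work to be in setting up the cospan rather than in any of the subsequent formal steps. Concretely, I would need to check carefully that $i_1\hat{\pitchfork}(-)$ is a well-defined cosmological functor $\calK^{\isof}\to\calK^{\isof}$: that it lands in $\calK^{\isof}$, i.e. sends an isofibration of $\calK^{\isof}$ (a square as in \longref{Example}{eg:isof_of_cosmos}) to another such — which comes down to the Leibniz bookkeeping behind \longref{Diagram}{diag:Leibniz} together with stability of the induced comparison into the pullback of the relevant cospan — and that it commutes with the cosmological limits computed pointwise in \longref{Example}{eg:isof_of_cosmos}; and, on the enhanced side, that the loose parts of $\Cart(\calK)$ and $\calLali(\calK)$ assemble into a pullback compatibly with the tight parts, so that the second square above really is a pullback in $\FDeltaCat$. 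Once that square is in place, everything else is a formal application of \longref{Lemma}{lem:enriched_pullback}, \longref{Lemma}{lem:create_tight}, \longref{Theorem}{thm:Lali(K)}, and \longref{Lemma}{lem:EM}.
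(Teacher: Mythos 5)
Your proposal is correct and follows essentially the same route as the paper: exhibit $\Cart(\calK)$ as the pullback of $U \colon \Lali(\calK) \twoheadrightarrow \calK^{\isof}_\chor$ along the Leibniz-exponential functor $i_1 \hat{\pitchfork}(-)$ (the paper's $K_1$, via \cite[Proposition 6.3.14]{book:RV:2022}), then apply \longref{Lemma}{lem:enriched_pullback} together with \longref{Theorem}{thm:Lali(K)}, the fact that Leibniz exponentials are cosmological limits, preservation of flexible weighted limits, and \longref{Lemma}{lem:EM}. The only difference is that you spell out some of the routine verifications (that $K_1$ is cosmological, the compatibility of loose and tight parts) which the paper leaves implicit.
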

	
	\begin{proof}
		As described in \longref{Example}{eg:CartK} and the proof of \cite[Proposition 6.3.14]{book:RV:2022}, the $\infty$-cosmos  $\calCart(\calK)$ of Cartesian fibrations between $\infty$-categories of $\calK$ can be formed as the pullback
		\[\begin{tikzcd}[ampersand replacement=\&]
			{\calCart(\calK)} \& {\calLali(\calK)} \\
			{\calK^{\isof}} \& {\calK^{\isof}}
			\arrow[from=1-1, to=1-2]
			\arrow[hook, two heads, from=1-1, to=2-1]
			\arrow["\lrcorner"{anchor=center, pos=0.125}, draw=none, from=1-1, to=2-2]
			\arrow["U", hook, two heads, from=1-2, to=2-2]
			\arrow["{K_1}"', from=2-1, to=2-2]
		\end{tikzcd},\]
		where 		$$
		\begin{aligned}
			K_1 \colon \calK^{\isof} &\to \calK^{\isof}
			\\
			(E \xtwoheadrightarrow{p} B) &\mapsto (E^{\Delta^1} \xtwoheadrightarrow{i_1 \hat{\pitchfork} p} {B\downarrow p})
		\end{aligned}
		$$
		sends each isofibration $p \colon E \twoheadrightarrow B$ of $\calK$ to its Leibniz exponential $i_1 \hat{\pitchfork} p \colon E^{\Delta^1} \twoheadrightarrow {B \downarrow p}$ with $i_1 \colon \Delta^0 \to \Delta^1$, and each morphism $(e, b) \colon p_1 \to p_2$ of isofibrations to the commutative square in \longref{Diagram}{diag:Leibniz}. 
		
		We can then view $K_1$ as an enhanced simplicial functor between chordate enhanced simplicial categories. Similar to the proof of \longref{Theorem}{thm:trio_JLim}, the above pullback diagram extends to a pullback
		\begin{equation}
			\label{diag:Cart_pb}
			\begin{tikzcd}[ampersand replacement=\&]
				{\Cart(\calK)} \& {\Lali(\calK)} \\
				{\calK^{\isof}_\chor} \& {\calK^{\isof}_\chor}
				\arrow[from=1-1, to=1-2]
				\arrow[two heads, from=1-1, to=2-1]
				\arrow["\lrcorner"{anchor=center, pos=0.125}, draw=none, from=1-1, to=2-2]
				\arrow["U", two heads, from=1-2, to=2-2]
				\arrow["{K_1}"', from=2-1, to=2-2]
			\end{tikzcd}
		\end{equation}
		in $\FDeltaCat$. 
		
		By \longref{Lemma}{lem:enriched_pullback}, it suffices to verify that $K_1$ preserves tight cosmological limits and terminally rigged $n$-inserters. In fact, Leibniz exponentials are also cosmological limits, and therefore commute with tight cosmological limits in $\calK^{\isof}$. Moreover, $K_1$, as a functor between chordate enhanced simplicial categories, preserves flexible weighted limits, including terminally rigged $n$-inserters.
	\end{proof}

	We would also like to establish completeness results also for the case of isofibrations over a fixed base.
	
	\begin{nota}
		Let $B$ be an $\infty$-category in $\calK$. Denote by $\calK^{\isof}_{/B}$ the \emph{$\infty$-cosmos of isofibrations over $B$}. By \cite[Proposition 1.2.22]{book:RV:2022}, $\calK^{\isof}_{/B}$ has
		\begin{enumerate}
			\item[$\bullet$] objects the isofibrations $p \colon E \twoheadrightarrow B$ with codomain $B$ of $\calK$;
			\item[$\bullet$] hom-object from $p_1 \colon E_1 \twoheadrightarrow B$ to $p_2 \colon E_2 \twoheadrightarrow B$ given by the pullback
			\[\begin{tikzcd}[ampersand replacement=\&]
				{\calK^{\isof}_{/B}(p_1, p_2)} \& {\calK(E_1, E_2)} \\
				1 \& {\calK(E_1, B)}
				\arrow[from=1-1, to=1-2]
				\arrow[two heads, from=1-1, to=2-1]
				\arrow["\lrcorner"{anchor=center, pos=0.125}, draw=none, from=1-1, to=2-2]
				\arrow["{{p_2}_*}", two heads, from=1-2, to=2-2]
				\arrow["{p_1}"', from=2-1, to=2-2]
			\end{tikzcd},\]
			where $1$ denotes the terminal $\infty$-category in $\calK$;
			\item[$\bullet$] isofibrations given by commutative triangles
			\[\begin{tikzcd}[ampersand replacement=\&]
				{E_1} \&\& {E_2} \\
				\& B
				\arrow["e", two heads, from=1-1, to=1-3]
				\arrow["{p_1}"', two heads, from=1-1, to=2-2]
				\arrow["{p_2}", two heads, from=1-3, to=2-2]
			\end{tikzcd};\]\
			\item[$\bullet$] terminal object given by the identity $B \twoheadrightarrow$ on $B$;
			\item[$\bullet$] products $\times^B_i p_i$ of $\{p_i \colon E_i \twoheadrightarrow B\}$ are given by the pullback
			\[\begin{tikzcd}[ampersand replacement=\&]
				{\times^B_i E_i} \& {\prod_i E_i} \\
				B \& {\prod_i B}
				\arrow[from=1-1, to=1-2]
				\arrow["{\times^B_i p_i}"', two heads, from=1-1, to=2-1]
				\arrow["\lrcorner"{anchor=center, pos=0.125}, draw=none, from=1-1, to=2-2]
				\arrow["{\prod_i p_i}", two heads, from=1-2, to=2-2]
				\arrow["\Delta"', from=2-1, to=2-2]
			\end{tikzcd}\]
			of the product $\prod_i p_i$ along the diagonal;
			\item[$\bullet$] simplicial powers $U \pitchfork_B p$ by a simplicial set $U$ given by the pullback
			\[\begin{tikzcd}[ampersand replacement=\&]
				{U \pitchfork_B E} \& {E^U} \\
				B \& {B^U}
				\arrow[from=1-1, to=1-2]
				\arrow["{U \pitchfork_B p}"', two heads, from=1-1, to=2-1]
				\arrow["\lrcorner"{anchor=center, pos=0.125}, draw=none, from=1-1, to=2-2]
				\arrow["{p^U}", two heads, from=1-2, to=2-2]
				\arrow["\Delta"', from=2-1, to=2-2]
			\end{tikzcd}\]
			of $p^U$ along the diagonal;
			\item[$\bullet$] pullbacks and limits of a chain of isofibrations created by the forgetful functor $\calK^{\isof}_{/B} \to \calK$.
		\end{enumerate}
		Furthermore, an equivalence from $p_1$ to $p_2$ in $\calK^{\isof}_{/B}$ is given by a commutative triangle
		\[
		\begin{tikzcd}[ampersand replacement=\&]
			{E_1} \&\& {E_2} \\
			\& B
			\arrow["e", "\simeq"',  from=1-1, to=1-3]
			\arrow["{p_1}"', two heads, from=1-1, to=2-2]
			\arrow["{p_2}", two heads, from=1-3, to=2-2]
		\end{tikzcd}
		\]
		whose top row is an equivalence in $\calK$.
		
		Denote by $\calCart(\calK)_{/B}$ the \emph{$\infty$-cosmos of Cartesian fibrations over $B$}. It is the simplicial full sub-category of $\calK^{\isof}_{/B}$, determined by the Cartesian fibrations.
		
		
		We write $\Cart(\calK)_{/B}$ for the enhanced simplicial category of Cartesian fibrations over $B$. Its tight part is given by the $\infty$-cosmos $\calCart(\calK)_{/B}$, and the loose $0$-arrows are simply morphisms of isofibrations, i.e., $0$-arrows in $\calK^{\isof}_{/B}$. 
	\end{nota}
	
	Yet, unlike the previous examples, the case for  Cartesian fibrations over a fixed base cannot be done using the same technique. Thie is because the $\infty$-cosmos of Cartesian fibrations over a fixed base is formed as a pullback of a \emph{non-cosmological functor} along a cosmological embedding, which then fails to fulfill the condition of \longref{Lemma}{lem:enriched_pullback}. Instead, we have to argue more directly.
	
	\begin{theorem}
		\label{thm:trio_Cart/B}
		The enhanced simplicial category $\Cart(\calK)_{/B}$ admits tight cosmological limits, and terminally rigged $n$-inserters, and that the projection of a terminally rigged $n$-inserter is always a tight isofibration.
		
		Consequently, $\Cart(\calK)_{/B}$ admits Eilenberg-Moore objects over monads which are not Cartesian functors, and that the forgetful functor is a Cartesian functor, which reflects Cartesian functors.
		
		These $\F_\Delta$-weighted limits are all preserved by the inclusion
		$$\Cart(\calK)_{/B} \hookrightarrow \calK^{\isof}_{/B \;\chor}.$$
	\end{theorem}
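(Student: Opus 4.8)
As the excerpt notes, $\calCart(\calK)_{/B}$ is a pullback of the Leibniz-exponential functor $p\mapsto i_1\hat{\pitchfork}p$, valued in $\calK^{\isof}$, along the cosmological embedding $\calLali(\calK)\hooktwoheadrightarrow\calK^{\isof}$, and that functor is not cosmological (it destroys products over $B$ and the terminal object $1_B$), so \longref{Lemma}{lem:enriched_pullback} is unavailable. The plan is to use the same Leibniz exponential as an enhanced simplicial \emph{bridge} functor
\[\Lambda\colon\Cart(\calK)_{/B}\longrightarrow\Lali(\calK),\qquad(E\twoheadrightarrow B)\longmapsto i_1\hat{\pitchfork}(E\twoheadrightarrow B),\]
which by \longref{Example}{eg:CartK} (that is, \cite[Theorem 5.2.8, Theorem 5.3.4]{book:RV:2022}) sends Cartesian fibrations over $B$ to lali-isofibrations, Cartesian functors over $B$ to morphisms of lalis, arbitrary morphisms of isofibrations over $B$ to morphisms of isofibrations, and \emph{reflects tightness}: an object, resp.\ a morphism, over $B$ is Cartesian exactly when its $\Lambda$-image is a lali-isofibration, resp.\ a morphism of lalis. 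The structural fact I would establish is that $\Lambda$ \emph{preserves} powers by inchordate enhanced simplicial sets, pullbacks of tight isofibrations, and limits of countable chains of isofibrations, because $i_1\hat{\pitchfork}(-)$ commutes with cotensors and with connected limits of isofibrations; the ``Leibniz--Leibniz'' identification $i_1\hat{\pitchfork}(J\pitchfork_B E\twoheadrightarrow B)\cong J\pitchfork\bigl(i_1\hat{\pitchfork}(E\twoheadrightarrow B)\bigr)$ is the main technical point here. Note that $\Lambda$ does \emph{not} preserve products over $B$, so it is not a shortcut for tight cosmological limits as a whole; those are handled separately below.

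\textbf{Terminally rigged $n$-inserters and Eilenberg--Moore objects.} Given $F\colon\mathbb{D}^n_{[[n]]}\to\Cart(\calK)_{/B}$, by \longref{Remark}{rk:F_as_D} this amounts to Cartesian fibrations $S\twoheadrightarrow B$, $T\twoheadrightarrow B$ and a loose $0$-arrow $D\colon S\leadsto T^{\partial\Delta^n}$ with $\ev[[n]]\cdot D$ tight, where the powers are over-$B$ cotensors. First form the pullback of $D$ along $T^{\partial}\colon T^{\Delta^n}\twoheadrightarrow T^{\partial\Delta^n}$ in the $\infty$-cosmos $\calK^{\isof}_{/B}$ (it exists, $T^{\partial}$ being an isofibration there). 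Applying $\Lambda$ and its preservation of pullbacks of isofibrations and of cotensors, the image of this pullback is the pullback of $\Lambda(D)$ along $\Lambda(T)^{\partial}$, i.e.\ the terminally rigged $n$-inserter $\rins{n}{[[n]]}{\Lambda(D)}$ of \longref{Theorem}{thm:Lali(K)}, with $\Lambda(T)$ a lali-isofibration and $\ev[[n]]\cdot\Lambda(D)$ a morphism of lalis; so \longref{Theorem}{thm:Lali(K)} tells us it is a lali-isofibration whose projection is a morphism of lalis that reflects morphisms of lalis and is an isofibration of $\calK^{\isof}$. Since $\Lambda$ reflects tightness, the pullback formed in $\calK^{\isof}_{/B}$ is therefore a Cartesian fibration over $B$, its projection $p$ is a Cartesian functor over $B$ reflecting Cartesian functors over $B$, and $p$ is an isofibration of $\calK^{\isof}_{/B}$ (also directly, as a pullback of $T^{\partial}$). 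As $\Cart(\calK)_{/B}$ admits powers by inchordate enhanced simplicial sets, \longref{Proposition}{pro:as_pullbacks} now applies verbatim and identifies this pullback, with projection $p$, as the terminally rigged $n$-inserter $\rins{n}{[[n]]}{D}$ in $\Cart(\calK)_{/B}$, with tight-isofibration projection. With limits of countable chains of tight isofibrations (a cosmological limit in $\calCart(\calK)_{/B}$, whose projections reflect Cartesian functors over $B$ by the same $\Lambda$-argument), powers, and these rigged $n$-inserters all in place, \longref{Lemma}{lem:EM} yields Eilenberg--Moore objects over loose monads whose forgetful functor is a Cartesian functor over $B$ reflecting Cartesian functors over $B$.

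\textbf{Tight cosmological limits and preservation.} The tight part $\calCart(\calK)_{/B}$ admits all cosmological limits, created by $\calCart(\calK)_{/B}\hooktwoheadrightarrow\calK^{\isof}_{/B}$, with isofibration projections (cosmological embeddings being pullback-stable, \cite[Proposition 6.3.12]{book:RV:2022}); it remains to see these projections jointly reflect Cartesian functors over $B$. For powers, pullbacks of isofibrations and chains one repeats the $\Lambda$-argument above (these are exactly the limits $\Lambda$ preserves, and \longref{Proposition}{pro:tight_lim} supplies the reflection in $\Lali(\calK)$), while for products and the terminal object one appeals instead to the closure of Cartesian functors under products and their reflection along product projections, via the representable reduction to quasi-categories of \cite[\S5.3]{book:RV:2022}; so tight cosmological limits exist. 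Finally, all of these limits were built as the corresponding cosmological limits, pullbacks of isofibrations, or chains \emph{inside} $\calK^{\isof}_{/B}$; viewed as the chordate $\calK^{\isof}_{/B\;\chor}$, these are precisely the tight cosmological limits, terminally rigged $n$-inserters, and (via the same chain construction) Eilenberg--Moore objects there, so the inclusion $\Cart(\calK)_{/B}\hookrightarrow\calK^{\isof}_{/B\;\chor}$ preserves all of them. I expect the Leibniz-exponential compatibilities of $\Lambda$ and the product-reflection of Cartesian functors over $B$ to be the only non-routine steps; the rest is an assembly of \longref{Proposition}{pro:as_pullbacks}, \longref{Theorem}{thm:Lali(K)}, \longref{Proposition}{pro:tight_lim}, and \longref{Lemma}{lem:EM}.
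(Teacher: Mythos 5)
Your overall strategy---transporting the problem along the Leibniz exponential $i_1\hat{\pitchfork}(-)$ into lali-land, invoking \longref{Theorem}{thm:Lali(K)} and \longref{Proposition}{pro:tight_lim}, and finishing with \longref{Lemma}{lem:EM}---is the same as the paper's, but the step you yourself single out as ``the main technical point'' is false. There is no isomorphism $i_1\hat{\pitchfork}(J\pitchfork_B p)\cong J\pitchfork\bigl(i_1\hat{\pitchfork}p\bigr)$ when the right-hand cotensor is the pointwise one in $\calK^{\isof}$: writing $p\colon E\twoheadrightarrow B$, the domain of $i_1\hat{\pitchfork}(J\pitchfork_B p)$ is $B^{\Delta^1}\times_{B^{J\times\Delta^1}}E^{J\times\Delta^1}$ (cylinders whose $B$-image is constant in the $J$-direction), whereas the domain of $J\pitchfork(i_1\hat{\pitchfork}p)$ is all of $E^{J\times\Delta^1}$; already for $J=\emptyset$ the left side is the identity on $B^{\Delta^1}$ while the right side is the terminal isofibration, and for $J=\Delta^0\sqcup\Delta^0$ the domains are $E^{\Delta^1}\times_{B^{\Delta^1}}E^{\Delta^1}$ versus $E^{\Delta^1}\times E^{\Delta^1}$. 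Consequently your identification of the $\Lambda$-image of the slice pullback with the terminally rigged $n$-inserter $\rins{n}{[[n]]}{\Lambda(D)}$ in $\Lali(\calK)$, and hence the literal appeal to \longref{Theorem}{thm:Lali(K)} as you set it up, does not go through. The paper never asserts such a cotensor compatibility: it records only that applying $i_1\hat{\pitchfork}(-)$ to the pullback formed in $\calK^{\isof}_{/B}$ yields a cube whose top and bottom faces are pullbacks, whose vertices $i_1\hat{\pitchfork}a$, $i_1\hat{\pitchfork}(\partial\Delta^m\pitchfork_B c)$, $i_1\hat{\pitchfork}(\Delta^m\pitchfork_B c)$ are lali-isofibrations, and whose composite with evaluation at the terminal vertex is a morphism of lalis, and it is at that level that \longref{Theorem}{thm:Lali(K)} is invoked; to salvage your write-up you would have to argue at that level too (or rerun the lifting argument of \longref{Proposition}{pro:Lali_qCat} directly), not via a preservation statement that fails.

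The second divergence concerns tight cosmological limits. You are right that $p\mapsto i_1\hat{\pitchfork}p$ does not send products over $B$ or $1_B$ to products in $\calK^{\isof}$, but it does not follow that the paper's route is unavailable there: since $i_1\hat{\pitchfork}(-)$ is itself built out of cosmological limits, the image of the fibred-product cone is still an $\sSet$-weighted limit cone in $\calLali(\calK)$ (of wide-pullback shape over $i_1\hat{\pitchfork}1_B=1_{B^{\Delta^1}}$), \longref{Proposition}{pro:tight_lim} applies to any existing $\sSet$-weighted limit, and the extra projection to the identity isofibration is harmless because every morphism of isofibrations from a lali to an identity has invertible mate. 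This is how the paper handles all cosmological limits uniformly. Your substitute for the product and terminal cases---``closure of Cartesian functors under products and their reflection along product projections, via the representable reduction''---is not an argument but a restatement of exactly the joint-reflection property that has to be proved, so as written this case is also a gap. The remaining assembly (powers, chains, \longref{Proposition}{pro:as_pullbacks}, \longref{Lemma}{lem:EM}, and preservation by the inclusion because everything is computed inside $\calK^{\isof}_{/B}$) matches the paper and is fine once these two points are repaired.
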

	
	\begin{proof}
		Recall in the proof of \cite[Proposition 6.3.14]{book:RV:2022}, the $\infty$-cosmos  $\calCart(\calK)_{/B}$ of Cartesian fibrations over $B$ of an $\infty$-cosmos $\calK$ can be formed as the pullback
		\[\begin{tikzcd}[ampersand replacement=\&]
			{\calCart(\calK)_{/B}} \& {\calCart(\calK)} \\
			{\calK^{\isof}_{/B}} \& {\calK^{\isof}}
			\arrow[from=1-1, to=1-2]
			\arrow[hook, two heads, from=1-1, to=2-1]
			\arrow["\lrcorner"{anchor=center, pos=0.125}, draw=none, from=1-1, to=2-2]
			\arrow["U", hook, two heads, from=1-2, to=2-2]
			\arrow["I"', hook, from=2-1, to=2-2]
		\end{tikzcd}\]
		where $I$ denotes the inclusion.	We show that the inclusion
		$$\Cart(\calK)_{/B} \hooktwoheadrightarrow \calK^{\isof}_{/B \;\chor}$$
		on the left creates terminally rigged $n$-inserters and tight cosmological limits.
		
		Let $D \colon a \to \partial \Delta^m \pitchfork_B c$ be a diagram in $\calK^{\isof}_{/B}$, where $a \colon A \twoheadrightarrow B$ and $c \colon C \twoheadrightarrow B$ are Cartesian fibrations over $B$. Suppose that the composite $D \cdot \ev[[m]]$
		\[\begin{tikzcd}[ampersand replacement=\&]
			A \& {\partial\Delta^m \pitchfork_B C} \& C \\
			B \& B \& B
			\arrow["D", from=1-1, to=1-2]
			\arrow["a"', two heads, from=1-1, to=2-1]
			\arrow["{\ev[[m]]}", from=1-2, to=1-3]
			\arrow["{\partial\Delta^m\pitchfork_B c}", from=1-2, to=2-2]
			\arrow["c", from=1-3, to=2-3]
			\arrow[equals, from=2-1, to=2-2]
			\arrow[equals, from=2-2, to=2-3]
		\end{tikzcd}\]
		is a Cartesian functor. By \cite[Proposition 1.2.22, Theorem 5.2.8, Theorem 5.3.4]{book:RV:2022}, the pullback of $D$ along the restriction $\Delta^m\pitchfork C \twoheadrightarrow \partial\Delta^m\pitchfork C$ in $\calK^{\isof}_{/B}$ is equivalent to the commutative cube
		\[\begin{tikzcd}[ampersand replacement=\&]
			\& {(\rins{n}{n}{D}_1)^{\Delta^1}} \&\& {(\Delta^m \pitchfork_B C)^{\Delta^1}} \\
			{A^{\Delta^1}} \&\& {(\partial\Delta^m \pitchfork_B C)^{\Delta^1}} \\
			\& {B\downarrow \rins{n}{n}{D}} \&\& {B \downarrow (\Delta^m \pitchfork_B c)} \\
			{B \downarrow a} \&\& {B \downarrow (\partial\Delta^m \pitchfork_B c)}
			\arrow[from=1-2, to=1-4]
			\arrow[from=1-2, to=2-1]
			\arrow["{ \quad}"{description}, two heads, from=1-2, to=3-2]
			\arrow["{(\partial\pitchfork_B C)^{\Delta^1}}"{pos=0.3}, from=1-4, to=2-3]
			\arrow["{\scriptstyle {i_1 \hat{\pitchfork}(\Delta^m\pitchfork_B c)}}"{description}, two heads, from=1-4, to=3-4]
			\arrow[""{name=0, anchor=center, inner sep=0}, "{i_1 \hat{\pitchfork}D}"{description}, from=2-1, to=2-3]
			\arrow["{i_1 \hat{\pitchfork}a}"', two heads, from=2-1, to=4-1]
			\arrow["{}"{description}, from=3-2, to=3-4]
			\arrow["{\scriptstyle {i_1 \hat{\pitchfork}(\partial\Delta^m\pitchfork_B c)}}"{description}, two heads, from=2-3, to=4-3, crossing over]
			\arrow[from=3-2, to=4-1]
			\arrow["{B \downarrow (\partial\pitchfork_B c)}"{pos=0}, from=3-4, to=4-3]
			\arrow[""{name=1, anchor=center, inner sep=0}, "{B \downarrow D}"', from=4-1, to=4-3]
			\arrow["\lrcorner"{anchor=center, pos=0.125, rotate=-45}, shift left=2, draw=none, from=1-2, to=0]
			\arrow["\lrcorner"{anchor=center, pos=0.125, rotate=-45}, draw=none, from=3-2, to=1]
		\end{tikzcd}\]
		in  $\calK$, where the top and the bottom squares are pullbacks, $i_1 \hat{\pitchfork} a$, ${i_1 \hat{\pitchfork}(\partial\Delta^m\pitchfork_B c)}$ and ${i_1 \hat{\pitchfork}(\Delta^m\pitchfork_B c)}$ are lalis, and that the composite
		\[\begin{tikzcd}[ampersand replacement=\&]
			{A^{\Delta^1}} \& {(\partial\Delta^m \pitchfork_B C)^{\Delta^1}} \& {C^{\Delta^1}} \\
			{B \downarrow a} \& {B \downarrow (\partial\Delta^m \pitchfork_B c)} \& {B \downarrow c}
			\arrow["{i_1 \hat{\pitchfork}D}", from=1-1, to=1-2]
			\arrow["{i_1 \hat{\pitchfork}a}"', two heads, from=1-1, to=2-1]
			\arrow["{{\ev[[m]]}^{\Delta^1}}", two heads, from=1-2, to=1-3]
			\arrow["{\scriptstyle {i_1 \hat{\pitchfork}(\partial\Delta^m\pitchfork_B c)}}"{description}, two heads, from=1-2, to=2-2]
			\arrow["{i_1 \hat{\pitchfork}c}", two heads, from=1-3, to=2-3]
			\arrow["{B \downarrow D}"', from=2-1, to=2-2]
			\arrow["{\ev[[m]]}"', two heads, from=2-2, to=2-3]
		\end{tikzcd}\]
		is a Cartesian functor.
		
		Now by \longref{Theorem}{thm:Lali(K)}, we deduce that $(\rins{n}{n}{D}_1)^{\Delta^1} \twoheadrightarrow B \downarrow \rins{n}{n}{D}$ is a lali, and the projection, i.e., the left square in the cube, is an isofibration of $\calK^{\isof}$. This means that $\rins{n}{n}{D}$ is a Cartesian fibration, and that the projection  $\rins{n}{n}{D} \twoheadrightarrow a$ is a Cartesian functor, which reflects Cartesian functors.
		
		For tight cosmological limits, it is left to verify that the limit projections jointly reflect Cartesian functors.
		
		Suppose we have a cosmological limit cone, in $\calCart(\calK)_{/B}$, which is also a cosmological limit cone in $\calK^{\isof}_{/B}$, because the inclusion $\Cart(\calK)_{/B} \hooktwoheadrightarrow \calK^{\isof}_{/B \;\chor}$ is a cosmological embedding as it is the pullback of a cosmological embedding. Taking Leibniz exponentials with $i_1$ on the $0$-arrows, the limit cone in $\calK^{\isof}_{/B}$ becomes a limit cone in $\calLali(\calK)$, since Leibniz exponentials commute with cosmological limits. Then, by \longref{Proposition}{pro:tight_lim}, we conclude that the limit projections of this limit cone in $\calLali(\calK)$ jointly reflect morphisms of lalis. According to \cite[Theorem 5.3.4]{book:RV:2022}, the limit projections of the corresponding limit cone in $\calK^{\isof}_{/B}$ jointly reflect Cartesian functors.
	\end{proof}

	\subsubsection{\texorpdfstring{$\CoCart(\calK)$}{CoCart(K)} and \texorpdfstring{$\CoCart(\calK)_{/B}$}{CoCart(K)/B}}
	\label{sec:CoCart_fixed}
	We proceed to \emph{coCartesian} fibrations.
	
	Recall from \longref{Example}{eg:CartK} that for an $\infty$-cosmos $\calK$, $\CoCart(\calK)$ is the enhanced simplicial category of \emph{coCartesian} fibrations between $\infty$-categories in $\calK$.
	
	\begin{theorem}
		\label{thm:trio_CoCart}
		The enhanced simplicial category $\CoCart(\calK)$ admits tight cosmological limits, and initially rigged $n$-inserters, and that the projection of an initially rigged $n$-inserter is always a tight isofibration.
		
		Consequently, $\CoCart(\calK)$ admits coEilenberg-Moore objects over comonads which are not coCartesian functors, and that the forgetful functor is coCartesian, which reflects coCartesian functors.
		
		These $\F_\Delta$-weighted limits are all preserved by the inclusion
		$$\CoCart(\calK) \hookrightarrow \calK^{\isof}_\chor.$$
	\end{theorem}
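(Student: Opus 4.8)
The plan is to run the proof of \longref{Theorem}{thm:trio_Cart} in its dual form, systematically replacing lalis by ralis, the Leibniz exponential $i_1\,\hat\pitchfork\,(-)$ by $i_0\,\hat\pitchfork\,(-)$ with $i_0\colon\Delta^0\hookrightarrow\Delta^1$ picking out the source, and terminal rigging by initial rigging. First I would record that, just as $\calCart(\calK)$ is a pullback of $\calLali(\calK)$ along $K_1$, the $\infty$-cosmos $\calCoCart(\calK)$ of coCartesian fibrations between $\infty$-categories of $\calK$ arises as the pullback
\[
\begin{tikzcd}[ampersand replacement=\&]
{\calCoCart(\calK)} \& {\calRali(\calK)} \\
{\calK^{\isof}} \& {\calK^{\isof}}
\arrow[from=1-1, to=1-2]
\arrow[hook, two heads, from=1-1, to=2-1]
\arrow["\lrcorner"{anchor=center, pos=0.125}, draw=none, from=1-1, to=2-2]
\arrow["U", hook, two heads, from=1-2, to=2-2]
\arrow["{K_0}"', from=2-1, to=2-2]
\end{tikzcd}
\]
where $U\colon\calRali(\calK)\hooktwoheadrightarrow\calK^{\isof}$ is the cosmological embedding and $K_0\colon\calK^{\isof}\to\calK^{\isof}$ sends $p\colon E\twoheadrightarrow B$ to $i_0\,\hat\pitchfork\,p\colon E^{\Delta^1}\twoheadrightarrow p\downarrow B$ and each morphism of isofibrations to the evident commutative square from the dual half of \longref{Example}{eg:CartK}. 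This is the coCartesian counterpart of \cite[Proposition 6.3.14]{book:RV:2022}, resting on \cite[Theorem 5.2.8, Theorem 5.3.4]{book:RV:2022}, which characterise coCartesian fibrations and coCartesian functors via $i_0$-Leibniz exponentials and ralis.

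Next I would view $K_0$ as an enhanced simplicial functor between chordate enhanced simplicial categories and, using \longref{Lemma}{lem:inclusion_isof} to see that the inclusion extends to an isofibration $U\colon\Rali(\calK)\twoheadrightarrow\calK^{\isof}_\chor$ in $\FDeltaCat$, upgrade the square above to a pullback
\[
\begin{tikzcd}[ampersand replacement=\&]
{\CoCart(\calK)} \& {\Rali(\calK)} \\
{\calK^{\isof}_\chor} \& {\calK^{\isof}_\chor}
\arrow[from=1-1, to=1-2]
\arrow[two heads, from=1-1, to=2-1]
\arrow["\lrcorner"{anchor=center, pos=0.125}, draw=none, from=1-1, to=2-2]
\arrow["U", two heads, from=1-2, to=2-2]
\arrow["{K_0}"', from=2-1, to=2-2]
\end{tikzcd}
\]
in $\FDeltaCat$. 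Then \longref{Lemma}{lem:enriched_pullback} (with $\V=\F_\Delta$) reduces everything to showing that $U$ and $K_0$ preserve tight cosmological limits and initially rigged $n$-inserters. Preservation by $U$ of tight cosmological limits is \longref{Lemma}{lem:create_tight}, and preservation of initially rigged $n$-inserters is part of \longref{Theorem}{thm:Rali(K)}. For $K_0$: Leibniz exponentials are themselves cosmological limits, hence commute with tight cosmological limits in $\calK^{\isof}$; and, being a functor between chordate enhanced simplicial categories, $K_0$ preserves flexible weighted limits by \cite[Proposition 6.2.8]{book:RV:2022}, which include initially rigged $n$-inserters by \longref{Remark}{rk:initial_flexible}. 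That the projection of an initially rigged $n$-inserter in $\CoCart(\calK)$ is a tight isofibration follows from pullback-stability of cosmological embeddings, \cite[Proposition 6.3.12]{book:RV:2022}, since this projection is created from one in $\calRali(\calK)$.

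The coEilenberg-Moore clause is then immediate from \longref{Lemma}{lem:coEM}: the tight cosmological limits — in particular limits of countable chains of tight isofibrations and simplicial powers by inchordate enhanced simplicial sets, the existence of tight cosmological limits being also supplied by \longref{Proposition}{pro:tight_lim_Rali} — together with the initially rigged $n$-inserters with isofibration projections are exactly its hypotheses, so coEilenberg-Moore objects over loose comonads exist and the forgetful functor is a morphism of ralis, hence coCartesian, which reflects tightness; preservation of all these $\F_\Delta$-weighted limits by $\CoCart(\calK)\hookrightarrow\calK^{\isof}_\chor$ is recorded along the way.

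The step I expect to demand the most care is the very first one: spelling out the coCartesian analogue of \cite[Proposition 6.3.14]{book:RV:2022} and checking that the loose $0$-arrows agree across the pullback, i.e.\ that a plain morphism of isofibrations between coCartesian fibrations is the same datum on both sides. An economical alternative that avoids writing this out is to establish the identification $\CoCart(\calK)=\Cart(\calK^\co)^\co$, mirroring $\La(\calK)=\Ra(\calK^\co)^\co$ of \longref{Proposition}{pro:La_Ra}, and then transport \longref{Theorem}{thm:trio_Cart} through \longref{Proposition}{pro:K^co_has_dual_lim}; once that identification is in hand the rest of the argument is a purely formal dualisation.
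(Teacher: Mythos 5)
Your proposal is correct and follows essentially the same route as the paper: the published proof simply notes that, as in \longref{Theorem}{thm:trio_Cart}, $\CoCart(\calK)$ is obtained as a pullback of $\Rali(\calK)$ (via the $i_0$-Leibniz exponential functor) satisfying the hypotheses of \longref{Lemma}{lem:enriched_pullback}, which is exactly the dualisation you carry out in detail.
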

	
	\begin{proof}
		Similar to the proof of \longref{Theorem}{thm:trio_Cart} for $\Cart(\calK)$, the enhanced simplicial category $\CoCart(\calK)$ can be obtained as a pullback of $\Rali(\calK)$ that satisfies the conditions in \longref{Lemma}{lem:enriched_pullback}.
	\end{proof}

	We now discuss coCartesian fibrations over a fixed base.
	
	\begin{nota}
		Let $B$ be an $\infty$-category in $\calK$. 
		Denote by $\calCoCart(\calK)_{/B}$ the \emph{$\infty$-cosmos of coCartesian fibrations over $B$}. It is the simplicial full sub-category of $\calK^{\isof}_{/B}$, determined by the coCartesian fibrations.
		
		
		We write $\CoCart(\calK)_{/B}$ for the enhanced simplicial category of coCartesian fibrations over $B$. Its tight part is given by the $\infty$-cosmos $\calCoCart(\calK)_{/B}$, and the loose $0$-arrows are simply morphisms of isofibrations, i.e., $0$-arrows in $\calK^{\isof}_{/B}$. 
	\end{nota}
	

	\begin{theorem}
		\label{thm:trio_CoCart/B}
		The enhanced simplicial category $\CoCart(\calK)_{/B}$ admits tight cosmological limits, and initially rigged $n$-inserters, and that the projection of an initially rigged $n$-inserter is always a tight isofibration.
		
		Consequently, $\CoCart(\calK)_{/B}$ admits coEilenberg-Moore objects over comonads which are not coCartesian functors, and that the forgetful functor is a coCartesian, which reflects coCartesian functors.
		
		These $\F_\Delta$-weighted limits are all preserved by the inclusion
		$$\CoCart(\calK)_{/B} \hookrightarrow \calK^{\isof}_{/B \;\chor}.$$
	\end{theorem}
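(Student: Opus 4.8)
The plan is to dualise the proof of \longref{Theorem}{thm:trio_Cart/B} essentially word for word, trading Cartesian fibrations for coCartesian fibrations, the Leibniz exponential with $i_1 \colon \Delta^0 \hookrightarrow \Delta^1$ (the target inclusion) for the one with $i_0 \colon \Delta^0 \hookrightarrow \Delta^1$ (the source inclusion), lalis for ralis, the evaluation $\ev[[m]]$ at the terminal vertex for the evaluation $\ev[[0]]$ at the initial vertex, terminally rigged $n$-inserters for initially rigged $n$-inserters, and invoking \longref{Theorem}{thm:Rali(K)}, \longref{Proposition}{pro:tight_lim_Rali} and \longref{Lemma}{lem:coEM} in place of \longref{Theorem}{thm:Lali(K)}, \longref{Proposition}{pro:tight_lim} and \longref{Lemma}{lem:EM}. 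Concretely, as recalled in the proof of \cite[Proposition 6.3.14]{book:RV:2022} and in \longref{Theorem}{thm:trio_Cart/B}, the $\infty$-cosmos $\calCoCart(\calK)_{/B}$ is the pullback of the cosmological embedding $U \colon \calCoCart(\calK) \hooktwoheadrightarrow \calK^{\isof}$ along the cosmological embedding $\calK^{\isof}_{/B} \hookrightarrow \calK^{\isof}$; since the latter is a cosmological embedding and cosmological embeddings are pullback-stable by \cite[Proposition 6.3.12]{book:RV:2022}, the task reduces to showing that the inclusion $\CoCart(\calK)_{/B} \hooktwoheadrightarrow \calK^{\isof}_{/B\;\chor}$ creates initially rigged $n$-inserters and tight cosmological limits.

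For the initially rigged $n$-inserters I would start from a diagram $D \colon a \to \partial\Delta^m \pitchfork_B c$ in $\calK^{\isof}_{/B}$ with $a \colon A \twoheadrightarrow B$ and $c \colon C \twoheadrightarrow B$ coCartesian fibrations over $B$ and with $D \cdot \ev[[0]]$ a coCartesian functor, and apply the source-variant of \cite[Proposition 1.2.22, Theorem 5.2.8, Theorem 5.3.4]{book:RV:2022} to identify the pullback of $D$ along the restriction $\Delta^m \pitchfork C \twoheadrightarrow \partial\Delta^m \pitchfork C$ in $\calK^{\isof}_{/B}$ with the commutative cube in $\calK$ obtained by applying $i_0 \hat{\pitchfork}(-)$ throughout, in which $i_0\hat{\pitchfork}a$, $i_0\hat{\pitchfork}(\partial\Delta^m\pitchfork_B c)$ and $i_0\hat{\pitchfork}(\Delta^m\pitchfork_B c)$ are ralis and the evident composite is a morphism of ralis. \longref{Theorem}{thm:Rali(K)} then shows that the relevant face of this cube is a morphism of ralis whose pullback is a rali-isofibration and that the associated projection is an isofibration of $\calK^{\isof}$; translating back via \cite[Theorem 5.3.4]{book:RV:2022}, $\rins{m}{[[0]]}{D}$ is a coCartesian fibration over $B$ and the projection $\rins{m}{[[0]]}{D} \twoheadrightarrow a$ is a coCartesian functor that reflects coCartesian functors.

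For the tight cosmological limits I would repeat the argument of \longref{Theorem}{thm:trio_Cart/B}: a cosmological limit cone in $\calCoCart(\calK)_{/B}$ is a cosmological limit cone in $\calK^{\isof}_{/B}$ because the inclusion is a cosmological embedding, applying $i_0\hat{\pitchfork}(-)$ turns it into a limit cone in $\calRali(\calK)$ since Leibniz exponentials commute with cosmological limits, \longref{Proposition}{pro:tight_lim_Rali} gives that the limit projections jointly reflect morphisms of ralis, and \cite[Theorem 5.3.4]{book:RV:2022} upgrades this to joint reflection of coCartesian functors. The consequences --- existence of coEilenberg-Moore objects over loose comonads, the forgetful functor being coCartesian and reflecting coCartesian functors, and preservation of all these $\F_\Delta$-weighted limits by the inclusion into $\calK^{\isof}_{/B\;\chor}$ --- then follow from \longref{Lemma}{lem:coEM} and the constructions above. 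The only genuinely non-formal step, exactly as in \longref{Theorem}{thm:trio_Cart/B}, is assembling the commutative cube correctly and verifying that the indicated Leibniz exponentials are ralis and the composite a morphism of ralis; an alternative route that sidesteps this is the identification $\CoCart(\calK)_{/B} = \bigl(\Cart(\calK^\co)_{/B}\bigr)^\co$ together with \longref{Proposition}{pro:K^co_has_dual_lim} and \longref{Theorem}{thm:trio_Cart/B}, at the price of unwinding how the $\infty$-cosmos $\calK^{\isof}_{/B}$ behaves under $(-)^\co$.
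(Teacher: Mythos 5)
Your proposal is correct and is exactly what the paper does: its proof of this theorem simply reads ``a straightforward adaptation of the proof of Theorem \ref{thm:trio_Cart/B} suffices,'' and your systematic dualisation (Cartesian $\to$ coCartesian, $i_1 \to i_0$, lali $\to$ rali, $\ev[[m]] \to \ev[[0]]$, invoking Theorem \ref{thm:Rali(K)}, Proposition \ref{pro:tight_lim_Rali} and Lemma \ref{lem:coEM}) is precisely that adaptation. The alternative route via $(-)^\co$ you mention is a reasonable extra, but not needed.
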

	
	\begin{proof}
		A straightforward adaptation to the proof of \longref{Theorem}{thm:trio_Cart/B} suffices.
	\end{proof}

	\subsubsection{Analogous results for two-sided fibrations  between $(\infty, 1)$-categories}
	In \cite[Chapter 7]{book:RV:2022}, the notions of two-sided fibrations and modules between $(\infty, 1)$-categories are introduced.
	
	\begin{defi}[{\cite[Chapter 7]{book:RV:2022}}]
		Let $\calK$ be an $\infty$-cosmos, and $A, B$ be $\infty$-categories of $\calK$. A \emph{two-sided isofibration} from $A$ to $B$ is an $\infty$-category in the $\infty$-cosmos $\calK^{\isof}_{/A\times B}$ of isofibrations over $A \times B$.
		
		Equivalently, a two-sided isofibration from $A$ to $B$ is a span $A \xtwoheadleftarrow{p} E \xtwoheadrightarrow{q} B$ in $\calK$ such that $(p, q) \colon E \twoheadrightarrow A \times B$ is an isofibration.
	\end{defi}
	
	\begin{defi}[{\cite[Corollary 7.1.3]{book:RV:2022}}]
		Let $\calK$ be an $\infty$-cosmos, and $A, B$ be $\infty$-categories of $\calK$. A \emph{two-sided fibration} from $A$ to $B$ is a two-sided isofibration $E \twoheadrightarrow A \times B$ of $\calK$, which is {coCartesian on the left} and also {Cartesian on the right}, i.e., the morphism of isofibrations over $A$
		\begin{equation}
			\label{diag:A}
			\begin{tikzcd}[ampersand replacement=\&]
				E \&\& {A\times B} \\
				\& A
				\arrow["{(p, q)}", two heads, from=1-1, to=1-3]
				\arrow["p"', two heads, from=1-1, to=2-2]
				\arrow["a", two heads, from=1-3, to=2-2]
			\end{tikzcd}
		\end{equation}
		belongs to $\calCoCart(\calK)_{/A}$, and defines a Cartesian fibration of $\calK^{\isof}_{/A}$; or, equivalently, the morphism of isofibrations over $B$
		\begin{equation}
			\label{diag:B}
			\begin{tikzcd}[ampersand replacement=\&]
				E \&\& {A\times B} \\
				\& B
				\arrow["{(p, q)}", two heads, from=1-1, to=1-3]
				\arrow["q"', two heads, from=1-1, to=2-2]
				\arrow["b", two heads, from=1-3, to=2-2]
			\end{tikzcd}
		\end{equation}
		belongs to $\calCart(\calK)_{/B}$, and defines a coCartesian fibration of $\calK^{\isof}_{/B}$. Here, $a \colon A \times B \twoheadrightarrow A$ and $b \colon A \times B \twoheadrightarrow B$ denote the canonical projections, such that \longref{Diagram}{diag:A} defines a Cartesian fibration of $\calCoCart(\calK)_{/A}$; or, equivalently, \longref{Diagram}{diag:B} defines a coCartesian fibration of $\calCart(\calK)_{/B}$.
	\end{defi}
	
	\begin{defi}[{\cite[Proposition 7.1.7]{book:RV:2022}}]
		Let $A \xtwoheadleftarrow{p_1} E_1 \xtwoheadrightarrow{q_1} B$ and $A \xtwoheadleftarrow{p_2} E_2 \xtwoheadrightarrow{q_2} B$ be two-sided fibrations from $A$ to $B$ of an $\infty$-cosmos $\calK$. A map of spans
		\begin{equation}
			\label{diag:map_of_spans}
			\begin{tikzcd}[ampersand replacement=\&]
				\& {E_1} \\
				A \&\& B \\
				\& {E_2}
				\arrow["{p_1}"', two heads, from=1-2, to=2-1]
				\arrow["{q_1}", two heads, from=1-2, to=2-3]
				\arrow["e"{description}, from=1-2, to=3-2]
				\arrow["{p_2}", two heads, from=3-2, to=2-1]
				\arrow["{q_2}"', two heads, from=3-2, to=2-3]
			\end{tikzcd}
		\end{equation}
		is a \emph{Cartesian functor} precisely if the equivalent conditions
		\begin{enumerate}
			\item[$\bullet$] $e$ defines a Cartesian functor between Cartesian fibrations of $\calCoCart(\calK)_{/A}$;
			\item[$\bullet$] $e$ defines a coCartesian functor between coCartesian fibrations of $\calCart(\calK)_{/B}$;
			\item[$\bullet$] the mates 
			\begin{equation}
				\label{diag:mates_two_sided}
				\begin{tikzcd}[ampersand replacement=\&]
					{E_1} \& {E_2} \\
					{p_1 \downarrow A} \& {p_2 \downarrow A}
					\arrow["e", from=1-1, to=1-2]
					\arrow[from=2-1, to=1-1]
					\arrow["{e \downarrow A}"', from=2-1, to=2-2]
					\arrow["\cong", shorten <=7pt, shorten >=9pt, Rightarrow, from=2-2, to=1-1]
					\arrow[from=2-2, to=1-2]
				\end{tikzcd}
				\quad \quad
				\begin{tikzcd}[ampersand replacement=\&]
					{E_1} \& {E_2} \\
					{B \downarrow q_1} \& {B \downarrow q_2}
					\arrow["e", from=1-1, to=1-2]
					\arrow["\cong"', shorten <=7pt, shorten >=9pt, Rightarrow, from=1-1, to=2-2]
					\arrow[from=2-1, to=1-1]
					\arrow["{B \downarrow e}"', from=2-1, to=2-2]
					\arrow[from=2-2, to=1-2]
				\end{tikzcd}
			\end{equation}
			(of the canonical isomorphisms) are invertible in (the homotopy $2$-category of) $\calK_{/A \times B}$;
		\end{enumerate}
		are satisfied.
	\end{defi}
	
	From the above, we see that two-sided fibrations form an $\infty$-cosmos. This $\infty$-cosmos can be formulated as $\calCart(\calCoCart(\calK)_{/A})_{/a}$, or equivalently, as $\calCoCart(\calCart(\calK)_{/B})_{/b}$, which are isomorphic. Here, $a \colon A \times B \twoheadrightarrow A$ and $b \colon A \times B \twoheadrightarrow B$ denote the canonical projections.
	
	Nevertheless, there are two obvious but different enhanced simplicial categories of two-sided fibrations of an $\infty$-cosmos $\calK$, which are $\Cart(\calCoCart(\calK)_{/A})_{/a}$ and $\CoCart(\calCart(\calK)_{/B})_{/b}$, respectively. Clearly, their tight parts coincide.
	
	\begin{lemma}
		\label{lem:enh_Cart}
		A loose $0$-arrow in the enhanced simplicial category $\Cart(\calCoCart(\calK)_{/A})_{/a}$ is a map of span in \longref{Diagram}{diag:map_of_spans} such that the square on the left of \longref{Diagram}{diag:mates_two_sided} is invertible.
		
		A loose $0$-arrow in the enhanced simplicial category $\CoCart(\calCart(\calK)_{/B})_{/b}$ is a map of span in \longref{Diagram}{diag:map_of_spans} such that the square on the right of \longref{Diagram}{diag:mates_two_sided} is invertible.
	\end{lemma}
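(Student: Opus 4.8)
The plan is to prove both statements by unwinding the three nested constructions through which the $\infty$-cosmos of two-sided fibrations is assembled; since the second statement is obtained from the first by exchanging the roles of $A$ and $B$ and of ``Cartesian'' and ``coCartesian'', I would treat $\Cart(\calCoCart(\calK)_{/A})_{/a}$ in full and leave the other as a mirror image. The first ingredient is the description of a $0$-arrow of the innermost $\infty$-cosmos $\calCoCart(\calK)_{/A}$: between coCartesian fibrations $(E_1, p_1)$ and $(E_2, p_2)$ over $A$, it is a $0$-arrow $e \colon E_1 \to E_2$ of $\calK$ satisfying $p_2 \cdot e = p_1$ which is moreover a coCartesian functor, and by the characterisation of coCartesian functors recalled in \longref{Example}{eg:CartK} (that is, by \cite[Theorem 5.2.8, Theorem 5.3.4]{book:RV:2022}) this last condition is precisely invertibility of the mate of the canonical square, namely the square displayed on the left of \longref{Diagram}{diag:mates_two_sided}.

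With this in hand I would peel off the two outer layers. By \longref{Example}{eg:CartK} applied to the $\infty$-cosmos $\calCoCart(\calK)_{/A}$, a loose $0$-arrow of $\Cart(\calCoCart(\calK)_{/A})$ is simply a morphism of isofibrations of $\calCoCart(\calK)_{/A}$, with no Cartesian-functor hypothesis imposed; and passing to the slice over $a$ --- where $a$ is the coCartesian fibration $\mathrm{pr}_A \colon A \times B \twoheadrightarrow A$, regarded as a Cartesian fibration of $\calCoCart(\calK)_{/A}$, so that $\Cart(\calCoCart(\calK)_{/A})_{/a}$ is the enhanced simplicial category of two-sided fibrations --- a loose $0$-arrow from $(p_1, q_1)$ to $(p_2, q_2)$ is a $0$-arrow $e \colon (E_1, p_1) \to (E_2, p_2)$ of $\calCoCart(\calK)_{/A}$ commuting with the structure maps to $(A \times B, a)$. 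Combining this with the previous paragraph, such an $e$ is exactly a $0$-arrow $e \colon E_1 \to E_2$ of $\calK$ with $p_2 \cdot e = p_1$ and $q_2 \cdot e = q_1$, i.e.\ a map of spans as in \longref{Diagram}{diag:map_of_spans}, for which the left-hand square of \longref{Diagram}{diag:mates_two_sided} is invertible; this is the first assertion. The second assertion follows by running the same argument with $\calCart(\calK)_{/B}$ in place of $\calCoCart(\calK)_{/A}$: its $0$-arrows are the Cartesian functors over $B$, characterised by invertibility of the right-hand square of \longref{Diagram}{diag:mates_two_sided}.

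The work here is bookkeeping rather than genuine mathematics, and the main obstacle is keeping the three layers straight. In particular one must not conflate the loose $0$-arrows with the tight ones: the tight $0$-arrows of $\Cart(\calCoCart(\calK)_{/A})_{/a}$ are the Cartesian functors of two-sided fibrations, which by \cite[Proposition 7.1.7]{book:RV:2022} demand invertibility of \emph{both} mate squares, whereas a loose $0$-arrow is merely required to be a morphism of $\calCoCart(\calK)_{/A}$, hence a coCartesian functor over $A$, so that precisely the left mate square survives. A small point to check along the way is that the mate square produced by the coCartesian-functor characterisation --- phrased through the Leibniz exponential $i_0 \,\hat{\pitchfork}\,(-)$ --- agrees, under the canonical identifications, with the comma-object square $p_i \downarrow A$ appearing in \longref{Diagram}{diag:mates_two_sided}.
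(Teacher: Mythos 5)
Your argument is correct and is essentially the paper's own proof: both unwind a loose $0$-arrow of $\Cart(\calCoCart(\calK)_{/A})_{/a}$ to a commutative triangle over $a$ in $\calCoCart(\calK)_{/A}$, i.e.\ a map of spans whose underlying $0$-arrow in $\calK$ is a coCartesian functor over $A$, and then invoke the internal characterisation of coCartesian functors to translate that condition into invertibility of the left-hand mate square, with the second statement obtained by the mirror argument. Your closing remarks distinguishing loose from tight $0$-arrows and identifying the Leibniz-exponential mate with the comma-object square are precisely the points the paper's proof leaves implicit.
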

	
	\begin{proof}
		Note that a $0$-arrow $\chi_1 \to \chi_2$ in the $\infty$-cosmos $(\calCoCart(\calK)_{/A})^{\isof}_{/a}$ amounts to a commutative triangle
		\[\begin{tikzcd}[ampersand replacement=\&]
			{p_1} \&\& {p_2} \\
			\& a
			\arrow["\pi", from=1-1, to=1-3]
			\arrow["{\chi_1}"', two heads, from=1-1, to=2-2]
			\arrow["{\chi_2}", two heads, from=1-3, to=2-2]
		\end{tikzcd}\]
		in $\calCoCart(\calK)_{/A}$, which is equivalent to a commutative triangle
		\[\begin{tikzcd}[ampersand replacement=\&]
			{E_1} \&\& {E_2} \\
			\& A
			\arrow["e", from=1-1, to=1-3]
			\arrow["{p_1}"', two heads, from=1-1, to=2-2]
			\arrow["{p_2}", two heads, from=1-3, to=2-2]
		\end{tikzcd}\]
		in $\calK$.
		
		Now, by the internal characterisation of coCartesian functors, asking $\pi$ to be a $0$-arrow in $\calCoCart(\calK)_{/A}$ means asking the square on the left of \longref{Diagram}{diag:mates_two_sided} to be invertible.
		
		A similar argument shows the second statement.		
	\end{proof}
	
	Therefore, depending on how we view the two-sided fibrations from $\infty$-categories $A$ to $B$ of an $\infty$-cosmos as an enhanced simplicial category, we attain different conclusions, by our previous results on fibrations over a fixed base in \longref{Section}{sec:Cart_fixed} and  \longref{Section}{sec:CoCart_fixed}. 
	
	\begin{theorem}
		\label{thm:trio_enh_Cart}
		The enhanced simplicial category $\Cart(\calCoCart(\calK)_{/A})_{/a}$ admits tight cosmological limits, terminally rigged $n$-inserters, and that the projection of a terminally rigged $n$-inserter is an isofibration of $(\calCoCart(\calK)_{/A})^{\isof}$.
		
		Consequently, $\Cart(\calCoCart(\calK)_{/A})_a$ admits Eilenberg-Moore objects over loose monads, and that the forgetful functor is a Cartesian functor of two-sided fibrations from $A$ to $B$, which reflects Cartesian functors.
		
		These $\F_\Delta$-weighted limits are all preserved by the inclusion to $(\calCoCart(\calK)_{/A})^{\isof}_\chor$.
	\end{theorem}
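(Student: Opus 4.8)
The plan is to obtain the statement as a direct instantiation of \longref{Theorem}{thm:trio_Cart/B}. That theorem is proved for an \emph{arbitrary} $\infty$-cosmos $\calK$ and an \emph{arbitrary} base object $B$ of it, using only general features of $\infty$-cosmoi --- the pullback presentation of $\calCart(\calK)_{/B}$, pullback-stability of cosmological embeddings \cite[Proposition 6.3.12]{book:RV:2022}, \longref{Lemma}{lem:enriched_pullback}, and \longref{Theorem}{thm:Lali(K)}. So the first step is to observe that the ambient category we want to plug in, namely $\calCoCart(\calK)_{/A}$, is itself an $\infty$-cosmos: this is recorded in the excerpt, being the simplicial full sub-$\infty$-cosmos of $\calK^{\isof}_{/A}$ on the coCartesian fibrations, so that $\calCart(\calCoCart(\calK)_{/A})_{/a}$ is again an $\infty$-cosmos. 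Moreover $a \colon A \times B \twoheadrightarrow A$ is an object of $\calCoCart(\calK)_{/A}$: being a product projection it is a coCartesian fibration over $A$ --- for instance as the pullback of the isofibration $B \twoheadrightarrow 1$, which is trivially coCartesian since its codomain is terminal, along $A \to 1$, using pullback-stability of coCartesian fibrations --- and this is precisely the object over which the $\calCart$-construction in the theorem is taken.

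With this in place, I would apply \longref{Theorem}{thm:trio_Cart/B} with $\calK$ replaced by $\calCoCart(\calK)_{/A}$ and $B$ replaced by $a$. This yields at once that $\Cart(\calCoCart(\calK)_{/A})_{/a}$ admits tight cosmological limits and terminally rigged $n$-inserters, with the projection of a terminally rigged $n$-inserter a tight isofibration; unwinding what ``tight isofibration'' means in an enhanced simplicial category whose tight part is an $\infty$-cosmos, and using once more that the relevant cosmological embedding is pullback-stable \cite[Proposition 6.3.12]{book:RV:2022}, this projection is an isofibration of $(\calCoCart(\calK)_{/A})^{\isof}$, as claimed. The ``consequently'' clause then follows from \longref{Lemma}{lem:EM} applied to $\Cart(\calCoCart(\calK)_{/A})_{/a}$: its tight part is an $\infty$-cosmos, it admits limits of countable chains of tight isofibrations and simplicial powers by inchordate enhanced simplicial sets (both instances of tight cosmological limits), and terminally rigged $n$-inserters with tight-isofibration projections, so the lemma produces Eilenberg--Moore objects over loose monads whose forgetful functor is a tight isofibration that reflects tightness.

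The last step is purely a matter of dictionary: by \longref{Lemma}{lem:enh_Cart} the loose $0$-arrows of $\Cart(\calCoCart(\calK)_{/A})_{/a}$ are exactly the maps of spans for which the left-hand square of \longref{Diagram}{diag:mates_two_sided} is invertible, and the tight ones are precisely the Cartesian functors of two-sided fibrations from $A$ to $B$, so ``the forgetful functor is tight and reflects tightness'' reads as the asserted statement about Cartesian functors of two-sided fibrations. The preservation of all these $\F_\Delta$-weighted limits along the inclusion $\Cart(\calCoCart(\calK)_{/A})_{/a} \hookrightarrow (\calCoCart(\calK)_{/A})^{\isof}_{/a \;\chor}$, and hence into $(\calCoCart(\calK)_{/A})^{\isof}_\chor$, is inherited verbatim from the corresponding clause of \longref{Theorem}{thm:trio_Cart/B}. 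I do not expect a genuine obstacle here: the whole argument is a formal change of base, and the only points deserving a word of care are checking that $a$ is indeed an object of $\calCoCart(\calK)_{/A}$ and that \longref{Theorem}{thm:trio_Cart/B} was genuinely proved at the level of generality of an arbitrary $\infty$-cosmos, so that substituting $\calCoCart(\calK)_{/A}$ for $\calK$ smuggles in no hidden hypothesis.
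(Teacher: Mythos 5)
Your proposal is correct and matches the paper's intended argument: the paper derives this theorem (without a separate written proof) precisely by instantiating the fixed-base result, Theorem \ref{thm:trio_Cart/B}, with the $\infty$-cosmos $\calCoCart(\calK)_{/A}$ and the base object $a$, then translating tightness via Lemma \ref{lem:enh_Cart}. Your additional checks (that $a$ is a coCartesian fibration over $A$ and that Theorem \ref{thm:trio_Cart/B} holds for an arbitrary $\infty$-cosmos) are exactly the points that make the substitution legitimate.
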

	%
	%
	%
	
	\begin{theorem}
		\label{thm:trio_enh_CoCart}
		The enhanced simplicial category $\CoCart(\calCart(\calK)_{/B})_{/b}$ admits tight cosmological limits, initially rigged $n$-inserters, and that the projection of a initially rigged $n$-inserter is an isofibration of $(\calCart(\calK)_{/B})^{\isof}$.
		
		Consequently, $\CoCart(\calCart(\calK)_{/B})_{/b}$ admits coEilenberg-Moore objects over loose comonads, and that the forgetful functor is a Cartesian functor of two-sided fibrations from $A$ to $B$, which reflects Cartesian functors.
		
		These $\F_\Delta$-weighted limits are all preserved by the inclusion to $(\calCart(\calK)_{/B})^{\isof}_\chor$.
	\end{theorem}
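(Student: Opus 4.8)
The plan is to deduce this from the corresponding statement for $\Cart(\calCoCart(\calK)_{/A})_{/a}$, namely \longref{Theorem}{thm:trio_enh_Cart}, by $(-)^\co$-duality — which is also, at bottom, how \longref{Theorem}{thm:trio_enh_Cart} itself mirrors \longref{Theorem}{thm:trio_Cart/B}. The first step is to record the relevant identifications of $\infty$-cosmoi and enhanced simplicial categories. By \cite[Proposition 6.3.14, Proposition 1.2.22]{book:RV:2022}, the slice $\calCart(\calK)_{/B}$ is an $\infty$-cosmos, and the product projection $b \colon A \times B \twoheadrightarrow B$ is a Cartesian fibration, hence an object of $\calCart(\calK)_{/B}$, so that $\CoCart(\calCart(\calK)_{/B})_{/b}$ is a genuine instance of the construction $\CoCart(-)_{/(-)}$. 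Combining the standard dualities $\calCart(\calM)^\co \cong \calCoCart(\calM^\co)$, the compatibility of $(-)^\co$ with slicing and with $(-)^{\isof}$, and the analogous passages already used for $\Lali$ versus $\Rali$ in \longref{Lemma}{lem:Rali_Lali} and for $\La$ versus $\Ra$ in \longref{Proposition}{pro:La_Ra}, one obtains $\calCart(\calK)_{/B}^\co \cong \calCoCart(\calK^\co)_{/B}$ and, at the level of enhanced simplicial categories, $\CoCart(\calCart(\calK)_{/B})_{/b} \cong \bigl(\Cart(\calCoCart(\calK^\co)_{/B})_{/b}\bigr)^\co$; the loose $0$-arrows match up because the left-hand and right-hand squares of \longref{Diagram}{diag:mates_two_sided} in \longref{Lemma}{lem:enh_Cart} are interchanged by $(-)^\co$.

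With this in hand, I would apply \longref{Theorem}{thm:trio_enh_Cart} to the $\infty$-cosmos $\calK^\co$, with the roles of $A$ and $B$ interchanged, obtaining that $\Cart(\calCoCart(\calK^\co)_{/B})_{/b}$ admits tight cosmological limits and terminally rigged $n$-inserters whose projections are tight isofibrations, all preserved by the inclusion into $(\calCoCart(\calK^\co)_{/B})^{\isof}_\chor$. Transferring through $(-)^\co$ by \longref{Proposition}{pro:K^co_has_dual_lim} together with \longref{Proposition}{pro:terminal_initial}, the $\co$-opposite $\CoCart(\calCart(\calK)_{/B})_{/b}$ then admits tight cosmological limits (self-dual) and initially rigged $n$-inserters (the $\co$-duals of terminally rigged ones), the projection of an initially rigged $n$-inserter is a tight isofibration, and these limits are preserved by the inclusion into $\bigl((\calCoCart(\calK^\co)_{/B})^{\isof}_\chor\bigr)^\co \cong (\calCart(\calK)_{/B})^{\isof}_\chor$, which is the asserted inclusion.

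The coEilenberg-Moore clause follows by feeding the above into \longref{Lemma}{lem:coEM}: the enhanced simplicial category $\CoCart(\calCart(\calK)_{/B})_{/b}$ has a tight part which is an $\infty$-cosmos, and it now admits limits of a countable chain of tight isofibrations and simplicial powers by inchordate enhanced simplicial sets (both tight cosmological limits) together with initially rigged $n$-inserters with tight-isofibration projections — exactly the hypotheses of \longref{Lemma}{lem:coEM} — so coEilenberg-Moore objects over loose comonads exist and the forgetful functor is tight and reflects tightness. Since the tight parts of $\CoCart(\calCart(\calK)_{/B})_{/b}$ and $\Cart(\calCoCart(\calK)_{/A})_{/a}$ both coincide with the $\infty$-cosmos of two-sided fibrations from $A$ to $B$, a tight $0$-arrow is precisely a Cartesian functor of two-sided fibrations, while by \longref{Lemma}{lem:enh_Cart} a loose $0$-arrow is a map of spans as in \longref{Diagram}{diag:map_of_spans} for which the right-hand square of \longref{Diagram}{diag:mates_two_sided} is invertible; hence the forgetful functor is a Cartesian functor of two-sided fibrations that reflects Cartesian functors, and the preservation statement reads exactly as in the theorem.

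The point that needs the most care — though none of it is deep — is assembling the duality dictionary of the first paragraph so that $\CoCart(\calCart(\calK)_{/B})_{/b}$ really is the $\co$-opposite of $\Cart(\calCoCart(\calK^\co)_{/B})_{/b}$ as an \emph{enhanced} simplicial category, loose $0$-arrows included. A completely self-contained route that avoids $(-)^\co$ would instead rerun the proof of \longref{Theorem}{thm:trio_Cart/B} in the iterated setting: form the diagram in $(\calCart(\calK)_{/B})^{\isof}_{/b}$, build the associated commutative cube of Leibniz exponentials with $i_0 \hat{\pitchfork}(-)$ in place of $i_1 \hat{\pitchfork}(-)$, and invoke \longref{Theorem}{thm:Rali(K)} and \longref{Proposition}{pro:tight_lim_Rali} in place of \longref{Theorem}{thm:Lali(K)} and \longref{Proposition}{pro:tight_lim} — but this merely reproduces content already packaged in \longref{Theorem}{thm:trio_CoCart/B} and \longref{Theorem}{thm:trio_enh_Cart}, so the duality argument is the economical one.
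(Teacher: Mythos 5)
Your argument is correct, but it takes a different route from the paper's. The paper's intended proof (signposted by the paragraph preceding the two theorems) is a one-line specialisation: since $\calCart(\calK)_{/B}$ is itself an $\infty$-cosmos and $b \colon A \times B \twoheadrightarrow B$ is an object of it, the enhanced simplicial category $\CoCart(\calCart(\calK)_{/B})_{/b}$ is literally an instance of the construction $\CoCart(\calM)_{/m}$ treated in \longref{Theorem}{thm:trio_CoCart/B}, so that theorem applies verbatim with $\calM = \calCart(\calK)_{/B}$ and $m = b$; the identification of tight and loose $0$-arrows with (one-sidedly) Cartesian maps of spans, via \longref{Lemma}{lem:enh_Cart}, is all that remains, and both routes need it. You instead dualise \longref{Theorem}{thm:trio_enh_Cart} through $(-)^\co$, which is legitimate and in the spirit of the paper's own uses of duality (\longref{Lemma}{lem:Rali_Lali}, \longref{Proposition}{pro:La_Ra}, \longref{Proposition}{pro:terminal_initial}, \longref{Proposition}{pro:K^co_has_dual_lim}), but it shifts the real work onto the enhanced-level identification $\CoCart(\calCart(\calK)_{/B})_{/b} \cong \bigl(\Cart(\calCoCart(\calK^\co)_{/B})_{/b}\bigr)^\co$, including the matching of loose $0$-arrows and the compatibility of $(-)^\co$ with slicing and with $(-)^{\isof}$ — facts that are standard but nowhere established in the paper, and whose verification is comparable in length to simply invoking \longref{Theorem}{thm:trio_CoCart/B}. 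Two small points to watch in your version: tight cosmological limits transfer along $(-)^\co$ only because powers by $J$ become powers by $J^\op$ and the class of cosmological weights is closed under this (worth saying explicitly, since \longref{Proposition}{pro:K^co_has_dual_lim} dualises the weight); and your closing remark understates the direct alternative — one does not need to rerun the proof of \longref{Theorem}{thm:trio_Cart/B} in the iterated setting, only to apply \longref{Theorem}{thm:trio_CoCart/B} as a black box to the new cosmos, which is the more economical argument and the one the paper intends.
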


	\section{Limits not from \texorpdfstring{$\Lali(\calK)$}{Lali(K)} or \texorpdfstring{$\Rali(\calK)$}{Rali(K)}}
	\label{sec:not_in_Lali}
	We established all the completeness results above by relating each enhanced simplicial category to $\Lali(\calK)$ or $\Rali(\calK)$ of an $\infty$-cosmos $\calK$. A natural guess is that $\Lali(\calK)$ or $\Rali(\calK)$ admits any $\F_\Delta$-weighted limits that these enhanced simplicial categories should possess. However, this might not be the case. In fact, we would show in this section that, particularly, the enhanced simplicial category $\JLim(\calK)$ of $\infty$-categories with $J$-shaped limits admits certain $\F_\Delta$-weighted limits that $\Lali(\calK)$ does not possess.
	
	In order to establish the existence of certain $\F_\Delta$-weighted limits in some specific enhanced simplicial categories, we would be working a lot in the homotopy $2$-category. We call (the equivalence class) of a $1$-morphism in an $\infty$-cosmos an \emph{$\infty$-natural transformation} in the homotopy $2$-category.
	
	\subsection{\texorpdfstring{$\La(\calK)$}{La(K)} and \texorpdfstring{$\Ra(\calK)$}{Ra(K)}}
	The key is actually the left, or respectively, right adjoints.

	\begin{theorem}
		\label{thm:pullback_along_Cartesian_fib_La}
		Let $\calK$ be an $\infty$-cosmos. The enhanced simplicial category $\La(\calK)$ of la-isofibrations of $\calK$ admits pullbacks of a loose $0$-arrow along a tight Cartesian fibration, which are preserved by the inclusion
		$$\La(\calK) \hookrightarrow \calK^{\isof}_\chor.$$
		
		More precisely, let $A, B, C$ be la-isofibrations of $\calK$, $F \colon A \to C$ be a morphism of isofibrations, and $G \colon B \twoheadrightarrow C$ be a morphism of left adjoints, which is also a Cartesian fibration of $\calK^{\isof}$. Then, the pullback
		\[\begin{tikzcd}[ampersand replacement=\&]
			P \& B \\
			A \& C
			\arrow["{p_B}", from=1-1, to=1-2]
			\arrow["{p_A}"', two heads, from=1-1, to=2-1]
			\arrow["\lrcorner"{anchor=center, pos=0.125}, draw=none, from=1-1, to=2-2]
			\arrow["G", two heads, from=1-2, to=2-2]
			\arrow["F"', from=2-1, to=2-2]
		\end{tikzcd}\]
		of $F$ along $G$ in $\calK^{\isof}$ is also a la-isofibration, and that the projection $p_A$ is a morphism of left adjoints, which also reflects morphisms of left adjoints.
	\end{theorem}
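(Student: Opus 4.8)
The plan is to follow the two-step pattern of \longref{Theorem}{thm:Lali(K)}: prove the statement for $\calK=\qCat$ by an explicit argument with universal elements, then bootstrap to an arbitrary $\infty$-cosmos representably. I emphasise that, unlike the earlier completeness results, \longref{Lemma}{lem:enriched_pullback} cannot be invoked to transport the limit from $\Lali(\calK)$, since $\Lali(\calK)$ is not known to admit pullbacks of a loose $0$-arrow along a tight Cartesian fibration; the argument must therefore be direct, and it is precisely the fact that the objects of $\La(\calK)$ are required only to be left adjoints, not lalis, that leaves room to carry it out.

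First I would reduce the claim that the $\calK^{\isof}$-pullback $P$ of $F$ along $G$ is an la-isofibration to a statement about lalis. Writing the given la-isofibrations as isofibrations $A_1\twoheadrightarrow A_2$, $B_1\twoheadrightarrow B_2$, $C_1\twoheadrightarrow C_2$ of $\calK$, \longref{Lemma}{lem:la_as_lali_K} identifies ``$P$ is a left adjoint'' with ``the comma projection $\pi_{P_2}\colon P\downarrow P_2\twoheadrightarrow P_2$ is a lali''. Applying the cosmological functor $L$ of \longref{Proposition}{pro:La_cosmos} to the pullback square, and using that $L$ preserves pullbacks of isofibrations, yields a canonical identification $P\downarrow P_2\cong(A\downarrow A_2)\times_{C\downarrow C_2}(B\downarrow B_2)$ over $P_2=A_2\times_{C_2}B_2$, compatible with all projections. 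Here the three base comma projections are lalis of $\calK$ (\longref{Lemma}{lem:la_as_lali_K} again); the induced comma map $\bar F\colon A\downarrow A_2\to C\downarrow C_2$ is a plain morphism of isofibrations; and the induced comma map $\bar G\colon B\downarrow B_2\to C\downarrow C_2$ is a morphism of lalis by \longref{Lemma}{lem:mor_la_mor_lali_K}, and moreover a Cartesian fibration of $\calK$ --- the latter because $\bar G$ is the top component of $L(G)$, so it arises from the Cartesian fibration $G$ of $\calK^{\isof}$ by applying $L$ and then the domain projection $\calK^{\isof}\to\calK$, both cosmological, and cosmological functors preserve Cartesian fibrations by the Leibniz-exponential characterisation of \cite[Theorem 5.2.8]{book:RV:2022}. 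Applying $\calK(X,-)$ and invoking \longref{Proposition}{pro:5.5}, it suffices to prove in $\qCat$: with three lalis as above, a Cartesian fibration $\bar G$ that is also a morphism of lalis, and a morphism of isofibrations $\bar F$, the pullback $(A\downarrow A_2)\times_{C\downarrow C_2}(B\downarrow B_2)\twoheadrightarrow P_2$ is a lali, its projection to $A\downarrow A_2$ is a morphism of lalis, and that projection reflects morphisms of lalis.

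The heart of the matter --- and the step I expect to be the main obstacle --- is constructing, in $\qCat$, a $\pi_{P_2}$-universal element of the pullback over each vertex $(a_2,b_2)$ of $P_2$, with common image $c_2$ in $C_2$. The idea: pick $\pi_{A_2}$- and $\pi_{B_2}$-universal elements $u_A$ over $a_2$ and $u_B$ over $b_2$; then $v:=\bar F(u_A)$ and $w:=\bar G(u_B)$ both lie over $c_2$, with $w$ universal (since $\bar G$ is a morphism of lalis, by \longref{Proposition}{pro:5.5}) but $v$ in general not --- this discrepancy, caused by $F$ failing to be a morphism of left adjoints, is exactly why the naive pair $(u_A,u_B)$ need not lie in the pullback. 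Universality of $w$ supplies a comparison $v\to w$ over $\id_{c_2}$; since $\bar G$ is a Cartesian fibration, I would lift it to a $\bar G$-Cartesian arrow $\tilde u_B\to u_B$ with $\bar G(\tilde u_B)=v$, so that $(u_A,\tilde u_B)$ is a genuine element of the pullback over $(a_2,b_2)$ --- one checks in particular that $\tilde u_B$ still sits over $b_2$, using that the comparison $v\to w$ is vertical for $\pi_{C_2}$. Then one verifies that $(u_A,\tilde u_B)$ is $\pi_{P_2}$-universal: a lifting problem against the pullback projects to a lifting problem against $A\downarrow A_2$, solved by universality of $u_A$, and to one on the $B$-side solved by combining universality of $w$ with the defining property of the Cartesian arrow $\tilde u_B\to u_B$, the two partial solutions agreeing over $C\downarrow C_2$ by uniqueness of Cartesian lifts. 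The genuinely delicate point is that this Cartesian-lift correction is simultaneously compatible with all three lali structures and coherent enough --- as the vertex $(a_2,b_2)$ and the lifting data vary --- to assemble into an honest right adjoint satisfying the triangle identities; this is the counterpart in the present setting of the inductive lifting argument in the proof of \longref{Proposition}{pro:Lali_qCat}.

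It then remains to collect the conclusions. By \longref{Proposition}{pro:5.5}, the first projection $(A\downarrow A_2)\times_{C\downarrow C_2}(B\downarrow B_2)\to A\downarrow A_2$ carries the universal element $(u_A,\tilde u_B)$ to $u_A$, which is universal, so it is a morphism of lalis; that it reflects morphisms of lalis follows by a symmetric argument, again using the Cartesian fibration $\bar G$ to pin down the $B$-component of a candidate. Transporting back along \longref{Lemma}{lem:la_as_lali_K}, \longref{Lemma}{lem:mor_la_mor_lali_K}, and the representability of adjunctions \cite[Lemma 2.4.2]{RV:2015_2}, this gives that $P$ is an la-isofibration and $p_A\colon P\to A$ a morphism of left adjoints that reflects morphisms of left adjoints. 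Finally, $P$ is by construction the pullback in $\calK^{\isof}$, so the inclusion $\La(\calK)\hookrightarrow\calK^{\isof}_\chor$ sends the resulting $\F_\Delta$-weighted limit to the corresponding chordate pullback; the joint reflection of tightness by the limit projections amounts to the statement that a loose $0$-arrow $q\colon Z\leadsto P$ is a morphism of left adjoints iff $p_A\cdot q$ is, which follows from a mate computation --- the mate of $q$ is invertible iff both of its components into $A$ and $B$ are, and the $B$-component is controlled by the Cartesian-fibration hypothesis on $G$.
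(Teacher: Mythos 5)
Your reduction through the cosmological functor $L$ and \longref{Lemma}{lem:la_as_lali_K}--\longref{Lemma}{lem:mor_la_mor_lali_K} is sound in outline, but the central construction in $\qCat$ has a genuine gap. \longref{Proposition}{pro:5.5} demands a universal element of the pullback lying \emph{strictly over} the prescribed vertex $(\alpha,\beta)$ of the base $a_2\times_{c_2}b_2$. Your candidate is $(u_A,\tilde u_B)$, where $\tilde u_B\to u_B$ is a $\bar G$-Cartesian lift of the comparison $v\to w$. However, the fact that $v\to w$ is vertical with respect to the projection $C\downarrow c_2\to c_2$ only forces the image of $\tilde u_B\to u_B$ in $b_2$ to be an arrow $\beta'\to\beta$ lying over $\mathrm{id}_\gamma$; it does not make it an identity, nor even an isomorphism. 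So $\tilde u_B$ sits over some $\beta'$, not over $\beta$, and $(u_A,\tilde u_B)$ is in general not an element over $(\alpha,\beta)$ at all. To choose the Cartesian lift vertically you would need a fibred lifting property, i.e.\ you would have to exploit that $G$ is a Cartesian fibration of $\calK^{\isof}$ (a condition on the whole square) rather than merely that the comma component $\bar G$ is a Cartesian fibration of $\calK$; your proposal neither states nor proves such a compatibility. In addition, the step you yourself flag as ``the genuinely delicate point'' --- that the corrected element is universal, that the lifts can be made coherently across all simplicial dimensions, and that the projection reflects morphisms of lalis --- is precisely where the entire content of the theorem is concentrated, so deferring it leaves the proof essentially unexecuted; the reflection argument at the end (``the mate of $q$ is invertible iff both of its components are'') is likewise only asserted.

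For orientation, the paper's proof takes a different and more direct route: it never passes through commas, lalis or universal elements, but works in the homotopy $2$-category, constructing a right adjoint to $P$ by hand via the weak $2$-dimensional universal property of the pullback, taking a $G_1$-Cartesian lift of a pasted mate $2$-cell built from $\lambda_F$ and $\lambda_G^{-1}$, producing the unit with the $2$-dimensional lifting property of the Cartesian fibration $G_1$ (\cite[Proposition 5.2.11]{book:RV:2022}), and then verifying the triangle identities up to isomorphism. If you wish to pursue your universal-element strategy, you would need both the fibred-Cartesian-lift input noted above and a coherence argument of the same order as the inductive lifting in \longref{Proposition}{pro:Lali_qCat}; as written, the proposal does not constitute a proof.
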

	
	\begin{proof}
		Write $A \colon a_1 \twoheadrightarrow a_2$, $B \colon b_1 \twoheadrightarrow b_2$, and $C \colon c_1 \twoheadrightarrow c_2$. Denote by $r_A \colon a_2 \to a_1$, $r_B \colon b_2 \to b_1$, and $r_C \colon c_2 \to c_1$, the right adjoints to $A$, $B$, and $C$, respectively. Then $F$ and $G$ amount to  commutative squares
		\[
		\begin{tikzcd}[ampersand replacement=\&]
			{a_1} \& {a_2} \\
			{c_1} \& {c_2}
			\arrow["A", two heads, from=1-1, to=1-2]
			\arrow["{F_1}"', from=1-1, to=2-1]
			\arrow["{F_2}", from=1-2, to=2-2]
			\arrow["C"', two heads, from=2-1, to=2-2]
		\end{tikzcd},
		\quad \quad
		\begin{tikzcd}[ampersand replacement=\&]
			{b_1} \& {b_2} \\
			{c_1} \& {c_2}
			\arrow["B", two heads, from=1-1, to=1-2]
			\arrow["{G_1}"', two heads, from=1-1, to=2-1]
			\arrow["{G_2}", two heads, from=1-2, to=2-2]
			\arrow["C"', two heads, from=2-1, to=2-2]
		\end{tikzcd}\]
		in $\calK$, respectively. Here, both $G_1$ and $G_2$ are Cartesian fibrations of $\calK$, because the (co)domain projections are cosmological functors. Since $G$ is a morphism of left adjoints, its mate
		\[\begin{tikzcd}[ampersand replacement=\&]
			\& {b_1} \& {c_1} \& {c_1} \\
			{b_2} \& {b_2} \& {c_2}
			\arrow["{G_1}", two heads, from=1-2, to=1-3]
			\arrow["B"', two heads, from=1-2, to=2-2]
			\arrow[""{name=0, anchor=center, inner sep=0}, equals, from=1-3, to=1-4]
			\arrow["C", two heads, from=1-3, to=2-3]
			\arrow[""{name=1, anchor=center, inner sep=0}, "{r_B}", from=2-1, to=1-2]
			\arrow[""{name=2, anchor=center, inner sep=0}, equals, from=2-1, to=2-2]
			\arrow["{G_2}"', two heads, from=2-2, to=2-3]
			\arrow[""{name=3, anchor=center, inner sep=0}, "{r_C}"', from=2-3, to=1-4]
			\arrow["{\eta_C}"', shorten <=2pt, shorten >=2pt, Rightarrow, from=0, to=3]
			\arrow["{\epsilon_B}", shorten <=2pt, shorten >=2pt, Rightarrow, from=1, to=2]
		\end{tikzcd}\]
		which we denote by $\lambda_G$, is an invertible. Here, $\epsilon_B$ is the counit for the left adjoint $B$, whereas $\eta_C$ is the unit for the left adjoint $C$. Similarly, we denote by $\lambda_F$ the mate of $F$.
		
		We first construct a right adjoint for the isofibration $P \colon p_1 \twoheadrightarrow p_2$.
		
		Consider the composite
		\[\begin{tikzcd}[ampersand replacement=\&]
			{p_2} \& {b_2} \& {b_1} \\
			{a_2} \& {c_2} \\
			{a_1} \&\& {c_1}
			\arrow["{{p_B}_2}", from=1-1, to=1-2]
			\arrow["{{p_A}_2}"', two heads, from=1-1, to=2-1]
			\arrow["\lrcorner"{anchor=center, pos=0.125}, draw=none, from=1-1, to=2-2]
			\arrow["{r_B}", from=1-2, to=1-3]
			\arrow["{G_2}"', two heads, from=1-2, to=2-2]
			\arrow["{G_1}", two heads, from=1-3, to=3-3]
			\arrow["{F_2}", from=2-1, to=2-2]
			\arrow["{r_A}"', from=2-1, to=3-1]
			\arrow["{\lambda_G^{-1}}"', shorten <=6pt, shorten >=6pt, Rightarrow, from=2-2, to=1-3]
			\arrow["{r_C}"', from=2-2, to=3-3]
			\arrow["{\lambda_F}"', shorten <=6pt, shorten >=6pt, Rightarrow, from=3-1, to=2-2]
			\arrow["{F_1}"', from=3-1, to=3-3]
		\end{tikzcd}\]
		in the homotopy $2$-category $h\calK$. Since $G_1$ is a Cartesian fibration of $\calK$, by \longref{Lemma}{lem:isofib_in_isofib} and \cite[Definition 5.2.1]{book:RV:2022}, there exists a $G_1$-Cartesian lift $\chi \colon x \Rightarrow r_B \cdot {p_B}_2$ in $h\calK$ such that 
		\[\begin{tikzcd}[ampersand replacement=\&]
			{p_2} \& {b_2} \& {b_1} \\
			{a_2} \& {c_2} \\
			{a_1} \&\& {c_1}
			\arrow["{{p_B}_2}", from=1-1, to=1-2]
			\arrow["{{p_A}_2}"', two heads, from=1-1, to=2-1]
			\arrow["\lrcorner"{anchor=center, pos=0.125}, draw=none, from=1-1, to=2-2]
			\arrow["{r_B}", from=1-2, to=1-3]
			\arrow["{G_2}"', two heads, from=1-2, to=2-2]
			\arrow["{G_1}", two heads, from=1-3, to=3-3]
			\arrow["{F_2}", from=2-1, to=2-2]
			\arrow["{r_A}"', from=2-1, to=3-1]
			\arrow["{\lambda_G^{-1}}"', shorten <=6pt, shorten >=6pt, Rightarrow, from=2-2, to=1-3]
			\arrow["{r_C}"', from=2-2, to=3-3]
			\arrow["{\lambda_F}"', shorten <=6pt, shorten >=6pt, Rightarrow, from=3-1, to=2-2]
			\arrow["{F_1}"', from=3-1, to=3-3]
		\end{tikzcd}
		\quad = \quad
		\begin{tikzcd}[ampersand replacement=\&]
			{p_2} \&\& {b_1} \\
			{a_2} \\
			{a_1} \&\& {c_1}
			\arrow[""{name=0, anchor=center, inner sep=0}, bend right, from=1-1, to=1-3]
			\arrow[""{name=1, anchor=center, inner sep=0}, "{r_B \cdot {p_B}_2}", bend left, from=1-1, to=1-3]
			\arrow["{{p_A}_2}"', from=1-1, to=2-1]
			\arrow["{G_1}", two heads, from=1-3, to=3-3]
			\arrow["{r_A}"', from=2-1, to=3-1]
			\arrow["{F_1}"', from=3-1, to=3-3]
			\arrow["x"', shorten <=3pt, shorten >=3pt, Rightarrow, from=0, to=1]
		\end{tikzcd}.
		\]
		By the weak universal property as shown in \cite[Proposition 3.3.1]{book:RV:2022} of pullbacks, there is a unique $0$-arrow $r \colon p_2 \to p_1$ in $\calK$ such that ${p_B}_1 \cdot r = x$ and ${p_A}_1 \cdot r = r_A \cdot {p_A}_2$. 
		
		It remains to show that $P \dashv r$. Our goal is to construct a pair of $\infty$-natural transformations
		\[
		\begin{tikzcd}[ampersand replacement=\&]
			{p_1} \&\& {p_1} \\
			\& {p_2}
			\arrow[""{name=0, anchor=center, inner sep=0}, equals, from=1-1, to=1-3]
			\arrow["P"', two heads, from=1-1, to=2-2]
			\arrow["r"', from=2-2, to=1-3]
			\arrow["\eta", shorten <=5pt, shorten >=5pt, Rightarrow, from=0, to=2-2]
		\end{tikzcd}
		\quad\quad
		\begin{tikzcd}[ampersand replacement=\&]
			\& {p_1} \\
			{p_2} \&\& {p_2}
			\arrow["P", two heads, from=1-2, to=2-3]
			\arrow["r", from=2-1, to=1-2]
			\arrow[""{name=0, anchor=center, inner sep=0}, equals, from=2-1, to=2-3]
			\arrow["\epsilon", shorten <=5pt, shorten >=5pt, Rightarrow, from=1-2, to=0]
		\end{tikzcd}
		\]
		which would serve as the unit and counit, respectively.
		
		To obtain $\eta$ from the weak universal property of pullbacks, we need to have $\infty$-natural transformations ${p_B}_1 \Rightarrow {p_B}_1 \cdot r \cdot P$ and ${p_A}_1 \Rightarrow {p_A}_1 \cdot r \cdot P$ satisfying the required condition. Our construction of the former $\infty$-natural transformation rely on \cite[Proposition 5.2.11]{book:RV:2022}, which makes use of the fact that $G_1$ is a Cartesian fibration.
		
		We first verify that
		\begin{equation}
			\label{eqt:5.2.11}
			\begin{tikzcd}[ampersand replacement=\&]
				{p_1} \& {b_1} \&\& {b_1} \& {c_1} \\
				\&\& {b_2}
				\arrow["{{p_B}_1}"', from=1-1, to=1-2]
				\arrow[""{name=0, anchor=center, inner sep=0}, equals, from=1-2, to=1-4]
				\arrow["B"', two heads, from=1-2, to=2-3]
				\arrow["{G_1}"', two heads, from=1-4, to=1-5]
				\arrow["{r_B}"', from=2-3, to=1-4]
				\arrow["{\eta_B}", shorten <=5pt, shorten >=5pt, Rightarrow, from=0, to=2-3]
			\end{tikzcd}
			\quad = \quad
			\begin{tikzcd}[ampersand replacement=\&]
				{p_1} \& {a_1} \&\& {a_1} \\
				{p_2} \&\& {a_2} \\
				{b_2} \&\& {c_2} \\
				{b_1} \&\&\& {c_1}
				\arrow["{{p_A}_1}", two heads, from=1-1, to=1-2]
				\arrow["P"', two heads, from=1-1, to=2-1]
				\arrow[""{name=0, anchor=center, inner sep=0}, equals, from=1-2, to=1-4]
				\arrow["A"', two heads, from=1-2, to=2-3]
				\arrow["{\lambda_F}", shorten <=19pt, shorten >=11pt, Rightarrow, from=1-4, to=3-3]
				\arrow["{F_1}", from=1-4, to=4-4]
				\arrow["{{p_A}_2}", two heads, from=2-1, to=2-3]
				\arrow["{{p_B}_2}"', from=2-1, to=3-1]
				\arrow["{r_A}"', from=2-3, to=1-4]
				\arrow["{F_2}"', from=2-3, to=3-3]
				\arrow[""{name=1, anchor=center, inner sep=0}, "{G_2}", two heads, from=3-1, to=3-3]
				\arrow["{r_B}"', from=3-1, to=4-1]
				\arrow["{\lambda_G^{-1}}", shorten <=14pt, shorten >=19pt, Rightarrow, from=3-3, to=4-1]
				\arrow["{r_C}", from=3-3, to=4-4]
				\arrow["{G_1}"', two heads, from=4-1, to=4-4]
				\arrow["{\eta_A}", shorten <=5pt, shorten >=5pt, Rightarrow, from=0, to=2-3]
				\arrow["\lrcorner"{anchor=center, pos=0.125}, draw=none, from=2-1, to=1]
			\end{tikzcd}.
		\end{equation}
		Indeed, we have
		{\allowdisplaybreaks
			\begin{align*}
				&
				\begin{tikzcd}[ampersand replacement=\&, scale cd = 0.9]
					{p_1} \& {a_1} \& {a_1} \\
					{p_2} \& {a_2} \\
					{b_2} \& {c_2} \& {c_1}
					\arrow["{{p_A}_1}", from=1-1, to=1-2]
					\arrow["P"', two heads, from=1-1, to=2-1]
					\arrow[""{name=0, anchor=center, inner sep=0}, equals, from=1-2, to=1-3]
					\arrow["A"', two heads, from=1-2, to=2-2]
					\arrow["{\lambda_F}", shorten <=19pt, shorten >=11pt, Rightarrow, from=1-3, to=3-2]
					\arrow["{F_1}", from=1-3, to=3-3]
					\arrow["{{p_A}_2}", from=2-1, to=2-2]
					\arrow["{{p_B}_2}"', from=2-1, to=3-1]
					\arrow[""{name=1, anchor=center, inner sep=0}, "{r_A}"', from=2-2, to=1-3]
					\arrow["{F_2}"', from=2-2, to=3-2]
					\arrow[""{name=2, anchor=center, inner sep=0}, "{G_2}"', two heads, from=3-1, to=3-2]
					\arrow["{r_C}"', from=3-2, to=3-3]
					\arrow["{\eta_A}"', shorten <=3pt, shorten >=3pt, Rightarrow, from=0, to=1]
					\arrow["\lrcorner"{anchor=center, pos=0.125}, draw=none, from=2-1, to=2]
				\end{tikzcd}
				\; = \;
				\begin{tikzcd}[ampersand replacement=\&, scale cd = 0.9]
					{p_1} \& {a_1} \&\& {a_1} \\
					{p_2} \& {a_2} \& {a_2} \\
					{b_2} \& {c_2} \&\& {c_1} \\
					\& {b_1} \\
					{c_2} \&\&\& {c_1}
					\arrow["{{p_A}_1}", two heads, from=1-1, to=1-2]
					\arrow["P"', two heads, from=1-1, to=2-1]
					\arrow[""{name=0, anchor=center, inner sep=0}, equals, from=1-2, to=1-4]
					\arrow["A"', two heads, from=1-2, to=2-2]
					\arrow["A", two heads, from=1-4, to=2-3]
					\arrow["{F_1}", from=1-4, to=3-4]
					\arrow["{{p_A}_2}", two heads, from=2-1, to=2-2]
					\arrow["{{p_B}_2}"', from=2-1, to=3-1]
					\arrow[""{name=1, anchor=center, inner sep=0}, "{r_A}"{description}, from=2-2, to=1-4]
					\arrow[equals, from=2-2, to=2-3]
					\arrow["{F_2}"', from=2-2, to=3-2]
					\arrow["{F_2}", from=2-3, to=3-2]
					\arrow[""{name=2, anchor=center, inner sep=0}, "{G_2}", two heads, from=3-1, to=3-2]
					\arrow["{r_B}"', from=3-1, to=4-2]
					\arrow["{G_2}"', two heads, from=3-1, to=5-1]
					\arrow["{\lambda_G^{-1}}", shorten <=2pt, shorten >=2pt, Rightarrow, from=3-2, to=4-2]
					\arrow["{r_C}"{description}, from=3-2, to=5-4]
					\arrow["{r_C}"', two heads, from=3-4, to=3-2]
					\arrow[""{name=3, anchor=center, inner sep=0}, equals, from=3-4, to=5-4]
					\arrow["{\lambda_G}", shorten <=4pt, shorten >=6pt, Rightarrow, from=4-2, to=5-1]
					\arrow["{G_1}"', two heads, from=4-2, to=5-4]
					\arrow["{r_C}"', from=5-1, to=5-4]
					\arrow["{\eta_A}"', shorten <=3pt, shorten >=3pt, Rightarrow, from=0, to=1]
					\arrow["\lrcorner"{anchor=center, pos=0.125}, draw=none, from=2-1, to=2]
					\arrow["{\epsilon_A}"'{pos=0.9}, shorten <=1pt, Rightarrow, from=1, to=2-3]
					\arrow["{\eta_C}"', shorten <=17pt, shorten >=28pt, Rightarrow, from=3, to=3-2]
				\end{tikzcd}
				\; = \;
				\begin{tikzcd}[ampersand replacement=\&, scale cd = 0.9]
					{p_1} \& {a_1} \&\& {a_1} \\
					{p_2} \&\& {a_2} \\
					{b_2} \& {c_2} \&\& {c_1} \\
					\& {b_1} \\
					{c_2} \&\&\& {c_1}
					\arrow["{{p_A}_1}", two heads, from=1-1, to=1-2]
					\arrow["P"', two heads, from=1-1, to=2-1]
					\arrow[equals, from=1-2, to=1-4]
					\arrow["A", two heads, from=1-4, to=2-3]
					\arrow["{F_1}", from=1-4, to=3-4]
					\arrow["{{p_B}_2}"', from=2-1, to=3-1]
					\arrow["{F_2}", from=2-3, to=3-2]
					\arrow[""{name=0, anchor=center, inner sep=0}, "{G_2}", two heads, from=3-1, to=3-2]
					\arrow["{r_B}"', from=3-1, to=4-2]
					\arrow["{G_2}"', two heads, from=3-1, to=5-1]
					\arrow["{\lambda_G^{-1}}", shorten <=2pt, shorten >=2pt, Rightarrow, from=3-2, to=4-2]
					\arrow["{r_C}"{description}, from=3-2, to=5-4]
					\arrow["{r_C}"', two heads, from=3-4, to=3-2]
					\arrow[""{name=1, anchor=center, inner sep=0}, equals, from=3-4, to=5-4]
					\arrow["{\lambda_G}", shorten <=4pt, shorten >=6pt, Rightarrow, from=4-2, to=5-1]
					\arrow["{G_1}"', two heads, from=4-2, to=5-4]
					\arrow["{r_C}"', from=5-1, to=5-4]
					\arrow["\lrcorner"{anchor=center, pos=0.125}, draw=none, from=2-1, to=0]
					\arrow["{\eta_C}"', shorten <=17pt, shorten >=28pt, Rightarrow, from=1, to=3-2]
				\end{tikzcd}
				\\
				&= \quad 
				\begin{tikzcd}[ampersand replacement=\&]
					{p_1} \& {b_1} \& {b_1} \& {c_1} \& {c_1} \\
					\& {b_2} \& {b_2} \& {c_2}
					\arrow["{{p_B}_1}", from=1-1, to=1-2]
					\arrow[""{name=0, anchor=center, inner sep=0}, equals, from=1-2, to=1-3]
					\arrow["B"', two heads, from=1-2, to=2-2]
					\arrow["{G_1}", two heads, from=1-3, to=1-4]
					\arrow["B", two heads, from=1-3, to=2-3]
					\arrow[""{name=1, anchor=center, inner sep=0}, equals, from=1-4, to=1-5]
					\arrow["C"', two heads, from=1-4, to=2-4]
					\arrow[""{name=2, anchor=center, inner sep=0}, "{r_B}"{description}, from=2-2, to=1-3]
					\arrow[""{name=3, anchor=center, inner sep=0}, equals, from=2-2, to=2-3]
					\arrow["{G_2}"', two heads, from=2-3, to=2-4]
					\arrow[""{name=4, anchor=center, inner sep=0}, "{r_C}"', from=2-4, to=1-5]
					\arrow["{\eta_B}"', shorten <=2pt, shorten >=2pt, Rightarrow, from=0, to=2]
					\arrow["{\eta_C}"', shorten <=2pt, shorten >=2pt, Rightarrow, from=1, to=4]
					\arrow["{\epsilon_C}", shorten <=2pt, shorten >=2pt, Rightarrow, from=2, to=3]
				\end{tikzcd}
				\quad = \quad
				\begin{tikzcd}[ampersand replacement=\&]
					{p_1} \& {b_1} \& {b_1} \& {c_1} \& {c_1} \\
					\& {b_2} \& {b_2} \& {c_2}
					\arrow["{{p_B}_1}", from=1-1, to=1-2]
					\arrow[""{name=0, anchor=center, inner sep=0}, equals, from=1-2, to=1-3]
					\arrow["B"', two heads, from=1-2, to=2-2]
					\arrow["{G_1}", two heads, from=1-3, to=1-4]
					\arrow["{\lambda_G}", shorten <=4pt, shorten >=4pt, Rightarrow, from=1-3, to=2-4]
					\arrow[equals, from=1-4, to=1-5]
					\arrow[""{name=1, anchor=center, inner sep=0}, "{r_B}"{description}, from=2-2, to=1-3]
					\arrow[equals, from=2-2, to=2-3]
					\arrow["{G_2}"', two heads, from=2-3, to=2-4]
					\arrow["{r_C}"', from=2-4, to=1-5]
					\arrow["{\eta_B}"', shorten <=2pt, shorten >=2pt, Rightarrow, from=0, to=1]
				\end{tikzcd}.
			\end{align*}
		}
		Now, by pasting $\lambda_G^{-1}$ on both sides, we attain \longref{Equation}{eqt:5.2.11}. By \cite[Proposition 5.2.11]{book:RV:2022}, there exists a lift 
		\[\begin{tikzcd}[ampersand replacement=\&]
			{p_1} \&\& {b_1} \\
			\& {p_2}
			\arrow[""{name=0, anchor=center, inner sep=0}, "{{p_B}_1}", from=1-1, to=1-3]
			\arrow["P"', two heads, from=1-1, to=2-2]
			\arrow["x"', from=2-2, to=1-3]
			\arrow["{\overline{\gamma}}", shorten <=5pt, shorten >=3pt, Rightarrow, from=0, to=2-2]
		\end{tikzcd}\]
		of $F_1 \cdot \eta_A \cdot {p_A}_1$ satisfying
		\begin{equation*}
			\begin{tikzcd}[ampersand replacement=\&]
				{p_1} \& {b_1} \&\& {b_1} \\
				\&\& {b_2}
				\arrow["{{p_B}_1}", from=1-1, to=1-2]
				\arrow[""{name=0, anchor=center, inner sep=0}, equals, from=1-2, to=1-4]
				\arrow["B"', two heads, from=1-2, to=2-3]
				\arrow["{r_B}"', from=2-3, to=1-4]
				\arrow["{\eta_B}", shorten <=5pt, shorten >=5pt, Rightarrow, from=0, to=2-3]
			\end{tikzcd}
			\quad = \quad
			\begin{tikzcd}[ampersand replacement=\&]
				{p_1} \&\& {b_1} \\
				\& {p_2}
				\arrow[""{name=0, anchor=center, inner sep=0}, "{{p_B}_1}", from=1-1, to=1-3]
				\arrow["P"', two heads, from=1-1, to=2-2]
				\arrow[""{name=1, anchor=center, inner sep=0}, "x"{description}, from=2-2, to=1-3]
				\arrow[""{name=2, anchor=center, inner sep=0}, "{r_B \cdot {p_B}_2}"', bend right, from=2-2, to=1-3]
				\arrow["{\overline{\gamma}}", shorten <=5pt, shorten >=3pt, Rightarrow, from=0, to=2-2]
				\arrow["\chi", shorten <=3pt, shorten >=3pt, Rightarrow, from=1, to=2]
			\end{tikzcd}.
		\end{equation*}
		This means that we have an $\infty$-natural transformation
		\[\begin{tikzcd}[ampersand replacement=\&]
			{p_1} \& {a_1} \&\& {a_1} \\
			\&\& {a_2}
			\arrow["{{p_A}_1}", from=1-1, to=1-2]
			\arrow[""{name=0, anchor=center, inner sep=0}, equals, from=1-2, to=1-4]
			\arrow["A"', two heads, from=1-2, to=2-3]
			\arrow["{r_A}"', from=2-3, to=1-4]
			\arrow["{\eta_A}", shorten <=5pt, shorten >=5pt, Rightarrow, from=0, to=2-3]
		\end{tikzcd}\]
		in $h\calK$ satisfying
		\begin{equation}
			\label{eqt:eta_equals_to}
			\begin{tikzcd}[ampersand replacement=\&]
				{p_1} \&\& {b_1} \& {c_1} \\
				\& {p_2}
				\arrow[""{name=0, anchor=center, inner sep=0}, "{{p_B}_1}", from=1-1, to=1-3]
				\arrow["P"', two heads, from=1-1, to=2-2]
				\arrow["{G_1}", two heads, from=1-3, to=1-4]
				\arrow["x"', from=2-2, to=1-3]
				\arrow["{\overline{\gamma}}", shorten <=5pt, shorten >=3pt, Rightarrow, from=0, to=2-2]
			\end{tikzcd}
			\quad = \quad
			\begin{tikzcd}[ampersand replacement=\&]
				{p_1} \& {a_1} \&\& {a_1} \& {c_1} \\
				\&\& {a_2}
				\arrow["{{p_A}_1}", from=1-1, to=1-2]
				\arrow[""{name=0, anchor=center, inner sep=0}, equals, from=1-2, to=1-4]
				\arrow["A"', two heads, from=1-2, to=2-3]
				\arrow["{F_1}", from=1-4, to=1-5]
				\arrow["{r_A}"', from=2-3, to=1-4]
				\arrow["{\eta_A}", shorten <=5pt, shorten >=5pt, Rightarrow, from=0, to=2-3]
			\end{tikzcd}.
		\end{equation}
		Now by the weak universal property of pullbacks, there exists an $\infty$-natural transformation
		\[\begin{tikzcd}[ampersand replacement=\&]
			{p_1} \&\& {p_1} \\
			\& {p_2}
			\arrow[""{name=0, anchor=center, inner sep=0}, equals, from=1-1, to=1-3]
			\arrow["P"', two heads, from=1-1, to=2-2]
			\arrow["r"', from=2-2, to=1-3]
			\arrow["\eta", shorten <=5pt, shorten >=5pt, Rightarrow, from=0, to=2-2]
		\end{tikzcd}\]
		such that
		\[
		\begin{tikzcd}[ampersand replacement=\&]
			{p_1} \&\& {p_1} \& {b_1} \\
			\& {p_2}
			\arrow[""{name=0, anchor=center, inner sep=0}, equals, from=1-1, to=1-3]
			\arrow["P"', two heads, from=1-1, to=2-2]
			\arrow["{{p_B}_1}", from=1-3, to=1-4]
			\arrow["r"', from=2-2, to=1-3]
			\arrow["\eta", shorten <=5pt, shorten >=5pt, Rightarrow, from=0, to=2-2]
		\end{tikzcd}
		\quad = \quad
		\begin{tikzcd}[ampersand replacement=\&]
			{p_1} \&\& {b_1} \\
			\& {p_2}
			\arrow[""{name=0, anchor=center, inner sep=0}, "{{p_B}_1}", from=1-1, to=1-3]
			\arrow["P"', two heads, from=1-1, to=2-2]
			\arrow["x"', from=2-2, to=1-3]
			\arrow["{\overline{\gamma}}", shorten <=5pt, shorten >=3pt, Rightarrow, from=0, to=2-2]
		\end{tikzcd},
		\]
		and
		\[
		\begin{tikzcd}[ampersand replacement=\&]
			{p_1} \&\& {p_1} \& {a_1} \\
			\& {p_2}
			\arrow[""{name=0, anchor=center, inner sep=0}, equals, from=1-1, to=1-3]
			\arrow["P"', two heads, from=1-1, to=2-2]
			\arrow["{{p_A}_1}", two heads, from=1-3, to=1-4]
			\arrow["r"', from=2-2, to=1-3]
			\arrow["\eta", shorten <=5pt, shorten >=5pt, Rightarrow, from=0, to=2-2]
		\end{tikzcd}
		\quad = \quad
		\begin{tikzcd}[ampersand replacement=\&]
			{p_1} \& {a_1} \&\& {a_1} \\
			\&\& {a_2}
			\arrow["{{p_A}_1}", two heads, from=1-1, to=1-2]
			\arrow[""{name=0, anchor=center, inner sep=0}, equals, from=1-2, to=1-4]
			\arrow["A"', two heads, from=1-2, to=2-3]
			\arrow["{r_A}"', from=2-3, to=1-4]
			\arrow["\eta", shorten <=5pt, shorten >=5pt, Rightarrow, from=0, to=2-3]
		\end{tikzcd}.
		\]
		
		Similarly, to obtain $\epsilon$, we need $\infty$-natural transformations ${p_A}_2 \cdot P \cdot r \Rightarrow {p_A}_2$ and ${p_B}_2 \cdot P \cdot r \Rightarrow {p_B}_2$ that satisfy the required condition, so as to apply the weak universal property.
		
		Consider the compositea
		\[
		\begin{tikzcd}[ampersand replacement=\&]
			\&\&\& {b_1} \\
			{p_2} \&\& {b_2} \&\& {b_2}
			\arrow["B", two heads, from=1-4, to=2-5]
			\arrow[""{name=0, anchor=center, inner sep=0}, "x", from=2-1, to=1-4]
			\arrow["{{p_B}_2}"', from=2-1, to=2-3]
			\arrow["{r_B}", from=2-3, to=1-4]
			\arrow[""{name=1, anchor=center, inner sep=0}, equals, from=2-3, to=2-5]
			\arrow["{\epsilon_B}", shorten <=5pt, shorten >=5pt, Rightarrow, from=1-4, to=1]
			\arrow["\chi"', shorten <=2pt, Rightarrow, from=0, to=2-3]
		\end{tikzcd}, \quad \quad
		\begin{tikzcd}[ampersand replacement=\&]
			\&\& {a_1} \\
			{p_1} \& {a_2} \&\& {a_2}
			\arrow["A", two heads, from=1-3, to=2-4]
			\arrow["{{p_A}_2}", two heads, from=2-1, to=2-2]
			\arrow["{r_A}", from=2-2, to=1-3]
			\arrow[""{name=0, anchor=center, inner sep=0}, equals, from=2-2, to=2-4]
			\arrow["{\epsilon_A}"', shorten <=5pt, shorten >=5pt, Rightarrow, from=1-3, to=0]
		\end{tikzcd}.
		\]
		We would like to verify that
		\begin{equation}
			\label{eqt:epsilon}
			\begin{tikzcd}[ampersand replacement=\&]
				\&\&\& {b_1} \\
				{p_2} \&\& {b_2} \&\& {b_2} \& {c_2}
				\arrow["B", two heads, from=1-4, to=2-5]
				\arrow[""{name=0, anchor=center, inner sep=0}, "x", from=2-1, to=1-4]
				\arrow["{{p_B}_2}"', from=2-1, to=2-3]
				\arrow["{r_B}", from=2-3, to=1-4]
				\arrow[""{name=1, anchor=center, inner sep=0}, equals, from=2-3, to=2-5]
				\arrow["{G_2}", two heads, from=2-5, to=2-6]
				\arrow["{\epsilon_B}", shorten <=5pt, shorten >=5pt, Rightarrow, from=1-4, to=1]
				\arrow["\chi"', shorten <=2pt, Rightarrow, from=0, to=2-3]
			\end{tikzcd}
			\quad = \quad
			\begin{tikzcd}[ampersand replacement=\&]
				\&\& {a_1} \\
				{p_1} \& {a_2} \&\& {a_2} \& {c_2}
				\arrow["A", two heads, from=1-3, to=2-4]
				\arrow["{{p_A}_2}", two heads, from=2-1, to=2-2]
				\arrow["{r_A}", from=2-2, to=1-3]
				\arrow[""{name=0, anchor=center, inner sep=0}, equals, from=2-2, to=2-4]
				\arrow["{F_2}", from=2-4, to=2-5]
				\arrow["{\epsilon_A}"', shorten <=5pt, shorten >=5pt, Rightarrow, from=1-3, to=0]
			\end{tikzcd}.
		\end{equation}
		From the construction of $\chi$, it is easy to see that
		\[
		\begin{tikzcd}[ampersand replacement=\&]
			{p_2} \& {b_2} \&\& {c_2} \\
			{a_2} \& {c_2} \& {b_1} \\
			{a_1} \&\&\& {c_1}
			\arrow["{{p_B}_2}", from=1-1, to=1-2]
			\arrow["{{p_A}_2}"', two heads, from=1-1, to=2-1]
			\arrow["\lrcorner"{anchor=center, pos=0.125}, draw=none, from=1-1, to=2-2]
			\arrow["{G_2}", two heads, from=1-2, to=1-4]
			\arrow["{G_2}"', two heads, from=1-2, to=2-2]
			\arrow["{r_B}", from=1-2, to=2-3]
			\arrow["{r_C}", from=1-4, to=3-4]
			\arrow["{F_2}", from=2-1, to=2-2]
			\arrow["{r_A}"', from=2-1, to=3-1]
			\arrow["{\lambda_G^{\isof}}", shorten <=3pt, shorten >=3pt, Rightarrow, from=2-2, to=2-3]
			\arrow["{r_C}"', from=2-2, to=3-4]
			\arrow["{\lambda_G}"', shorten <=6pt, shorten >=6pt, Rightarrow, from=2-3, to=1-4]
			\arrow["{G_1}", two heads, from=2-3, to=3-4]
			\arrow["{\lambda_F}"', shorten <=6pt, shorten >=6pt, Rightarrow, from=3-1, to=2-2]
			\arrow["{F_1}"', from=3-1, to=3-4]
		\end{tikzcd}
		\quad = \quad
		\begin{tikzcd}[ampersand replacement=\&]
			{p_2} \&\& {b_2} \& {c_2} \\
			{a_2} \&\& {b_1} \\
			{a_1} \&\&\& {c_1}
			\arrow["{{p_B}_2}", from=1-1, to=1-3]
			\arrow["{{p_A}_2}"', two heads, from=1-1, to=2-1]
			\arrow[""{name=0, anchor=center, inner sep=0}, "x"', from=1-1, to=2-3]
			\arrow["{G_2}", two heads, from=1-3, to=1-4]
			\arrow["{r_B}", from=1-3, to=2-3]
			\arrow["{r_C}", from=1-4, to=3-4]
			\arrow["{r_A}"', from=2-1, to=3-1]
			\arrow["{\lambda_G}"', shorten <=6pt, shorten >=6pt, Rightarrow, from=2-3, to=1-4]
			\arrow["{G_1}", two heads, from=2-3, to=3-4]
			\arrow["{F_1}"', from=3-1, to=3-4]
			\arrow["\chi"', shorten <=7pt, shorten >=7pt, Rightarrow, from=0, to=1-3]
		\end{tikzcd}.
		\]
		Therefore, we have
		{\allowdisplaybreaks
			\begin{align*}
				&
				\begin{tikzcd}[ampersand replacement=\&]
					\&\& {a_1} \\
					{p_1} \& {a_2} \&\& {a_2} \& {c_2}
					\arrow["A", two heads, from=1-3, to=2-4]
					\arrow["{{p_A}_2}", two heads, from=2-1, to=2-2]
					\arrow["{r_A}", from=2-2, to=1-3]
					\arrow[""{name=0, anchor=center, inner sep=0}, equals, from=2-2, to=2-4]
					\arrow["{F_2}", from=2-4, to=2-5]
					\arrow["{\epsilon_A}"', shorten <=5pt, shorten >=5pt, Rightarrow, from=1-3, to=0]
				\end{tikzcd}
				\quad = \quad
				\begin{tikzcd}[ampersand replacement=\&]
					{p_2} \&\&\& {b_2} \\
					{a_2} \& {a_2} \&\& {c_2} \\
					{a_1} \& {c_1} \& {c_1} \& {c_1}
					\arrow["{{p_B}_2}", from=1-1, to=1-4]
					\arrow["{{p_A}_2}"', two heads, from=1-1, to=2-1]
					\arrow["\lrcorner"{anchor=center, pos=0.125}, draw=none, from=1-1, to=2-2]
					\arrow["{G_2}", two heads, from=1-4, to=2-4]
					\arrow[""{name=0, anchor=center, inner sep=0}, equals, from=2-1, to=2-2]
					\arrow["{r_A}"', from=2-1, to=3-1]
					\arrow["{F_2}", from=2-2, to=2-4]
					\arrow[from=2-4, to=3-3]
					\arrow[""{name=1, anchor=center, inner sep=0}, "{r_C}", from=2-4, to=3-4]
					\arrow[""{name=2, anchor=center, inner sep=0}, "A"', two heads, from=3-1, to=2-2]
					\arrow["{F_1}"', from=3-1, to=3-2]
					\arrow[""{name=3, anchor=center, inner sep=0}, "C", two heads, from=3-2, to=2-4]
					\arrow[""{name=4, anchor=center, inner sep=0}, equals, from=3-2, to=3-3]
					\arrow["C"', two heads, from=3-3, to=3-4]
					\arrow["{\epsilon_A}", shorten <=2pt, shorten >=2pt, Rightarrow, from=2, to=0]
					\arrow["{\eta_B}"'{pos=0.8}, shorten <=5pt, shorten >=5pt, Rightarrow, from=4, to=3]
					\arrow["{\epsilon_B}"'{pos=0.7}, shift left=2, shorten <=7pt, shorten >=7pt, Rightarrow, from=3-3, to=1]
				\end{tikzcd}
				\\
				&  = \quad
				\begin{tikzcd}[ampersand replacement=\&]
					{p_2} \&\&\& {b_2} \\
					{a_2} \&\&\& {c_2} \\
					{a_1} \& {c_1} \& {c_1} \& {c_2}
					\arrow["{{p_B}_2}", from=1-1, to=1-4]
					\arrow["{{p_A}_2}"', two heads, from=1-1, to=2-1]
					\arrow["{G_2}", two heads, from=1-4, to=2-4]
					\arrow["{F_2}", from=2-1, to=2-4]
					\arrow["{r_A}"', from=2-1, to=3-1]
					\arrow[from=2-4, to=3-3]
					\arrow[""{name=0, anchor=center, inner sep=0}, equals, from=2-4, to=3-4]
					\arrow["{\lambda_F}", shorten <=32pt, shorten >=32pt, Rightarrow, from=3-1, to=2-4]
					\arrow["{F_1}"', from=3-1, to=3-2]
					\arrow[equals, from=3-2, to=3-3]
					\arrow["C"', two heads, from=3-3, to=3-4]
					\arrow["{\epsilon_C}"'{pos=0.7}, shift left=2, shorten <=7pt, shorten >=7pt, Rightarrow, from=3-3, to=0]
				\end{tikzcd}
				\quad = \quad
				\begin{tikzcd}[ampersand replacement=\&]
					{p_2} \&\& {b_2} \& {b_2} \\
					{a_2} \&\& {b_1} \& {c_2} \\
					\&\& {c_1} \\
					{a_1} \&\& {c_1} \& {c_2}
					\arrow["{{p_B}_2}", from=1-1, to=1-3]
					\arrow["{{p_A}_2}"', two heads, from=1-1, to=2-1]
					\arrow[""{name=0, anchor=center, inner sep=0}, "x"', from=1-1, to=2-3]
					\arrow[""{name=1, anchor=center, inner sep=0}, equals, from=1-3, to=1-4]
					\arrow["{r_B}"', from=1-3, to=2-3]
					\arrow["{G_2}", two heads, from=1-4, to=2-4]
					\arrow["{r_A}"', from=2-1, to=4-1]
					\arrow[""{name=2, anchor=center, inner sep=0}, "B"', two heads, from=2-3, to=1-4]
					\arrow["{G_1}"', two heads, from=2-3, to=3-3]
					\arrow[""{name=3, anchor=center, inner sep=0}, from=2-4, to=4-3]
					\arrow[""{name=4, anchor=center, inner sep=0}, equals, from=2-4, to=4-4]
					\arrow["C", two heads, from=3-3, to=2-4]
					\arrow["{F_1}"', from=4-1, to=4-3]
					\arrow[""{name=5, anchor=center, inner sep=0}, equals, from=4-3, to=3-3]
					\arrow["C"', two heads, from=4-3, to=4-4]
					\arrow["\chi", shorten <=5pt, shorten >=7pt, Rightarrow, from=0, to=1-3]
					\arrow["{\epsilon_B}", shorten <=2pt, shorten >=2pt, Rightarrow, from=2, to=1]
					\arrow["{\epsilon_C}"'{pos=0.7}, shift left=2, shorten <=5pt, shorten >=5pt, Rightarrow, from=3, to=4]
					\arrow["{\eta_C}"{pos=0.9}, shorten <=5pt, shorten >=5pt, Rightarrow, from=5, to=3]
				\end{tikzcd}
				\\
				& = \quad
				\begin{tikzcd}[ampersand replacement=\&]
					\&\&\& {b_1} \\
					{p_2} \&\& {b_2} \&\& {b_2} \& {c_2}
					\arrow["B", two heads, from=1-4, to=2-5]
					\arrow[""{name=0, anchor=center, inner sep=0}, "x", from=2-1, to=1-4]
					\arrow["{{p_B}_2}"', from=2-1, to=2-3]
					\arrow["{r_B}", from=2-3, to=1-4]
					\arrow[""{name=1, anchor=center, inner sep=0}, equals, from=2-3, to=2-5]
					\arrow["{G_2}", two heads, from=2-5, to=2-6]
					\arrow["{\epsilon_B}", shorten <=5pt, shorten >=5pt, Rightarrow, from=1-4, to=1]
					\arrow["\chi"', shorten <=2pt, Rightarrow, from=0, to=2-3]
				\end{tikzcd},
			\end{align*}
		}
		which gives \longref{Equation}{eqt:epsilon}.
		By the weak universal property, there is an $\infty$-natural transformation
		\[\begin{tikzcd}[ampersand replacement=\&]
			\& {p_1} \\
			{p_2} \&\& {p_2}
			\arrow["P", two heads, from=1-2, to=2-3]
			\arrow["r", from=2-1, to=1-2]
			\arrow[""{name=0, anchor=center, inner sep=0}, equals, from=2-1, to=2-3]
			\arrow["\epsilon"', shorten <=5pt, shorten >=5pt, Rightarrow, from=1-2, to=0]
		\end{tikzcd}\]
		such that
		\[
		\begin{tikzcd}[ampersand replacement=\&]
			\& {p_1} \\
			{p_2} \&\& {p_2} \& {b_2}
			\arrow["P", two heads, from=1-2, to=2-3]
			\arrow["r", from=2-1, to=1-2]
			\arrow[""{name=0, anchor=center, inner sep=0}, equals, from=2-1, to=2-3]
			\arrow["{{p_B}_2}", from=2-3, to=2-4]
			\arrow["\epsilon"', shorten <=5pt, shorten >=5pt, Rightarrow, from=1-2, to=0]
		\end{tikzcd}
		\quad = \quad
		\begin{tikzcd}[ampersand replacement=\&]
			\&\&\& {b_1} \\
			{p_2} \&\& {b_2} \&\& {b_2}
			\arrow["B", two heads, from=1-4, to=2-5]
			\arrow[""{name=0, anchor=center, inner sep=0}, "x", from=2-1, to=1-4]
			\arrow["{{p_B}_2}"', from=2-1, to=2-3]
			\arrow["{r_B}", from=2-3, to=1-4]
			\arrow[""{name=1, anchor=center, inner sep=0}, equals, from=2-3, to=2-5]
			\arrow["{\epsilon_B}", shorten <=5pt, shorten >=5pt, Rightarrow, from=1-4, to=1]
			\arrow["\chi"', shorten <=2pt, Rightarrow, from=0, to=2-3]
		\end{tikzcd},
		\]
		and
		\[
		\begin{tikzcd}[ampersand replacement=\&]
			\& {p_1} \\
			{p_2} \&\& {p_2} \& {a_2}
			\arrow["P", two heads, from=1-2, to=2-3]
			\arrow["r", from=2-1, to=1-2]
			\arrow[""{name=0, anchor=center, inner sep=0}, equals, from=2-1, to=2-3]
			\arrow["{{p_A}_2}", from=2-3, to=2-4]
			\arrow["\epsilon"', shorten <=5pt, shorten >=5pt, Rightarrow, from=1-2, to=0]
		\end{tikzcd}
		\quad = \quad
		\begin{tikzcd}[ampersand replacement=\&]
			\&\& {a_1} \\
			{p_1} \& {a_2} \&\& {a_2}
			\arrow["A", two heads, from=1-3, to=2-4]
			\arrow["{{p_A}_2}", two heads, from=2-1, to=2-2]
			\arrow["{r_A}", from=2-2, to=1-3]
			\arrow[""{name=0, anchor=center, inner sep=0}, equals, from=2-2, to=2-4]
			\arrow["{\epsilon_A}"', shorten <=5pt, shorten >=5pt, Rightarrow, from=1-3, to=0]
		\end{tikzcd}.
		\]
		
		It remains to show that $\eta$ and $\epsilon$ satisfy the triangle identities for unit and counit.
		
		Indeed, we have
		{\allowdisplaybreaks
			\begin{align*}
				&
				\begin{tikzcd}[ampersand replacement=\&]
					{p_2} \& {p_2} \&\& {b_2} \\
					\\
					{p_1} \& {p_1} \&\& {b_1}
					\arrow[""{name=0, anchor=center, inner sep=0}, equals, from=1-1, to=1-2]
					\arrow["r"', from=1-1, to=3-1]
					\arrow["{{p_B}_2}", from=1-2, to=1-4]
					\arrow["r", from=1-2, to=3-2]
					\arrow["{r_B}", from=1-4, to=3-4]
					\arrow[""{name=1, anchor=center, inner sep=0}, "P"{description}, two heads, from=3-1, to=1-2]
					\arrow[""{name=2, anchor=center, inner sep=0}, equals, from=3-1, to=3-2]
					\arrow["\chi"', shorten <=22pt, shorten >=22pt, Rightarrow, from=3-2, to=1-4]
					\arrow["{{p_B}_1}"', from=3-2, to=3-4]
					\arrow["\epsilon"', shorten <=6pt, shorten >=6pt, Rightarrow, from=1, to=0]
					\arrow["\eta"', shorten <=6pt, shorten >=6pt, Rightarrow, from=2, to=1]
				\end{tikzcd}
				\; = \;
				\begin{tikzcd}[ampersand replacement=\&]
					{p_2} \& {p_2} \&\& {b_2} \\
					\&\& {p_1} \\
					{p_1} \&\&\& {b_1}
					\arrow[""{name=0, anchor=center, inner sep=0}, equals, from=1-1, to=1-2]
					\arrow["r"', from=1-1, to=3-1]
					\arrow["{{p_B}_2}", from=1-2, to=1-4]
					\arrow[""{name=1, anchor=center, inner sep=0}, "r", from=1-2, to=2-3]
					\arrow["{r_B}", from=1-4, to=3-4]
					\arrow["\chi"', shorten <=8pt, shorten >=8pt, Rightarrow, from=2-3, to=1-4]
					\arrow["{{p_B}_1}"', from=2-3, to=3-4]
					\arrow[""{name=2, anchor=center, inner sep=0}, "P"{description}, two heads, from=3-1, to=1-2]
					\arrow[""{name=3, anchor=center, inner sep=0}, "{{p_B}_1}"', from=3-1, to=3-4]
					\arrow["\epsilon"', shorten <=6pt, shorten >=6pt, Rightarrow, from=2, to=0]
					\arrow["{\overline{\gamma}}", shorten <=10pt, shorten >=13pt, Rightarrow, from=3, to=1]
				\end{tikzcd}
				\; = \;
				\begin{tikzcd}[ampersand replacement=\&]
					{p_2} \& {p_2} \&\& {b_2} \\
					\\
					{p_1} \&\& {b_1} \& {b_1}
					\arrow[""{name=0, anchor=center, inner sep=0}, equals, from=1-1, to=1-2]
					\arrow["r"', from=1-1, to=3-1]
					\arrow["{{p_B}_2}", from=1-2, to=1-4]
					\arrow["{r_B}", from=1-4, to=3-4]
					\arrow[""{name=1, anchor=center, inner sep=0}, "P"', two heads, from=3-1, to=1-2]
					\arrow["{{p_B}_1}"', from=3-1, to=3-3]
					\arrow[""{name=2, anchor=center, inner sep=0}, "B", two heads, from=3-3, to=1-4]
					\arrow[""{name=3, anchor=center, inner sep=0}, equals, from=3-3, to=3-4]
					\arrow["\epsilon"', shorten <=6pt, shorten >=6pt, Rightarrow, from=1, to=0]
					\arrow["{\eta_B}"', shorten <=6pt, shorten >=6pt, Rightarrow, from=3, to=2]
				\end{tikzcd}
				\\
				& = \quad
				\begin{tikzcd}[ampersand replacement=\&]
					{p_2} \&\& {b_2} \& {b_2} \\
					\\
					{p_1} \&\& {b_1} \& {b_1}
					\arrow[""{name=0, anchor=center, inner sep=0}, "{{p_B}_2}", from=1-1, to=1-3]
					\arrow["r"', from=1-1, to=3-1]
					\arrow[""{name=1, anchor=center, inner sep=0}, "x"', from=1-1, to=3-3]
					\arrow[""{name=2, anchor=center, inner sep=0}, equals, from=1-3, to=1-4]
					\arrow["{r_B}"', from=1-3, to=3-3]
					\arrow["{r_B}", from=1-4, to=3-4]
					\arrow["{{p_B}_1}"', from=3-1, to=3-3]
					\arrow[""{name=3, anchor=center, inner sep=0}, "B"{description}, two heads, from=3-3, to=1-4]
					\arrow[""{name=4, anchor=center, inner sep=0}, equals, from=3-3, to=3-4]
					\arrow["\chi"', shorten <=6pt, shorten >=6pt, Rightarrow, from=1, to=0]
					\arrow["{\eta_B}"', shorten <=6pt, shorten >=6pt, Rightarrow, from=4, to=3]
					\arrow["{\epsilon_B}", shorten <=6pt, shorten >=6pt, Rightarrow, from=3, to=2]
				\end{tikzcd}
				\quad = \quad
				\begin{tikzcd}[ampersand replacement=\&]
					\& {b_2} \\
					{p_2} \&\& {b_1}
					\arrow["{r_B}", from=1-2, to=2-3]
					\arrow["{{p_B}_2}", from=2-1, to=1-2]
					\arrow[""{name=0, anchor=center, inner sep=0}, "x"', from=2-1, to=2-3]
					\arrow["\chi"', shorten <=5pt, shorten >=5pt, Rightarrow, from=0, to=1-2]
				\end{tikzcd}.
			\end{align*}
		}
		Also, we have
		{\allowdisplaybreaks
			\begin{align*}
				\begin{tikzcd}[ampersand replacement=\&]
					{p_2} \& {p_2} \\
					\\
					{p_1} \& {p_1} \& {b_1} \& {c_1}
					\arrow[""{name=0, anchor=center, inner sep=0}, equals, from=1-1, to=1-2]
					\arrow["r"', from=1-1, to=3-1]
					\arrow["r", from=1-2, to=3-2]
					\arrow[""{name=1, anchor=center, inner sep=0}, "P"{description}, two heads, from=3-1, to=1-2]
					\arrow[""{name=2, anchor=center, inner sep=0}, equals, from=3-1, to=3-2]
					\arrow["{{p_B}_1}"', from=3-2, to=3-3]
					\arrow["{G_1}"', two heads, from=3-3, to=3-4]
					\arrow["\epsilon"', shorten <=6pt, shorten >=6pt, Rightarrow, from=1, to=0]
					\arrow["\eta"', shorten <=6pt, shorten >=6pt, Rightarrow, from=2, to=1]
				\end{tikzcd}
				\; &= \quad
				\begin{tikzcd}[ampersand replacement=\&]
					{p_2} \& {p_2} \& {p_1} \\
					\\
					{p_1} \&\& {b_1} \& {c_1}
					\arrow[""{name=0, anchor=center, inner sep=0}, equals, from=1-1, to=1-2]
					\arrow["r"', from=1-1, to=3-1]
					\arrow["r", from=1-2, to=1-3]
					\arrow["{{p_B}_1}", from=1-3, to=3-3]
					\arrow[""{name=1, anchor=center, inner sep=0}, "P"{description}, two heads, from=3-1, to=1-2]
					\arrow[""{name=2, anchor=center, inner sep=0}, "{{p_B}_1}"', from=3-1, to=3-3]
					\arrow["{G_1}"', two heads, from=3-3, to=3-4]
					\arrow["\epsilon"', shorten <=6pt, shorten >=6pt, Rightarrow, from=1, to=0]
					\arrow["{\overline{\gamma}}"', shorten <=15pt, shorten >=15pt, Rightarrow, from=2, to=1-2]
				\end{tikzcd}
				\\			
				\; = \quad
				\begin{tikzcd}[ampersand replacement=\&]
					{p_2} \& {p_2} \& {a_2} \\
					\\
					{p_1} \& {a_1} \& {a_1} \& {c_1}
					\arrow[""{name=0, anchor=center, inner sep=0}, equals, from=1-1, to=1-2]
					\arrow["r"', from=1-1, to=3-1]
					\arrow["{{p_A}_2}", two heads, from=1-2, to=1-3]
					\arrow["{r_A}", from=1-3, to=3-3]
					\arrow[""{name=1, anchor=center, inner sep=0}, "P"', two heads, from=3-1, to=1-2]
					\arrow["{{p_A}_1}"', two heads, from=3-1, to=3-2]
					\arrow[""{name=2, anchor=center, inner sep=0}, "A", two heads, from=3-2, to=1-3]
					\arrow[""{name=3, anchor=center, inner sep=0}, equals, from=3-2, to=3-3]
					\arrow["{F_1}"', from=3-3, to=3-4]
					\arrow["\epsilon"', shorten <=6pt, shorten >=6pt, Rightarrow, from=1, to=0]
					\arrow["{\eta_A}"', shorten <=6pt, shorten >=6pt, Rightarrow, from=3, to=2]
				\end{tikzcd}
				\; &= \quad
				\begin{tikzcd}[ampersand replacement=\&]
					{p_2} \& {a_2} \& {a_2} \\
					\\
					\& {a_1} \& {a_1} \& {c_1}
					\arrow["{{p_A}_2}", two heads, from=1-1, to=1-2]
					\arrow[""{name=0, anchor=center, inner sep=0}, equals, from=1-2, to=1-3]
					\arrow["{r_A}"', from=1-2, to=3-2]
					\arrow["{r_A}", from=1-3, to=3-3]
					\arrow[""{name=1, anchor=center, inner sep=0}, "A"{description}, two heads, from=3-2, to=1-3]
					\arrow[""{name=2, anchor=center, inner sep=0}, equals, from=3-2, to=3-3]
					\arrow["{F_1}"', from=3-3, to=3-4]
					\arrow["{\eta_A}"', shorten <=6pt, shorten >=6pt, Rightarrow, from=2, to=1]
					\arrow["{\epsilon_A}", shorten <=6pt, shorten >=6pt, Rightarrow, from=1, to=0]
				\end{tikzcd}
			\end{align*}
		}	
		\noindent which is an identity $\infty$-natural transformation at $F_1 \cdot r_A \cdot {p_A}_2 = G_1 \cdot x$. By \cite[Proposition 5.2.11]{book:RV:2022}, the composite
		\[\begin{tikzcd}[ampersand replacement=\&]
			{p_2} \& {p_2} \\
			{p_1} \& {p_1} \& {b_1}
			\arrow[""{name=0, anchor=center, inner sep=0}, equals, from=1-1, to=1-2]
			\arrow["r"', from=1-1, to=2-1]
			\arrow["r", from=1-2, to=2-2]
			\arrow[""{name=1, anchor=center, inner sep=0}, "P"{description}, two heads, from=2-1, to=1-2]
			\arrow[""{name=2, anchor=center, inner sep=0}, equals, from=2-1, to=2-2]
			\arrow["{{p_B}_1}"', from=2-2, to=2-3]
			\arrow["\epsilon"', shorten <=3pt, shorten >=3pt, Rightarrow, from=1, to=0]
			\arrow["\eta"', shorten <=3pt, shorten >=3pt, Rightarrow, from=2, to=1]
		\end{tikzcd}\]
		is invertible. Furthermore, we have
		\[
		\begin{aligned}
			\begin{tikzcd}[ampersand replacement=\&]
				{p_2} \& {p_2} \\
				{p_1} \& {p_1} \& {a_1}
				\arrow[""{name=0, anchor=center, inner sep=0}, equals, from=1-1, to=1-2]
				\arrow["r"', from=1-1, to=2-1]
				\arrow["r", from=1-2, to=2-2]
				\arrow[""{name=1, anchor=center, inner sep=0}, "P"{description}, two heads, from=2-1, to=1-2]
				\arrow[""{name=2, anchor=center, inner sep=0}, equals, from=2-1, to=2-2]
				\arrow["{{p_A}_1}"', two heads, from=2-2, to=2-3]
				\arrow["\epsilon"', shorten <=3pt, shorten >=3pt, Rightarrow, from=1, to=0]
				\arrow["\eta"', shorten <=3pt, shorten >=3pt, Rightarrow, from=2, to=1]
			\end{tikzcd}
			& \quad = \quad
			\begin{tikzcd}[ampersand replacement=\&]
				{p_2} \& {a_2} \& {a_2} \\
				\& {a_1} \& {a_1}
				\arrow["{{p_A}_2}", two heads, from=1-1, to=1-2]
				\arrow[""{name=0, anchor=center, inner sep=0}, equals, from=1-2, to=1-3]
				\arrow["{r_A}"', from=1-2, to=2-2]
				\arrow["{r_A}", from=1-3, to=2-3]
				\arrow[""{name=1, anchor=center, inner sep=0}, "A"{description}, two heads, from=2-2, to=1-3]
				\arrow[""{name=2, anchor=center, inner sep=0}, equals, from=2-2, to=2-3]
				\arrow["{\epsilon_A}"', shorten <=3pt, shorten >=3pt, Rightarrow, from=0, to=1]
				\arrow["{\eta_A}", shorten <=3pt, shorten >=3pt, Rightarrow, from=1, to=2]
			\end{tikzcd},
		\end{aligned}
		\]
		which is clearly an identity. Altogether, by the weak universal property,  we conclude that
		\[\begin{tikzcd}[ampersand replacement=\&]
			{p_2} \& {p_2} \\
			{p_1} \& {p_1}
			\arrow[""{name=0, anchor=center, inner sep=0}, equals, from=1-1, to=1-2]
			\arrow["r"', from=1-1, to=2-1]
			\arrow["r", from=1-2, to=2-2]
			\arrow[""{name=1, anchor=center, inner sep=0}, "P"{description}, two heads, from=2-1, to=1-2]
			\arrow[""{name=2, anchor=center, inner sep=0}, equals, from=2-1, to=2-2]
			\arrow["\epsilon"', shorten <=3pt, shorten >=3pt, Rightarrow, from=1, to=0]
			\arrow["\eta"', shorten <=3pt, shorten >=3pt, Rightarrow, from=2, to=1]
		\end{tikzcd}\]
		is invertible.
		
		Next,
		\[
		\begin{aligned}
			&
			\begin{tikzcd}[ampersand replacement=\&]
				{p_1} \& {p_1} \\
				{p_2} \& {p_2} \&\& {b_2}
				\arrow[""{name=0, anchor=center, inner sep=0}, equals, from=1-1, to=1-2]
				\arrow["P"', two heads, from=1-1, to=2-1]
				\arrow["P", two heads, from=1-2, to=2-2]
				\arrow[""{name=1, anchor=center, inner sep=0}, "r"{description}, from=2-1, to=1-2]
				\arrow[""{name=2, anchor=center, inner sep=0}, equals, from=2-1, to=2-2]
				\arrow["{{p_B}_2}"', from=2-2, to=2-4]
				\arrow["\eta"', shorten <=3pt, shorten >=3pt, Rightarrow, from=0, to=1]
				\arrow["\epsilon", shorten <=3pt, shorten >=3pt, Rightarrow, from=1, to=2]
			\end{tikzcd}
			\quad = \quad
			\begin{tikzcd}[ampersand replacement=\&]
				{p_1} \& {p_1} \& {b_1} \\
				{p_2} \& {b_2} \&\& {b_2}
				\arrow[""{name=0, anchor=center, inner sep=0}, equals, from=1-1, to=1-2]
				\arrow["P"', two heads, from=1-1, to=2-1]
				\arrow["{{p_B}_1}", from=1-2, to=1-3]
				\arrow["\chi", shorten <=2pt, shorten >=2pt, Rightarrow, from=1-2, to=2-2]
				\arrow["B", two heads, from=1-3, to=2-4]
				\arrow[""{name=1, anchor=center, inner sep=0}, "r"{description}, from=2-1, to=1-2]
				\arrow["{{p_B}_2}"', from=2-1, to=2-2]
				\arrow["{r_B}"{description}, from=2-2, to=1-3]
				\arrow[""{name=2, anchor=center, inner sep=0}, "{{p_B}_2}"', equals, from=2-2, to=2-4]
				\arrow["\eta"', shorten <=3pt, shorten >=3pt, Rightarrow, from=0, to=1]
				\arrow["{\epsilon_B}", shorten <=5pt, shorten >=5pt, Rightarrow, from=1-3, to=2]
			\end{tikzcd}
			\\
			\quad &= \quad
			\begin{tikzcd}[ampersand replacement=\&]
				{p_1} \&\& {b_1} \\
				\& {p_1} \\
				{p_2} \&\& {b_2} \& {b_2}
				\arrow[""{name=0, anchor=center, inner sep=0}, "{{p_B}_1}", from=1-1, to=1-3]
				\arrow["P"', two heads, from=1-1, to=3-1]
				\arrow[""{name=1, anchor=center, inner sep=0}, "B", two heads, from=1-3, to=3-4]
				\arrow["{{p_B}_1}", from=2-2, to=1-3]
				\arrow["\chi", shorten <=4pt, shorten >=4pt, Rightarrow, from=2-2, to=3-3]
				\arrow["r"{description}, from=3-1, to=2-2]
				\arrow["{{p_B}_2}"', from=3-1, to=3-3]
				\arrow["{r_B}"{description}, from=3-3, to=1-3]
				\arrow[""{name=2, anchor=center, inner sep=0}, "{{p_B}_2}"', equals, from=3-3, to=3-4]
				\arrow["{\overline{\gamma}}"', shorten <=5pt, shorten >=3pt, Rightarrow, from=0, to=2-2]
				\arrow["{\epsilon_B}", shorten <=6pt, shorten >=6pt, Rightarrow, from=1, to=2]
			\end{tikzcd}
			\quad = \quad
			\begin{tikzcd}[ampersand replacement=\&]
				{p_1} \&\& {b_1} \& {b_1} \\
				\&\& {b_2} \& {b_2}
				\arrow["{{p_B}_1}", from=1-1, to=1-3]
				\arrow[""{name=0, anchor=center, inner sep=0}, equals, from=1-3, to=1-4]
				\arrow["B"', two heads, from=1-3, to=2-3]
				\arrow["B", two heads, from=1-4, to=2-4]
				\arrow[""{name=1, anchor=center, inner sep=0}, "{r_B}"{description}, from=2-3, to=1-4]
				\arrow[""{name=2, anchor=center, inner sep=0}, equals, from=2-3, to=2-4]
				\arrow["{\eta_B}"', shorten <=3pt, shorten >=3pt, Rightarrow, from=0, to=1]
				\arrow["{\epsilon_B}", shorten <=3pt, shorten >=3pt, Rightarrow, from=1, to=2]
			\end{tikzcd},
		\end{aligned}
		\]
		which gives the identity. And, similarly,
		\[
		\begin{aligned}
			\begin{tikzcd}[ampersand replacement=\&]
				{p_1} \& {p_1} \\
				{p_2} \& {p_2} \&\& {a_2}
				\arrow[""{name=0, anchor=center, inner sep=0}, equals, from=1-1, to=1-2]
				\arrow["P"', two heads, from=1-1, to=2-1]
				\arrow["P", two heads, from=1-2, to=2-2]
				\arrow[""{name=1, anchor=center, inner sep=0}, "r"{description}, from=2-1, to=1-2]
				\arrow[""{name=2, anchor=center, inner sep=0}, equals, from=2-1, to=2-2]
				\arrow["{{p_A}_2}"', from=2-2, to=2-4]
				\arrow["\eta"', shorten <=3pt, shorten >=3pt, Rightarrow, from=0, to=1]
				\arrow["\epsilon", shorten <=3pt, shorten >=3pt, Rightarrow, from=1, to=2]
			\end{tikzcd}
			& \quad = \quad
			\begin{tikzcd}[ampersand replacement=\&]
				{p_1} \&\& {a_1} \& {a_1} \\
				\&\& {a_2} \& {a_2}
				\arrow["{{p_A}_1}", two heads, from=1-1, to=1-3]
				\arrow[""{name=0, anchor=center, inner sep=0}, equals, from=1-3, to=1-4]
				\arrow["A"', two heads, from=1-3, to=2-3]
				\arrow["A", two heads, from=1-4, to=2-4]
				\arrow[""{name=1, anchor=center, inner sep=0}, "{r_A}"{description}, from=2-3, to=1-4]
				\arrow[""{name=2, anchor=center, inner sep=0}, equals, from=2-3, to=2-4]
				\arrow["{\eta_BA}"', shorten <=3pt, shorten >=3pt, Rightarrow, from=0, to=1]
				\arrow["{\epsilon_A}", shorten <=3pt, shorten >=3pt, Rightarrow, from=1, to=2]
			\end{tikzcd}
		\end{aligned}
		\]
		equals the identity. Therefore, by the weak universal property of pullbacks, we conclude that is invertible.	By \cite[Lemma 2.1.11]{book:RV:2022}, the triangle identities hold, so we are done.
	\end{proof}
	
	Clearly, the proof above could be dualised to obtain a dual version of the theorem.
	
	\begin{theorem}
		\label{thm:pullback_along_Cartesian_fib_Ra}
		Let $\calK$ be an $\infty$-cosmos. The enhanced simplicial category $\Ra(\calK)$ of ra-isofibrations of $\calK$ admits pullbacks of a loose $0$-arrow along a tight coCartesian fibration, which are preserved by the inclusion
		$$\Ra(\calK) \hookrightarrow \calK^{\isof}_\chor.$$ The projection is a morphism of right adjoints, and reflects morphisms of right adjoints.
	\end{theorem}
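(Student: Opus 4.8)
The plan is to deduce \longref{Theorem}{thm:pullback_along_Cartesian_fib_Ra} directly from \longref{Theorem}{thm:pullback_along_Cartesian_fib_La} by passing to the $\co$-dual $\infty$-cosmos, exactly as \longref{Theorem}{thm:trio_Ra} was deduced from \longref{Theorem}{thm:trio_La}, rather than redoing the lengthy Cartesian-lift argument. First I would recall from \longref{Proposition}{pro:La_Ra} that $\Ra(\calK) = \La(\calK^\co)^\co$, so a pullback in $\Ra(\calK)$ is the same thing as a pullback in $\La(\calK^\co)$ (pullbacks, being conical limits weighted by a chordate weight, are self-dual under $(-)^\co$ in the sense of \longref{Proposition}{pro:K^co_has_dual_lim}; concretely, the weight for a cospan is valued in chordate enhanced simplicial sets, hence $\Phi_\co \cong \Phi$). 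Next I would check that under the identification $\Ra(\calK) = \La(\calK^\co)^\co$ the two hypotheses transport correctly: a tight coCartesian fibration of $\calK^{\isof}$ becomes, in $\calK^\co$, a tight Cartesian fibration of $(\calK^\co)^{\isof}$ (this is the standard fact, used throughout Chapter 5--6 of \cite{book:RV:2022}, that $(-)^\co$ exchanges Cartesian and coCartesian fibrations, and morphisms of ra-isofibrations of $\calK$ are precisely morphisms of la-isofibrations of $\calK^\co$ by the computation in the proof of \longref{Proposition}{pro:La_Ra}), while a loose $0$-arrow stays a loose $0$-arrow since loose $0$-arrows are just morphisms of isofibrations, which are insensitive to the $\co$-duality on hom-spaces.

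With these translations in hand, \longref{Theorem}{thm:pullback_along_Cartesian_fib_La} applied to $\calK^\co$ yields that $\La(\calK^\co)$ admits pullbacks of a loose $0$-arrow along a tight Cartesian fibration, that such a pullback is again an la-isofibration of $\calK^\co$, and that the projection is a morphism of left adjoints which reflects morphisms of left adjoints. Transporting back along $(-)^\co$, the pullback in $\Ra(\calK) = \La(\calK^\co)^\co$ exists, is a ra-isofibration of $\calK$, and its projection is a morphism of right adjoints reflecting morphisms of right adjoints --- which is exactly the assertion of \longref{Theorem}{thm:pullback_along_Cartesian_fib_Ra}. For the preservation clause, I would observe that the inclusion $\Ra(\calK) \hookrightarrow \calK^{\isof}_\chor$ is, under the duality, the $\co$-dual of the inclusion $\La(\calK^\co) \hookrightarrow (\calK^\co)^{\isof}_\chor$, and since $(\calK^{\isof})^\co \cong (\calK^\co)^{\isof}$ as $\infty$-cosmoi and $(-)^\co$ preserves and reflects limits weighted by chordate weights, the preservation statement of \longref{Theorem}{thm:pullback_along_Cartesian_fib_La} for $\calK^\co$ immediately gives the preservation statement here.

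The main obstacle --- and the only point requiring genuine care rather than bookkeeping --- is verifying that the $\co$-duality really does interchange ``pullback of a loose $0$-arrow along a tight coCartesian fibration'' in $\Ra(\calK)$ with ``pullback of a loose $0$-arrow along a tight Cartesian fibration'' in $\La(\calK^\co)$ at the level of the full $\F_\Delta$-weighted universal property, not merely on underlying objects. Concretely one must confirm that the characterising isomorphism defining $\{\Phi, F\}$ (involving $[\bbA, \bbF_\Delta](\Phi, \bbK(X, F-))$) is sent by $(-)^\co$ to the corresponding isomorphism for the dual data; this is precisely what \longref{Lemma}{lem:dual_fun} and \longref{Proposition}{pro:K^co_has_dual_lim} provide, once one notes that the cospan weight is chordate so that $\Phi_\co = \Phi$. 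I expect this to go through cleanly, so the proof will be short: essentially one sentence invoking \longref{Proposition}{pro:La_Ra}, one invoking \longref{Proposition}{pro:K^co_has_dual_lim} together with the Cartesian/coCartesian duality from \cite{book:RV:2022}, and a concluding sentence transporting the statement of \longref{Theorem}{thm:pullback_along_Cartesian_fib_La} back.
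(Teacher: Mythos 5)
Your proposal is correct and matches the paper's approach: the paper simply remarks that the explicit Cartesian-lift proof of \longref{Theorem}{thm:pullback_along_Cartesian_fib_La} ``could be dualised,'' and your formal transport via $\Ra(\calK) = \La(\calK^\co)^\co$ (\longref{Proposition}{pro:La_Ra}), \longref{Proposition}{pro:K^co_has_dual_lim}, and the Cartesian/coCartesian exchange under $(-)^\co$ is exactly that dualisation, carried out the same way the paper handles its other dual statements (e.g.\ \longref{Theorem}{thm:trio_Ra}). Your attention to the chordate cospan weight, so that $\Phi_\co \cong \Phi$ and the hypotheses and the preservation clause transport along $(\calK^{\isof})^\co \cong (\calK^\co)^{\isof}$, is precisely the bookkeeping the paper leaves implicit.
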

	
	However, the enhanced simplicial category $\Lali(\calK)$ might not admit pullbacks of a loose $0$-arrow along a tight Cartesian fibration. Even if we assume further that the counits $\epsilon_A$, $\epsilon_B$, $\epsilon_C$ in the proof of \longref{Theorem}{thm:pullback_along_Cartesian_fib_La}  are invertible, we are still unable to force the constructed $\epsilon$ to be invertible. The issue here is caused by the non-uniqueness of a Cartesian lift. More precisely, we know that
	\[
	\begin{tikzcd}[ampersand replacement=\&]
		\&\&\& {b_1} \\
		{p_2} \&\& {b_2} \&\& {b_2} \& {c_2}
		\arrow["B", two heads, from=1-4, to=2-5]
		\arrow[""{name=0, anchor=center, inner sep=0}, "x", from=2-1, to=1-4]
		\arrow["{{p_B}_2}"', from=2-1, to=2-3]
		\arrow["{r_B}", from=2-3, to=1-4]
		\arrow[""{name=1, anchor=center, inner sep=0}, equals, from=2-3, to=2-5]
		\arrow["{G_2}", two heads, from=2-5, to=2-6]
		\arrow["{\epsilon_B}", shorten <=5pt, shorten >=5pt, Rightarrow, from=1-4, to=1]
		\arrow["\chi"', shorten <=2pt, Rightarrow, from=0, to=2-3]
	\end{tikzcd}
	\]
	is invertible, because $\epsilon_A$ is assumed to be invertible. Then $G_2$, as an isofibration, should provide an invertible lift, but there is no guaranteed uniqueness of lifts, and thus, we cannot conclude that
	\[
	\begin{tikzcd}[ampersand replacement=\&]
		\&\&\& {b_1} \\
		{p_2} \&\& {b_2} \&\& {b_2}
		\arrow["B", two heads, from=1-4, to=2-5]
		\arrow[""{name=0, anchor=center, inner sep=0}, "x", from=2-1, to=1-4]
		\arrow["{{p_B}_2}"', from=2-1, to=2-3]
		\arrow["{r_B}", from=2-3, to=1-4]
		\arrow[""{name=1, anchor=center, inner sep=0}, equals, from=2-3, to=2-5]
		\arrow["{\epsilon_B}", shorten <=5pt, shorten >=5pt, Rightarrow, from=1-4, to=1]
		\arrow["\chi"', shorten <=2pt, Rightarrow, from=0, to=2-3]
	\end{tikzcd}
	\]
	must be invertible. As a consequence, we are unable to apply the weak universal property to get an invertible $\epsilon$, so the constructed pullback cannot be a lali. This same issue happens for $\Rali(\calK)$ as well.
	
	\subsection{\texorpdfstring{$\JLim(\calK)$}{J-Lim(K)} and \texorpdfstring{$\JColim(\calK)$}{J-Coim(K)}}
	Indeed, the enhanced simplicial category $\JLim(\calK)$ of $\infty$-categories of an $\infty$-cosmos $\calK$ that have $J$-shaped limits for a simplicial set $J$ can be formulated as the pullback of $\La(\calK)$, other than  $\Lali(\calK)$.
	
	To see this, we recall the definition of $\infty$-categories with limits, and a proposition on characterising $\infty$-categories with limits using lalis in the book by Riehl and Verity.
	
	\begin{defi}[{\cite[Definition 2.3.2]{book:RV:2022}}]
		An $\infty$-category $A$ of an $\infty$-cosmos $\calK$ \emph{admits all $J$-shaped limits} if and only if the constant diagram functor $\triangle \colon A \to A^J$ admits a right adjoint.
	\end{defi}
	
	\begin{pro}[{\cite[Proposition 4.3.4]{book:RV:2022}}]
		An $\infty$-category $A$ admits a limit of a family of diagrams $d \colon D \to A^J$ indexed by a simplicial set $J$ if and only if there is an absolute right lifting of $d$ 
		\[\begin{tikzcd}[ampersand replacement=\&]
			\& {A^{J_\triangleleft}} \\
			D \& {A^J}
			\arrow["\res", two heads, from=1-2, to=2-2]
			\arrow[""{name=0, anchor=center, inner sep=0}, from=2-1, to=1-2]
			\arrow[""{name=1, anchor=center, inner sep=0}, "d"', from=2-1, to=2-2]
			\arrow["\epsilon", shorten <=3pt, shorten >=3pt, Rightarrow, from=0, to=1]
		\end{tikzcd}\]
		through the restriction $\res \colon A^{J_\triangleleft} \twoheadrightarrow A^J$. 
		
		When these equivalent conditions hold, $\epsilon$ is necessarily an isomorphism and may be chosen to be the identity.
	\end{pro}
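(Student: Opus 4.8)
The plan is to reduce the statement to the definition of a limit as an absolute right lifting through the constant-diagram functor $\triangle\colon A\to A^{J}$, by exhibiting the $\infty$-category $A^{J_\triangleleft}$ together with its restriction $\res$ as a comma $\infty$-category built out of $\triangle$. The only genuinely model-level input will be a join/slice identification; everything else is formal $\infty$-cosmos theory, and in particular can be checked after applying $\calK(X,-)$, since commas and simplicial cotensors are cosmological limits.

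First I would set up the geometric lemma. Writing $\pi\colon A^{J_\triangleleft}\to A$ for the $\infty$-functor induced by the cone-vertex inclusion $\Delta^{0}\hookrightarrow J_\triangleleft$, the universal cone furnishes a canonical $2$-cell $\phi\colon\triangle\cdot\pi\Rightarrow\res$ in $h\calK$, and I claim the datum $(A^{J_\triangleleft},\pi,\res,\phi)$ exhibits $A^{J_\triangleleft}$ as the comma $\infty$-category $\triangle\downarrow A^{J}$ over $A\times A^{J}$: concretely, a functor $\Delta^{0}\star J\to A$ is exactly a cone, i.e.\ an apex $a\in A$, a diagram $x\in A^{J}$, and a $2$-cell $\triangle a\Rightarrow x$, which is precisely an object of $\triangle\downarrow A^{J}$. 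Under this identification $\res$ becomes the leg onto $A^{J}$ and $\pi$ the leg onto $A$.

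Second, I would invoke the comma-object characterisation of absolute right liftings: for $\infty$-functors $f\colon A\to C$ and $g\colon D\to C$, the family $g$ admits an absolute right lifting through $f$ if and only if the projection $f\downarrow g\twoheadrightarrow D$ admits a right adjoint right inverse, the lifting $2$-cell being the counit. Applying this with $f=\triangle$ and with $f=\res$, and using the previous step together with the standard fact that whiskering a universal comma $2$-cell by an identity is an equivalence (so that $\res\downarrow d$ is equivalent over $D$ to $\triangle\downarrow d$), the two right-adjoint-right-inverse conditions literally coincide, which is the asserted equivalence. For the addendum, when these conditions hold $\epsilon$ is the counit of a right-adjoint-right-inverse adjunction along the isofibration $\res$, hence invertible; and because $\res$ is an isofibration one may transport the inverse isomorphism to rectify the chosen section, making $\epsilon$ an identity, exactly as in the rectification arguments for lali-isofibrations used elsewhere in the paper.

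The main obstacle I anticipate is the first step: making precise, inside an arbitrary $\infty$-cosmos rather than merely for quasi-categories, that $A^{J_\triangleleft}$ with its cone-vertex projection and restriction functor \emph{is} the comma $\infty$-category of $\triangle$ — this is where the join/slice adjunction, and the comparison between fat and strict joins, enters — together with pinning down the comma-object criterion for absolute right liftings in a form robust enough to be transported along that equivalence without circularity.
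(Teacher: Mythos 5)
This proposition is not proved in the paper at all: it is quoted verbatim from Riehl--Verity (Proposition 4.3.4 of \emph{Elements}), so the only comparison available is with the cited source. Your overall architecture --- identify $A^{J_\triangleleft}$, with its cone-vertex and restriction projections, with the comma $\infty$-category $\triangle\downarrow A^{J}$ over $A\times A^{J}$ via the join/fat-join comparison, then transfer absolute right liftings, then rectify the section along the isofibration $\res$ to make $\epsilon$ an identity --- is indeed the route taken there, and your first step and the final rectification remark are sound (the equivalence is an equivalence over $A\times A^J$, not an isomorphism, exactly as you flag).

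The genuine gap is the bridge you use in the second step: ``$\res\downarrow d$ is equivalent over $D$ to $\triangle\downarrow d$.'' Under your identification, $\res$ corresponds to the codomain projection $p_1\colon \triangle\downarrow A^{J}\twoheadrightarrow A^{J}$, so what the standard comma-transport facts give you is $\res\downarrow d\simeq p_1\downarrow d$ --- and $p_1\downarrow d$ is \emph{not} equivalent to $\triangle\downarrow d$ in general. Already in the $\infty$-cosmos of quasi-categories, take $A=\Delta^1$, $J=\Delta^0$, and $d\colon \Delta^0\to A$ the terminal vertex: then $\triangle=\mathrm{id}_A$ and $\triangle\downarrow d\simeq\Delta^1$, while $\res\colon A^{\Delta^1}\twoheadrightarrow A$ is evaluation at the target and $\res\downarrow d\simeq\Delta^2$; the comparison functor given by composition is not even full, let alone an equivalence. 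Consequently ``the two right-adjoint-right-inverse conditions literally coincide'' does not follow as written, and this missing comparison is precisely the real content of the cited proposition: one must show that absolute right liftings of $d$ through $\triangle$ correspond to absolute right liftings of $d$ through the comma projection $p_1$, e.g.\ by exploiting the canonical functor $i\colon A\to\triangle\downarrow A^{J}$ classifying the identity $2$-cell (which satisfies $p_1 i=\triangle$ and is left adjoint to the other projection $p_0$), or by verifying the lifting universal property directly as in Riehl--Verity. A minor further point: the invertibility of $\epsilon$ is not because it is ``the counit of a right-adjoint-right-inverse adjunction'' --- the analogous counit through $\triangle$ is the limit cone and is certainly not invertible; rather, one exhibits one lifting through $\res$ with identity $2$-cell (the limit-cone functor $c\colon D\to A^{J_\triangleleft}$ with $\res\cdot c=d$) and then invokes uniqueness of absolute right liftings up to isomorphism.
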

	
	In particular, if $A$ has \emph{all} $J$-shaped limits, then the above proposition says that there is an absolute right lifting
	\[\begin{tikzcd}[ampersand replacement=\&]
		\& {A^{J_\triangleleft}} \\
		{A^J} \& {A^J}
		\arrow["\res", two heads, from=1-2, to=2-2]
		\arrow[""{name=0, anchor=center, inner sep=0}, from=2-1, to=1-2]
		\arrow[""{name=1, anchor=center, inner sep=0}, equals, from=2-1, to=2-2]
		\arrow["\epsilon", shorten <=3pt, shorten >=3pt, Rightarrow, from=0, to=1]
	\end{tikzcd}\]
	of the identity at $A^J$ through the restriction, which is \emph{necessarily invertible}. By {\cite[Lemma 2.3.7]{book:RV:2022}}, $\epsilon$  defines the counit of an adjunction, with $\res \colon A^{J_\triangleleft} \twoheadrightarrow A^J$ being the left adjoint.
	
	All-in-all, the left adjoint $\res \colon A^{J_\triangleleft} \twoheadrightarrow A^J$ is \emph{automatically} a lali. Consequently, the $\infty$-cosmos $\calJLim(\calK)$ of $\infty$-categories with $J$-shaped limits can actually be formed as a pullback
	\[\begin{tikzcd}[ampersand replacement=\&]
		{\calJLim(\calK)} \& {\calLa(\calK)} \\
		\calK \& {\calK^{\isof}}
		\arrow[from=1-1, to=1-2]
		\arrow[hook, two heads, from=1-1, to=2-1]
		\arrow["\lrcorner"{anchor=center, pos=0.125}, draw=none, from=1-1, to=2-2]
		\arrow["U", hook, two heads, from=1-2, to=2-2]
		\arrow["{F_{J_\triangleleft}}"', from=2-1, to=2-2]
	\end{tikzcd}\]
	of the $\infty$-cosmos $\calLa(\calK)$, as in the proof of \longref{Theorem}{thm:trio_JLim}, where $U \colon \calLa(\calK) \hookrightarrow \calK^{\isof}$ is a cosmological embedding, from \longref{Proposition}{pro:La_cosmos}.
	
	Using the previous arguments, we are able to show the following.
	
	\begin{theorem}
		\label{thm:pullback_along_Cartesian_fib_JLim}
		Let $\calK$ be an $\infty$-cosmos. The enhanced simplicial category $\JLim(\calK)$ of $\infty$-categories of $\calK$ that have $J$-shaped limits for a simplicial set $J$ admits pullbacks of a loose $0$-arrow along a tight Cartesian fibration, and that the projection preserves and reflects limits.
	\end{theorem}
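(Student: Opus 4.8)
The plan is to exploit the pullback presentation of $\JLim(\calK)$ over $\La(\calK)$ recorded just above the statement, together with \longref{Theorem}{thm:pullback_along_Cartesian_fib_La}. Exactly as in the proof of \longref{Theorem}{thm:trio_JLim}, the $\infty$-cosmos pullback presenting $\calJLim(\calK)$ extends to a pullback
\[\begin{tikzcd}[ampersand replacement=\&]
{\JLim(\calK)} \& {\La(\calK)} \\
{\calK_\chor} \& {\calK^{\isof}_\chor}
\arrow[from=1-1, to=1-2]
\arrow[two heads, from=1-1, to=2-1]
\arrow["\lrcorner"{anchor=center, pos=0.125}, draw=none, from=1-1, to=2-2]
\arrow["U", two heads, from=1-2, to=2-2]
\arrow["{F_{J_\triangleleft}}"', from=2-1, to=2-2]
\end{tikzcd}\]
in $\FDeltaCat$, where $F_{J_\triangleleft}$ is cosmological and $U$ is the inclusion from \longref{Lemma}{lem:inclusion_isof}. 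Since pullbacks of a loose $0$-arrow along a tight Cartesian fibration are not among the general $\F_\Delta$-weighted limits covered by \longref{Lemma}{lem:enriched_pullback} — the Cartesian fibration hypothesis is not encoded by any $\F_\Delta$-weight — I would argue directly, in the spirit of the proof of \longref{Theorem}{thm:trio_Cart/B}, that the inclusion $\JLim(\calK) \hookrightarrow \calK_\chor$ creates such pullbacks.

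Concretely, fix a loose $0$-arrow $F$ and a tight Cartesian fibration $G \colon B \twoheadrightarrow C$ in $\JLim(\calK)$ (that is, a Cartesian fibration of the $\infty$-cosmos $\calJLim(\calK)$), and form the pullback $P := A \times_C B$ of $F$ along $G$ in $\calK$, which exists since $G$ is an isofibration of $\calK$. The first step is to show $P \in \JLim(\calK)$. Applying $F_{J_\triangleleft}$: because it is cosmological it preserves pullbacks of isofibrations, so $\res_P = F_{J_\triangleleft}(P)$ is the pullback of $\res_F = F_{J_\triangleleft}(F)$ along $\res_G = F_{J_\triangleleft}(G)$ in $\calK^{\isof}$; and since cosmological functors preserve Leibniz exponentials and adjunctions, $\res_G$ is an isofibration of $\calK^{\isof}$ which is also a Cartesian fibration of $\calK^{\isof}$. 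Moreover $G$ being tight in $\JLim(\calK)$ means, by \longref{Example}{eg:JLimK}, that the square $\res_G$ is a morphism of lalis, equivalently a morphism of left adjoints between the la-isofibrations $\res_B$ and $\res_C$; hence $\res_G$ is precisely a tight Cartesian fibration of $\La(\calK)$ as required by \longref{Theorem}{thm:pullback_along_Cartesian_fib_La}. That theorem then gives that $\res_P$ is an la-isofibration of $\calK$, so $\res_P \colon P^{J_\triangleleft} \twoheadrightarrow P^J$ is a left adjoint; by the characterisation of $\infty$-categories admitting $J$-shaped limits recalled just before the statement, $P$ admits $J$-shaped limits, i.e. $P \in \JLim(\calK)$.

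It remains to establish the universal property of $P$ inside $\JLim(\calK)$ and to identify the projections. The loose part is immediate: the loose part $\calJLim(\calK)_\lambda$ is a full simplicial sub-category of $\calK$, so the loose-level pullback universal property of $P$ is inherited from $\calK$. For the tight part, \longref{Theorem}{thm:pullback_along_Cartesian_fib_La} says the projection $\res_{p_A} = F_{J_\triangleleft}(p_A)$ in $\La(\calK)$ is a morphism of left adjoints that reflects morphisms of left adjoints. Reading this back through \longref{Example}{eg:JLimK}: $p_A \colon P \to A$ preserves $J$-shaped limits, so it is a tight $0$-arrow of $\JLim(\calK)$; and for any loose $0$-arrow $q$ with target $P$, the composite $p_A \cdot q$ preserves $J$-limits — i.e. $\res_{p_A}\cdot\res_q=\res_{p_A\cdot q}$ is a morphism of left adjoints — if and only if $\res_q$ is one, i.e. if and only if $q$ is tight. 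Thus $P$ together with $(p_A, p_B)$ has the desired universal property of the pullback of the loose $0$-arrow $F$ along the tight Cartesian fibration $G$, the inclusion into $\calK_\chor$ preserves it, and $p_A$ both preserves and reflects limits.

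The main obstacle I anticipate is the bookkeeping around the several meanings of "Cartesian fibration": one must verify carefully that a Cartesian fibration of $\calJLim(\calK)$ is carried by $F_{J_\triangleleft}$ to a square which is at once a morphism of left adjoints and a Cartesian fibration of $\calK^{\isof}$ — exactly the hypothesis of \longref{Theorem}{thm:pullback_along_Cartesian_fib_La}. I would handle this by factoring $F_{J_\triangleleft}$ through the cosmological embedding $\calJLim(\calK)\hooktwoheadrightarrow\calK$ and through the cosmological functor $\calK\to\calK^{\isof}$, invoking that cosmological functors (hence also cosmological embeddings) preserve isofibrations and Cartesian fibrations. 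A secondary point deserving care is that, unlike $p_A$, the projection $p_B$ remains only a loose $0$-arrow, so the limit cone here carries exactly one tight leg — consistent with the rigged nature of this limit — and the universal property must be formulated accordingly.
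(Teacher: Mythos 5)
Your proposal is correct and follows essentially the same route as the paper: present $\JLim(\calK)$ as the pullback of $\La(\calK)$ along $F_{J_\triangleleft}$ and the inclusion $U$, and feed \longref{Theorem}{thm:pullback_along_Cartesian_fib_La} into it, using that $F_{J_\triangleleft}$ is cosmological (so it preserves Cartesian fibrations and pullbacks of isofibrations) and that tight $0$-arrows of $\JLim(\calK)$ correspond under $F_{J_\triangleleft}$ to morphisms of left adjoints. The only divergence is cosmetic: where you verify the transfer across the pullback in $\FDeltaCat$ by hand (on the grounds that this limit is not encoded by an $\F_\Delta$-weight), the paper simply invokes \longref{Lemma}{lem:enriched_pullback}, whose accompanying remark is explicitly meant to cover limits with such isofibration/Cartesian-fibration side conditions, so your direct verification just unfolds that lemma in this instance.
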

	
	\begin{proof}
		Note that Cartesian fibrations in $\calLa(\calK)$ are simply Cartesian fibrations in $\calK^{\isof}$. Besides, by \cite[Corollary 5.3.5]{book:RV:2022}, the cosmological functor $F_{J_{\triangleleft}}$ preserves Cartesian fibrations.
		
		Following the proof of \longref{Theorem}{thm:trio_JLim}, we obtain a pullback diagram
		\[\begin{tikzcd}[ampersand replacement=\&]
			{\JLim(\calK)} \& {\La(\calK)} \\
			\calK_\chor \& {\calK^{\isof}_\chor}
			\arrow[from=1-1, to=1-2]
			\arrow[two heads, from=1-1, to=2-1]
			\arrow["\lrcorner"{anchor=center, pos=0.125}, draw=none, from=1-1, to=2-2]
			\arrow["U", two heads, from=1-2, to=2-2]
			\arrow["{F_{J_\triangleleft}}"', from=2-1, to=2-2]
		\end{tikzcd}\]
		in $\FDeltaCat$, in which both ${F_{J_\triangleleft}}$ and $U$ preserve pullbacks of a loose $0$-arrow along a tight Cartesian fibration, and that $U$ is an isofibration of enhanced simplicial categories, by \longref{Lemma}{lem:inclusion_isof}.
		
		Now by  \longref{Theorem}{thm:pullback_along_Cartesian_fib_La} and \longref{Lemma}{lem:enriched_pullback}, we achieve the desired statement.
	\end{proof}
	
	Similarly, we have a dual version of the above theorem.
	
	\begin{theorem}
		\label{thm:pullback_along_Cartesian_fib_JColim}
		Let $\calK$ be an $\infty$-cosmos. The enhanced simplicial category $\JColim(\calK)$ of $\infty$-categories of $\calK$ that have $J$-shaped colimits for a simplicial set $J$ admits pullbacks of a loose $0$-arrow along a tight Cartesian fibration, and that the projection preserves and reflects colimits.
	\end{theorem}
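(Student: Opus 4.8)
The plan is to obtain \longref{Theorem}{thm:pullback_along_Cartesian_fib_JColim} as the formal dual of \longref{Theorem}{thm:pullback_along_Cartesian_fib_JLim}, exactly in the spirit of how \longref{Theorem}{thm:Rali(K)} was deduced from \longref{Theorem}{thm:Lali(K)} and \longref{Theorem}{thm:trio_JColim} from \longref{Theorem}{thm:trio_JLim}. First I would recall that $\calK^\co$ is again an $\infty$-cosmos, and that under the $(\cdot)^\co$ duality an $\infty$-category $A$ of $\calK$ admits $J$-shaped colimits precisely when the corresponding object of $\calK^\co$ admits $J$-shaped limits; more precisely, the restriction $\res\colon A^{J_\triangleright}\twoheadrightarrow A^J$ from \longref{Example}{eg:JLimK}, which is the left adjoint witnessing $J$-shaped colimits, becomes in $h(\calK^\co)$ the left adjoint $\res\colon A^{J_\triangleleft}\twoheadrightarrow A^J$ witnessing $J$-shaped limits. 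Consequently $\JColim(\calK)=\JLim(\calK^\co)^\co$ as enhanced simplicial categories, the equality on tight parts being the analogue of \longref{Lemma}{lem:Rali_Lali} (with $\calLa$ replaced by $\calRa$ via \longref{Proposition}{pro:La_Ra}) and extending to the loose parts since loose $0$-arrows are just morphisms of isofibrations in both cases.

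Next I would identify what the dual of ``pullback of a loose $0$-arrow along a tight Cartesian fibration'' is under $(\cdot)^\co$. The relevant point is that a tight Cartesian fibration of $\calK^{\isof}$ corresponds under $(\cdot)^\co$ to a tight coCartesian fibration of $(\calK^\co)^{\isof}$ (this is built into the formulation of \longref{Example}{eg:CartK}, since $\Cart(\calK^\co)$ and $\CoCart(\calK)$ are swapped by the duality), while pullbacks, being conical limits, are self-dual as $\F_\Delta$-weighted limits in the sense of \longref{Proposition}{pro:K^co_has_dual_lim}. Hence a pullback of a loose $0$-arrow along a tight Cartesian fibration in $\JColim(\calK)=\JLim(\calK^\co)^\co$ is precisely a pullback of a loose $0$-arrow along a tight coCartesian fibration in $\JLim(\calK^\co)$. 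But one should be careful here: \longref{Theorem}{thm:pullback_along_Cartesian_fib_JLim} is stated for pullbacks along tight \emph{Cartesian} fibrations, not coCartesian ones, so the clean way is instead to invoke \longref{Theorem}{thm:pullback_along_Cartesian_fib_Ra} directly. Namely, $\JColim(\calK)$ is a pullback of $\Ra(\calK)$ along the cosmological functor $F_{J_\triangleright}$ (the colimit analogue of the pullback square in the paragraph preceding \longref{Theorem}{thm:pullback_along_Cartesian_fib_JLim}, using that $\res\colon A^{J_\triangleright}\twoheadrightarrow A^J$ is automatically a rali), and $F_{J_\triangleright}$ preserves coCartesian fibrations by \cite[Corollary 5.3.5]{book:RV:2022}; then \longref{Theorem}{thm:pullback_along_Cartesian_fib_Ra} together with \longref{Lemma}{lem:enriched_pullback} and \longref{Lemma}{lem:inclusion_isof} gives the result, with the projection a morphism of ralis and reflecting morphisms of ralis, hence preserving and reflecting $J$-shaped colimits.

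So the key steps, in order, are: (i) record that $\calK^\co$ is an $\infty$-cosmos and that $\res\colon A^{J_\triangleright}\twoheadrightarrow A^J$ is automatically a rali-isofibration, so $\calJColim(\calK)$ is the pullback of $\calRa(\calK)$ along $F_{J_\triangleright}\colon\calK\to\calK^{\isof}$; (ii) extend this to a pullback square of enhanced simplicial categories $\JColim(\calK)=\Ra(\calK)\times_{\calK^{\isof}_\chor}\calK_\chor$ in $\FDeltaCat$, exactly as in the proof of \longref{Theorem}{thm:trio_JLim}; (iii) check that $F_{J_\triangleright}$ and the cosmological embedding $U\colon\Ra(\calK)\hookrightarrow\calK^{\isof}_\chor$ both preserve pullbacks of a loose $0$-arrow along a tight coCartesian fibration — the former because $F_{J_\triangleright}$ is cosmological and preserves coCartesian fibrations by \cite[Corollary 5.3.5]{book:RV:2022}, the latter being automatic; (iv) apply \longref{Theorem}{thm:pullback_along_Cartesian_fib_Ra} and \longref{Lemma}{lem:enriched_pullback} to conclude, noting that $U$ is an isofibration of $\FDeltaCat$ by \longref{Lemma}{lem:inclusion_isof}. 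The main obstacle is purely bookkeeping: making sure that the word ``Cartesian'' in the statement is interpreted consistently with the $(\cdot)^\co$-conventions so that one genuinely lands on \emph{co}Cartesian fibrations of $\calK$ (equivalently Cartesian fibrations of $\calK^\co$), and that the colimit-cone $\epsilon$ of \cite[Proposition 4.3.4]{book:RV:2022} is indeed the counit of the rali structure on $\res$; once the duality dictionary is set up correctly everything else is a transcription of the limit case, so I would keep the written proof to a short paragraph that cites \longref{Theorem}{thm:pullback_along_Cartesian_fib_Ra}, \longref{Lemma}{lem:enriched_pullback}, and \longref{Lemma}{lem:inclusion_isof}, mirroring the terse proofs of \longref{Theorem}{thm:trio_JColim} and \longref{Theorem}{thm:trio_CoCart}.
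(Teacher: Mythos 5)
Your proposal is correct and follows essentially the paper's own route: the paper gives no written argument beyond declaring the theorem the dual of \longref{Theorem}{thm:pullback_along_Cartesian_fib_JLim}, and your steps (i)--(iv) --- exhibiting $\JColim(\calK)$ as the pullback of $\Ra(\calK)$ along $F_{J_\triangleright}$ and invoking \longref{Theorem}{thm:pullback_along_Cartesian_fib_Ra}, \longref{Lemma}{lem:enriched_pullback}, and \longref{Lemma}{lem:inclusion_isof} --- are precisely that dualisation, in the same style as the proof of \longref{Theorem}{thm:trio_JColim}. Your caution about the word ``Cartesian'' is exactly the right call: what this argument yields (and all that can hold) is pullbacks along a tight \emph{co}Cartesian fibration, the literal wording of the statement being an undualised slip --- already in $\Cat$ with $J=\emptyset$, pulling back the point of the arrow category $\mathbb{2}$ at its terminal object (a loose $0$-arrow) along the point at its initial object (a tight, initial-object-preserving Cartesian fibration) gives the empty category, so no object of $\JColim(\Cat)$ can satisfy the universal property, consistent with \longref{Theorem}{thm:pullback_along_Cartesian_fib_Ra} concerning coCartesian fibrations. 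One further, immaterial, slip is in your framing paragraph: $J$-shaped colimits in $\calK$ correspond to $J^{\op}$-shaped limits in $\calK^{\co}$, since cotensors in $\calK^{\co}$ are cotensors by opposite simplicial sets, but as your actual proof runs directly through $\Ra(\calK)$ and $F_{J_\triangleright}$ rather than through $\JLim(\calK^{\co})$, nothing is affected.
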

	%
	
	

\end{document}